\setlist[enumerate]{leftmargin=2em, label=(\arabic*)}
\setlist[itemize]{leftmargin=2em}
\definecolor{green}{rgb}{0,0.8,0} 
\newtheorem{theorem}{Theorem}[section]
\newtheorem{corollary}[theorem]{Corollary}
\newtheorem{claim}[theorem]{Claim}
\newtheorem{lemma}[theorem]{Lemma}
\newtheorem{proposition}[theorem]{Proposition}
\theoremstyle{definition}
\newtheorem{definition}[theorem]{Definition}
\theoremstyle{remark}
\newtheorem{remark}[theorem]{Remark}
\numberwithin{equation}{section}
\newcommand{\nrm}[1]{\Vert#1\Vert}
\newcommand{\abs}[1]{\vert#1\vert}
\newcommand{\brk}[1]{\langle#1\rangle}
\newcommand{\set}[1]{\{#1\}}
\newcommand{\tr}{\mathrm{tr}\,}
\newcommand{\aeq}{\simeq}
\newcommand{\aleq}{\lesssim}
\newcommand{\ageq}{\gtrsim}
\newcommand{\lap}{\Delta}
\newcommand{\ud}{\mathrm{d}}
\newcommand{\rd}{\partial}
\newcommand{\nb}{\nabla}
\newcommand{\imp}{\Rightarrow}
\newcommand{\0}{\emptyset}
\newcommand{\alp}{\alpha}
\newcommand{\bt}{\beta}
\newcommand{\gmm}{\gamma}
\newcommand{\Gmm}{\Gamma}
\newcommand{\dlt}{\delta}
\newcommand{\Dlt}{\Delta}
\newcommand{\eps}{\epsilon}
\newcommand{\kpp}{\kappa}
\newcommand{\lmb}{\lambda}
\newcommand{\sgm}{\sigma}
\newcommand{\Sgm}{\Sigma}
\newcommand{\Tht}{\Theta}
\newcommand{\omg}{\omega}
\newcommand{\Omg}{\Omega}
\newcommand{\zt}{\zeta}
\newcommand{\bfi}{{\bf i}}
\newcommand{\bfj}{{\bf j}}
\newcommand{\bfk}{{\bf k}}
\newcommand{\bfm}{{\bf m}}
\newcommand{\bfD}{{\bf D}}
\newcommand{\bfG}{{\bf G}}
\newcommand{\bbC}{\mathbb C}
\newcommand{\bbN}{\mathbb N}
\newcommand{\bbR}{\mathbb R}
\newcommand{\bbS}{\mathbb S}
\newcommand{\bbZ}{\mathbb Z}
\newcommand{\calA}{\mathcal A}
\newcommand{\calD}{\mathcal D}
\newcommand{\calE}{\mathcal E}
\newcommand{\calG}{\mathcal G}
\newcommand{\calH}{\mathcal H}
\newcommand{\calL}{\mathcal L}
\newcommand{\calO}{\mathcal O}
\newcommand{\frkg}{\mathfrak g}
\newcommand{\weakto}{\rightharpoonup}
\newcommand{\upto}{\nearrow}
\newcommand{\G}{\bfG}
\newcommand{\g}{\frkg}
\newcommand{\covD}{\bfD}
\newcommand{\tA}{\tilde{A}}
\newcommand{\ta}{\tilde{a}}
\newcommand{\te}{\tilde{e}}
\newcommand{\ba}{\bar{a}}
\newcommand{\be}{\bar{e}}
\newcommand{\pfstep}[1]{\vskip.5em \noindent {\bf #1.}}
\newcommand{\rcsp}{\underline{r}_{c}}		
\newcommand{\Rcsp}{\underline{R}_{c}}		
\newcommand{\rc}{r_{c}}		
\newcommand{\srd}{\mbox{$\not \hskip-.25em \rd$}}
\newcommand{\scovD}{\mbox{$\not \!\! \covD$}}
\newcommand{\Egs}{E_{GS}}				
\newcommand{\spE}{\calE_{e}}				
\newcommand{\ch}{\boldsymbol{\chi}}
\newcommand{\h}{\mathfrak{h}}
\begin{document}

\title[]{The hyperbolic Yang--Mills equation for \\ connections in an
  arbitrary topological class}
\author{Sung-Jin Oh}%
\address{Department of Mathematics, UC Berkeley, Berkeley, CA 94720 and KIAS, Seoul, Korea 02455}%
\email{sjoh@math.berkeley.edu}%

\author{Daniel Tataru}%
\address{Department of Mathematics, UC Berkeley, Berkeley, CA 94720}%
\email{tataru@math.berkeley.edu}%


\begin{abstract}
  This is the third part of a four-paper sequence, which establishes
  the Threshold Conjecture and the Soliton-Bubbling vs.~Scattering
  Dichotomy for the energy critical hyperbolic Yang--Mills equation in
  the $(4+1)$-dimensional Minkowski space-time. This paper provides
  basic tools for considering the dynamics of the hyperbolic
  Yang--Mills equation in an arbitrary topological class at an optimal
  regularity.

  We generalize the standard notion of a topological class of
  connections on $\bbR^{d}$, defined via a pullback to the one-point
  compactification $\bbS^{d} = \bbR^{d} \cup \set{\infty}$, to rough
  connections with curvature in the critical space
  $L^{\frac{d}{2}}(\bbR^{d})$.  Moreover, we provide excision and
  extension techniques for the Yang--Mills constraint (or Gauss)
  equation, which allow us to efficiently localize Yang--Mills initial
  data sets.
  Combined with the results in the previous paper \cite{OTYM2}, we
  obtain local well-posedness of the hyperbolic Yang--Mills equation
  on $\bbR^{1+d}$ $(d \geq 4)$ in an arbitrary topological class at
  optimal regularity in the temporal gauge (where finite speed of
  propagation holds). In addition, in the energy subcritical case $d =
  3$, our techniques provide an alternative proof of the classical
  finite energy global well-posedness theorem of Klainerman--Machedon
  \cite{KlMa2}, while also removing the smallness assumption in the
  temporal-gauge local well-posedness theorem of Tao \cite{TaoYM}.

  Although this paper is a part of a larger sequence, the materials
  presented in this paper may be of independent and general interest.
  For this reason, we have organized the paper so that it may be read
  separately from the sequence.
\end{abstract}
\maketitle

\tableofcontents

\section{Introduction}
The subject of this paper is the $(d+1)$-dimensional hyperbolic
Yang--Mills equation with compact noncommutative structure group. Our
goal is two-fold:
\begin{itemize}
\item To describe, topologically and analytically, the Yang--Mills
  initial data sets at the optimal $L^{2}$-Sobolev regularity;
\item To provide a good local theory for solutions at the optimal
  $L^{2}$-Sobolev regularity.
\end{itemize}
In each case, we consider two model base spaces: Either a ball $B_{R}
= \set{x \in \bbR^{d} : \abs{x} < R}$ or the whole space $\bbR^{d}$
for the first goal, and (suitable time restrictions of) their
respective domains of dependence $\calD(B_{R}) = \set{(t, x) \in
  \bbR^{1+d} : \abs{t} + \abs{x} < R}$ and $\calD(\bbR^{d}) =
\bbR^{1+d}$ for the second goal.

The main results of this paper may be classified into three classes:
\begin{enumerate}
\item {\it Good global gauge and topological class of rough connections.} Motivated by the optimal regularity theory for the hyperbolic Yang--Mills equation, we consider locally-defined connections on a subset of $\bbR^{d}$ with $L^{\frac{d}{2}}$-curvature. Patching together the local gauges, we show that we can always produce good global gauges in the two model base spaces above (Theorems~\ref{thm:goodrep-ball} and \ref{thm:goodrep}). Moreover, in whole space case, we use the asymptotics of the good global gauge potential to extend the notion of topological classes of connections to the rough setting. (Definition~\ref{def:top-class}).

\item {\it Initial data surgery.}
We provide techniques for excising and extending Yang--Mills initial data sets, which are subject to the nonlinear Yang--Mills constraint (or Gauss) equation (Theorems~\ref{thm:ext-id} and \ref{thm:excise}). These are based on a sharp
  solvability result for the covariant divergence equation
  $\covD^{\ell} e_{\ell} = h$ which preserves physical space support
  property (Theorem~\ref{thm:gauss-0}).

\item {\it Large data local theory.}
Using the ideas of initial data surgery and patching solutions, we show how to extend a small data well-posedness result in the temporal gauge to arbitrarily large data; the key is that causality (or finite speed of propagation) holds in the temporal gauge.
Combined with the optimal regularity temporal gauge small data global well-posedness theorem proved in \cite{OTYM2}, we prove local well-posedness of the hyperbolic Yang--Mills equation in the temporal gauge for arbitrary critical Sobolev initial data in $d \geq 4$ (Theorem~\ref{thm:local-temp}). In $d = 3$, we obtain a generalization of a low regularity result of Tao \cite{TaoYM}, as well as an alternative proof of the classical result of Klainerman--Machedon \cite{KlMa2}.
\end{enumerate}

In addition, in the last section we provide a review of the theory of harmonic Yang--Mills equation on $\bbR^{4}$ using the topological framework developed in this paper. A particular emphasis is given to the recent sharp energy lower bound for non-instanton solutions due to Gursky--Kelleher--Streets \cite{GKS}, which clarifies the threshold energy for the energy critical hyperbolic Yang--Mills equation (and the Yang--Mills heat flow); namely, it is twice the ground state energy.

\begin{remark} \label{rem:series} When restricted to the energy
  critical dimension $d = 4$, the results in this paper constitute
  the third part of a four-paper sequence, whose principal aim is to
  prove the Threshold Theorem for the energy critical hyperbolic
  Yang--Mills equation. The four installments of the series are
  concerned with
  \begin{enumerate}
  \item the \emph{caloric gauge} for the hyperbolic Yang--Mills
    equation, \cite{OTYM1}.
  \item large data \emph{energy dispersed} caloric gauge solutions,
    \cite{OTYM2}.
  \item \emph{topological classes} of connections and large data local
    well-posedness, present article.
  \item \emph{soliton bubbling} vs. scattering dichotomy for large
    data solutions, \cite{OTYM3}.
  \end{enumerate}
  A short overview of the whole sequence is provided in the survey
  paper \cite{OTYM0}.

  The present paper is mostly independent of the other papers in the
  series; the only exception is the small data well-posedness result
  for the hyperbolic Yang--Mills equation from \cite{OTYM2} ($d \geq
  4$), which is used here as a black-box.
\end{remark}

This paper is structured as follows. In the remainder of the
introduction, we present the basic definitions and main results of
this paper. For the notation and conventions that are not explained in
the course of exposition, we refer the reader to
Section~\ref{sec:notation}. In
Sections~\ref{sec:rough-conn}--\ref{sec:threshold}, we elaborate and
provide proofs of the results stated in the introduction.

\subsection{Connections on a vector bundle with structure group
\texorpdfstring{  $\G$}{G}} \label{subsec:vb} Here we give a quick review of the basic
theory of connections on vector bundles, and at the same time fix some
notation and conventions. For a textbook treatment of these materials,
we recommend \cite{MR1393940, MR1393941, MR0440554}.

Let $\G$ be a compact Lie group with Lie algebra $\g$. We denote
the adjoint action of $\G$ on $\g$ by $Ad(O) A = O A O^{-1}$, and the
corresponding action of $\g$ by $ad(A) B = [A, B]$. We endow $\g$
  with an inner product $\brk{\cdot, \cdot}$ which is $Ad$-invariant
  (or bi-invariant), i.e.,
  \begin{align*}
    \brk{A, B} = \brk{Ad(O) A, Ad(O) B} \qquad A, B \in \g, \ O \in
    \G.
  \end{align*}
  Such an $Ad$-invariant inner product always exists if $\G$ is
  compact. Indeed, from any inner product $\brk{\cdot, \cdot}'$, we
  may construct an $Ad$-invariant inner product by applying $Ad(O)$ to
  each input and averaging in $O \in \G$.

The main objects we consider are connections $\covD$ on a vector
bundle on some smooth base manifold $X$ with structure group $\G$.
Here we recall the standard local definition of a vector bundle in the
smooth and continuous cases, which will be most useful later:
\begin{definition} \label{def:vb} A $C^{\infty}$ [resp. $C^{0}$]
  vector bundle $\eta$ on a smooth manifold $X$ with fibers modeled on
  a vector space $V$ consists of the following objects:
  \begin{itemize}
  \item An open cover $\set{U_{\alp}}$ of $X$;
  \item For each pair $U_{\alp}, U_{\bt}$, a $C^{\infty}$
    [resp. $C^{0}$] \emph{transition map} $O_{(\alp \bt)} : U_{\alp}
    \cap U_{\bt} \to Aut(V)$, which satisfy the following
    \emph{cocycle properties}:
    \begin{enumerate}
    \item $O_{(\alp \alp)} = I \quad \hbox{ on } U_{\alp} (= U_{\alp}
      \cap U_{\alp})$,
    \item $O_{(\alp \gmm)} = O_{(\alp \bt)} O_{(\bt \gmm)} \quad
      \hbox{ on } U_{\alp} \cap U_{\bt} \cap U_{\gmm}$.
    \end{enumerate}
  \end{itemize}
  Suppose that a Lie group $\G$ acts on $V$, in the sense that there
  exists a smooth representation $\rho : \G \to Aut(V)$.  We say that
  \emph{$\eta$ has structure group $\G$} if the transition functions
  may be lifted to $C^{\infty}$ [resp. $C^{0}$] $\G$-valued cocyles,
  i.e.,
  \begin{equation*}
    O_{(\alp \bt)} = \rho \circ \tilde{O}_{(\alp \bt)} \quad \hbox{ for some } \tilde{O}_{(\alp \bt)} : U_{\alp} \cap U_{\bt} \to \G
  \end{equation*}
  so that $\set{\tilde{O}_{(\alp \bt)}}$ satisfy the cocycle property.

  For simplicity, throughout the paper we omit the representation
  $\rho$ and denote the lifted cocycles $\tilde{O}_{(\alp \bt)}$ by
  $O_{(\alp \bt)}$.
\end{definition}

In the local formulation, vector bundles with structure group $\bfG$
defined by the data sets $\set{U_{\alp}, O_{(\alp \bt)}}$ and
$\set{U'_{\alp'}, O'_{(\alp' \bt')}}$ are \emph{isomorphic} if and
only if there exists a common refinement $\set{V_{\gmm}}$ of
$\set{U_{\alp}}$ and $\set{U'_{\alp'}}$, so that $V_{\gmm} \subseteq
U_{\alp(\gmm)} \cap U_{\alp'(\gmm)}$ and $C^{\infty}$ [resp. $C^{0}$]
functions $P_{(\gmm)} : V_{\gmm} \to \G$ so that
\begin{equation*}
  P_{(\gmm)} O_{(\alp(\gmm) \alp(\dlt))} = O'_{(\alp'(\gmm) \alp'(\dlt))} P_{(\dlt)} \quad \hbox{ on } V_{\gmm} \cap V_{\dlt}.
\end{equation*}
By the \emph{topological} or \emph{isomorphism class} of a vector
bundle $\eta$, we mean the class of all vector bundles isomorphic to
$\eta$.

The open cover $\set{U_{\alp}}$ in Definition~\ref{def:vb} provides
subsets on which $\eta$ is isomorphic to the trivial bundle $U_{\alp}
\times V$, and the transition maps $\set{O_{(\alp \bt)}}$ describe how
these local trivial bundles are patched together.  We call an
isomorphism $\eta \restriction_{U_{\alp}} \to U_{\alp} \times V$ a
\emph{local gauge} (or local trivializations), and refer to $O_{(\alp
  \bt)}$, viewed as an isomorphism between two trivial bundles
$U_{\alp} \times V$, as a \emph{local gauge transformation}. Moreover,
we use the term \emph{global gauge} for a global isomorphism from
$\eta \to X \times V$ (if it exists), and \emph{global gauge
  transformation} for a $\G$-valued function on $X$, viewed as an
isomorphism between such trivial bundles.

Let $\eta$ be a $C^{\infty}$ vector bundle with structure group $\G$,
defined by the data $\set{U_{\alp}, O_{(\alp \bt)}}$.  A \emph{section
  $s$ of $\eta$} consists of local data $s_{(\alp)}$ (the local
expression for $s$ in the local gauge on $U_{\alp}$), which are smooth
functions $s_{(\alp)} : U_{\alp} \to V$ satisfying the compatibility
condition
\begin{equation*}
  s_{(\alp)} = O_{(\alp \bt)} s_{(\bt)} \quad \hbox{ on } U_{\alp} \cap U_{\bt}.
\end{equation*}
A \emph{connection $\covD$} on $\eta$ consists of local data $\ud +
A_{(\alp)}$, where each $A_{(\alp)}$ is a smooth $\g$-valued 1-form on
$U_{\alp}$ satisfying the compatibility condition:
\begin{equation*}
  A_{(\alp)} = Ad(O_{(\alp \bt)})A_{(\bt)} - \rd O_{(\alp \bt)} O^{-1}_{(\alp \bt)} \quad \hbox{ on } U_{\alp} \cap U_{\bt}.
\end{equation*}
We call $A_{(\alp)}$ a \emph{gauge potential} for $\covD$ in the local
gauge $U_{\alp}$.

Observe that $\covD$ defines a first order differential on the space
of smooth sections of $\eta$, in the sense that $\covD (f s) = \ud f s
+ f \covD s$ for any function $f$ and any section $s$. The space of
all connections is denoted by $\calA(\eta)$. As is well-known,
$\calA(\eta)$ has the structure of an affine space, in the sense that
the difference of two connections $\covD$ and $\covD'$ is a 1-form
taking values in the adjoint bundle $ad(\eta)$ (defined with the same
data as $\eta$, but where $V = \g$ and $O_{(\alp \bt)}$ acts on $V$ on
the left by the adjoint action).

The curvature $2$-form of $\covD$ is defined by the relation
\begin{equation*}
  F[\covD](X, Y) \cdot s = \covD_{X} \covD_{Y} s - \covD_{Y} \covD_{X} s  - \covD_{[X, Y]} s
\end{equation*}
Locally, it takes the form
\begin{equation*}
  F_{(\alp)} = \ud A_{(\alp)} + \frac{1}{2} [A_{(\alp)} \wedge A_{(\alp)}] \quad \hbox{ on } U_{\alp},
\end{equation*}
and different local data are related to each other by
\begin{equation*}
  F_{(\alp)} = Ad(O_{(\alp \bt)}) F_{(\bt)} \quad \hbox{ on } U_{\alp} \cap U_{\bt}.
\end{equation*}
In other words, $F$ is an $ad(\eta)$-valued 2-form on $X$.

Finally, we introduce the notion of the associated \emph{principal
  $\bfG$-bundle}, which is the bundle with data the $\set{U_{\alp},
  O_{(\alp \bt)}}$ and with the fibers modeled on the group $\bfG$,
where the transition functions $O_{(\alp \bt)}$ act on $\bfG$ by right
multiplication. From the local viewpoint, it is simply a way to
encapsulate the data $\set{U_{\alp}, O_{(\alp \bt)}}$ without
reference to any vector space $V$. Principal bundles may serve as an
alternative starting point for developing the theory of vector bundles
(cf. Kobayashi--Nomizu \cite{MR1393940,MR1393941}).

\subsection{Global gauges and topological classes of 
\texorpdfstring{$C^{\infty}$}{C-infty}  connections} 
\label{subsec:smth} In the following few subsections,
we specialize to the cases $X = B_{R}$ (a ball of radius $R$ in
$\bbR^{d}$) or $\bbR^{d}$. Eventually, we aim to give a suitable
definition of connections at the optimal regularity, and introduce the
notion of topological classes of such connections. Before we embark on
these goals, we first review the simple case of a $C^{\infty}$
connection with a compactly supported curvature.

We start with the case $X = B_{R}$. Since $B_{R}$ is contractible, all
$C^{\infty}$ vector bundles over $B_{R}$ are trivial; more precisely,
a global gauge (or trivialization) of $\eta$ on $B_{R}$ can be
constructed by parallel transportation with respect to $\covD$ along
each ray starting from the center $x_{0}$ of $B_{R}$. We obtain a
representative $A$ of $\covD$ on $B_{R}$ such that
\begin{equation} \label{eq:goodrep-smth-ball} A \in C^{\infty}(B_{R};
  \g).
\end{equation}
Moreover, $(x - x_{0})^{j}A_{j} = 0$ by the parallel transport
condition.

Next, we consider the case $X = \bbR^{d}$. Since $\bbR^{d}$ is
contractible, too, all $C^{\infty}$ vector bundles over $\bbR^{d}$ are
trivial. However, when the vector bundles is endowed with a compactly
supported curvature, we may define their topological class by viewing
them as bundles on the compactification $\bbR^{d} \cup \set{\infty}$,
which is homeomorphic to $\bbS^{d} = \set{X \in \bbR^{d+1} : \abs{X} =
  1}$. More precisely, consider the stereographic projection
\begin{equation} \label{eq:st-proj} \boldsymbol{\Sgm} : \bbS^{d} \to
  \bbR^{d}, \quad (X^{1}, \ldots, X^{d+1}) \mapsto \left(
    \frac{X^{1}}{1 - X^{d+1}}, \ldots, \frac{X^{d}}{1 - X^{d+1}}
  \right).
\end{equation}
Note that the pullback of $(\eta, \covD)$ along $\boldsymbol{\Sgm}$,
which we denote by $(\boldsymbol{\Sgm}^{\ast} \eta,
\boldsymbol{\Sgm}^{\ast} \covD)$, obeys $F[\boldsymbol{\Sgm}^{\ast}
\covD] = 0$ on $U'_{\infty} = \set{X \in \bbS^{d} : 0 < X^{d+1} < 1} =
\boldsymbol{\Sgm}^{-1}(\bbR^{d} \setminus B_{1})$. Since $U_{\infty}'$
is simply connected, the pullback bundle $\boldsymbol{\Sgm}^{\ast}
\eta$ is isomorphic to the trivial bundle $U'_{\infty} \times V$
\cite[Corollary~9.2]{MR1393940}, which may be easily extended to
$U_{\infty} = \set{X \in \bbS^{d} : X^{d+1} > 0}$. Therefore,
$\boldsymbol{\Sgm}^{\ast} \eta$ extends to a smooth vector bundle on
$\bbS^{d}$.  The \emph{topological class} of $(\eta, \covD)$ may be
defined to be that of the extended bundle on $\bbS^{d}$.

Since $\bbS^{d}$ is covered by with two contractible open sets, namely
$U_{0} = \bbS^{d} \setminus \set{(0, \ldots, 0, 1)}$ and $U_{\infty} =
\bbS^{d} \setminus \set{(0, \ldots, 0, -1)}$, the topological class of
the bundle on $\bbS^{d}$ is determined by the transition map
in-between. At the level of $\eta$, it is the transition map $O$
between $\bbR^{d}$, on which there exists a local representative
$\covD = \ud + A$ with $A (0)=0$ and $x^{j} A_{j} = 0$ (parallel
transport along radial rays from $0$), and $\bbR^{d} \setminus B_{1}$,
on which $\covD = \ud$. On $\bbR^{d} \setminus B_{1}$, we have
\begin{equation*}
  A = - \rd_{x} O O^{-1}.
\end{equation*}
Moreover, since $x^{j} A_{j} = 0$, it follows that $x^{j} \rd_{j} O =
0$ on $\bbR^{d} \setminus B_{1}$, i.e., $O(x) = O(\frac{x}{\abs{x}})$
for $\abs{x} \geq 1$. Defining $O_{(\infty)} : \bbR^{d} \setminus
\set{0} \to \G$, $O_{(\infty)}(x) = O(\frac{x}{\abs{x}})$ and
introducing a smooth function $\chi$ such that $1-\chi$ is compactly
supported, we arrive at:
\begin{theorem} \label{thm:goodrep-smth} Let $\covD$ be a $C^{\infty}$
  connection on a $C^{\infty}$ vector bundle $\eta$ on $\bbR^{d}$,
  whose curvature is compactly supported. Then there exists a global
  gauge for $\eta$ in which the global gauge potential $A = \covD -
  \ud$ admits a decomposition of the form
  \begin{equation} \label{eq:goodrep-smth} A = - \chi O_{(\infty); x}
    + B
  \end{equation}
  where $O_{(\infty)} (x)$ is a smooth $0$-homogeneous map into $\G$
  and $B \in C^{\infty}_{c}(\bbR^{d}; \g)$.
\end{theorem}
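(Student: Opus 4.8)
The plan is to take the desired global gauge to be the \emph{radial} (Cronström) gauge centred at the origin, and to read off its behaviour at infinity by comparing it, on the region where the curvature vanishes, with a flat trivialization. After a harmless rescaling of $\bbR^{d}$ we may assume $\supp F[\covD] \subseteq B_{1}$.

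First I would construct the radial gauge. Since $\bbR^{d}$ is contractible, $\eta$ is trivial, so in some global trivialization $\covD = \ud + \tld{A}$ with $\tld{A}\in C^{\infty}(\bbR^{d};\g)$. I then produce a global gauge transformation $g:\bbR^{d}\to\G$ for which $A := Ad(g)\tld{A} - \rd g\, g^{-1}$ has vanishing radial component $x^{j}A_{j}=0$, by solving, along each ray $t\mapsto t\omg$ ($\omg\in\bbS^{d-1}$), the $\G$-valued linear ODE $\tfrac{\ud}{\ud t}g(t\omg) = g(t\omg)\,\omg^{j}\tld{A}_{j}(t\omg)$ with the common initial value $g(0)=I$. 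By smooth dependence of solutions of linear ODEs on the independent variable and on the parameter $\omg$, the resulting $g$ is smooth on all of $\bbR^{d}$, including at the origin; one checks $\rd_{j}g(0)=\tld{A}_{j}(0)$, hence $A(0)=0$, while $x^{j}A_{j}=0$ holds by construction. This is exactly the construction used for the ball, cf.~\eqref{eq:goodrep-smth-ball}. Fix this global gauge from now on; $A$ will be the potential appearing in the statement.

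Next I would identify $O_{(\infty)}$ and extract $B$. On the open set $\Omg := \bbR^{d}\setminus\br{B_{1}}$ we have $F[\covD]=0$, and $\Omg$ is simply connected, so by \cite[Corollary~9.2]{MR1393940} there is a local gauge on $\Omg$ in which $\covD = \ud$. Let $O:\Omg\to\G$ be the transition map from this flat gauge to the radial gauge; by the transformation law for gauge potentials, $A = -\rd_{x}O\,O^{-1}$ on $\Omg$. Combining with $x^{j}A_{j}=0$ gives $x^{j}\rd_{j}O=0$ on $\Omg$, i.e. $O$ is constant along rays from the origin, so $O(x)=O(x/\abs{x})$ for $\abs{x}\geq 1$, and $O_{(\infty)}(x):=O(x/\abs{x})$ defines a smooth $0$-homogeneous map $\bbR^{d}\setminus\set{0}\to\G$ agreeing with $O$ on $\Omg$. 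Now choose $\chi\in C^{\infty}(\bbR^{d})$ with $\chi\equiv 1$ on $\bbR^{d}\setminus\br{B_{1}}$ and $\chi\equiv 0$ near $0$, so that $1-\chi$ is compactly supported and, with $O_{(\infty);x}:=\rd_{x}O_{(\infty)}\,O_{(\infty)}^{-1}$ (smooth on $\bbR^{d}\setminus\set{0}$, with an $O(\abs{x}^{-1})$ bound near $0$ killed by $\chi$), the product $\chi\,O_{(\infty);x}$ extends to an element of $C^{\infty}(\bbR^{d};\g)$. Set $B := A + \chi\,O_{(\infty);x}$. Then $B\in C^{\infty}(\bbR^{d};\g)$, while on $\bbR^{d}\setminus\br{B_{1}}$ one has $B = A + O_{(\infty);x} = -\rd_{x}O\,O^{-1} + \rd_{x}O\,O^{-1} = 0$; hence $\supp B\subseteq\br{B_{1}}$, so $B\in C^{\infty}_{c}(\bbR^{d};\g)$, and $A = -\chi\,O_{(\infty);x} + B$, which is \eqref{eq:goodrep-smth}.

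Since this is the $C^{\infty}$ model case, the argument is largely bookkeeping and I do not expect a serious obstacle; the two points meriting a little care are (i) the global smoothness of the radial-gauge potential $A$, in particular at the origin, which rests on smooth dependence of ODE solutions on the parameters and initial data, and (ii) the observation that the normalization $x^{j}A_{j}=0$ is exactly what forces the transition function at infinity to depend on the angular variable alone, and hence to produce a genuinely $0$-homogeneous $O_{(\infty)}$. The substantive versions of these difficulties — curvature merely in $L^{d/2}$, and making the patching quantitative — arise only in the rough analogues, Theorems~\ref{thm:goodrep-ball} and \ref{thm:goodrep}.
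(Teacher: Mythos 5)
Your proof is correct and follows essentially the same route as the paper: pass to the Cronström (radial) gauge $x^j A_j=0$, compare on the curvature-free exterior with a flat trivialization via the transition map $O$, observe that $x^jA_j=0$ forces $x^j\rd_jO=0$ and hence $0$-homogeneity of $O$ at infinity, and peel off $-\chi\,O_{(\infty);x}$ to leave a compactly supported remainder. The only cosmetic difference is that the paper frames this through the pullback to $\bbS^d$ (to motivate the definition of the topological class), whereas you work directly on $\bbR^d$; the computation and the decomposition \eqref{eq:goodrep-smth} are the same.
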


It is not difficult to see that $O_{(\infty)}$, which we call a
\emph{gauge at infinity for $A$}, is defined uniquely up to homotopy
(cf. Proposition~\ref{prop:goodrep-homotopy-0}). The \emph{homotopy
  class $[O_{(\infty)}]$}, which is defined intrinsically without
reference to the pullback procedure, \emph{determines the topological
  class}\footnote{Strictly speaking, $O_{(\infty)}$ in
  Theorem~\ref{thm:goodrep-smth} directly determines only the smooth
  isomorphism class, which in turn determines the topological (i.e.,
  $C^{0}$) isomorphism class by a density argument.} \emph{of the
  extended pullback bundle on $\bbS^{d}$}. Hence, any topological
invariants of the extended pullback bundle depend only on
$[O_{(\infty)}]$.

  \emph{Characteristic classes} are important invariants of a vector
  (or principal) $\G$-bundle. On $\bbS^{d}$, by the Chern--Weil theory
  \cite[Chapter~XII]{MR1393941}, these may be defined in terms of a
  connection $\covD$ as follows. Given any symmetric $Ad$-invariant
  $k$-linear function $f$ on $\g$, we call the $2k$-form
  \begin{equation*}
    f(F[\covD], \ldots, F[\covD])  = f(F_{j_{1} j_{2}}, \ldots, F_{j_{d-1} j_{d}}) \ud x^{j_{1}} \wedge \ud x^{j_{2}} \wedge \cdots \wedge \ud x^{j_{d}}
  \end{equation*}
  the \emph{characteristic class} associated to $f$. This $2k$-form is
  closed and is invariant, up to an exact form, in the choice of a
  connection $\covD$ on the bundle; hence it defines a cohomology
  class in $H^{2k}(\bbS^{d})$, which depends only on the isomorphism
  class of the bundle. Moreover, when $d = 2k$, the integral
  \begin{equation*}
    \ch_{f} = \int_{\bbS^{d}} f(F[\covD], \ldots, F[\covD]),
  \end{equation*}
  called the \emph{characteristic number}, is also an invariant of the
  bundle.

  Now, as an application of Theorem~\ref{thm:goodrep-smth}, consider a
  $C^{\infty}$ connection $\covD$ on $\bbR^{d}$ with compactly
  supported curvature. Then $\ch_{f}$ of the pullback bundle equals
  \begin{equation} \label{eq:ch-no-smth} \ch_{f} = \int_{\bbR^{d}}
    f(F[\covD], \ldots, F[\covD]),
  \end{equation}
  and depends only on $[O_{(\infty)}]$ in
  Theorem~\ref{thm:goodrep-smth}.

  An important special case of the above theory is when $d = 4$ and
  $\G = SU(2)$, and we take $f(A, B) = \frac{1}{8 \pi^{2}} \tr(AB)$. The
  corresponding characteristic number, given by the integral formula
  \begin{equation*}
    c_{2} = \frac{1}{8 \pi^{2}} \int_{\bbR^{4}} \tr(F \wedge F),
  \end{equation*}
  is called the \emph{second Chern number}. It is always an integer,
  and it classifies the topological classes of $SU(2)$-bundles. For
  more on characteristic classes, we refer the reader to
  \cite{MR0440554}.

\subsection{Global gauges for rough
\texorpdfstring{  $\G$}{G}-bundles} \label{subsec:rough-conn} We are now ready to describe
our first set of results. Motivated by the desire to study the
hyperbolic Yang--Mills equation (cf. Section~\ref{subsec:ym}) at the
optimal scaling-invariant regularity, our aim here is to sharpen
\eqref{eq:goodrep-smth-ball} and \eqref{eq:goodrep-smth} in two ways:
\begin{enumerate}
\item To obtain quantitative bounds for $A$ in a ``good global gauge''
  in terms of $F$;
\item To relax the condition for $F$ to the scaling-invariant
  condition $F \in L^{\frac{d}{2}}(X)$.
\end{enumerate}
In what follows, we restrict to $d \geq 3$ (which, for instance,
avoids the case $L^{\frac{d}{2}} = L^{1}$).

To set up the scene, we start with the definition of connections with
$L^{\frac{d}{2}}_{loc}$ curvature. Let $X$ be an open subset of
$\bbR^{d}$. For $k \in \bbR$ and $p \in [1, \infty]$, we introduce
\begin{align}
  \calG^{k, p}_{loc}(X) = & \set{O \in W^{k, p}_{loc}(X; \bbR^{N
      \times N}) : O (x) \in \bfG \hbox{ for a.e. } x \in X}.
\end{align}
The relevant regularity class is $\calG^{2, \frac{d}{2}}_{loc}$, which
turns to be closed under multiplication and inverse (see
Lemmas~\ref{lem:mult}, \ref{lem:inv} and \ref{lem:d-inv} below). In
parallel to Section~\ref{subsec:vb}, we define \emph{a $\calG^{2,
    \frac{d}{2}}_{loc}$ (principal) $\G$-bundle on $X \subseteq
  \bbR^{d}$} by the data:
\begin{itemize}
\item An open cover $\set{U_{\alp}}$ of $X$;
\item A transition function $O_{(\alp \bt)} \in \calG_{loc}^{2,
    \frac{d}{2}} (U_{\alp} \cap U_{\bt})$ for every $\alp, \bt$,
  obeying the \emph{cocycle conditions}:
  \begin{enumerate}
  \item $O_{(\alp \alp)} = id$ on each $U_{\alp}$;
  \item $O_{(\alp \bt)} \cdot O_{(\bt \gmm)} = O_{(\alp \gmm)}$ on
    each $U_{\alp} \cap U_{\bt} \cap U_{\gmm}$.
  \end{enumerate}
\end{itemize}
An open cover $\set{V_{\gmm}}$ is a \emph{refinement} of
$\set{U_{\alp}}$ if there exists a function $\alp = \alp(\gmm)$ such
that $V_{\gmm} \subseteq U_{\alp(\gmm)}$. We say that two data sets
$\set{U_{\alp}, O_{(\alp \bt)}}$ and $\set{U'_{\alp'}, O'_{(\alp'
    \bt')}}$ define an \emph{equivalent $\calG^{2, \frac{d}{2}}_{loc}$
  bundle} if there exists a common refinement $V_{\gmm}$ of the open
covers and $P_{(\gmm)} \in \calG^{2, \frac{d}{2}}_{loc}(V_{\gmm})$
such that
\begin{equation*}
  P_{(\dlt)} \cdot O_{(\alp(\dlt) \alp(\gmm))} = O'_{(\alp'(\dlt) \alp'(\gmm))} \cdot P_{(\gmm)} \quad \hbox{ on } V_{\gmm} \cap V_{\dlt}.
\end{equation*}
A \emph{$W^{1, \frac{d}{2}}_{loc}$ connection} $\covD$ on the bundle
defined by $\set{U_{\alp}, O_{(\alp \bt)}}$ is given by the local
data:
\begin{itemize}
\item A 1-form $A_{(\alp)} \in W^{1, \frac{d}{2}}_{loc}(U_{\alp}; \g)$
  for each $\alp$, called the \emph{local representative} of $\covD$
  on $U_{\alp}$, satisfying the compatibility condition
  \begin{equation*}
    A_{(\alp)} = Ad (O_{(\alp \bt)}) A_{(\bt)} - O_{(\alp \bt); x} \quad \hbox{ on each } U_{\alp} \cap U_{\bt}.
  \end{equation*}
\end{itemize}
Given a $W^{1, \frac{d}{2}}_{loc}$ connection $\covD$, we define its
\emph{curvature 2-form} $F = F[\covD]$ by the local data:
\begin{equation*}
  F_{(\alp)} = \ud A_{(\alp)} + \frac{1}{2} [A_{(\alp)} \wedge A_{(\alp)}] \quad \hbox{ on each } U_{\alp}.
\end{equation*}
We denote by $\calA^{1, \frac{d}{2}}_{loc}(X)$ the space of all $W^{1,
  \frac{d}{2}}_{loc}$ connections on all $\calG^{2,
  \frac{d}{2}}_{loc}$ bundles on $X$. By the compatibility property of
$F_{(\alp)}$ (algebraically the same as  in the smooth case), note that
\begin{equation*}
  \abs{F} = \abs{F_{(\alp)}} = \sqrt{\brk{F_{(\alp)}, F_{(\alp)}}} \quad \hbox{ on each } U_{\alp}
\end{equation*}
is a well-defined element of $L^{\frac{d}{2}}_{loc}(X)$.

Consider the case $X = B_{R}$. In order to state quantitative bounds
for the gauge potential in a ``good gauge'', we introduce the
\emph{inner ($L^{\frac{d}{2}}$-)concentration scale} with threshold
$\eps_{\ast}$ of a connection $\covD$, defined as follows:
\begin{equation*}
  \rcsp^{\eps_{\ast}} [\covD] = \sup \set{r > 0 : \nrm{F[\covD]}_{L^{\frac{d}{2}}(B_{r}(x) \cap X)} \leq \eps_{\ast} \ \hbox{ for all } x \in X}.
\end{equation*}

\begin{theorem} [Good gauge on a ball] \label{thm:goodrep-ball} Let
  $\covD \in \calA^{1, \frac{d}{2}}_{loc}(B_{R})$ satisfy $F[\covD]
  \in L^{\frac{d}{2}}(B_{R})$ and $\rcsp^{\eps_{\ast}} [\covD] \geq r$,
  for some $r > 0$ and a sufficiently small $\eps_{\ast} >0$. Then
  there exists a global gauge in which the gauge potential $A$ for
  $\covD$ satisfies
  \begin{equation} \label{eq:goodrep-ball-est} \nrm{A}_{\dot{W}^{1,
        \frac{d}{2}}(B_{R})} \aleq_{\eps_{\ast}, \frac{R}{r}} 1.
  \end{equation}
  If, in addition, $\covD^{(n)} F \in L^{p}(B_{R})$ for some
  nonnegative integer $n$ and $p \in (1, \infty)$ such that $p \geq
  \frac{d}{n+2}$, then $A \in W^{n+1, p}(B_{R})$.
\end{theorem}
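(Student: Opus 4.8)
\emph{Proposed approach.} I would prove Theorem~\ref{thm:goodrep-ball} by assembling the good global gauge out of local Coulomb gauges and then patching them, carrying all constants through the finite cover furnished by the hypothesis $\rcsp^{\eps_{\ast}}[\covD] \ge r$. The one genuinely non-routine input is the rough-connection analogue of Uhlenbeck's small-curvature gauge lemma: on a ball $B = B_{\rho}(x_{0})$ (or, for $x_{0}$ near $\partial B_{R}$, on the Lipschitz domain $B_{\rho}(x_{0}) \cap B_{R}$) on which $\nrm{F[\covD]}_{L^{\frac{d}{2}}(B)} \le \eps_{\ast}$ with $\eps_{\ast}$ below a universal threshold, there is a local gauge for $\covD$ over $B$ whose representative $a$ lies in the Coulomb gauge ($\partial^{\ell} a_{\ell} = 0$, with the natural boundary condition) and obeys $\nrm{a}_{\dot{W}^{1,\frac{d}{2}}(B)} \lesssim \nrm{F[\covD]}_{L^{\frac{d}{2}}(B)}$. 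This is exactly where $\eps_{\ast}$ needs to be small; it is proved by a continuity-method argument in the curvature whose linearization is controlled, via Calderón--Zygmund estimates, by the identity (valid in the Coulomb gauge) $\lap a_{k} = \partial^{\ell} F_{\ell k} - \partial^{\ell}[a_{\ell}, a_{k}]$. I treat this as a separate ingredient of Section~\ref{sec:rough-conn}.

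\emph{Cover, local gauges, transition maps.} Since $\rcsp^{\eps_{\ast}}[\covD] \ge r$, I fix a finite cover of $B_{R}$ by balls $B_{i} = B_{cr}(x_{i})$ with $x_{i} \in B_{R}$ and $c$ a small universal constant, arranged so that each $V_{i} := B_{i} \cap B_{R}$ satisfies $\nrm{F[\covD]}_{L^{\frac{d}{2}}(V_{i})} \le \eps_{\ast}$, with uniformly bounded overlap multiplicity and with cardinality $N \lesssim (R/r)^{d}$. The local lemma gives on each $V_{i}$ a local gauge $\ud + a^{(i)}$ with $\nrm{a^{(i)}}_{\dot{W}^{1,\frac{d}{2}}(V_{i})} \lesssim \eps_{\ast}$. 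On a nonempty overlap the two representatives are related by $a^{(i)} = Ad(g_{ij}) a^{(j)} - g_{ij; x}$, hence $\ud g_{ij} = \big(Ad(g_{ij}) a^{(j)} - a^{(i)}\big) g_{ij}$; since $g_{ij}$ takes values in the compact group $\G$ and $a^{(i)}, a^{(j)} \in \dot{W}^{1,\frac{d}{2}} \hookrightarrow L^{d}$, the multiplication and inverse properties of $\calG^{2,\frac{d}{2}}_{loc}$ (Lemmas~\ref{lem:mult}, \ref{lem:inv}, \ref{lem:d-inv}) and a short bootstrap on this relation place $g_{ij} \in \calG^{2,\frac{d}{2}}(V_{i} \cap V_{j})$ with $\nrm{g_{ij} - c_{ij}}_{W^{2,\frac{d}{2}}} \lesssim \eps_{\ast}$ for a suitable constant $c_{ij} \in \G$. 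Conjugating $a^{(j)}$ by $Ad(c_{ij})$, I may assume each $g_{ij}$ is $\eps_{\ast}$-close to the identity in $W^{2,\frac{d}{2}}$.

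\emph{Gluing — the main obstacle.} The family $\set{g_{ij}}$ is a $\calG^{2,\frac{d}{2}}$ Čech cocycle over the contractible set $B_{R}$, and the crux is to trivialize it \emph{quantitatively}: to produce $h^{(i)} \in \calG^{2,\frac{d}{2}}(V_{i})$ with $g_{ij} = h^{(i)} (h^{(j)})^{-1}$ whose norms are controlled. I do this by inductive patching over the $N$ balls: with $\Omega_{k} = V_{1} \cup \cdots \cup V_{k}$ and a common gauge already built on $\Omega_{k}$, I solve for $h^{(k+1)}$ on $V_{k+1}$ so that it matches the existing gauge on $V_{k+1} \cap \Omega_{k}$; because every transition map is $\eps_{\ast}$-close to $I$ and there are only $N \lesssim (R/r)^{d}$ steps, the accumulated $W^{2,\frac{d}{2}}$-norms stay $\lesssim_{\eps_{\ast}, R/r} 1$. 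This uniform bookkeeping over the finite cover — keeping the gluing maps in $\calG^{2,\frac{d}{2}}$ with norms depending only on $\eps_{\ast}$ and $R/r$ — is the heart of the argument. The resulting global gauge potential restricted to $V_{i}$ equals $A = Ad(h^{(i)}) a^{(i)} - h^{(i)}_{;x}$, so $\nrm{A}_{\dot{W}^{1,\frac{d}{2}}(V_{i})} \lesssim_{\eps_{\ast}, R/r} 1$; summing over the $N$ patches yields \eqref{eq:goodrep-ball-est}.

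\emph{Higher regularity.} This step is purely local, so I run it in each Coulomb gauge $a^{(i)}$. There $\lap a^{(i)}_{k} = \partial^{\ell} F_{\ell k} - \partial^{\ell}[a^{(i)}_{\ell}, a^{(i)}_{k}]$, while $\covD^{(m)} F$ expands as $\partial^{m} F$ plus lower-order terms built from $\partial$-derivatives of $a^{(i)}$ and of $F$. Starting from $a^{(i)} \in \dot{W}^{1,\frac{d}{2}} \hookrightarrow L^{d}$ and $F \in L^{\frac{d}{2}}$, I alternately upgrade the regularity/integrability of $F$ (using the hypothesis $\covD^{(n)} F \in L^{p}$, $p \ge \frac{d}{n+2}$, together with the bounds already known for $a^{(i)}$) and of $a^{(i)}$ (Calderón--Zygmund estimates for $\lap$ applied to the displayed equation), reaching $a^{(i)} \in W^{n+1,p}(V_{i})$ after finitely many steps. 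The transition and gluing maps inherit this regularity by bootstrapping the ODEs $\ud g_{ij} = \dots$ and $\ud h^{(i)} = \dots$, whence $A \in W^{n+1,p}(B_{R})$.
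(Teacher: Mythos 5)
Your overall scheme---cover $B_{R}$ by balls of size comparable to the concentration radius, apply Uhlenbeck's lemma on each, and patch the local Coulomb gauges into a global gauge---matches the architecture of the paper's proof, and you correctly identify the patching step as the heart of the matter. However, that step as you have sketched it has a genuine gap, and the paper resolves it by a different mechanism that you should compare against.

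You propose to build the global gauge inductively over $\Omega_{k} = V_{1}\cup\cdots\cup V_{k}$, at each stage solving for $h^{(k+1)}$ on $V_{k+1}$ matching the already-built gauge on $V_{k+1}\cap\Omega_{k}$. Concretely this is a $\G$-valued Sobolev extension problem at the critical regularity $W^{2,\frac{d}{2}}$, from the increasingly irregular domain $V_{k+1}\cap\Omega_{k}$ (an intersection of a ball with a union of $k$ balls) into all of $V_{k+1}$, with constants uniform over all such domains. You do not supply such an extension lemma, and it is not routine: $W^{2,\frac{d}{2}}\not\hookrightarrow L^{\infty}$, so $\G$-valuedness is not preserved by linear extension operators, and the smallness-to-identity hypothesis you invoke to bypass this itself requires solving an approximate Čech coboundary problem for the constants $c_{ij}$ before it can be enforced consistently across triple overlaps. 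The paper sidesteps the extension issue entirely by the diffeomorphism-based patching of Section~\ref{subsec:patching}, Scenario~(1): the cover is arranged so that at each inductive step there is a diffeomorphism $\Phi$ (with uniform bounds depending only on the fixed good cover, hence only on $R/r$) from the new patch onto the overlap with the previously covered region, which equals the identity on a shrunken overlap; the new gauge transformation $P_{(\alpha)}$ is defined as the pullback $\tilde P_{(\alpha)}\circ\Phi$. This preserves $\G$-valuedness trivially, works at any Sobolev regularity, and requires \emph{no} smallness of the transition maps---only their membership in $\calG^{2,\frac{d}{2}}$. In particular it never needs the reduction to transition maps close to the identity that your argument relies on.

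Two further points. First, for balls meeting $\partial B_{R}$ you propose to apply Uhlenbeck's lemma directly on the Lipschitz domain $B_{\rho}(x_{0})\cap B_{R}$; the paper instead first normalizes $a_{r}=0$ via Lemma~\ref{lem:Ar=0} and reflects across $\partial B_{R}$ via Lemma~\ref{lem:ext-simple} so that the curvature of the extended connection is controlled on a slightly larger ball, letting one apply the round-ball Uhlenbeck lemma (Theorem~\ref{thm:uhlenbeck-ball}) uniformly; the boundary version you invoke would need a separate proof. Likewise, in the higher-regularity bootstrap the boundary balls cannot be treated by bare Calder\'on--Zygmund: the paper first obtains angular regularity via $\not\hskip-.25em\partial$-derivatives (Lemma~\ref{lem:div-curl-A-tang}) and then recovers radial regularity from \eqref{eq:div-curl-rad}. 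Second, your argument implicitly starts from a global gauge potential on $B_{R}$; since connections in $\calA^{1,\frac{d}{2}}_{loc}(B_{R})$ are only given by local data, the paper establishes the quantitative bound first under the additional hypothesis $A\in\dot W^{1,\frac{d}{2}}(B_{R})$ (Proposition~\ref{prop:goodrep-ball-key}) and then removes it by a weak-limit argument over $R_{n}\nearrow R$, using the soft triviality result Proposition~\ref{prop:ball-triv}. Your write-up should include this reduction.
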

Theorem~\ref{thm:goodrep-ball} tells us that given any connection on a
ball with $L^{\frac{d}{2}}$-curvature, there exists a good gauge in
which the a-priori bound \eqref{eq:goodrep-ball-est} holds. When
$\nrm{F[\covD]}_{L^{\frac{d}{2}}(B_{R})}$ is sufficiently small (with
the threshold depending on $d$), Theorem~\ref{thm:goodrep-ball} is the
classical result of Uhlenbeck \cite{MR648356}. The general case is
proved by appropriately patching up local applications of Uhlenbeck's
lemma.

Next, we consider the case $X = \bbR^{d}$. To proceed, we need an
additional concept. We define the \emph{outer
  ($L^{\frac{d}{2}}$-)concentration radius} with threshold
$\eps_{\ast}$ of a connection $\covD$ to be
\begin{equation*}
  \Rcsp^{\eps_{\ast}} [\covD] = \inf \set{r > 0 : \nrm{F[\covD]}_{L^{\frac{d}{2}}(\bbR^{d} \setminus  B_{r}(x))} \leq \eps_{\ast} \ \hbox{ for some } x \in \bbR^{d}}.
\end{equation*}
Let $1- \chi \in C^{\infty}_{c}(\bbR^{d})$ be fixed.
\begin{theorem} [Good global gauge on $\bbR^{d}$] \label{thm:goodrep}
  Let $\covD \in \calA^{1, \frac{d}{2}}_{loc}(\bbR^{d})$ satisfy
  $F[\covD] \in L^{\frac{d}{2}}(\bbR^{d})$, as well as
  $\rcsp^{\eps_{\ast}}[\covD] \geq r$ and $\Rcsp^{\eps_\ast}[\covD]
  \leq R$ for some $0 < r \leq R$ and a universal small constant
  $\eps_{\ast} > 0$. Then there exists exists a global gauge on
  $\bbR^{d}$, in which the gauge potential $A \in \dot{W}^{1,
    \frac{d}{2}}_{loc}(\bbR^{d})$ for $\covD$ admits a decomposition
  of the form
  \begin{equation} \label{eq:goodrep-0} A = - \chi (\cdot / R)
    O_{(\infty); x} + B
  \end{equation}
  where $O_{(\infty)} (x)$ is a smooth $0$-homogeneous map into $\G$
  and $B \in \dot{W}^{1, \frac{d}{2}}(\bbR^{d}; \g)$. Moreover,
  \begin{equation} \label{eq:goodrep-bnd} \nrm{B}_{\dot{W}^{1,
        \frac{d}{2}}} \aleq_{\eps_{\ast}, \frac{R}{r}} 1, \qquad
    \nrm{O_{(\infty)}}_{C^{N}(\bbS^{d-1})} \aleq_{\eps_{\ast},
      \frac{R}{r}, N} 1 \quad \hbox{ for all } N \geq 0.
  \end{equation}
  If, in addition, $\covD^{(n)} F \in L^{p}(B_{R})$ for some
  nonnegative integer $n$ and $p \in (1, \infty)$ such that $p \geq
  \frac{d}{n+2}$, then $B \in \dot{W}^{n+1, p}(\bbR^{d})$.
\end{theorem}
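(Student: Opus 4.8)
The plan is to reduce Theorem~\ref{thm:goodrep} to the ball case (Theorem~\ref{thm:goodrep-ball}) applied on a large ball $B_{CR}$, together with a matching of the gauge on the exterior region $\bbR^d \setminus B_R$ where the curvature is small. First, since $\Rcsp^{\eps_\ast}[\covD] \leq R$, there is a point $x_0$ (which we may translate to the origin) so that $\nrm{F}_{L^{d/2}(\bbR^d \setminus B_R)} \leq \eps_\ast$. On the exterior annular region $\bbR^d \setminus B_{R}$ one may cover by finitely many dilated annuli and apply Uhlenbeck's lemma on each (using $\eps_\ast$ small); patching these, as in the proof of Theorem~\ref{thm:goodrep-ball}, produces a gauge $\covD = \ud + A_{\mathrm{ext}}$ on $\bbR^d \setminus B_R$ with $\nrm{A_{\mathrm{ext}}}_{\dot W^{1,d/2}(\bbR^d \setminus B_R)} \aleq 1$. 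The crucial point is that this exterior region is homotopy equivalent to $\bbS^{d-1}$ and the Uhlenbeck gauge is scaling-compatible, so after a further (smooth, bounded) gauge transformation we may arrange that $A_{\mathrm{ext}}$ has the form $-\chi(\cdot/R) O_{(\infty);x} + (\text{small tail})$ with $O_{(\infty)}$ a $0$-homogeneous $\bfG$-valued map — this is the analog of the radial-parallel-transport reduction in the smooth case, and the homogeneity of $O_{(\infty)}$ together with the quantitative $C^N(\bbS^{d-1})$ bound in \eqref{eq:goodrep-bnd} comes from elliptic estimates for the Uhlenbeck gauge (the Coulomb condition plus $F \in L^{d/2}$ forces higher regularity away from the origin where the annuli are fixed dyadic scales). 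Then on the overlap $B_{CR} \setminus B_R$ one matches the interior gauge from Theorem~\ref{thm:goodrep-ball} (applied on $B_{CR}$, using $\rcsp^{\eps_\ast}[\covD] \geq r$) with the exterior gauge via a transition function $P \in \calG^{2,d/2}_{loc}$ on the overlap; since both gauge potentials are bounded in $\dot W^{1,d/2}$ there, a standard argument (Lemma~\ref{lem:mult}, Lemma~\ref{lem:inv}, Lemma~\ref{lem:d-inv}) gives $P \in \dot W^{2,d/2}$ with the corresponding bound, and one glues with a cutoff $\chi(\cdot/R)$ to get a single global gauge in which $A = -\chi(\cdot/R) O_{(\infty);x} + B$ with $\nrm{B}_{\dot W^{1,d/2}} \aleq 1$.

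For the final (higher-regularity) assertion, the plan is a bootstrap. Assume $\covD^{(n)} F \in L^p(B_R)$ with $p \geq \tfrac{d}{n+2}$, $p\in(1,\infty)$. On the interior ball this is exactly the conclusion of Theorem~\ref{thm:goodrep-ball}: $A \in W^{n+1,p}(B_{CR})$, hence $B = A + \chi(\cdot/R) O_{(\infty);x} \in W^{n+1,p}$ locally near the origin (note $O_{(\infty)}$ is smooth and $0$-homogeneous, so $\chi(\cdot/R)O_{(\infty);x}$ is smooth on all of $\bbR^d$). On the exterior region $\bbR^d \setminus B_R$ the curvature is small in $L^{d/2}$ but we now also control $\covD^{(n)} F$ in $L^p$; since there is no curvature concentration outside $B_R$ there is no scale below which we lose control, so on each fixed dyadic annulus the Uhlenbeck (Coulomb) gauge potential solves an elliptic system $\ud^\ast A_{\mathrm{ext}} = 0$, $\ud A_{\mathrm{ext}} = F - \tfrac12[A_{\mathrm{ext}}\wedge A_{\mathrm{ext}}]$, and elliptic regularity upgrades $A_{\mathrm{ext}} \in \dot W^{1,d/2} \to \dot W^{n+1,p}$ by induction on $n$ (the quadratic term is handled by the multiplication estimates, using $A_{\mathrm{ext}}$ already one step more regular). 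Summing the dyadic pieces with the scaling-invariant $\dot W^{n+1,p}$ norm and using that $p \geq \tfrac{d}{n+2}$ guarantees the relevant embeddings, one obtains $B \in \dot W^{n+1,p}(\bbR^d)$ globally.

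The main obstacle, I expect, is the construction of the \emph{decomposition} \eqref{eq:goodrep-0} with a genuinely $0$-homogeneous $O_{(\infty)}$ and the quantitative $C^N(\bbS^{d-1})$ bound — not merely an exterior gauge bounded in $\dot W^{1,d/2}$. In the smooth case this was forced by the radial parallel-transport condition $x^j A_j = 0$; here one does not have such a pointwise condition a priori, so one must (i) pass from the patched Uhlenbeck exterior gauge to a gauge satisfying an approximate radial condition, absorbing the correction into the tail $B$, and (ii) show that the residual transition map to "$\covD = \ud$ at infinity" is, up to a controllable $\dot W^{1,d/2}$ error, homogeneous of degree zero. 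Step (ii) is where the smallness $\eps_\ast$ and the absence of concentration outside $B_R$ are essential: they let one run a fixed-point / continuity argument on $\bbS^{d-1} \times [R,\infty)$ to straighten out the radial dependence, and elliptic estimates on $\bbS^{d-1}$ then deliver the $C^N$ bound. Everything else — the patching of Uhlenbeck gauges, the multiplication/inverse lemmas for $\calG^{2,d/2}_{loc}$, and the elliptic bootstrap — is routine given the tools already assembled in the excerpt.
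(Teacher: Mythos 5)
Your high-level architecture is in the right spirit — exterior small-curvature gauge plus interior Theorem~\ref{thm:goodrep-ball} gauge, matched on an overlap annulus and glued with a cutoff — but the exterior step has a genuine gap, and the device the paper uses to close it is missing from your sketch.

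\textbf{The exterior gauge.} You propose to cover $\bbR^d \setminus B_R$ by dilated annuli, apply Uhlenbeck on each, and patch. But there are \emph{infinitely} many dyadic annuli, and your patching machinery (Scenarios (1)--(3) of Section~\ref{subsec:patching}) controls only finitely many patches with constants depending on the number of patches; taken naively, iterating it out to infinity loses uniform control. The paper bypasses this entirely with a single conformal trick: the inversion $\iota : x \mapsto x/|x|^2$ is a conformal isometry that sends $\bbR^d\setminus \overline B$ to the punctured unit ball $B'$ and preserves the $L^{d/2}$ norm of the curvature $2$-form. One then applies the \emph{removal-of-singularity} form of Uhlenbeck's lemma (Theorem~\ref{thm:uhlenbeck-pball}) on $B'$ in one shot, and pulls back to obtain an exterior Coulomb gauge $A_{(\infty)}$ with $\nrm{A_{(\infty)}}_{L^d\cap\dot W^{1,d/2}(\bbR^d\setminus\overline B)}\aleq\eps_\ast$. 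This is not just a shortcut: it is what gives the clean uniform bound up to spatial infinity, which your annular scheme would have to rederive.

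\textbf{The $0$-homogeneous $O_{(\infty)}$.} You correctly flag this as the main difficulty, but the fixed-point/"straightening of radial dependence" scheme you sketch is not what the paper does, and it is not clear it would work as stated (you never have, and do not need, a pointwise radial gauge condition in the rough setting). The paper's construction is more elementary and operates at the level of the \emph{transition function}, not the gauge potential: once the interior gauge (from Theorem~\ref{thm:goodrep-ball} applied on $5B$) and the exterior gauge $A_{(\infty)}$ are both in hand with quantitative $\dot W^{1,d/2}$ bounds, the transition $O\in\calG^{2,d/2}_{loc}(5B\setminus\overline B)$ between them satisfies $\nrm{O_{;x}}_{L^d\cap\dot W^{1,d/2}(5B\setminus\overline B)}\aleq_{\eps_\ast,R/r}1$. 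One then applies the partial smooth approximation Lemma~\ref{lem:part-approx} to replace $O$ by a map $\tilde O_{(\infty)}$ that agrees with $O$ on $2B\setminus\overline B$, is $C^\infty$ with quantitative $C^N$ bounds on $5B\setminus\overline{3B}$, and is $0$-homogeneous for $r\geq 4$; $O_{(\infty)}$ is simply the $0$-homogeneous map $O_{(\infty)}(r\Tht)=\tilde O_{(\infty)}(4\Tht)$, and $B_x = A_x + \chi\, O_{(\infty);x}$ inherits the $\dot W^{1,d/2}$ bound from $A_{(\infty)}$, $A_{(0)}$, and the estimates on $\tilde O_{(\infty)}$. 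No radial gauge condition and no continuity argument on $\bbS^{d-1}\times[R,\infty)$ are needed, and your attribution of the $C^N(\bbS^{d-1})$ bound to "elliptic estimates for the Uhlenbeck gauge" is not quite right — it comes from the mollification at a fixed scale in Lemma~\ref{lem:part-approx}, with the $L^d$ control on $O_{;x}$ as input. The higher-regularity assertion for $B$ then does follow along the bootstrap lines you indicate, using Lemmas~\ref{lem:div-curl-A-intr} and \ref{lem:div-curl-A-tang}; but as written your sketch conflates this with a local-regularity argument on dyadic annuli, which again runs into the infinite-patch issue that the inversion trick avoids.
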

Thanks to Theorems~\ref{thm:goodrep-ball} and \ref{thm:goodrep}, we
may identify any connection $\covD \in \calA^{1, \frac{d}{2}}(X)$ with
a gauge potential $A \in W^{1, \frac{d}{2}}_{loc}(X)$ in a good global
gauge. In the rest of the introduction, we adopt the convention of
referring to a connection $\covD$ on $B_{R}$ or $\bbR^{d}$ by its
global gauge potential $A$.

\subsection{Topological classes of rough
  connections} \label{subsec:top-class} Given a $W^{1,
  \frac{d}{2}}_{loc}$ connection $A$ on $\bbR^{d}$, we call a pair
$(O_{(\infty)}, B)$ of a smooth $0$-homogeneous map into $\G$ and an
element in $\dot{W}^{1, \frac{d}{2}}(\bbR^{d}; \g)$ a \emph{good
  representative of $A$} if $A = - \chi O_{(\infty); x} + B$ for some
$1-\chi \in C^{\infty}_{c}(\bbR^{d})$. We furthermore call
  $O_{(\infty)}$ a \emph{gauge (transformation) at infinity for $A$}.
Theorem~\ref{thm:goodrep} insures that a good representative always
exists provided that $F[A] \in L^{\frac{d}{2}}$.

Recall that when the curvature is smooth and compactly supported, the
topological class of $A$ is classified by the homotopy class of its
gauge at infinity $O_{(\infty)}$. We extend the definition of
the topological class to a rough connections on $\bbR^{d}$ with
$L^{\frac{d}{2}}$-curvature using this classification. We need the
following preliminary results:

\begin{proposition} \label{prop:goodrep-homotopy-0} Let $A \in
  \calA_{loc}^{1, \frac{d}{2}}(\bbR^{d})$ satisfy $F[A] \in
  L^{\frac{d}{2}}(\bbR^{d})$, and let $(O_{(\infty)}, B)$ be a good
  representative of $A$.
  \begin{enumerate}
  \item If $(O'_{(\infty)}, B')$ is another good representation of
    $A$, then $O_{(\infty)}$ is homotopic to $O'_{(\infty)}$.
  \item Conversely, given any smooth $O'_{(\infty)} : \bbS^{d-1} \to
    \G$ homotopic to $O_{(\infty)}$, there exists another good
    representation $(O'_{(\infty)}, B')$ of $A$.
  \end{enumerate}
\end{proposition}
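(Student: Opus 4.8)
The plan is to analyze how the pair $(O_{(\infty)}, B)$ changes under a global gauge transformation, since two good representatives of the same connection $A$ differ precisely by some $P \in \calG^{2,\frac{d}{2}}_{loc}(\bbR^{d})$ with $A = Ad(P) A - P_{;x}$. For part (1): given good representatives $(O_{(\infty)}, B)$ and $(O'_{(\infty)}, B')$, subtract the two decompositions to get $\chi (O'_{(\infty);x} - O_{(\infty);x}) = B - B' \in \dot{W}^{1,\frac{d}{2}}(\bbR^{d};\g)$ outside a compact set (where $\chi \equiv 1$). The key point is that on the region $\abs{x} \geq R$, the curvature $F[A]$ is small in $L^{\frac{d}{2}}$, so one expects $A$ restricted to a large annulus or exterior region to be gauge-equivalent, via an $L^{\frac{d}{2}}$-controlled gauge, to something with small gauge potential; concretely the relation $-O'_{(\infty);x} + B' = Ad(P)(-O_{(\infty);x} + B) - P_{;x}$ on the exterior should force $P$ to be close to a product of the two ($0$-homogeneous-type) gauges at infinity times a small correction, with $P \in \dot{W}^{1,\frac{d}{2}}$ away from a compact set. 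The cleanest route is to pass to the sphere: restrict everything to a large sphere $\abs{x} = \rho$, use the trace theorem $\dot{W}^{1,\frac{d}{2}}(\bbR^{d}) \hookrightarrow \dot{W}^{1-\frac{2}{d}, \frac{d}{2}}(\bbS^{d-1}_\rho) \hookrightarrow L^{d-1}(\bbS^{d-1}_\rho)$ or better a $C^0$-type bound on a good sphere (one chosen so that $\nrm{F}_{L^{\frac{d}{2}}}$ on a neighborhood of it is tiny, which exists by the concentration-radius hypothesis and a pigeonhole/Chebyshev argument in $\rho$). On such a sphere, $B$ and $B'$ have small $L^{d-1}(\bbS^{d-1})$ norm, hence the restrictions of the two connections are $C^0$-close to the respective pulled-back flat connections $-O_{(\infty);x}$, $-O'_{(\infty);x}$, and the gauge transformation $P$ restricted to this sphere is a continuous (indeed $W^{1,d-1}\hookrightarrow C^0$ after the good-sphere choice) map $\bbS^{d-1} \to \G$ intertwining them. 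Since $O_{(\infty)}, O'_{(\infty)}$ are $0$-homogeneous, their restrictions to $\bbS^{d-1}$ are homotopic iff the maps on $\bbR^{d}\setminus 0$ are; the continuous $\G$-valued map $P\restriction_{\bbS^{d-1}_\rho}$ exhibits $O'_{(\infty)}(O_{(\infty)})^{-1}$ as a pointwise conjugate-type product that is null-homotopic, or more directly the intertwining relation on the sphere shows $O'_{(\infty)} \simeq P(\cdot) O_{(\infty)}$ up to a term whose connection form is small, and smallness of the connection form on $\bbS^{d-1}$ forbids any nontrivial winding. (One may also argue via the standard fact that a $\G$-bundle on $\bbS^{d-1}$ — or on a collar — with an $L^{d-1}$-small connection is trivial with trivialization homotopic to any given one.) This gives the homotopy $O_{(\infty)} \simeq O'_{(\infty)}$.

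For part (2): given $O'_{(\infty)}$ smooth and homotopic to $O_{(\infty)}$, I would build the new representative by applying a global gauge transformation $P$ to $A$ that equals $O'_{(\infty)} (O_{(\infty)})^{-1}$ (suitably interpreted, e.g. $O'_{(\infty)}(x/\abs{x})\, O_{(\infty)}(x/\abs{x})^{-1}$) for $\abs{x}$ large, and is smoothly extended into $\abs{x}\leq R$ — which is possible precisely because the two $0$-homogeneous maps are homotopic, so the $\G$-valued map defined on the exterior extends continuously, hence (by smoothing and the standard density of smooth maps in $\calG^{2,\frac{d}{2}}_{loc}$) as a $\calG^{2,\frac{d}{2}}_{loc}$ gauge transformation over all of $\bbR^{d}$. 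Then $A^{P} := Ad(P)A - P_{;x}$ has the form $-\chi O'_{(\infty);x} + B'$ with $B' = Ad(P) B - (P_{;x} - \chi O'_{(\infty);x} + Ad(P)\chi O_{(\infty);x})$; outside the support of $1-\chi$ the explicit cancellation $P_{;x} = O'_{(\infty);x} - Ad(P) O_{(\infty);x}$ (the Maurer–Cartan identity for a product) makes $B'$ compactly supported there up to lower-order terms, and one checks $B' \in \dot{W}^{1,\frac{d}{2}}$ using that $B \in \dot{W}^{1,\frac{d}{2}}$, $P$ is smooth with bounded derivatives of all orders (being smooth and $\G$-valued), and the multiplication/Leibniz estimates in $\dot{W}^{1,\frac{d}{2}}$ from Lemmas~\ref{lem:mult}, \ref{lem:inv}, \ref{lem:d-inv}. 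Thus $(O'_{(\infty)}, B')$ is a good representative of $A$.

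The main obstacle I anticipate is part (1): making rigorous the passage from "curvature is $L^{\frac{d}{2}}$-small on the exterior" to "the gauge transformation $P$ relating the two representatives is continuous, $\G$-valued, and of controlled homotopy type on a large sphere." The subtlety is that $P \in \calG^{2,\frac{d}{2}}_{loc}$ a priori only controls $\nb^2 P \in L^{\frac{d}{2}}_{loc}$, which does not immediately embed into $C^0$ in dimension $d$ on $\bbR^{d}$ (it is borderline: $W^{2,d/2}\hookrightarrow W^{1,d}$, and $W^{1,d}$ just barely fails $C^0$), so one genuinely needs the good-sphere reduction — trace onto a carefully chosen $\bbS^{d-1}_\rho$ where the relevant norms of $B, B', P$ are finite and where $W^{1,d-1}(\bbS^{d-1})\hookrightarrow C^0$ applies — together with the quantitative control that small curvature yields small connection form in the good gauge (Uhlenbeck's lemma, as already invoked for Theorem~\ref{thm:goodrep-ball}), to conclude that the winding of $P\restriction_{\bbS^{d-1}_\rho}$ is forced. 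I would handle this by combining the good-sphere pigeonhole with the exterior gauge theory implicit in Theorem~\ref{thm:goodrep}, reducing the statement to homotopy invariance of transition functions of a $\G$-bundle over $\bbS^{d-1}$ with small connection, which is standard.
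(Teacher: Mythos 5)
Part (2) of your proposal is essentially correct and matches the paper's proof: you extend $P = O'_{(\infty)} O_{(\infty)}^{-1}$ inward, which is possible since $[O'_{(\infty)} O_{(\infty)}^{-1}] = [id]$ (the content of the paper's Lemma~\ref{lem:homotopy-0}), then verify via the Leibniz computation that the resulting $B'$ lies in $\dot{W}^{1,\frac{d}{2}}$. Both the idea and the estimates line up.

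Part (1), however, has genuine gaps. The paper's argument here is \emph{purely algebraic and does not use smallness of curvature or Uhlenbeck's lemma at all}: writing $O$ for the gauge transformation between the two good representatives, it observes $[O] = [id]$ by Lemma~\ref{lem:homotopy-0} (because $O$ is defined across the whole ball), computes that $(O_{(\infty)}^{-1} O O'_{(\infty)})_{;x}$ lies in $L^d(\bbR^d \setminus \overline{B})$ (being a linear combination of $B_x$ and $B_x'$ conjugated by bounded matrices), and applies Lemma~\ref{lem:homotopy-infty} to conclude $[O_{(\infty)}^{-1} O O'_{(\infty)}] = [id]$; the result then follows from multiplicativity of homotopy classes. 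Against this, your proposal has three problems. First, the Sobolev embedding $W^{1,d-1}(\bbS^{d-1}) \hookrightarrow C^0$ you invoke is \emph{false}: $p = d-1$ is exactly the critical exponent on the $(d-1)$-dimensional sphere, where the embedding into $C^0$ barely fails. The paper uses the supercritical embeddings $W^{1,d}(\bbS^{d-1}) \hookrightarrow C^0$ (in Lemma~\ref{lem:homotopy-infty}) and $W^{2,\frac{d}{2}}(\bbS^{d-1}) \hookrightarrow C^0$ (in Lemma~\ref{lem:homotopy}), which hold because $d > d-1$. Second, you never address why the intermediate gauge transformation $P$ is null-homotopic on a large sphere — this is not forced by smallness of the connection form (the connection form of $P$ itself need not be small anywhere), but by the global topological fact that $P$ extends over the ball, which is exactly what Lemma~\ref{lem:homotopy-0} provides. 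Third, the pigeonhole choice of a ``good sphere'' gives only \emph{finiteness} of the traced norm, not smallness; to get a sphere on which the relevant map oscillates little, the paper exploits the $\ud r / r$ weight in the polar-coordinates version of the $L^d$ bound (Lemma~\ref{lem:homotopy-infty}), producing a sequence $r_k \to \infty$ along which $\nrm{\rd_\Tht(\cdot)}_{L^d(\bbS^{d-1})} \to 0$, and only then invoking the supercritical embedding. Your ``smallness of the connection form forbids nontrivial winding'' conflates smallness of $F$ (which is not used and not needed here) with smallness of the angular derivative of a $\G$-valued map, and without the $L^d$-weighted argument the latter is not available on a single sphere.
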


\begin{remark} \label{rem:chi-indep-0} For completeness, we make the
  trivial observation that the homotopy class of $O_{(\infty)}$ is
  independent of the choice of $\chi$, too.
\end{remark}

Theorem~\ref{thm:goodrep}, Proposition~\ref{prop:goodrep-homotopy-0}
and Remark~\ref{rem:chi-indep-0} lead to the following:
\begin{definition} \label{def:top-class} Given an
  $L^{\frac{d}{2}}$-curvature connection $A$, we define the
  \emph{topological class} $[A]$ of $A$ to be the homotopy class of
  $O_{(\infty)} : \bbS^{d-1} \to \G$ of a good representative
  ({i.e., a gauge at infinity for $A$}). If the topological class
  of $A'$ is $[A]$, then we write $A' \in [A]$.
\end{definition}

Observe that the addition of a 1-form $B$ in $\dot{W}^{1,
  \frac{d}{2}}(\bbR^{d}; \g)$ does not change the topological class of
$A$, i.e.,
\begin{equation*}
  A + B \in [A].
\end{equation*}
In particular, by mollifying and cutting off $B$, we can easily find
approximations by smooth connections with compactly supported
curvature in the same topological class with respect to the distance
$d_{\dot{W}^{1, \frac{d}{2}}}(A, A') = \nrm{A - A'}_{\dot{W}^{1,
    \frac{d}{2}}(\bbR^{d}; \g)}$. Moreover, good representations of
two connections with the same $O_{(\infty)}$ are path-connected with
respect to the $d_{\dot{W}^{1, \frac{d}{2}}}$. By
Proposition~\ref{prop:goodrep-homotopy-0}, it follows that each
topological class is path-connected with respect to $d_{\dot{W}^{1,
    \frac{d}{2}}}$ up to global gauge transformations in $\calG^{2,
  \frac{d}{2}}_{loc}(\bbR^{d})$.

Observe also that topological class is determined by the part of the
connection where the $L^{\frac{d}{2}}$ norm of $F$ is
concentrated. More precisely, we have:
\begin{proposition} \label{prop:top-class-outer} Let $A, A' \in
  \calA^{1, \frac{d}{2}}_{loc}(\bbR^{d})$ satisfy $F[A], F[A'] \in
  L^{\frac{d}{2}}(\bbR^{d})$. Assume moreover that $A$ and $A'$ are close
  in $L^{d}(B_{5R})$, and have small $L^{\frac{d}{2}}$ curvature
  outside $B_{R}$, i.e.,
  \begin{equation*}
    \nrm{A - A'}_{L^{d}(B_{5R})} \leq \eps_{\ast}, \quad
    \nrm{F[A]}_{L^{\frac{d}{2}}(\bbR^{d} \setminus B_{R})} \leq \eps_{\ast}, \quad
    \nrm{F[A']}_{L^{\frac{d}{2}}(\bbR^{d} \setminus B_{R})} \leq \eps_{\ast},
  \end{equation*}
  where $\eps_{\ast} > 0$ is sufficiently small universal
  constant. Then $[A] = [A']$.
\end{proposition}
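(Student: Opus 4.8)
The plan is to reduce everything to the construction in Theorem~\ref{thm:goodrep} and the homotopy uniqueness in Proposition~\ref{prop:goodrep-homotopy-0}, by showing that $A$ and $A'$ admit good representatives with the \emph{same} gauge at infinity $O_{(\infty)}$. The key point is that the hypotheses $\nrm{F[A]}_{L^{d/2}(\bbR^d\setminus B_R)}\leq\eps_\ast$ and $\nrm{F[A']}_{L^{d/2}(\bbR^d\setminus B_R)}\leq\eps_\ast$ force $\Rcsp^{\eps_\ast}[A],\Rcsp^{\eps_\ast}[A']\leq R$ (with the ball centered at the origin), so Theorem~\ref{thm:goodrep} applies to both with outer radius $R$. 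The subtlety is that Theorem~\ref{thm:goodrep} involves a choice of patching, and a priori produces \emph{some} gauge at infinity for each; I must exploit the closeness of $A$ and $A'$ in $L^d(B_{5R})$ to align these choices.

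First I would recall that for a connection with $\nrm{F}_{L^{d/2}(\bbR^d\setminus B_R)}\leq\eps_\ast$, Uhlenbeck's lemma (the $\eps_\ast$-small case of Theorem~\ref{thm:goodrep-ball}, applied on the annular region, or directly on $\bbR^d\setminus B_R$ after a conformal inversion sending it to a ball) produces a gauge on $\bbR^d\setminus B_R$ in which $A$ is small in $\dot W^{1,d/2}$; on the overlap with the interior good gauge from Theorem~\ref{thm:goodrep-ball} on $B_{5R}$, the transition function lands in $\calG^{2,d/2}_{loc}$ and, since both representatives are small there, it is $\dot W^{2,d/2}$-close to a constant, hence (after normalizing) close to the identity and in particular homotopic to the constant map on each sphere $\abs{x}=\rho$, $\rho\in(R,5R)$. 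This is essentially how one reads off $O_{(\infty)}$ from the exterior gauge in the proof of Theorem~\ref{thm:goodrep}: $[A]=[O_{(\infty)}]$ is the homotopy class of the exterior transition function restricted to a large sphere.

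Next I would use the $L^d(B_{5R})$-closeness: choosing the interior gauges for $A$ and $A'$ via the \emph{same} normalization procedure (e.g.\ parallel transport along rays from the center, as in Theorem~\ref{thm:goodrep-ball}), the resulting interior gauge potentials are $L^d$-close on $B_{5R}$, with a loss controlled by $\eps_\ast$ — this requires a perturbative argument comparing the parallel transport ODEs, using that the difference $A-A'$ is small in $L^d$ and that both gauge potentials are bounded in $\dot W^{1,d/2}\subset L^d$ by Theorem~\ref{thm:goodrep-ball}. Having arranged that the interior gauges nearly agree and that both exterior transition functions are small, one concludes that the two gauge-at-infinity maps $O_{(\infty)}$ and $O'_{(\infty)}$, restricted to a sphere of radius $\rho\in(R,5R)$, are uniformly close as maps $\bbS^{d-1}\to\G$; uniformly close maps into a compact Lie group are homotopic. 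By Proposition~\ref{prop:goodrep-homotopy-0}(1) the homotopy class of the gauge at infinity is independent of the choice of good representative, and by Remark~\ref{rem:chi-indep-0} it is independent of $\chi$, so $[A]=[O_{(\infty)}]=[O'_{(\infty)}]=[A']$.

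The main obstacle I anticipate is the quantitative comparison of the two interior gauges: one must show that near-equality of $A$ and $A'$ in $L^d(B_{5R})$ (a \emph{subcritical-looking} but in fact critical-scaling norm, since $L^d$ is the scaling norm of $\dot W^{1,d/2}$ in $d$ dimensions) transfers, after the nonlinear gauge-fixing of Theorem~\ref{thm:goodrep-ball}, to closeness of the normalized gauge potentials, hence to $C^0$-closeness of the induced transition functions on spheres. This is where the specific construction in the proof of Theorem~\ref{thm:goodrep-ball} (parallel transport plus Uhlenbeck patching) has to be revisited with an eye toward Lipschitz dependence on the data; the $\eps_\ast$-smallness of the exterior curvature is essential to keep the patching stable. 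Once this continuity statement is in hand, the passage from $L^d$-closeness of potentials to $C^0$-closeness of the gauge at infinity and then to a homotopy is standard.
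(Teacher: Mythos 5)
Your overall strategy --- show the two gauges at infinity are $C^0$-close on a sphere of radius between $R$ and $5R$, then use that $C^0$-close maps into a compact Lie group are homotopic --- is the same as the paper's. But your first paragraph contains an outright error: you claim the transition function between the exterior Uhlenbeck gauge and the interior good gauge is $\dot W^{2,d/2}$-close to a constant ``since both representatives are small there,'' hence nullhomotopic on spheres $\abs{x}=\rho$. If this were true, $[A]$ would be trivial for every $A$, contradicting your own conclusion in the next sentence that $[A]$ is the homotopy class of this very transition. In fact the interior potential $A_{(0)}$ is \emph{not} small on $B_{5R}\setminus B_R$: Theorem~\ref{thm:goodrep-ball} gives only a bound depending on $R/r$, and the hypotheses give no lower bound on the inner concentration scale $r$. (A BPST instanton with curvature concentrated inside $B_R$ produces a transition of degree one on any sphere of radius $>R$.) Only the exterior potential $A_{(\infty)}$ is small.

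This confusion also leads you to a needlessly hard route. You propose to compare the interior Uhlenbeck gauges of $A$ and $A'$ and flag the Lipschitz dependence of the Uhlenbeck-plus-patching machinery as ``the main obstacle.'' It is an obstacle, and it can be avoided entirely: $A$ and $A'$ are already global gauge potentials, so one should use them as the interior gauges. Take exterior Uhlenbeck gauges $A_{(\infty)},A'_{(\infty)}$ (small in $L^d\cap\dot W^{1,d/2}(\bbR^d\setminus\overline{B_R})$ by Theorem~\ref{thm:uhlenbeck-pball} after inversion, as in the proof of Theorem~\ref{thm:goodrep}) and let $O,O'\in\calG^{2,d/2}(5B_R\setminus\overline{B_R})$ be the transitions to the \emph{given} gauges, so that $O_{;x}=Ad(O)A_{(\infty)}-A$ on the annulus and $[A]$ is determined by $[O]$ (and likewise for $A'$). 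Then
\[
\nrm{O_{;x}-O'_{;x}}_{L^d(5B_R\setminus\overline{B_R})}\leq\nrm{A_{(\infty)}}_{L^d}+\nrm{A'_{(\infty)}}_{L^d}+\nrm{A-A'}_{L^d(B_{5R})}\aleq\eps_{\ast},
\]
purely by the triangle inequality and $Ad$-invariance of the norm --- no nonlinear comparison of gauge-fixing procedures is needed. A pigeonhole in $r\in(R,5R)$ then produces a sphere $\rd B_r$ where this bound localizes; after a constant gauge normalization (harmless for the homotopy class), Morrey's inequality on $\rd B_r$ gives $C^{1/d}$-closeness of $O$ and $O'$, hence homotopy equivalence. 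This is precisely the paper's argument, and it bypasses your ``main obstacle'' entirely.
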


We now discuss some simple consequences of the above
results. Given an $L^{\frac{d}{2}}$-curvature connection $A$, let
  $A^{n}$ be an approximation of $A$ in $d_{\dot{W}^{1,
      \frac{d}{2}}}$, such that each $A^{n}$ is smooth and $F[A^{n}]$
  is compactly supported. For any symmetric $Ad$-invariant $k$-linear
  function $f$ on $\g$, the associated characteristic classes of the
  pullback bundles $(\boldsymbol{\Sgm}^{\ast} \eta,
  \boldsymbol{\Sgm}^{\ast} A^{n})$ are independent of $n$ (for
  sufficiently large $n$), as well as of the approximating
  sequence. Moreover, when $d = 2k$, the characteristic numbers obey
  \begin{equation*}
    \ch_{f} = \int_{\bbR^{d}} f(F[A^{n}], \ldots, F[A^{n}]) \to \int_{\bbR^{d}} f(F[A], \ldots, F[A])
  \end{equation*}
  by continuity of the integral with respect to $\nrm{A -
  A'}_{\dot{W}^{1, \frac{d}{2}}(\bbR^{d}; \g)}$. Hence we recover the
following result of Uhlenbeck \cite{MR815194}:
\begin{corollary}\label{cor:ch-class}
  The characteristic numbers $\ch_{f}$, defined as in
  \eqref{eq:ch-no-smth}, depend only on $[A]$.  In particular,
    they vanish for $[0]$.
\end{corollary}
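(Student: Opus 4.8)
The plan is to reduce the statement to the classical smooth, compactly-supported-curvature case treated in Section~\ref{subsec:smth}, via approximation in the metric $d_{\dot{W}^{1,\frac{d}{2}}}$. Fix an $L^{\frac{d}{2}}$-curvature connection $A$ on $\bbR^{d}$ (here $d = 2k$), and by Theorem~\ref{thm:goodrep} pass to a good representative $A = -\chi(\cdot/R)\, O_{(\infty);x} + B$ with $B \in \dot{W}^{1,\frac{d}{2}}(\bbR^{d};\g)$. First I would construct the approximating sequence: pick $B^{n} \in C^{\infty}_{c}(\bbR^{d};\g)$ with $B^{n} \to B$ in $\dot{W}^{1,\frac{d}{2}}$ (mollify and truncate) and set $A^{n} := -\chi(\cdot/R)\, O_{(\infty);x} + B^{n}$. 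Then $A^{n}$ is smooth, and since $-\chi(\cdot/R)\, O_{(\infty);x}$ equals the flat pure gauge $-O_{(\infty);x}$ wherever $\chi(\cdot/R) \equiv 1$, the curvature $F[A^{n}]$ is compactly supported, with $\supp F[A^{n}] \subseteq \supp(1 - \chi(\cdot/R)) \cup \supp B^{n}$. Moreover $(O_{(\infty)}, B^{n})$ is a good representative of $A^{n}$, so $A^{n} \in [A]$ for every $n$ by Definition~\ref{def:top-class}.

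Next I would invoke the smooth theory of Section~\ref{subsec:smth}: by \eqref{eq:ch-no-smth}, for each $n$ the number $\int_{\bbR^{d}} f(F[A^{n}], \dots, F[A^{n}])$ is the characteristic number of the extended pullback bundle $(\boldsymbol{\Sgm}^{\ast}\eta, \boldsymbol{\Sgm}^{\ast} A^{n})$ on $\bbS^{d}$, which by the discussion following Theorem~\ref{thm:goodrep-smth} is a topological invariant determined by $[O_{(\infty)}] = [A]$; in particular it is independent of $n$ and of the chosen approximating sequence. It then remains to pass to the limit. First, the rough integral is well-defined: since $d = 2k$ we have $L^{\frac{d}{2}} = L^{k}$, and $k$-linearity of $f$ gives the pointwise bound $|f(F[A], \dots, F[A])| \lesssim |F[A]|^{k} \in L^{1}(\bbR^{d})$. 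Second, $F[A^{n}] \to F[A]$ in $L^{\frac{d}{2}}$: writing $F[A^{n}] - F[A] = \ud(A^{n} - A) + \tfrac{1}{2}\big([A^{n} \wedge (A^{n} - A)] + [(A^{n} - A) \wedge A]\big)$, the first term tends to $0$ in $L^{\frac{d}{2}}$ by construction, while the bracket terms are bounded in $L^{\frac{d}{2}}$ by $\nrm{A^{n}}_{L^{d}} \nrm{A^{n} - A}_{L^{d}}$ and $\nrm{A^{n} - A}_{L^{d}} \nrm{A}_{L^{d}}$ respectively, which tend to $0$ via the Sobolev embedding $\dot{W}^{1,\frac{d}{2}}(\bbR^{d}) \hookrightarrow L^{d}(\bbR^{d})$. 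Telescoping the multilinear expression and applying H\"older with exponents $k, \dots, k$, we get $f(F[A^{n}], \dots, F[A^{n}]) \to f(F[A], \dots, F[A])$ in $L^{1}$, hence $\int_{\bbR^{d}} f(F[A^{n}], \dots, F[A^{n}]) \to \int_{\bbR^{d}} f(F[A], \dots, F[A]) = \ch_{f}$. Since the left-hand sides depend only on $[A]$, so does $\ch_{f}$.

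For the trivial class, the zero connection $A = 0$ is smooth with (trivially compactly supported) vanishing curvature, so $\ch_{f} = \int_{\bbR^{d}} f(0, \dots, 0) = 0$; by the invariance just established, $\ch_{f} = 0$ for every connection in $[0]$. I would also record explicitly that the integrand $f(F[A], \dots, F[A])$ is a globally well-defined $L^{1}$ density on $\bbR^{d}$ — because $F[A]$ transforms by $Ad$ and $f$ is $Ad$-invariant — so that $\ch_{f}$ is independent of the choice of good global gauge used to represent $A$.

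Finally, regarding the main difficulty: there is little of it at the analytic level, as the substantive ingredients are already in hand, namely the good-representative decomposition \eqref{eq:goodrep-0} (Theorem~\ref{thm:goodrep}), the fact that the mollified approximations remain in $[A]$ (Definition~\ref{def:top-class}), and the classical fact that $\ch_{f}$ of a smooth bundle over $\bbS^{d}$ is a topological invariant (Section~\ref{subsec:smth}). The only genuinely new content is the soft limiting argument above, whose single delicate point is to verify the full convergence $F[A^{n}] \to F[A]$ in $L^{\frac{d}{2}}$ (not merely the linear part $\ud A^{n} \to \ud A$), which the embedding $\dot{W}^{1,\frac{d}{2}} \hookrightarrow L^{d}$ takes care of.
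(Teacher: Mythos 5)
Your overall strategy is the same as the paper's (approximate $A$ by smooth connections $A^{n}$ with compactly supported curvature in the same topological class, invoke the smooth Chern--Weil theory, and pass to the limit of the characteristic integrals), so the proposal is correct in outline. However, the step you single out as the only delicate point — the verification that $F[A^{n}] \to F[A]$ in $L^{\frac{d}{2}}(\bbR^{d})$ — is precisely where your argument as written breaks down.

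The problem is that $\nrm{A}_{L^{d}(\bbR^{d})}$ and $\nrm{A^{n}}_{L^{d}(\bbR^{d})}$ are both infinite whenever $[A]$ is nontrivial, so the H\"older bounds $\nrm{[A^{n} \wedge (A^{n}-A)]}_{L^{d/2}} \lesssim \nrm{A^{n}}_{L^{d}}\nrm{A^{n}-A}_{L^{d}}$ and $\nrm{[(A^{n}-A) \wedge A]}_{L^{d/2}} \lesssim \nrm{A^{n}-A}_{L^{d}}\nrm{A}_{L^{d}}$ have an infinite right-hand side and give no information. Indeed, writing $A = -\chi(\cdot/R)\,O_{(\infty);x} + B$, the piece $O_{(\infty);x}$ is $(-1)$-homogeneous (since $O_{(\infty)}$ is $0$-homogeneous), so $\chi(\cdot/R)\,O_{(\infty);x}$ decays only like $\abs{x}^{-1}$ at infinity, which fails to be in $L^{d}(\bbR^{d})$ by the logarithmic divergence $\int_{R}^{\infty} r^{-1}\,dr = \infty$; the same applies to $A^{n}$. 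Only $B$ and $B^{n}-B = A^{n}-A$ lie in $L^{d}$ via Sobolev. The repair is to split the cross term according to the good-representative decomposition: the part $[(B^{n}-B) \wedge B]$ is handled by H\"older exactly as you propose, while for the part $[(B^{n}-B) \wedge \chi(\cdot/R)\,O_{(\infty);x}]$ one uses the pointwise bound $\abs{\chi(\cdot/R)\,O_{(\infty);x}} \lesssim \abs{x}^{-1}$ together with Hardy's inequality $\nrm{\abs{x}^{-1}(B^{n}-B)}_{L^{d/2}} \lesssim \nrm{\partial(B^{n}-B)}_{L^{d/2}}$ (valid since $\tfrac{d}{2} < d$), which tends to $0$ by construction. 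With this modification, the rest of your telescoping/H\"older argument for the convergence of $\int f(F[A^{n}], \ldots, F[A^{n}])$ goes through, and matches the continuity claim the paper asserts.
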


As another corollary of Theorem~\ref{thm:goodrep}, we obtain a
characterization of the topologically trivial class (i.e., the
topological class of the trivial connection $A = 0$):
\begin{corollary} \label{cor:top-triv} The space of topologically
  trivial connections with finite $L^{\frac{d}{2}}$ curvature
  correspond exactly to
  \begin{equation*}
    \calA^{1, \frac{d}{2}}_{0}(\bbR^{d}) = \set{\covD = \ud + A : A \in \dot{W}^{1, \frac{d}{2}}(\bbR^{d}; \g)}.
  \end{equation*}
  All characteristic numbers associated to a connection $A$ in
    $\calA^{1, \frac{d}{2}}_{0}(\bbR^{d})$ vanish.
\end{corollary}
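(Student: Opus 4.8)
The plan is to prove Corollary~\ref{cor:top-triv} as a direct consequence of Theorem~\ref{thm:goodrep} together with Definition~\ref{def:top-class}. First I would prove the inclusion $\calA^{1, \frac{d}{2}}_{0}(\bbR^{d}) \subseteq [0]$: given $\covD = \ud + A$ with $A \in \dot{W}^{1, \frac{d}{2}}(\bbR^{d}; \g)$, the pair $(O_{(\infty)}, B) = (\mathrm{id}, A)$ (with $O_{(\infty)}$ the constant map into the identity, so that $\chi(\cdot/R) O_{(\infty); x} = 0$) is itself a good representative of $A$ in the sense of Section~\ref{subsec:top-class}. Hence by Definition~\ref{def:top-class} the topological class $[A]$ is the homotopy class of the constant map, which is exactly $[0]$. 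Conversely, suppose $A \in \calA^{1, \frac{d}{2}}_{loc}(\bbR^{d})$ has $F[A] \in L^{\frac{d}{2}}(\bbR^{d})$ and $[A] = [0]$. By Theorem~\ref{thm:goodrep} (possibly after passing to the good global gauge, which one is always free to do given the convention adopted at the end of Section~\ref{subsec:rough-conn}), $A$ admits a good representative $A = -\chi(\cdot/R) O_{(\infty); x} + B$ with $B \in \dot{W}^{1, \frac{d}{2}}(\bbR^{d}; \g)$. Since $[A] = [0]$, the gauge at infinity $O_{(\infty)}$ is null-homotopic. By part (2) of Proposition~\ref{prop:goodrep-homotopy-0}, we may then replace $O_{(\infty)}$ by the constant map $\mathrm{id}$: there exists another good representation $(\mathrm{id}, B')$ of $A$, i.e.\ $A = -\chi(\cdot/R)\, \mathrm{id}_{;x} + B' = B'$ with $B' \in \dot{W}^{1, \frac{d}{2}}(\bbR^{d}; \g)$. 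Thus $\covD = \ud + A$ with $A = B' \in \dot{W}^{1, \frac{d}{2}}$, which places $\covD$ in $\calA^{1, \frac{d}{2}}_{0}(\bbR^{d})$.

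For the last sentence about vanishing characteristic numbers, I would simply invoke Corollary~\ref{cor:ch-class}: since any $A \in \calA^{1, \frac{d}{2}}_{0}(\bbR^{d})$ satisfies $[A] = [0]$ by the inclusion just proved, and the characteristic numbers $\ch_{f}$ depend only on $[A]$ and vanish for $[0]$, they vanish for every such $A$. Alternatively, one can argue directly: approximate $A$ by smooth compactly-supported-curvature connections $A^{n}$ in $d_{\dot{W}^{1, \frac{d}{2}}}$ (possible precisely because $A \in \dot{W}^{1, \frac{d}{2}}$, by mollification and cutoff), note that each $A^{n}$ is in the trivial class so its pullback bundle is trivial and its characteristic numbers vanish, and pass to the limit using the continuity of $\int_{\bbR^{d}} f(F[\cdot], \ldots, F[\cdot])$ with respect to the $\dot{W}^{1, \frac{d}{2}}$-distance noted in the discussion preceding Corollary~\ref{cor:ch-class}.

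The only genuinely substantive input is Proposition~\ref{prop:goodrep-homotopy-0}(2), which lets us normalize a null-homotopic gauge at infinity to the identity while keeping the remainder in $\dot{W}^{1, \frac{d}{2}}$; everything else is bookkeeping with the definitions. I do not expect any real obstacle here since the corollary is essentially a restatement of Definition~\ref{def:top-class} in the special case of the trivial homotopy class. The one point requiring a little care is the direction of the inclusion: one must make sure that a connection presented abstractly with an $\dot{W}^{1, \frac{d}{2}}$ potential genuinely has $[A] = [0]$, which is immediate because $(\mathrm{id}, A)$ literally satisfies the defining property of a good representative (the term $\chi(\cdot/R) O_{(\infty); x}$ vanishes identically for $O_{(\infty)} \equiv \mathrm{id}$), so no appeal to Theorem~\ref{thm:goodrep} or Proposition~\ref{prop:goodrep-homotopy-0} is even needed in that direction.
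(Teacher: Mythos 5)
Your proof is correct and fills in the details in exactly the way the paper intends (the paper itself offers no explicit proof, merely asserting the corollary follows from Theorem~\ref{thm:goodrep} and Proposition~\ref{prop:goodrep-homotopy-0}). One small point worth being precise about in the converse direction: Proposition~\ref{prop:goodrep-homotopy-0}(2) produces the good representation $(\mathrm{id}, B')$ only after applying a gauge transformation $P \in \calG^{2,\frac{d}{2}}_{loc}$ to $A$, so the conclusion is that $\covD$ admits a (gauge-equivalent) global potential $A' = Ad(P)A - P_{;x} = B' \in \dot{W}^{1,\frac{d}{2}}$, not that the original potential $A$ literally equals $B'$; this is exactly what the set $\calA^{1,\frac{d}{2}}_{0}(\bbR^{d})$ and the subsequent Remark require, so the argument is sound.
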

\begin{remark}
  The preceding corollary implies that given any connection $A$ in the
  topologically trivial class, there exists a global representative
  $\tA$ in the space $\dot{W}^{1, \frac{d}{2}}(\bbR^{d}; \g)$. Note,
  however, that no quantitative bound on $\nrm{\tA}_{\dot{W}^{1,
      \frac{d}{2}}}$ is claimed; such a bound would rely on
  quantitative bounds on a homotopy of $O_{(\infty)}$ to the identity
  in terms of scaling-invariant bounds on $O_{(\infty)}$.
\end{remark}

\subsection{Hyperbolic Yang--Mills equation} \label{subsec:ym} The
remainder of the introduction concerns the hyperbolic Yang--Mills
equation. The purpose of this subsection is to provide a brief
introduction to this equation.

Let $\bbR^{1+d}$ denote the $(d+1)$-dimensional Minkowski space, which
is equipped with the Minkowski metric $\bfm_{\mu \nu} =
\mathrm{diag}(-1, +1, \ldots, +1)$ in the rectangular coordinates
$(x^{0}, x^{1}, \ldots, x^{d})$.  We will often write $t = x^{0}$, to
emphasize the role of $x^{0}$ as (a choice of) a time function.
Throughout this paper, we will use the usual convention of raising and
lowering indices using the Minkowski metric, as well as summing up
repeated upper and lower indices.

Consider a connection $\covD$ on a vector bundle on $\bbR^{1+d}$ with
structure group $\G$. By topological triviality of $\bbR^{d}$ (or
Theorem~\ref{thm:goodrep} at low regularity), $\covD$ at each $t$ may
be identified with a global gauge potential $A$. The \emph{hyperbolic
  Yang--Mills equation} on $\bbR^{1+d}$ for $A$ is the Euler--Lagrange
equation associated with the formal Lagrangian action functional
\begin{equation*}
  \calL(A) = \frac{1}{2} \int_{\bbR^{1+d}} \brk{F_{\alp \bt}, F^{\alp \bt}} \, \ud x \ud t,
\end{equation*}
which takes the form
\begin{equation} \label{eq:ym} \covD^{\alp} F_{\alp \bt} = 0.
\end{equation}
Clearly, \eqref{eq:ym} is invariant under (smooth) gauge
transformations. This equation possesses a conserved energy, given by
\begin{equation*}
  \calE_{\set{t} \times \bbR^{d}}(A) = \int_{\set{t} \times \bbR^{d}} \sum_{\alp < \bt} \abs{F_{\alp \bt}}^{2} \, \ud x.
\end{equation*}
Furthermore, \eqref{eq:ym} is invariant under the scaling
\begin{equation*}
  A(t, x) \mapsto \lmb A (\lmb t, \lmb x) \qquad (\lmb > 0).
\end{equation*}
The scaling-invariant $L^{2}$-Sobolev norm is $\nrm{A(t,
  \cdot)}_{\dot{H}^{\frac{d-2}{2}}}$. In particular, \eqref{eq:ym} is
\emph{energy critical} when $d = 4$, in the sense that the conserved
energy (which scales like $\nrm{A(t, \cdot)}_{\dot{H}^{1}}$) is
invariant under the scaling.

We are interested in the initial value problem for \eqref{eq:ym} at
the scaling-invariant $L^{2}$-Sobolev regularity.
For this purpose we first formulate a gauge-covariant notion of
initial data sets. We say that a pair $(a, e)$ of a gauge potential
$a$ and a $\g$-valued 1-form $e$ on $\bbR^{d}$ is an initial data set
for a solution $A$ to \eqref{eq:ym} if
\begin{equation*}
  (A_{j}, F_{0j}) \restriction_{\set{t = 0}} = (a_{j}, e_{j}).
\end{equation*}
Here and throughout this paper, the roman letters stand for the
spatial coordinates $x^{1}, \ldots, x^{d}$. Note that \eqref{eq:ym}
with $\bt = 0$ imposes the condition that
\begin{equation} \label{eq:YMconstraint} \covD^{j} e_{j} = \rd^{j}
  e_{j} + [a^{j}, e_{j}] = 0.
\end{equation}
This equation is the \emph{Gauss} (or the \emph{constraint})
\emph{equation} for \eqref{eq:ym}.

It turns out that \eqref{eq:YMconstraint} characterizes precisely
those pairs $(a, e)$ which can arise as an initial data set. Thus we
make the following definition:
\begin{definition} \label{def:ym-id} An $\calH^{\sgm}(\calO)$
  (resp. $\dot{\calH}^{\sgm}(\calO)$ or $\calH^{\sgm}_{loc}(\calO)$)
  \emph{initial data set} for the Yang-Mills equation is a pair $(a,e)
  \in H^{\sgm} \times H^{\sgm-1}(\calO)$ (resp. $\dot{H}^{\sgm} \times
  \dot{H}^{\sgm-1}(\calO)$ or $H^{\sgm}_{loc} \times
  H^{\sgm-1}_{loc}(\calO)$) that satisfies the constraint equation
  \eqref{eq:YMconstraint}.
\end{definition}

Due to invariance under gauge transformations, \eqref{eq:ym} is not
even formally well-posed when viewed as a PDE for $A$. In order to
analyze \eqref{eq:ym} at the level of $A$, this invariance must be
removed by fixing a representative (or a gauge). A simple and useful
way is to require that
\begin{equation} \label{eq:temporal} A_{0} = 0.
\end{equation}
The gauge thus chosen is called \emph{temporal}. In this gauge,
\eqref{eq:ym} becomes a coupled system of wave and transport equations
for the curl and divergence of $A$, respectively, and local
well-posedness for regular data is easily follows. Moreover, in the
regular case it is also easy to verify the finite speed of propagation
property, in the sense that $A$ vanishes on the domain of dependence
of the zero-set of the data.

The aforementioned coupled wave-transport system in the temporal gauge
becomes difficult to analyze in the low regularity
setting. Nonetheless, in \cite{OTYM2}, global well-posedness of
\eqref{eq:ym} under \eqref{eq:temporal} was proved for small data at
the optimal $L^{2}$-Sobolev regularity (for dimensions $d \geq 4$), by
first working in a gauge with more favorable structure (caloric
gauge), and then estimating the gauge transformation to the temporal
gauge.

At this point, one may imagine upgrading the small data result to
large data local well-posedness by the following procedure:
\begin{enumerate}
\item Constructing local-in-spacetime solutions from the small data
  result applied to suitable localizations of the initial data;
\item Patch the local-in-spacetime solutions together by finite speed
  of propagation.
\end{enumerate}
Though this strategy eventually works (see Section~\ref{subsec:local}
below), this is not trivial. The primary reason is because the Gauss
equation \eqref{eq:YMconstraint} is nonlocal, and thus initial data
sets cannot be freely cut off. The next subsection is devoted to
resolving this issue.

\subsection{Excision and extension of Yang--Mills initial
  data} \label{subsec:excise} In this subsection we present the second
set of results of this paper, which eventually lead to a useful
excision-and-extension technique for Yang--Mills initial data. The
first and main result is solvability of the inhomogeneous Gauss
equation
\begin{equation} \label{eq:gauss-inhom} (\covD^{(a)})^{\ell} e_{\ell}
  = h
\end{equation}
while keeping good physical space support properties.
 
\begin{theorem} \label{thm:gauss-0} Let $d \geq 4$ and $a \in
  \dot{H}^{\frac{d-2}{2}}(\bbR^{d})$. Given any convex open set $K$,
  there exists a solution operator $T_{a}$ for \eqref{eq:gauss-inhom}
  satisfying the following conditions:
  \begin{enumerate}
  \item (Boundedness) We have
    \begin{equation} \label{eq:gauss-0-bnd}
      \nrm{T_{a}[h]}_{\dot{H}^{\frac{d-4}{2}}}
      \aleq_{\nrm{a}_{\dot{H}^{\frac{d-2}{2}}}, L(K)}
      \nrm{h}_{\dot{H}^{\frac{d-6}{2}}},
    \end{equation}
    where $L(K)$ is a scaling-invariant quantity (i.e., $L(\lmb K)$ is
    independent of $\lmb > 0$) defined in \eqref{eq:lip-K}.
  \item (Exterior support property) If $h$ is supported outside the
    set
    \begin{equation*}
      \lmb K = \set{\lmb (x - x_{K}) \in \bbR^{d} : \hbox{$x_{K}$ is the barycenter of $K$}}
    \end{equation*}
    for some $\lmb > 0$, then so is $T_{a}[h]$.
  \item (Higher regularity) If $h$ and $a$ are smooth, so is
    $T_{a}[h]$.
  \end{enumerate}
\end{theorem}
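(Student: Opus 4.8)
The plan is to build $T_a$ by perturbing off the flat case $a=0$, where the equation $\rd^\ell e_\ell = h$ can be solved with an explicit support-preserving operator on a convex domain. First I would treat the model problem: given $h$ supported outside $\lmb K$, I want a 1-form $e^{(0)}$ with $\rd^\ell e^{(0)}_\ell = h$, also supported outside $\lmb K$, with the bound $\nrm{e^{(0)}}_{\dot H^{s}} \aleq \nrm{h}_{\dot H^{s-1}}$ at the relevant regularity. The natural candidate is a Bogovskii-type operator: since $\bbR^d \setminus \lmb K$ is star-shaped with respect to every point of the complement of a suitable larger ball, one can write $e^{(0)}$ as an average of radial integrals of $h$ against a kernel homogeneous of degree $-(d-1)$, which is a Calder\'on--Zygmund-type operator of order $-1$ preserving support in the exterior of any convex set. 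This is where the scaling-invariant constant $L(K)$ enters — it measures the ``eccentricity'' of $K$ relative to a ball, i.e.\ the ratio controlling the star-shapedness, and is invariant under $K \mapsto \lmb K$ by construction.

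Next I would set up the perturbative step. Writing $e = e^{(0)} + \tld e$, the equation becomes ${}^{(a)}\covD^\ell \tld e_\ell = h - \rd^\ell e^{(0)}_\ell - [a^\ell, e^{(0)}_\ell] = -[a^\ell, e^{(0)}_\ell]$, so we need to solve ${}^{(a)}\covD^\ell \tld e_\ell = g$ with $g = -[a^\ell, e^{(0)}_\ell]$ supported outside $\lmb K$ (since $e^{(0)}$ is). I would then iterate: define $\covD^\ell$-solvability as $e \mapsto e^{(0)}[h] - e^{(0)}[[a^\ell, e_\ell]]$ and show this map is a contraction (or at least that the Neumann series converges) on the exterior-supported subspace of $\dot H^{\frac{d-4}{2}}$, using the paraproduct/Sobolev estimate $\nrm{[a^\ell, e_\ell]}_{\dot H^{\frac{d-6}{2}}} \aleq \nrm{a}_{\dot H^{\frac{d-2}{2}}} \nrm{e}_{\dot H^{\frac{d-4}{2}}}$, valid precisely because $d \geq 4$ makes $\dot H^{\frac{d-2}{2}} \cdot \dot H^{\frac{d-4}{2}} \hookrightarrow \dot H^{\frac{d-6}{2}}$ (this is the dimensional restriction). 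If $\nrm{a}_{\dot H^{\frac{d-2}{2}}}$ is not small one cannot contract directly, so the standard fix is to localize $a$ to small physical-space scales where its $\dot H^{\frac{d-2}{2}}$ norm is small, solve there, and patch — or, more cleanly, absorb the largeness into the constant by a fixed-point argument in a space with a large-data-tolerant norm (e.g.\ run the iteration after conjugating by the parallel transport of $a$, which is bounded in $\calG^{2,d/2}_{loc}$ by the earlier lemmas). Either way the output bound takes the stated form $\aleq_{\nrm{a}_{\dot H^{(d-2)/2}}, L(K)}$.

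Finally, the support property is immediate from the construction: $e^{(0)}$ preserves exterior support by the first step, and each term in the iteration is $e^{(0)}$ applied to a commutator of $a$ with a previously-constructed exterior-supported form, hence again exterior-supported; the series limit inherits this. Higher regularity (part (3)) follows by running the same iteration in $\dot H^{s}$ for all $s$ and noting the fixed point is unique, so if $h,a$ are smooth the solution lies in every Sobolev space, hence is smooth; one should also check that $T_a$ is genuinely a right inverse, i.e.\ ${}^{(a)}\covD^\ell (T_a[h])_\ell = h$, which holds by telescoping the iteration.

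\medskip

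I expect the \textbf{main obstacle} to be the large-data aspect: constructing the support-preserving solution operator for $\rd^\ell e_\ell = h$ on a convex exterior domain with a \emph{scaling-invariant} bound is essentially classical (Bogovskii), but upgrading to ${}^{(a)}\covD^\ell e_\ell = h$ for arbitrary $a \in \dot H^{\frac{d-2}{2}}$ without a smallness hypothesis requires genuine care — one cannot naively iterate. The resolution presumably uses a good gauge for $a$ (Theorem~\ref{thm:goodrep}) or a covariant parallel-transport conjugation to reduce to a perturbation that is small at each physical scale, then reassembles the pieces while tracking supports; making the reassembly support-clean is the delicate point. A secondary technical point is verifying the paraproduct estimate at the exact endpoint regularities $\dot H^{\frac{d-6}{2}} \to \dot H^{\frac{d-4}{2}}$, which is fine for $d\ge 4$ but borderline and requires the commutator structure (not just the product) when $d$ is small.
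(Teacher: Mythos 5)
Your structural plan — a support-preserving exact solution operator for the flat divergence equation on the exterior of a convex set, followed by a perturbative treatment of the commutator $[a^\ell, e_\ell]$ — matches the paper. The flat operator in the paper is built from one-dimensional ray integrals averaged over a narrow cone of directions, summed against a conical partition of unity whose aperture is chosen small relative to $L(K)$; this is in the same family as Bogovskii, though your phrase that $\bbR^d \setminus \lambda K$ is ``star-shaped with respect to every point of the complement of a suitable larger ball'' is not literally true, and it is precisely the conical narrowness (measured against $L(K)$) that does the geometric work. The small-$a$ Neumann series step, with the paraproduct estimate $\dot H^{(d-2)/2}\cdot \dot H^{(d-4)/2}\hookrightarrow \dot H^{(d-6)/2}$ and the observation that $d\ge 4$ is needed for this endpoint, is exactly the paper's Proposition~\ref{prop:gauss-small}.

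The genuine gap is the large-data step, and your two proposed fixes do not close it. ``Conjugating by parallel transport of $a$'' removes the covariant derivative only globally and at the cost of destroying the physical-space support structure of $e$, so the exterior support property is lost. ``Localize $a$ to regions where it is small and patch'' is pointing in the right direction but, as you yourself flag, the patching is not support-clean, and you do not supply a mechanism. The paper's actual device is an exponential weight $w = 2^{-\phi}$ with $\phi$ a bounded, increasing, radial function. The crucial observation is that the exterior support property makes the operator $T_0\,\mathrm{ad}(a)$ \emph{one-sided} with respect to the nested exterior regions $\set{\phi \ge k}$: writing $A_k = \set{k \le \phi \le k+1}$, one has $1_{A_j}\,T_0\,\mathrm{ad}(a)\,1_{A_k}=0$ for $j<k$, while for $j\ge k$ the weight contributes a factor $2^{k-j}$. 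Summing the resulting geometric series yields
\[
\nrm{T_0\,\mathrm{ad}(a)}_{L^p_w\to L^p_w}\aleq \sup_k \nrm{a}_{L^d(A_k)},
\]
which can be made arbitrarily small — with no smallness of $\nrm{a}_{\dot H^{(d-2)/2}}$ — by choosing $\phi$ to vary slowly exactly where $a$ is concentrated. In other words, the exterior-support property buys a causal/triangular structure, and the weight turns that triangularity into genuine operator smallness. This is the idea you are missing; once it is in hand, the paper then commutes derivatives through $T_a$ as you describe to reach $\dot H^{(d-4)/2}$, and higher regularity follows similarly.
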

\begin{remark} \label{rem:gauss-low-d} In $d \leq 3$, our proof does
  not apply at the critical regularity $e \in
  \dot{H}^{\frac{d-4}{2}}$, since the possible error of
  \eqref{eq:YMconstraint} belongs only to the ill-behaved space
  $\dot{H}^{-\frac{3}{2}}$. However, under an extra smallness
  assumption for $\nrm{a}_{\dot{H}^{\frac{d-2}{2}}}$, the conclusion
  of Theorem~\ref{thm:gauss-0} holds for $h \in \dot{H}^{\sgm-1}$ and
  $e \in \dot{H}^{\sgm}$ for the subcritical regularities $\sgm > 1-
  \frac{d}{2}$; see Proposition~\ref{prop:gauss-small} below.
\end{remark}

As a consequence of Theorem~\ref{thm:gauss-0}, we have the following
extension result for the Yang--Mills initial data sets.
\begin{theorem} \label{thm:ext-id} For $d \geq 4$, let $K$ be a convex
  domain in $\bbR^{d}$, and let $(a, e)$ be an $\calH^{\frac{d-2}{2}}$
  Yang--Mills initial data set on $2K \setminus \overline{K}$. Then
  there exists an $\calH^{\frac{d-2}{2}}$ Yang--Mills initial data set
  $(\ba, \be)$ on $\bbR^{d} \setminus \overline{K}$ that coincides
  with $(a, e)$ on $2K \setminus \overline{K}$ and obeys
  \begin{align}
    \nrm{\ba}_{\dot{H}^{\frac{d-2}{2}}(\bbR^{d} \setminus
      \overline{K})} & \aleq_{L(K)}
    \nrm{a}_{\dot{H}^{\frac{d-2}{2}}(2K \setminus \overline{K})}, \label{eq:ext-id-a} \\
    \nrm{\be}_{\dot{H}^{\frac{d-4}{2}}(\bbR^{d} \setminus
      \overline{K})} & \aleq_{\nrm{a}_{\dot{H}^{\frac{d-2}{2}}(2K
        \setminus \overline{K})}, L(K)}
    \nrm{e}_{\dot{H}^{\frac{d-4}{2}}(2K \setminus
      \overline{K})}. \label{eq:ext-id-e}
  \end{align}
  It can be arranged so that the association $(a, e) \mapsto (\ba,
  \be)$ is equivariant under constant gauge transformations, i.e.,
  $(Ad(O) a, Ad(O) e) \mapsto (Ad(O) \ba, Ad(O) \be))$ for each $O \in
  \G$. Moreover, if $(a, e)$ is smooth, then so is $(\ba, \be)$.
\end{theorem}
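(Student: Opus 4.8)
The plan is to reduce the extension problem for the \emph{constrained} data $(a,e)$ to two unconstrained extension problems plus one application of Theorem~\ref{thm:gauss-0} to correct the resulting violation of the Gauss equation. First I would extend the connection component: since $a \in \dot H^{\frac{d-2}{2}}(2K \setminus \overline K)$ and $2K \setminus \overline K$ is a Lipschitz domain (with Lipschitz character controlled by the scaling-invariant quantity $L(K)$), a standard Stein-type or Sobolev extension operator produces $\ba \in \dot H^{\frac{d-2}{2}}(\bbR^{d})$ with $\ba = a$ on $2K \setminus \overline K$ and $\nrm{\ba}_{\dot H^{\frac{d-2}{2}}(\bbR^{d})} \aleq_{L(K)} \nrm{a}_{\dot H^{\frac{d-2}{2}}(2K \setminus \overline K)}$; restricting to $\bbR^{d} \setminus \overline K$ gives \eqref{eq:ext-id-a}. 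To preserve equivariance under constant gauge transformations one should use an extension operator that is linear (so that it commutes with the fixed linear map $Ad(O)$); classical extension operators are linear, so this is automatic. Next I would crudely extend $e$: pick a cutoff $\zt$ equal to $1$ on a neighborhood of $\bbR^{d} \setminus 2K$ and supported in $\bbR^{d} \setminus \overline{K}$, use a linear extension $\te$ of $e$ as above, and set $e_{0} := \zt\, \te$, so that $e_{0}$ agrees with $e$ on the region where $\zt \equiv 1$ (which we can arrange to include, say, $2K \setminus \tfrac{3}{2}K$) but not necessarily on all of $2K \setminus \overline K$. This is the point where one must be slightly careful: to match $(a,e)$ \emph{exactly} on $2K \setminus \overline K$, I would instead keep $e_{0}$ literally equal to $e$ on $2K \setminus \overline K$ and only modify it further inside $\bbR^{d} \setminus 2K$, which is disjoint from the matching region.

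The core step is the correction. The naive pair $(\ba, e_{0})$ satisfies the constraint on $2K \setminus \overline K$ (where it equals $(a,e)$, which is a Yang--Mills initial data set there) but produces an error $h := {}^{(\ba)}\covD^{\ell}(e_{0})_{\ell}$ supported in the exterior region $\bbR^{d} \setminus 2K$ (since on $2K \setminus \overline K$ the divergence vanishes, and inside $\overline K$ everything is undefined/irrelevant because we only need data on $\bbR^{d}\setminus\overline K$). Here I would need to check that $h \in \dot H^{\frac{d-6}{2}}$, which follows from $\ba \in \dot H^{\frac{d-2}{2}}$ and $e_{0} \in \dot H^{\frac{d-4}{2}}$ by the product estimates underlying Theorem~\ref{thm:gauss-0} (the map $e \mapsto {}^{(a)}\covD^\ell e_\ell$ carries $\dot H^{\frac{d-4}{2}}$ into $\dot H^{\frac{d-6}{2}}$ with norm $\aleq_{\nrm{a}_{\dot H^{(d-2)/2}}} 1$). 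Now apply Theorem~\ref{thm:gauss-0} with the convex set $K$ (or rather a convex set whose dilate $\lmb K$ contains $\overline{2K}$ but is contained in the support-region of $h$—any fixed convex shape centered appropriately works, and $L$ of it is still controlled by $L(K)$): since $h$ is supported outside $\overline{2K} \supseteq \lmb' K$ for a suitable $\lmb'$, the solution $\be_{\mathrm{corr}} := T_{\ba}[h]$ is also supported outside $\overline{2K}$, and obeys $\nrm{\be_{\mathrm{corr}}}_{\dot H^{\frac{d-4}{2}}} \aleq_{\nrm{\ba}_{\dot H^{(d-2)/2}}, L(K)} \nrm{h}_{\dot H^{\frac{d-6}{2}}} \aleq_{\nrm{a}_{\dot H^{(d-2)/2}(2K \setminus \overline K)}, L(K)} \nrm{e}_{\dot H^{\frac{d-4}{2}}(2K \setminus \overline K)}$. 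Setting $\be := e_{0} - \be_{\mathrm{corr}}$ gives ${}^{(\ba)}\covD^{\ell}\be_{\ell} = h - h = 0$, i.e. $(\ba, \be)$ is a genuine Yang--Mills initial data set on $\bbR^{d} \setminus \overline K$; and since $\be_{\mathrm{corr}}$ vanishes on $2K \setminus \overline K$, we have $\be = e$ there, so the matching condition holds and \eqref{eq:ext-id-e} follows by the triangle inequality.

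Finally, equivariance under constant gauge transformations $O \in \G$: the linear extension operators commute with $Ad(O)$; the cutoffs $\zt, \chi$ are scalar, hence unaffected; and $T_{a}$ is equivariant in the sense ${}^{(Ad(O)a)} \covD^\ell (Ad(O) e)_\ell = Ad(O)\, {}^{(a)}\covD^\ell e_\ell$ together with uniqueness-of-construction of $T_a$ (one should either build $T_a$ equivariantly in the proof of Theorem~\ref{thm:gauss-0}, or average over $\G$—but the former is cleaner and I expect it is what the construction there delivers). Smoothness propagation is immediate from the "higher regularity" clause of Theorem~\ref{thm:gauss-0} and the fact that the extension operators and cutoffs preserve $C^\infty$. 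The main obstacle I anticipate is bookkeeping the geometry so that (i) the error $h$ is genuinely supported in the exterior region where Theorem~\ref{thm:gauss-0}'s exterior-support property applies, and (ii) the correction $\be_{\mathrm{corr}}$ is supported away from the annulus $2K \setminus \overline K$ where exact matching is demanded; choosing the convex set fed into Theorem~\ref{thm:gauss-0} to be a fixed round (or homothetic) copy of a neighborhood of $\overline{2K}$ reconciles both, at the cost of a harmless change in the implied constants through $L(\cdot)$.
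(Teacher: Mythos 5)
Your proposal follows essentially the same route as the paper: Sobolev-extend $(a,e)$ off the annulus, compute the resulting Gauss error $h = {}^{(\ba)}\covD^\ell \be'_\ell$ (which vanishes on $2K\setminus\overline K$), and invoke Theorem~\ref{thm:gauss-0} with the exterior-support property to produce a correction vanishing on $2K$, so that the matching region is untouched. The one place where the paper is sharper and you are slightly vague is the localization of $h$: the paper extends $e$ to all of $\bbR^{d}$, obtains an $h$ that may be nonzero \emph{inside} $\overline K$, and then multiplies by a smooth cutoff $\chi_{\mathrm{out}}$ (vanishing on $K$, equal to $1$ off $2K$) to produce $h_{\mathrm{out}}$ supported outside $2K$; since $(d-6)/2$ can be negative, this cutoff step is a genuine Hardy-type bound (absorbed into the $\dot H^{(d-6)/2}$ estimate), not a free "extend by zero." Your phrase that the interior-$K$ part of $h$ is "undefined/irrelevant" gestures at the same point but conceals this small analytic step; and on equivariance, the paper does \emph{not} build $T_a$ equivariantly but instead fixes a construction and averages over $\G$, contrary to your expectation.
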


At this point, it is useful to introduce a suitable generalization of
local energy for initial data sets at the optimal $L^{2}$-Sobolev
regularity. For $d \geq 4$ even, we make a gauge-invariant definition
\begin{equation*}
  \calE^{\frac{d-2}{2}}_{U}(a, e) = \nrm{(\covD^{(a)})^{(\frac{d-2}{2})} (F[a], e)}_{L^{2}(U)}^{2} + \nrm{(F[a], e)}_{L^{\frac{d}{2}}(U)}^{2}.
\end{equation*}
Note that this is equivalent to the energy when $d = 4$. For $d \geq
4$ odd, there is a nuisance that the optimal $L^{2}$-Sobolev
regularity involves a fractional derivative. Here, we take an easy way
out, and make a gauge-dependent definition in this case:
\begin{equation*}
  \calE^{\frac{d-2}{2}}_{U}(a, e) = \nrm{(a, e)}_{\dot{H}^{\frac{d-2}{2}} \times \dot{H}^{\frac{d-4}{2}}(U)}^{2}.
\end{equation*}

Let $\eps_{\ast} > 0$. For $X = B_{R}$ or $\bbR^{d}$, we define the
notion of the (inner) critical $L^{2}$-Sobolev concentration scale
with threshold $\eps_{\ast}$ as follows:
\begin{align}
  \rc^{\eps_{\ast}} =& \rc^{\eps_{\ast}}[a, e] = \sup \set{r > 0:
    \calE^{\frac{d-2}{2}}_{X \cap B_{r}(x)}(a, e) \leq \eps_{\ast}^{2}
    \hbox{ for all } x \in X}, \label{eq:conc-scale-id}
\end{align}
When $d = 4$, we call $\rc^{\eps_{\ast}}$ the \emph{energy
  concentration scale} with threshold $\eps_{\ast}$.

Combining Theorem~\ref{thm:ext-id} with Uhlenbeck's lemma, we also
obtain the following excision-and-extension result.

\begin{theorem} \label{thm:excise} Let $(a, e)$ be an
  $\calH^{\frac{d-2}{2}}_{loc}$ Yang--Mills initial data set on $X =
  B_{R}$ (resp. $X = \bbR^{d}$) with critical $L^{2}$-Sobolev
  concentration scale (with threshold $\eps_{\ast}$) at most
  $\rc$. Consider a ball $B_{r}(x)$ with radius $r < 10 \rc$ and
  $x \in X$. For $\eps_{\ast} > 0$ sufficiently small (as a universal
  constant), the following statements hold.
  \begin{enumerate}
  \item To $(a, e)$, we associate $(\ta, \te, O) \in
    \calH^{\frac{d-2}{2}}(\bbR^{d}) \times
    \calG^{\frac{d}{2}}(B_{r}(x) \cap X)$ such that $(\ta, \te)$ is
    gauge equivalent to $(a, e)$ on $B_{r}(x) \cap X$, i.e.,
    \begin{equation*}
      (\ta, \te) = (Ad(O) a - O_{;x}, Ad(O) e) \quad \hbox{ in } B_{r}(x) \cap X.
    \end{equation*}
    Moreover, $(\ta, \te)$ and $O$ obey the bounds
    \begin{align}
      \nrm{(\ta, \te)}_{\dot{H}^{\frac{d-2}{2}} \times
        \dot{H}^{\frac{d-4}{2}}}^{2}
      + r^{-(d-2)} \nrm{\ta}_{L^{2}}^{2} + r^{-(d-4)} \nrm{\te}_{L^{2}}^{2} \aleq & \calE^{\frac{d-2}{2}}_{B_{r}(x) \cap X}(a, e), \label{eq:excise-a} \\
      \nrm{O_{;x}}_{\dot{H}^{\frac{d-2}{2}}(B_{r}(x) \cap X)} \aleq &
      \nrm{a}_{\dot{H}^{\frac{d-2}{2}}(B_{r}(x) \cap
        X)}. \label{eq:excise-O}
    \end{align}
    When $d$ is odd, $O$ is a constant gauge transformation. If $(a,
    e)$ is smooth, then so are $(\ta, \te)$ and $O$.

  \item Let $\set{(a^{n}, e^{n})}$ be a sequence of
    $\calH^{\frac{d-2}{2}}$ Yang--Mills initial data sets on $B_{r}(x)
    \cap X$ such that $(a^{n}, e^{n}) \to (a, e)$ in
    $H^{\frac{d-2}{2}} \times H^{\frac{d-4}{2}}(B_{r}(x) \cap X)$. Let
    $(\ta^{n}, \te^{n}, O^{n})$ be given\footnote{Note that the
      hypothesis on the critical $L^{2}$-Sobolev concentration scale
      is satisfied for large enough $n$.} by (1) from $(a^{n},
    e^{n})$. Then after passing to a subsequence and suitably
    conjugating each $(\ta^{n}, \te^{n}, O^{n})$ with a constant gauge
    transformation, we have
    \begin{align*}
      (\ta^{n}, \te^{n}) \to (\ta, \te) \hbox{ in } H^{\frac{d-2}{2}}
      \times H^{\frac{d-4}{2}}(\bbR^{d}), \qquad O^{n} \to O \hbox{ in
      } H^{\frac{d}{2}}(B_{r}(x) \cap X).
    \end{align*}
  \end{enumerate}
\end{theorem}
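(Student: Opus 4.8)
\emph{Strategy and reductions.} The plan is to combine an Uhlenbeck-type local Coulomb gauge on $\calO := B_{r}(x) \cap X$ (the \emph{excision}) with the outward extension of Theorem~\ref{thm:ext-id} (the \emph{extension}), glued across the boundary of a slightly smaller copy of $\calO$. Note that $\calO$ is convex, being a ball or an intersection of two balls. Since every norm in \eqref{eq:excise-a}--\eqref{eq:excise-O} is invariant under the Yang--Mills scaling $a \mapsto \lmb a(\lmb \cdot)$, $e \mapsto \lmb^{2} e(\lmb \cdot)$ — which is exactly why the weights $r^{-(d-2)}$, $r^{-(d-4)}$ occur — we may translate and rescale so that $x = 0$ and $r = 1$; then $\calO \subseteq B_{1}$. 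The hypothesis $r < 10 \rc$ together with the concentration-scale bound and a finite covering argument then gives $\calE^{\frac{d-2}{2}}_{\calO}[a,e] \aleq_{d} \eps_{\ast}^{2}$, so in particular $\nrm{F[a]}_{L^{\frac{d}{2}}(\calO)}$ lies below Uhlenbeck's threshold once $\eps_{\ast}$ is small. We treat $d$ even and $d$ odd slightly differently, as $\calE^{\frac{d-2}{2}}$ is gauge-invariant in the former case but is the (gauge-dependent) quantity $\nrm{(a,e)}_{\dot{H}^{\frac{d-2}{2}} \times \dot{H}^{\frac{d-4}{2}}}^{2}$ in the latter.

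\emph{Excision.} For $d$ odd, take $O$ constant and $(\ta, \te) \restriction_{\calO} = (a,e)$: then $\nrm{(a,e)}_{\dot{H}^{\frac{d-2}{2}} \times \dot{H}^{\frac{d-4}{2}}(\calO)}^{2} = \calE^{\frac{d-2}{2}}_{\calO}[a,e]$ is already small, and \eqref{eq:excise-a}--\eqref{eq:excise-O} hold on $\calO$ (the $L^{2}$ pieces by Poincaré on the bounded set $\calO$, with the correct $r$-powers by scaling, and \eqref{eq:excise-O} trivially as $O_{;x} = 0$). For $d$ even, apply Uhlenbeck's lemma on $\calO$ in its quantitative form (\cite{MR648356}; see also Theorem~\ref{thm:goodrep-ball}; if $X = B_{R}$ and $\calO$ is not a ball, first extend the connection to a full ball) to obtain $O \in \calG^{\frac{d}{2}}(\calO)$ with $\ta^{(1)} := Ad(O) a - O_{;x}$ in Coulomb gauge on $\calO$ and
\begin{equation*}
  \nrm{\ta^{(1)}}_{\dot{W}^{1, \frac{d}{2}}(\calO)} \aleq \nrm{F[a]}_{L^{\frac{d}{2}}(\calO)}, \qquad \nrm{O_{;x}}_{\dot{H}^{\frac{d-2}{2}}(\calO)} \aleq \nrm{a}_{\dot{H}^{\frac{d-2}{2}}(\calO)},
\end{equation*}
the second estimate being exactly \eqref{eq:excise-O}. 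With $\te^{(1)} := Ad(O) e$, the pair $(\ta^{(1)}, \te^{(1)})$ solves ${}^{(\ta^{(1)})}\covD^{\ell} \te^{(1)}_{\ell} = 0$ on $\calO$ by gauge covariance of the constraint; since $\calE^{\frac{d-2}{2}}$ is gauge-invariant and $\ta^{(1)}$ is small in $\dot{W}^{1, \frac{d}{2}}(\calO)$ — so covariant derivatives may be traded for ordinary ones up to absorbable errors — the higher-regularity part of Uhlenbeck's lemma yields $\nrm{(\ta^{(1)}, \te^{(1)})}_{\dot{H}^{\frac{d-2}{2}} \times \dot{H}^{\frac{d-4}{2}}(\calO)}^{2} + \nrm{\ta^{(1)}}_{L^{2}(\calO)}^{2} + \nrm{\te^{(1)}}_{L^{2}(\calO)}^{2} \aleq \calE^{\frac{d-2}{2}}_{\calO}[a,e]$, again using Poincaré on $\calO$. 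Higher regularity, and smoothness of $O$ and of $(\ta^{(1)}, \te^{(1)})$ when $(a,e)$ is smooth, are part of Uhlenbeck's lemma.

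\emph{Extension and gluing.} Let $x_{\calO}$ be the barycenter of $\calO$ and set $K := x_{\calO} + \tfrac{1}{2}(\calO - x_{\calO})$, a convex set with $\br{K} \subseteq \calO \subseteq 2K$ and $L(K) = L(\calO) \aleq 1$ by scale- and translation-invariance of $L$ (assuming $\calO$ is a fat domain). Since $2K \setminus \br{K} \subseteq \calO$, the pair $(\ta^{(1)}, \te^{(1)})$ restricts to an $\calH^{\frac{d-2}{2}}$ Yang--Mills initial data set there, and Theorem~\ref{thm:ext-id} applied with this $K$ gives $(\ba, \be)$ on $\bbR^{d} \setminus \br{K}$, agreeing with $(\ta^{(1)}, \te^{(1)})$ on $2K \setminus \br{K}$ and obeying \eqref{eq:ext-id-a}--\eqref{eq:ext-id-e} (the constant in \eqref{eq:ext-id-e} being $O(1)$ since $\nrm{\ta^{(1)}}_{\dot{H}^{\frac{d-2}{2}}(\calO)} \aleq \eps_{\ast}$). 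Define $(\ta, \te)$ to equal $(\ta^{(1)}, \te^{(1)})$ on $\calO$ and $(\ba, \be)$ on $\bbR^{d} \setminus \br{K}$; this is well-defined on $\bbR^{d}$, the two definitions coinciding on the overlap $\calO \setminus \br{K} = 2K \setminus \br{K}$, and it solves the Gauss equation on all of $\bbR^{d}$ since it is locally one of the two constrained fields. On $\calO$ it equals $(Ad(O) a - O_{;x}, Ad(O) e)$, as required. The bound \eqref{eq:excise-a} follows from the two preceding estimates, \eqref{eq:ext-id-a}--\eqref{eq:ext-id-e}, and standard localization/patching estimates for (homogeneous) Sobolev functions on bounded domains via a partition of unity subordinate to $\set{B_{2}, \bbR^{d} \setminus \br{K}}$, the commutator terms being absorbed into the $L^{2}$ norms on the bounded overlap using Poincaré; smoothness propagates from the corresponding statements of Uhlenbeck's lemma and Theorem~\ref{thm:ext-id}. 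This proves part (1).

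\emph{Continuity (part (2)) and the main difficulty.} For large $n$ the concentration-scale hypothesis holds for $(a^{n}, e^{n})$, so part (1) applies. The construction $(a,e) \mapsto (\ta, \te, O)$ is continuous in the data \emph{modulo} its only intrinsic ambiguity: Uhlenbeck's Coulomb gauge is determined by $a$ only up to left multiplication of $O$ by a constant $g \in \G$, under which $(\ta, \te, O) \mapsto (Ad(g) \ta, Ad(g) \te, gO)$ — here one invokes the equivariance of Theorem~\ref{thm:ext-id} under constant gauge transformations. Once a normalization of this constant is fixed, Uhlenbeck's gauge depends continuously on $a \in \dot{H}^{\frac{d-2}{2}}$ (its defining elliptic system is solved by a fixed-point argument in the small-curvature regime), the solution operator $T_{a}$ of Theorem~\ref{thm:gauss-0} underlying the extension depends continuously on $a$ and is linear in its other argument, and the cutoffs and the preliminary extension of the connection to a ball are fixed. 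Since $\G$ is compact, passing to a subsequence along which the normalization discrepancies $g^{n} \in \G$ converge and conjugating each $(\ta^{n}, \te^{n}, O^{n})$ accordingly, the convergences $(\ta^{n}, \te^{n}) \to (\ta, \te)$ in $H^{\frac{d-2}{2}} \times H^{\frac{d-4}{2}}(\bbR^{d})$ and $O^{n} \to O$ in $H^{\frac{d}{2}}(\calO)$ follow from the continuity just described (for $d$ odd, $O^{n}$ is already constant and this is immediate). The genuinely nonlocal obstacle — that the Gauss constraint forbids cutting off and extending by zero — is entirely absorbed into the black boxes Theorem~\ref{thm:gauss-0} and Theorem~\ref{thm:ext-id}; granting those, the residual delicate points are the homogeneous-Sobolev patching across $\partial K$, keeping $L(K)$ scale-invariant and $O(1)$ when $X = B_{R}$ and $\calO$ threatens to degenerate into a thin cap (where the preliminary extension to a ball and the choice of $K$ must be done with care), and, for part (2), propagating continuity simultaneously through Uhlenbeck's gauge and $T_{a}$ while disposing of the $\G$-valued ambiguity by compactness.
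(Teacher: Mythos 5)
Your overall strategy is the same as the paper's: Uhlenbeck's local Coulomb gauge for the excision, Theorem~\ref{thm:ext-id} for the extension, gluing across a concentric rescaled copy of $\calO = B_{r}(x) \cap X$, with part~(2) handled by equivariance of the extension under constant gauge transformations and compactness of $\G$. The reductions, the odd-$d$ shortcut, and the scaling bookkeeping all track the paper's argument.

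The genuine gap is in the excision step when $X = B_{R}$, $d > 4$ even, and $B_{r}(x)$ meets $\partial B_{R}$. You propose to ``extend the connection to a full ball'' and then obtain \eqref{eq:excise-a} by ``trading covariant derivatives for ordinary ones up to absorbable errors,'' invoking gauge-invariance of $\calE^{\frac{d-2}{2}}$. That is not sufficient to cross $\partial B_{R}$ at critical regularity. What the paper actually does is first normalize $a_{r} = 0$ on $\partial B_{R}$ (Lemma~\ref{lem:Ar=0}) and then reflect $a$ across $\partial B_{R}$ by the inversion of Lemma~\ref{lem:ext-simple}, so that Uhlenbeck's lemma applies on a true ball $B_{2r}(x)$. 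The point you are missing is that this reflection preserves $\abs{F}$ pointwise — that is precisely \eqref{eq:ext-simple} — but it does \emph{not} preserve $\abs{\covD^{(m)} F}$: under $r \mapsto R^{2}/r$ the radial covariant derivative acquires a variable Jacobian factor, and only the \emph{angular} covariant derivatives $\scovD^{(m)} F$ transfer isometrically. Consequently the interior elliptic regularity Lemma~\ref{lem:div-curl-A-intr} alone yields \eqref{eq:excise-a} only for interior balls $B_{2r}(x) \subseteq B_{R}$. For boundary balls the paper must split the argument: first extract angular regularity of $\ta$ from $\scovD^{(m)} F$ via Lemma~\ref{lem:div-curl-A-tang}, then recover radial regularity from the first-order radial transport system \eqref{eq:div-curl-rad}. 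Your proposal, taken literally, requires controlling $\covD^{(m)}F$ of the reflected connection on $B_{2r}(x) \setminus B_{R}$ in terms of the original data on $B_{r}(x) \cap B_{R}$, which the reflection does not give you.

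A secondary imprecision: in part~(2) you assert that Uhlenbeck's gauge ``is solved by a fixed-point argument in the small-curvature regime'' and hence ``depends continuously on $a$.'' That is not how the continuity in Theorem~\ref{thm:uhlenbeck-ball}.(2) is established: there is no contraction; the argument is weak compactness along a subsequence, uniqueness modulo constant gauge transformations, and an upgrade to strong convergence through the div-curl system, and the higher-order convergence required here comes from the persistence-of-regularity argument of Remark~\ref{rem:uhlenbeck-cont}. Your eventual bookkeeping (pass to a subsequence, conjugate by a convergent $g^{n} \in \G$) does reach the same conclusion, but the mechanism you name would not deliver it directly.
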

\begin{remark}
  Theorems~\ref{thm:ext-id} and \ref{thm:excise} have a similar flavor
  to the so-called \emph{initial data gluing} procedure in general
  relativity \cite{ChDe, Co, CoSch}, which is a method to remove an
  error in the constraint equation while keeping physical space
  localization properties. See \cite{OT1} for an adaptation of this
  procedure for the Maxwell--Klein--Gordon constraint equation at the
  critical regularity, which had a similar role as
  Theorems~\ref{thm:ext-id} and \ref{thm:excise} in the present
  paper. We also note that an initial data extension theorem,
  analogous to Theorem~\ref{thm:ext-id}, was recently proved for the
  vacuum Einstein equation at the $L^{2}$-curvature regularity
  \cite{Czi1, Czi2}.
\end{remark}
As is evident from (2), it is natural to view the association $(a, e)
\mapsto (\ta, \te, O)$ in (1) as defined up to a constant gauge
transformation.

\subsection{Local theory in an arbitrary topological
  class} \label{subsec:local} We present the third set of results of
this paper, which concern local theory of \eqref{eq:ym} for arbitrary
$\calH^{\frac{d-2}{2}}_{loc}$ initial data set. The main local
well-posedness results in the temporal gauge
(Theorems~\ref{thm:local-temp} and \ref{thm:local-temp-sub}) are
proved as consequences of the finite speed of propagation property of
\eqref{eq:ym}, the results in Section~\ref{subsec:excise} and small
data well-posedness results \cite{OTYM2, TaoYM}.

We start with a (rather general) basic definition of a solution.
\begin{definition} \label{def:ym-sol-rough}
  \begin{enumerate}
  \item An \emph{$\calH^{\frac{d-2}{2}}_{loc}$ connection} in an open
    set $\calO \subseteq \bbR^{1+d}$ is a connection $\covD = \ud + A$
    satisfying
    \begin{equation*}
      (A, \rd_{t} A) \in C_{t} H^{\frac{d-2}{2}}_{loc} \times C_{t} H^{\frac{d-4}{2}}_{loc}(\calO).
    \end{equation*}

  \item An \emph{$\calH^{\frac{d-2}{2}}$ solution for the hyperbolic
      Yang--Mills equation \eqref{eq:ym}} in $\calO$ is an
    $\calH^{\frac{d-2}{2}}_{loc}$ connection $\covD = \ud + A$ in
    $\calO$ which is the limit of regular solutions in the topology
    $C_{t} H^{\frac{d-2}{2}}_{loc} \times C_{t}
    H^{\frac{d-4}{2}}_{loc}(\calO)$.
  \end{enumerate}
\end{definition}
It is straightforward to see that the set of
$\calH^{\frac{d-2}{2}}_{loc}$ solutions is closed with respect to the
$C_{t} H^{\frac{d-2}{2}}_{loc} \times C_{t} H^{\frac{d-4}{2}}_{loc}$
topology.

Next, we formulate the notion of gauge covariance of
$\calH^{\frac{d-2}{2}}_{loc}$ connections, as follows:
\begin{definition} \label{def:ym-sol-rough-gt}
  \begin{enumerate}
  \item A \emph{regular gauge transformation} in an open set $\calO
    \subseteq \bbR^{1+d}$ is a map $O : \calO \to \G$ with the
    regularity properties $O_{;t, x} \in C_{t} H^{N}_{loc}$.
  \item An \emph{admissible gauge transformation} in $\calO$ is a map
    $O : \calO \to \G$ with the regularity properties $O_{;t, x} \in
    C_{t} H^{\frac{d-2}{2}}_{loc}$.
  \item We say that two $\calH^{\frac{d-2}{2}}$ connections $A^{(1)}$
    and $A^{(2)}$ in $\calO$ are gauge equivalent if there exists an
    admissible gauge transformation $O$ in $\calO$ such that
    $A^{(2)}_{j} = Ad(O) A^{(1)}_{j} - O_{;j}$.
  \end{enumerate}
\end{definition}

Any admissible gauge transformation may be approximated by regular
gauge transformations in $C_{t} H^{\frac{d}{2}}_{loc}$ (the proof is a
straightforward variant of Lemma~\ref{lem:part-approx} below, and is
left to the reader). As a consequence, if $A$ and $A'$ are gauge
equivalent $\calH^{\frac{d-2}{2}}$ connections in $\calO$, $A$ is a
$\calH^{\frac{d-2}{2}}$ solution to \eqref{eq:ym} if and only if $A'$
is. Moreover, the class of gauge-equivalent connections is closed:
\begin{proposition} \label{prop:closed-class} The class $[A]$ of
  gauge-equivalent $\calH^{\frac{d-2}{2}}$ connections is closed in
  the topology $C_{t} H^{\frac{d-2}{2}}_{loc} \times C_{t}
  H^{\frac{d-4}{2}}_{loc}(\calO)$
\end{proposition}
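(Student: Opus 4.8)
The plan is to show that if $A^{(n)} \in [A]$ converges to some $A^{(\infty)}$ in $C_{t} H^{\frac{d-2}{2}}_{loc} \times C_{t} H^{\frac{d-4}{2}}_{loc}(\calO)$, then $A^{(\infty)} \in [A]$, i.e., there is an admissible gauge transformation $O$ with $A^{(\infty)}_{j} = Ad(O) A_{j} - O_{;j}$ (and $A^{(\infty)}_{0} = Ad(O) A_{0} - O_{;0}$). For each $n$, let $O^{(n)}$ be the admissible gauge transformation with $A^{(n)} = Ad(O^{(n)}) A - O^{(n)}_{;t,x}$. The first step is to extract uniform local bounds on $O^{(n)}$: from the relation $O^{(n)}_{;t,x} = Ad(O^{(n)}) A - A^{(n)}$, together with the uniform bound on $\|A^{(n)}\|$ (convergent sequences are bounded) and the fixed $A$, one gets a uniform $C_{t} H^{\frac{d-2}{2}}_{loc}$ bound on $O^{(n)}_{;t,x}$; since $O^{(n)}$ is $\G$-valued, $O^{(n)}$ itself is uniformly bounded, so $(O^{(n)}, O^{(n)}_{;t,x})$ is uniformly bounded in $C_{t}(H^{\frac{d}{2}}_{loc} \times H^{\frac{d-2}{2}}_{loc})$ on every compact subset of $\calO$. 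Here I would use the multiplication and inversion lemmas for $\calG^{2,\frac{d}{2}}_{loc}$ (Lemmas~\ref{lem:mult}, \ref{lem:inv}, \ref{lem:d-inv}) in their spacetime-slice form, exactly as invoked elsewhere in the paper.

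The second step is compactness: on a compact exhaustion $\calK_{1} \Subset \calK_{2} \Subset \cdots$ of $\calO$, the uniform $H^{\frac{d}{2}}_{loc}$ bound and Rellich–Kondrachov give a subsequence with $O^{(n)} \to O$ strongly in $C_{t} H^{s}_{loc}$ for every $s < \frac{d}{2}$, and weakly in $C_{t} H^{\frac{d}{2}}_{loc}$; a diagonal argument over $\calK_{j}$ produces a single limit $O$ defined on all of $\calO$ with $O_{;t,x} \in C_{t} H^{\frac{d-2}{2}}_{loc}$, i.e., $O$ is admissible. One must check that the pointwise constraint $O(t,x) \in \G$ passes to the limit, which it does because a.e.\ convergence (along a further subsequence) preserves membership in the closed set $\G \subset \bbR^{N \times N}$. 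The third step is to pass to the limit in the gauge relation $A^{(n)}_{j} = Ad(O^{(n)}) A_{j} - O^{(n)}_{;j}$: the left side converges in $C_{t} H^{\frac{d-4}{2}}_{loc}$ by hypothesis, the term $O^{(n)}_{;j} \rightharpoonup O_{;j}$ in $C_{t} H^{\frac{d-4}{2}}_{loc}$, and for $Ad(O^{(n)}) A_{j} = O^{(n)} A_{j} (O^{(n)})^{-1}$ one combines the strong subcritical convergence of $O^{(n)}$ with the fixed $A_{j} \in H^{\frac{d-2}{2}}_{loc}$ to identify the (weak) limit as $Ad(O) A_{j}$ — continuity of the bilinear/trilinear product map under the relevant $H^{s}_{loc}$ topologies, which is precisely the kind of estimate encapsulated by the multiplication lemmas. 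Hence $A^{(\infty)}_{j} = Ad(O) A_{j} - O_{;j}$, and likewise for the time component, so $A^{(\infty)} \in [A]$.

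The main obstacle I anticipate is the product convergence in step three at the critical regularity: $Ad(O^{(n)}) A_{j}$ is a product of a critically-regular factor with a factor converging only subcritically, so one cannot simply multiply limits in the critical space. The resolution is that we only need the limit \emph{as a distribution} (or in a slightly subcritical $H^{s}_{loc}$), where strong convergence of $O^{(n)}$ together with the \emph{uniform} critical bound suffices — a standard weak-strong / interpolation argument. A secondary technical point is making the diagonalization compatible with the time variable: since all bounds are in $C_{t}$ of a local Sobolev space with $t$ ranging over a fixed interval, one should phrase the compactness with time included (e.g.\ via Arzelà–Ascoli in $t$ with values in $H^{s}_{loc}$, using the equation to control $\rd_{t} O^{(n)}$), but this is routine given the uniform bounds on $O^{(n)}_{;t,x}$. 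No genuinely new idea beyond the bookkeeping of these uniform bounds and the multiplication lemmas is required.
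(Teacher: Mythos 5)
Your overall approach is the same as the paper's: extract uniform local bounds on $O^{(n)}$ from the relation $O^{(n)}_{;t,x}=Ad(O^{(n)})A-A^{(n)}$, use compactness to pass to a weakly/subcritically convergent subsequence, and pass to the limit in the gauge relation using a weak-strong argument for the bilinear terms. That part of your argument is correct and matches the paper in structure.

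The one place where your proposal is genuinely gappy is the final step — showing that the limit $O$ is an \emph{admissible} gauge transformation, i.e.\ $O_{;t,x}\in C_{t}H^{\frac{d-2}{2}}_{loc}$ rather than merely $L^{\infty}_{t}H^{\frac{d-2}{2}}_{loc}$. You suggest resolving this with Arzel\`a--Ascoli in $t$ "using the equation to control $\rd_{t}O^{(n)}$," and call it routine. But the uniform bound $O^{(n)}_{;t,x}\in L^{\infty}_{t}H^{\frac{d-2}{2}}_{loc}$ only gives equicontinuity of $O^{(n)}$ with values in $H^{\frac{d-2}{2}}_{loc}$, and combined with Rellich this yields $O^{(n)}\to O$ in $C_{t}H^{s}_{loc}$ only for $s<\frac{d}{2}$ — strictly subcritical. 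The limit of $\nb O^{(n)}$ is therefore only $L^{\infty}_{t}H^{\frac{d-2}{2}}_{loc}$ by weak-$\ast$ lower semicontinuity. The paper makes precisely this point ("This cannot come from weak convergence") and proves continuity in time at the \emph{critical} regularity differently: it uses the algebraic relation $\nb O = Ad(O)A - B$ together with the a.e.\ convergence of $O^{(n)}$ and the $C_{t}L^{d}_{loc}$ (and higher) continuity-in-time already known for the connections $A$ and $B$, then bootstraps up the derivative ladder. This is the extra idea missing from your proposal; once you insert it, the proof is complete.
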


With the basic notion of a solution in our hands, we are ready to
discuss the local theory of \eqref{eq:ym} for
$\calH^{\frac{d-2}{2}}_{loc}$ initial data sets. Given a subset $X$ of
$\bbR^{d}$ and a time interval $I$, denote by $\calD_{I}(X)$ the
future domain of dependence of $X$, intersected with $I \times
\bbR^{d}$:
\begin{equation*}
  \calD_{I}(X) = \set{(t, x) \in [0, \infty) \times \bbR^{d} : B_{t}(x) \subseteq X} \cap I \times \bbR^{d}.
\end{equation*}

In \cite{OTYM2}, global well-posedness of \eqref{eq:ym} in the
temporal gauge for small $\dot{\calH}^{\frac{d-2}{2}}$ data on
$\bbR^{d}$ was proved for dimensions\footnote{The exposition of
  \cite{OTYM2} is focused on the case $d = 4$, but the proof extends
  in a straightforward manner to $d \geq 4$.} $d \geq 4$ (see
Theorem~\ref{thm:small-temp} below). Combined with the
excision-and-extension result in Section~\ref{subsec:excise} and the
finite speed of propagation property in the temporal gauge, we obtain:

\begin{theorem}[Local well-posedness at optimal regularity, $d \geq
  4$] \label{thm:local-temp} For $d \geq 4$, there exists a
  dimensional constant $\eps_{\ast} > 0$ such that the Yang--Mills
  equation in the temporal gauge is locally well-posed on the time
  interval of length $\rc^{\eps_\ast} = \rc^{\eps_\ast} [a, e]$ for initial
  data $(a, e) \in \calH^{\frac{d-2}{2}}_{loc}(X)$ for $X = B_{R}$ or
  $\bbR^{d}$. More precisely, the following statements hold.
  \begin{enumerate}
  \item (Regular data) Let $(a, e) $ be a smooth Yang--Mills initial
    data set on $X$. Then there exists a unique smooth solution
    $A_{t,x}$ to the Yang--Mills equation in the temporal gauge on
    $\calD_{[0, \rc)}(X)$ such that $(A_{j}, F_{0j})
    \restriction_{\set{t = 0}} = (a_{j}, e_{j})$.

  \item (Rough data) Let $\calH^{\frac{d-2}{2}}_{loc, \, \rc}(X)$ be
    the class of $\calH^{\frac{d-2}{2}}_{loc}(X)$ Yang--Mills initial
    data sets with concentration scale $\geq \rc$, topologized with
    the norm
    \begin{equation*}
      \nrm{(a, e)}_{\calH^{\frac{d-2}{2}}_{loc, \, \rc}(X)} = \sup_{x \in X} \nrm{(a, e)}_{\dot{H}^{\frac{d-2}{2}} \times \dot{H}^{\frac{d-4}{2}} (B_{\rc}(x) \cap X)}.
    \end{equation*}
    Then the data-to-solution map admits a continuous extension
    \begin{equation} \label{eq:local-temp-cont}
      \calH^{\frac{d-2}{2}}_{loc, \, \rc}(X) \ni (a, e) \mapsto
      (A_{x}, \rd_{t} A_{x}) \in C_{t} \calH^{\frac{d-2}{2}}_{loc, \,
        \rc}(\calD_{[0, \rc)}(X)).
    \end{equation}
  \item (A-priori bound) The solution defined as above obeys the
    a-priori bound
    \begin{equation} \label{eq:local-temp-bnd} \nrm{(A, \rd_{t}
        A)}_{L^{\infty} (H^{\frac{d-2}{2}} \times H^{\frac{d-4}{2}})
        (\calD_{[0, \rc)}(B_{R'}(x)))} \aleq \nrm{(a,
        e)}_{H^{\frac{d-2}{2}} \times H^{\frac{d-4}{2}}(B_{R'}(x))}
    \end{equation}
    for any $B_{R'}(x) \subseteq X$.
  \end{enumerate}
\end{theorem}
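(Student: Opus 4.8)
The plan is to upgrade the small-data global well-posedness theorem of \cite{OTYM2} (Theorem~\ref{thm:small-temp}) to large data by localizing the initial data with the excision-and-extension results of Section~\ref{subsec:excise}, solving each localized problem globally in the temporal gauge, and then reassembling the pieces using finite speed of propagation. Throughout, fix $\eps_{\ast}>0$ smaller than the small-data threshold of Theorem~\ref{thm:small-temp} and small enough for Theorems~\ref{thm:gauss-0}--\ref{thm:excise}, and set $\rc=\rc^{\eps_{\ast}}[a,e]$. For the localization, note that $(t_{0},x_{0})\in\calD_{[0,\rc)}(X)$ satisfies $t_{0}<\rc$ and $\overline{B_{t_{0}}(x_{0})}\subseteq X$, so one may choose $r_{0}$ with $t_{0}<r_{0}<\rc$ and $\overline{B_{r_{0}}(x_{0})}\subseteq X$; call the balls $B$ obtained this way \emph{admissible}. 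It is straightforward that $\calD_{[0,\rc)}(X)$ is the union of the (future) domains of dependence $\calD(B)$ over admissible $B$, and that $\calD(B)\cap\calD(B')=\calD(B\cap B')$. For each admissible $B=B_{r_{0}}(x_{0})$, since $r_{0}<\rc$ forces $\calE^{\frac{d-2}{2}}_{B\cap X}[a,e]\le\eps_{\ast}^{2}$, Theorem~\ref{thm:excise}(1) produces $(\ta,\te)\in\calH^{\frac{d-2}{2}}(\bbR^{d})$ and a \emph{time-independent} $O\in\calG^{\frac{d}{2}}(B)$ with $(\ta,\te)=(Ad(O)a-O_{;x},Ad(O)e)$ on $B$, together with $\nrm{(\ta,\te)}_{\dot{H}^{\frac{d-2}{2}}\times\dot{H}^{\frac{d-4}{2}}}\aleq\eps_{\ast}$ by \eqref{eq:excise-a} and $\nrm{O_{;x}}_{\dot{H}^{\frac{d-2}{2}}(B)}\aleq\nrm{a}_{\dot{H}^{\frac{d-2}{2}}(B)}$ by \eqref{eq:excise-O}.

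Next I would feed $(\ta,\te)$ into Theorem~\ref{thm:small-temp}, obtaining a global temporal-gauge solution $\tilde A$ on $[0,\infty)\times\bbR^{d}$ with $\nrm{(\tilde A,\rd_{t}\tilde A)}_{C_{t}(\dot{H}^{\frac{d-2}{2}}\times\dot{H}^{\frac{d-4}{2}})}\aleq\eps_{\ast}$ (smooth when $(a,e)$ is, since excision preserves smoothness and small-data solutions have persistence of higher regularity). Since the temporal-gauge system, after imposing $A_{0}=0$, is a semilinear wave equation for $A$ with the Gauss constraint propagated, finite speed of propagation gives that $\tilde A\restriction_{\calD(B)}$ depends only on $(\ta,\te)\restriction_{B}$. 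Extending $O$ to be independent of $t$ on $\calD(B)$ yields an admissible gauge transformation (Definition~\ref{def:ym-sol-rough-gt}) which, being $t$-independent, preserves the temporal gauge; let $A_{B}$ be the conjugate of $\tilde A\restriction_{\calD(B)}$ by $O^{-1}$. Using the identity $Ad(O^{-1})O_{;x}+(O^{-1})_{;x}=0$ one checks that $A_{B}$ is an $\calH^{\frac{d-2}{2}}$ temporal-gauge solution on $\calD(B)$ realizing the data $(a,e)$ on $B$. When $(a,e)$ is smooth, each $A_{B}$ is smooth, and for admissible $B,B'$ the solutions $A_{B},A_{B'}$ coincide on $\calD(B\cap B')=\calD(B)\cap\calD(B')$ by uniqueness for the smooth temporal-gauge Cauchy problem on a domain of dependence (local uniqueness by energy estimates, globalized by finite speed of propagation), both having data $(a,e)$ there. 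Hence the $A_{B}$ glue to a smooth temporal-gauge solution $A$ on $\calD_{[0,\rc)}(X)$ with data $(a,e)$, unique by the same argument; this gives part~(1).

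For part~(2) I would show that for each admissible $B$ the assignment $\Phi_{B}\colon(a,e)\restriction_{B}\mapsto A_{B}\restriction_{\calD(B)}$ is continuous, from $\calH^{\frac{d-2}{2}}$-data on $B$ satisfying the concentration-scale bound into $C_{t}(\dot{H}^{\frac{d-2}{2}}\times\dot{H}^{\frac{d-4}{2}})(\calD(B))$: indeed $\Phi_{B}$ factors as excision (continuous \emph{up to a constant gauge rotation}, Theorem~\ref{thm:excise}(2)), then the small-data solution map of \cite{OTYM2} (continuous), then restriction, then conjugation by $O^{-1}$ (continuous by the multiplication estimates of Lemmas~\ref{lem:mult}, \ref{lem:inv} and \ref{lem:d-inv}), and conjugating $(\ta,\te,O)$ by a constant $g\in\G$ conjugates $\tilde A$ by $g$ and therefore leaves $A_{B}$ unchanged, so the ambiguity is harmless. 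Since $\Phi_{B}$ agrees with the part~(1) construction on smooth data, uniqueness for $\calH^{\frac{d-2}{2}}$ temporal-gauge solutions on domains of dependence follows by writing two such solutions as limits of regular ones and invoking continuity of $\Phi_{B'}$ for $\overline{B'}\subseteq B$; this consistency glues $\{A_{B}\}$ into a well-defined $\calH^{\frac{d-2}{2}}$ temporal-gauge solution $A$ on $\calD_{[0,\rc)}(X)$ with data $(a,e)$, which is an $\calH^{\frac{d-2}{2}}$ solution since that class is closed (noted after Definition~\ref{def:ym-sol-rough}). Continuity of the data-to-solution map in the topology of \eqref{eq:local-temp-cont} then follows from that of the $\Phi_{B}$, since each compact subset of $\calD_{[0,\rc)}(X)$ is covered by finitely many $\calD(B)$. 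Finally, \eqref{eq:local-temp-bnd} follows by covering $\calD_{[0,\rc)}(B_{R'}(x))$ by admissible balls $B\subseteq B_{R'}(x)$, combining on each the small-data bound with \eqref{eq:excise-a}, \eqref{eq:excise-O} and the same product estimates, and summing over the finitely many balls, at the cost of an implicit constant depending on $R'/\rc$.

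I expect the main obstacle to be the gauge bookkeeping in the patching step: the locally constructed solutions sit a priori in different gauges, and one must be certain they glue \emph{in the temporal gauge}. The resolution rests on two facts --- that the excision gauge transformation $O$ from Theorem~\ref{thm:excise} is time-independent, so conjugation by it preserves $A_{0}=0$, and that the temporal-gauge Cauchy problem has uniqueness on domains of dependence, so fixing the data $(a,e)$ pins the solution down and the glued object is well-defined. A secondary point, relevant only to part~(2), is to verify that the constant-gauge ambiguity in the continuous dependence statement of Theorem~\ref{thm:excise}(2) does not propagate to the reassembled temporal-gauge solution; this follows from gauge-covariance of \eqref{eq:ym} under constant rotations, as indicated above.
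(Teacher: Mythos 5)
Your proposal is correct and follows essentially the same strategy as the paper: localize the data via the excision-and-extension Theorem~\ref{thm:excise}, feed the small global data into the small-data temporal-gauge theorem of \cite{OTYM2}, conjugate back by the time-independent $O^{-1}$ (which preserves $A_0=0$), and glue by local-in-spacetime uniqueness/finite speed of propagation, with the constant-gauge ambiguity in Theorem~\ref{thm:excise}(2) neutralized exactly as you describe. The only small inaccuracy is that your ``admissible'' balls are required to satisfy $\overline{B_{r_0}(x_0)}\subseteq X$, so their domains of dependence miss boundary points of the truncated cone $\calD_{[0,\rc)}(X)$ when $X=B_R$ (e.g.\ points with $t_0+\abs{x_0}=R$); the paper avoids this by taking intersections $B_{2\rc}(x)\cap X$ for $x\in X$ (which are convex, as Theorem~\ref{thm:excise} requires) and gluing over $\bigcup_{x\in X}\calD(B_{2\rc}(x)\cap X)\supseteq\calD_{[0,\rc)}(X)$.
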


The temporal gauge solution given by Theorem~\ref{thm:local-temp}
represents any $\calH^{\frac{d-2}{2}}_{loc}$ solution in the sense of
Definition~\ref{def:ym-sol-rough}.
\begin{theorem} \label{thm:equiv-temp} Any
  $\calH^{\frac{d-2}{2}}_{loc}$ solution to the hyperbolic Yang--Mills
  equation in $\calD_{I}(X)$ (where $X = B_{R}$ or $\bbR^{d}$) can be
  put into the temporal gauge.
\end{theorem}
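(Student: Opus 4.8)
The plan is to establish the claim first for smooth solutions via a transport ODE, and then for rough solutions by approximation, treating Theorem~\ref{thm:local-temp} and Proposition~\ref{prop:closed-class} as black boxes. For a smooth solution $A$ on $\calD_{I}(X)$, write $(a, e) = (A_{j}, F_{0j})\restriction_{\set{t=0}}$ and solve the linear transport ODE $\rd_{t} O = O A_{0}$ along each line $\set{x} \times I$ with $O\restriction_{\set{t=0}} = \mathrm{id}$; this produces a smooth $\G$-valued $O$ on $\calD_{I}(X)$, and by construction the transformed connection $\tA := Ad(O) A - O_{;\cdot}$ satisfies $\tA_{0} = 0$, $(\tA_{j}, F_{0j}[\tA])\restriction_{\set{t=0}} = (a, e)$, and, by gauge covariance, solves \eqref{eq:ym}. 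By the uniqueness statement of Theorem~\ref{thm:local-temp}(1), $\tA$ coincides with the temporal-gauge solution with data $(a, e)$ constructed there wherever both are defined. Hence every smooth solution can be put into the temporal gauge, and the result is \emph{the} solution furnished by Theorem~\ref{thm:local-temp}.

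For a general $\calH^{\frac{d-2}{2}}_{loc}$ solution $A$ in $\calD_{I}(X)$, Definition~\ref{def:ym-sol-rough} exhibits $A$ as a $C_{t} H^{\frac{d-2}{2}}_{loc} \times C_{t} H^{\frac{d-4}{2}}_{loc}$ limit of smooth solutions $A^{(n)}$; their data $(a^{(n)}, e^{(n)})$ then converge to $(a, e) := (A_{j}, F_{0j})\restriction_{\set{t=0}}$ and, for large $n$, have concentration scale bounded below. Let $O^{(n)}$ and $\tA^{(n)} = Ad(O^{(n)}) A^{(n)} - O^{(n)}_{;\cdot}$ be as in the smooth case. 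By the continuity part of Theorem~\ref{thm:local-temp}(2), $\tA^{(n)} \to \tA$ in $C_{t} H^{\frac{d-2}{2}}_{loc} \times C_{t} H^{\frac{d-4}{2}}_{loc}$, where $\tA$ is the temporal-gauge solution with data $(a, e)$; in particular $\tA$ is an $\calH^{\frac{d-2}{2}}_{loc}$ solution with $\tA_{0} = 0$. It then remains only to show that $A$ is gauge-equivalent to $\tA$ through an \emph{admissible} gauge transformation in the sense of Definition~\ref{def:ym-sol-rough-gt}, which amounts to showing $O^{(n)} \to O$ in $C_{t} H^{\frac{d}{2}}_{loc}$ with $\rd_{t} O^{(n)} \to \rd_{t} O$ in $C_{t} H^{\frac{d-2}{2}}_{loc}$.

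The transport ODE by itself is not enough here: for the rough limit it yields only $\rd_{t} O = O A_{0}$ and no spatial control on $O$ beyond $O \in L^{\infty}$. The additional regularity must be extracted from the spatial identities $\rd_{j} O^{(n)} = O^{(n)} A^{(n)}_{j} - \tA^{(n)}_{j} O^{(n)}$, which hold because $\tA^{(n)}$ is literally the gauge transform of $A^{(n)}$ by $O^{(n)}$. On any ball of radius at most the concentration scale, the local theory together with finite speed of propagation in the temporal gauge bounds $\nrm{A^{(n)}_{j}}$ and $\nrm{\tA^{(n)}_{j}}$ in $C_{t} \dot{H}^{\frac{d-2}{2}}$ by $C \eps_{\ast}$; inserting this into the spatial identities and using the Leibniz/Sobolev product estimates for $\dot{W}^{1, d} \cap L^{\infty}$ acting on $\dot{H}^{\frac{d-2}{2}}$ gives, on comparable balls, $\nrm{O^{(n)}}_{C_{t} \dot{H}^{\frac{d}{2}}} \aleq \eps_{\ast}\, \nrm{O^{(n)}}_{C_{t} \dot{H}^{\frac{d}{2}}} + (\hbox{lower order})$, so that for $\eps_{\ast}$ small, after a continuous induction on the ball radius, the top-order term is absorbed and the $H^{\frac{d}{2}}_{loc}$ norm of $O^{(n)}$ is uniform in $n$. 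Running the same scheme on the differences $O^{(n)} - O^{(m)}$, whose right-hand sides are controlled by $A^{(n)} - A^{(m)}$ and $\tA^{(n)} - \tA^{(m)}$, shows $\set{O^{(n)}}$ is Cauchy in $C_{t} H^{\frac{d}{2}}_{loc}$, with admissible limit $O$ (the bound on $\rd_{t} O$ then following from the transport equation). Passing to the limit in $\tA^{(n)} = Ad(O^{(n)}) A^{(n)} - O^{(n)}_{;\cdot}$ yields $\tA = Ad(O) A - O_{;\cdot}$, so $A$ and $\tA$ are gauge-equivalent $\calH^{\frac{d-2}{2}}_{loc}$ connections (alternatively, once the uniform bounds are in hand, one concludes via Proposition~\ref{prop:closed-class}). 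The crux of the argument, and the only real obstacle, is precisely this absorption step that upgrades the gauge transformation from $L^{\infty} \cap C_{t}(\mathrm{id} + H^{\frac{d-2}{2}}_{loc})$ to the admissible class.
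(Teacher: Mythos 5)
Your outline matches the paper's proof: approximate $A$ by smooth solutions $A^{(n)}$, identify the temporal-gauge solutions $\tA^{(n)}$ via the transport ODE and uniqueness in Theorem~\ref{thm:local-temp}(1), use continuous dependence to get $\tA^{(n)} \to \tA$, obtain uniform local bounds on the gauge transformations $O^{(n)}$, and pass to the limit; the paper packages the last step into Proposition~\ref{prop:closed-class} (which handles compactness and weak limits of admissible gauge transformations once and for all), so your Cauchy argument in $C_{t} H^{\frac{d}{2}}_{loc}$ is a more-than-sufficient alternative. There is, however, a gap in the absorption step. You assert that on balls of radius at most the concentration scale, \emph{both} $\nrm{A^{(n)}_{j}}_{C_{t} \dot{H}^{\frac{d-2}{2}}}$ and $\nrm{\tA^{(n)}_{j}}_{C_{t} \dot{H}^{\frac{d-2}{2}}}$ are bounded by $C\eps_{\ast}$, and you need both to be small since, in the identity $\rd_{j} O^{(n)} = O^{(n)} A^{(n)}_{j} - \tA^{(n)}_{j} O^{(n)}$, each of $A^{(n)}$ and $\tA^{(n)}$ multiplies $\rd O^{(n)}$ at top order after one more differentiation. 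The claim is correct for $\tA^{(n)}$ (it is the temporal-gauge evolution of the small excised-and-extended data, hence globally $\aleq \eps_{\ast}$ by Theorem~\ref{thm:small-temp}), but not for $A^{(n)}$: this is the approximating solution in an unspecified gauge, whose $H^{\frac{d-2}{2}}_{loc}$ norm is close to that of $A$ and in general not small. The concentration scale is defined in terms of the gauge-covariant quantity $\calE^{\frac{d-2}{2}}$, which controls the curvature, not the size of an arbitrary gauge potential on a ball (a pure-gauge connection has zero curvature but can have arbitrarily large $\dot H^{\frac{d-2}{2}}$ norm). As written, the absorption does not close.

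The fix is to differentiate the equivalent identity $(O^{(n)})^{-1}_{;j} = Ad((O^{(n)})^{-1}) \tA^{(n)}_{j} - A^{(n)}_{j}$ instead. Its derivative is $[(O^{(n)})^{-1}_{;k}, Ad((O^{(n)})^{-1})\tA^{(n)}_{j}] + Ad((O^{(n)})^{-1})\rd_{k}\tA^{(n)}_{j} - \rd_{k} A^{(n)}_{j}$, so the only top-order product involving $(O^{(n)})^{-1}_{;x}$ carries the small factor $Ad((O^{(n)})^{-1}) \tA^{(n)}$, while $A^{(n)}$ enters only additively through $\rd A^{(n)}$. Since $\abs{O^{(n)}_{;x}} = \abs{(O^{(n)})^{-1}_{;x}}$ pointwise, this gives the desired uniform bound on $O^{(n)}$ without any smallness assumption on $A^{(n)}$. (Alternatively, $A^{(n)} \to A$ strongly in $C_{t} L^{\frac{2d}{d-2}}_{loc}$ by Sobolev embedding, so by uniform integrability $\nrm{A^{(n)}}_{L^{\frac{2d}{d-2}}(B)}$ is small on sufficiently small balls, where the radius is adapted to $A$ rather than to $\rc$; this is the quantity the absorption actually needs.)
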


When $X = \bbR^{d}$, we say that $A$ is a $\calH^{\frac{d-2}{2}}$
solution to the hyperbolic Yang--Mills equation in $I \times \bbR^{d}$
if it is an $\calH^{\frac{d-2}{2}}_{loc}$ solution, and moreover
satisfies the following condition for every $t \in I$:
\begin{equation} \label{eq:tail}
  \calE^{\frac{d-2}{2}}_{\bbR^{d}}(A_{x}(t) , F_{0x}(t)) < \infty.
\end{equation}
By Uhlenbeck's lemma and Theorem~\ref{thm:local-temp}.(3),
\eqref{eq:tail} holds for every $t \in I$ if it holds for its data
$(a, e)$ at some $t \in I$.  For such a solution, the topological
class of $A_{x}(t)$ is preserved under the hyperbolic Yang--Mills
evolution.
\begin{proposition} \label{prop:top-class-ym} Let $A$ be an
  $\calH^{\frac{d-2}{2}}$ solution to \eqref{eq:ym} in $I \times
  \bbR^{4}$. Then $[A_{x}(t)]$ is constant in $t$.
\end{proposition}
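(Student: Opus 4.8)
The plan is to use that the topological class $[A_{x}(t)]$ is a homotopy invariant — an element of the set of homotopy classes of maps $\bbS^{3} \to \G$ — and hence cannot vary continuously; since $I$ is connected, it therefore suffices to show that $t \mapsto [A_{x}(t)]$ is \emph{locally} constant. First one should record that $[A_{x}(t)]$ is defined for every $t \in I$: since $A$ is an $\calH^{\frac{d-2}{2}}$ solution on $I \times \bbR^{4}$, the tail condition \eqref{eq:tail} holds at every $t$, so $F[A_{x}(t)] \in L^{2}(\bbR^{4})$; then $\rcsp^{\eps_{\ast}}[A_{x}(t)] > 0$ and $\Rcsp^{\eps_{\ast}}[A_{x}(t)] < \infty$ follow by absolute continuity of the integral, so Theorem~\ref{thm:goodrep} applies and furnishes a good representative, hence a topological class. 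To prove local constancy at a fixed $t_{0} \in I$, I would apply Proposition~\ref{prop:top-class-outer} with $A = A_{x}(t_{0})$, $A' = A_{x}(t)$ for $t$ near $t_{0}$, and a suitable radius $R_{0} = R_{0}(t_{0})$.

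To choose $R_{0}$: since $\calE_{\set{t_{0}} \times \bbR^{4}}[A] < \infty$ (part of \eqref{eq:tail} at $t_{0}$), fix, again by absolute continuity, a radius $\rho_{0} = \rho_{0}(t_{0})$ with $\calE_{\set{t_{0}} \times (\bbR^{4} \setminus B_{\rho_{0}})}[A] \le \eps_{\ast}^{2}$, and set $R_{0} = \rho_{0} + 1$. Two of the three hypotheses of Proposition~\ref{prop:top-class-outer} are then easy. The exterior-curvature bound for $A_{x}(t_{0})$ holds a fortiori, since $\nrm{F[A_{x}(t_{0})]}_{L^{2}(\bbR^{4} \setminus B_{R_{0}})}^{2} \le \calE_{\set{t_{0}} \times (\bbR^{4} \setminus B_{\rho_{0}})}[A] \le \eps_{\ast}^{2}$. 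The closeness hypothesis $\nrm{A_{x}(t) - A_{x}(t_{0})}_{L^{4}(B_{5R_{0}})} \le \eps_{\ast}$ (for $t$ near $t_{0}$) follows from $A \in C_{t} H^{\frac{d-2}{2}}_{loc}$ and the Sobolev embedding $\dot{H}^{1}(\bbR^{4}) \hookrightarrow L^{4}$, which make $t \mapsto A_{x}(t)$ continuous into $L^{4}(B_{5R_{0}})$.

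The crux is the remaining hypothesis, $\nrm{F[A_{x}(t)]}_{L^{2}(\bbR^{4} \setminus B_{R_{0}})} \le \eps_{\ast}$ for $t$ near $t_{0}$. Here I would invoke finite speed of propagation for \eqref{eq:ym}, in the form of monotonicity of the exterior energy: integrating the divergence identity $\rd^{\alp} T_{\alp 0} = 0$ for the (gauge-invariant) energy--momentum tensor over the truncated exterior cone $\set{(s, x) : t_{0} \le s \le t, \ \abs{x} \ge \rho_{0} + (s - t_{0})}$, and discarding the nonnegative energy flux through the outgoing null wall, gives, for $t > t_{0}$,
\begin{equation*}
  \calE_{\set{t} \times (\bbR^{4} \setminus B_{\rho_{0} + (t - t_{0})})}[A] \le \calE_{\set{t_{0}} \times (\bbR^{4} \setminus B_{\rho_{0}})}[A] \le \eps_{\ast}^{2},
\end{equation*}
and symmetrically for $t < t_{0}$ by time reversal. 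Since $\bbR^{4} \setminus B_{R_{0}} \subseteq \bbR^{4} \setminus B_{\rho_{0} + \abs{t - t_{0}}}$ whenever $\abs{t - t_{0}} < 1$, this yields $\nrm{F[A_{x}(t)]}_{L^{2}(\bbR^{4} \setminus B_{R_{0}})} \le \eps_{\ast}$ on that interval. With all three hypotheses verified for $t$ sufficiently close to $t_{0}$, Proposition~\ref{prop:top-class-outer} gives $[A_{x}(t)] = [A_{x}(t_{0})]$; hence $t \mapsto [A_{x}(t)]$ is locally constant, and therefore constant on $I$.

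I expect the main obstacle to be the justification of the exterior energy monotonicity for \emph{rough} $\calH^{\frac{d-2}{2}}$ solutions. By Definition~\ref{def:ym-sol-rough} such a solution is only a limit of regular solutions in $C_{t} H^{\frac{d-2}{2}}_{loc} \times C_{t} H^{\frac{d-4}{2}}_{loc}$, a topology with no uniformity at spatial infinity, so passing the displayed inequality to the limit requires ruling out curvature energy escaping from — or entering at — spatial infinity along the approximating sequence. This is exactly the point at which the global hypothesis (that $A$ is an $\calH^{\frac{d-2}{2}}$ solution on $I \times \bbR^{4}$, not merely an $\calH^{\frac{d-2}{2}}_{loc}$ one) and the persistence of \eqref{eq:tail} — noted before the Proposition, via Uhlenbeck's lemma and Theorem~\ref{thm:local-temp}.(3) — must be used. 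A convenient way to organize the limiting argument is to first pass to the temporal gauge via Theorem~\ref{thm:equiv-temp}, in which $A$ is a limit of regular temporal-gauge solutions and the exterior energy estimate above is classical, and then transfer the resulting exterior curvature bound back, the curvature, and hence the exterior energy, being gauge-invariant.
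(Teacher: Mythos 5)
Your proposal follows the same overall strategy as the paper's proof: reduce to the temporal gauge via Theorem~\ref{thm:equiv-temp}, prove local constancy in $t$ via Proposition~\ref{prop:top-class-outer}, use the time-continuity \eqref{eq:local-temp-cont} (which, combined with Sobolev embedding, gives $L^{4}_{loc}$ closeness) to secure the closeness hypothesis, and use finite speed of propagation to propagate the exterior curvature smallness from $t_{0}$ to nearby $t$. Up to this point the two arguments coincide.

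The mechanism you propose for the last step --- integrating the stress-energy divergence identity over a truncated exterior cone --- differs from the paper's, and this is where your argument has a gap that you correctly flag but do not close. To justify the exterior flux inequality for the rough solution you need to pass to the limit along the regular approximants, but Definition~\ref{def:ym-sol-rough} only gives convergence in $C_{t} H^{\frac{d-2}{2}}_{loc}$, so neither the exterior energy of the approximants at time $t_{0}$ nor at time $t$ has any reason to converge to the exterior energy of $A$. Lower semicontinuity gives one-sided control, but the flux inequality needs the other direction on the $t_{0}$ slice, and manufacturing approximants whose exterior energy is controlled is essentially as hard as the bound you are after. Switching to temporal gauge and ``transferring back'' rearranges but does not remove this difficulty: the temporal-gauge approximants are still only local limits. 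The paper sidesteps the issue entirely: it applies the a-priori domain-of-dependence bound \eqref{eq:local-temp-bnd} of Theorem~\ref{thm:local-temp}.(3) --- which is already established for the rough temporal-gauge solution itself, uniformly over all balls --- on a cover of $\bbR^{4} \setminus \overline{B_{R}}$, and converts the resulting gauge-dependent $H^{1}$ bounds on $A_{x}$ into the gauge-invariant $L^{2}$ bound on $F[A_{x}(t)]$ via Uhlenbeck's lemma. You mention Theorem~\ref{thm:local-temp}.(3) in passing, so the right ingredient is on your radar; the fix is to invoke it directly in place of re-deriving a flux inequality in the rough category.
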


The temporal gauge is convenient in order to deal with causality, but
it lacks good dispersive bounds in contrast to the caloric gauge
\cite{OTYM2} (cf. also the small data result in the Coulomb gauge in
\cite{KT}). In a different global gauge, the caloric gauge regularity
may be patched up, as the following sample result demonstrates:

\begin{theorem} \label{thm:imp-reg} Let $A$ be an
  $\calH^{\frac{d-2}{2}}_{loc}$ solution to \eqref{eq:ym} in
  $\calD_{[0, \rc)}(B_{R})$, whose initial data set has critical
  $L^{2}$-Sobolev concentration scale $\geq \rc$ with sufficiently
  small $\eps_{\ast} > 0$. In a suitable global gauge in $D = [0,
  \rc) \times B_{R - 4 \rc}$, the solution obeys
  \begin{equation} \label{eq:imp-reg} \nrm{\nb A_{x}}_{L^{\infty}
      \dot{H}^{\frac{d-4}{2}}(D)} + \nrm{\Box A_{x}}_{\ell^{1} L^{2}
      \dot{H}^{\frac{d-5}{2}}(D)} + \nrm{\nb A_{0}}_{\ell^{1} L^{2}
      \dot{H}^{\frac{d-3}{2}}(D)} \aleq_{\eps_{\ast}, \frac{R}{\rc}}
    1.
  \end{equation}
\end{theorem}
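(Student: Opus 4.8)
The plan is to reduce matters, by localization in physical space, to the small-data caloric-gauge theory of \cite{OTYM2}, and then to reassemble the resulting local gauges into a single good global gauge on $D$ using the excision--extension and patching machinery developed earlier in this paper.

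\emph{Step 1 (localizing the data).} Fix a finitely overlapping cover of $B_{R-4\rc}$ by balls whose centers form a $\rc$-net $\set{x_{j}}$, and set $B_{j} := B_{2\rc}(x_{j})$. Then $B_{j} \subseteq B_{R}$, the truncated forward cones $C_{j} := \calD_{[0, \rc)}(B_{j}) = \set{(t,x) : \abs{x - x_{j}} + t < 2\rc, \ 0 \le t < \rc}$ are contained in $\calD_{[0,\rc)}(B_{R})$, and they cover $D$. By the concentration-scale hypothesis, $\calE^{\frac{d-2}{2}}_{B_{j}}[a, e] \aleq \eps_{\ast}^{2}$ (cover $B_{j}$ by boundedly many balls of radius $< \rc$), so Theorem~\ref{thm:excise}.(1) applies to the initial data $(a,e)$ of $A$ and the ball $B_{j}$. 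It produces a global Yang--Mills data set $(\ta^{j}, \te^{j}) \in \calH^{\frac{d-2}{2}}(\bbR^{d})$ with $\nrm{(\ta^{j}, \te^{j})}_{\dot{H}^{\frac{d-2}{2}} \times \dot{H}^{\frac{d-4}{2}}} \aleq \eps_{\ast}$, together with $O^{j} \in \calG^{\frac{d}{2}}(B_{j})$ such that $(\ta^{j}, \te^{j}) = (Ad(O^{j}) a - O^{j}_{;x}, Ad(O^{j}) e)$ on $B_{j}$. (For $(a,e)$ smooth everything here is smooth; the rough case follows by the approximation and continuity statements in Theorem~\ref{thm:excise}.(2) and Definition~\ref{def:ym-sol-rough}.)

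\emph{Step 2 (small-data solutions and consistency).} Since $\eps_{\ast}$ is a small universal constant, the small-data global well-posedness theorem of \cite{OTYM2} applies to each $(\ta^{j}, \te^{j})$ and produces a global solution $\bar A^{j}$ of \eqref{eq:ym} which, in the caloric gauge, obeys the global analogue of \eqref{eq:imp-reg} with constant $\aleq_{\eps_{\ast}} 1$. By Theorem~\ref{thm:equiv-temp}, both $A$ (on $\calD_{[0,\rc)}(B_{R})$) and $\bar A^{j}$ admit temporal-gauge representatives; since $(\ta^{j}, \te^{j})$ agrees with the $O^{j}$-transform of $(a, e)$ on $B_{j}$, finite speed of propagation in the temporal gauge (with the uniqueness in Theorem~\ref{thm:local-temp}) forces these representatives to coincide on $C_{j}$ up to conjugation by the time-independent extension of $O^{j}$. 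Hence $\bar A^{j}$ is gauge equivalent to $A$ on $C_{j}$; in particular, on each nonempty overlap $C_{j} \cap C_{k}$ there is an admissible transition map $P^{jk}$ with $\bar A^{k}_{\mu} = Ad(P^{jk}) \bar A^{j}_{\mu} - P^{jk}_{;\mu}$, and the $P^{jk}$ satisfy the cocycle conditions. Thus $\set{C_{j}, P^{jk}, \bar A^{j}}$ describes a connection on a $\G$-bundle over $D$ with the same curvature as $A$.

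\emph{Step 3 (patching) and the main obstacle.} It remains to trivialize this bundle globally with quantitative bounds: ordering the finitely many cones and arguing inductively -- exactly as in the proofs of Theorems~\ref{thm:goodrep-ball} and \ref{thm:goodrep} -- one conjugates successively by admissible gauge transformations $Q^{j}$ so that the local representatives agree on the overlaps treated so far, producing a global gauge potential $\bar A$ on $D$, and one verifies that \eqref{eq:imp-reg} survives this finite composition. This last verification is the crux. Slice by slice, the first term $\nrm{\nb \bar A_{x}}_{L^{\infty}\dot{H}^{\frac{d-4}{2}}}$ is controlled under $C_{t}H^{\frac{d}{2}}_{loc}$ gauge transformations (this is essentially the content of Theorem~\ref{thm:goodrep-ball}), but $\nrm{\Box \bar A_{x}}_{\ell^{1}L^{2}\dot{H}^{\frac{d-5}{2}}}$ and $\nrm{\nb \bar A_{0}}_{\ell^{1}L^{2}\dot{H}^{\frac{d-3}{2}}}$ are \emph{not} gauge covariant: conjugating $\bar A_{x} \mapsto Ad(Q)\bar A_{x} - Q_{;x}$ produces, besides $Ad(Q)\Box \bar A_{x}$, terms involving $\Box Q$, $\nb Q \cdot \nb \bar A_{x}$ and lower-order products. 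Hence the maps $P^{jk}$ and $Q^{j}$ must be controlled not merely in $C_{t}H^{\frac{d}{2}}_{loc}$ but also at the level of $\Box$, in the scales appearing in \eqref{eq:imp-reg}. This control is available because $P^{jk}$ relates two solutions both satisfying \eqref{eq:imp-reg}-type bounds in the caloric gauge, so $P^{jk}$ solves a good elliptic(-parabolic) system with coefficients bounded by those norms; quantifying this estimate, and organizing the finitely overlapping induction so that the implicit constants depend only on $\eps_{\ast}$ and $R/\rc$, is the technical heart of the argument.
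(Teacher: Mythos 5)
Your overall strategy is the same one the paper uses: cover $B_{R-4\rc}$ by $\rc$-scale balls, invoke the excision--extension machinery (Theorem~\ref{thm:excise}) to produce small global data, solve in the caloric gauge via \cite{OTYM2}, and patch the resulting local gauges. You also correctly identify the crux: the $\ell^{1}L^{2}$ and $\Box$-type norms in \eqref{eq:imp-reg} are \emph{not} gauge covariant, so the transition maps must be controlled at the level of $\Box$, not merely in $C_{t}H^{\frac{d}{2}}_{loc}$. However, you leave exactly that step -- which constitutes the bulk of the paper's proof -- as an unexecuted remark (``$P^{jk}$ solves a good elliptic(-parabolic) system \ldots\ is the technical heart''). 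That is a genuine gap: without those estimates nothing in Step~3 actually closes.

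What the paper does there is a concrete four-stage bootstrap for each transition map $O$ between two caloric representatives on the overlap. From the gauge relation $\tA_{(\alpha)}=Ad(O)\tA_{(\beta)}-O_{;t,x}$ together with \eqref{eq:small-cal-H} one reads off $O_{;x},O_{;t}\in L^{\infty}L^{d}\cap L^{2}L^{2d}$, and by reinserting, $O_{;x},O_{;t}\in L^{\infty}\dot H^{\frac{d-2}{2}}$. The key upgrade then comes from the div-curl system for $O_{;x}$ (cf.\ Lemma~\ref{lem:div-curl-O}): the curl is $[O_{;j},O_{;k}]$, and the divergence is controlled through $\rd^{\ell}\tA_{(\alpha)\ell}$, $\rd^{\ell}\tA_{(\beta)\ell}\in\ell^{1}L^{2}\dot H^{\frac{d-3}{2}}$ from \eqref{eq:small-cal-ell} plus a commutator, yielding $O_{;x}\in\ell^{1}(L^{\infty}\dot H^{\frac{d-2}{2}}\cap L^{2}\dot H^{\frac{d-1}{2}})$. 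Differentiating the gauge relation in $x$ and then in $t$ gives the same $\ell^{1}$ structure for $O_{;t}$ and for $\rd_{t}O_{;t}$, and finally $\Box O_{;x}\in\ell^{1}L^{2}\dot H^{\frac{d-5}{2}}$. Only with the package \eqref{eq:imp-reg-O} in hand -- which also shows $O$ is continuous and $L^{\infty}$-close to a constant, so that the subtleties of Section~\ref{subsec:rough-gt} evaporate -- is the quantitative patching applicable, and the $\Box A_{x}$ bound then follows by writing out \eqref{eq:patching-A-ball} and checking each commutator term. Your assertion that the system is ``elliptic(-parabolic)'' is also slightly off: the transition maps are handled purely through the elliptic div-curl system and differentiation of the algebraic gauge relation; no parabolic (heat flow) estimate is used at that stage.

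A smaller but not cosmetic point: you patch directly on the truncated cones $C_{j}$. The paper instead restricts each caloric solution to the cylinder $[0,\rc)\times B_{\rc}(x_{\alpha})$ sitting inside the corresponding cone, and then applies Scenario~(2) of Section~\ref{subsec:patching} slice by slice. This choice is load-bearing: the patching diffeomorphisms and cutoffs are then \emph{time-independent}, which is what allows the $\ell^{1}L^{2}$ and $\Box$ bounds of \eqref{eq:imp-reg-O} to pass mechanically to the composite transforms $P_{(\alpha)}$ in \eqref{eq:imp-reg-P}. With overlaps that are intersections of cones (as in your setup) the patching geometry is genuinely time-dependent and the bookkeeping does not reduce to Scenario~(2).
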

\begin{remark}
  The restriction to $[0, \rc) \times B_{R - 4 \rc}$ instead of
  $\calD_{[0, \rc)(B_{R})}$ is enforced merely to avoid technical
  issues near the boundary, and may be removed if desired. We do not
  pursue this improvement, since Theorem~\ref{thm:imp-reg} suffices
  for our application in \cite{OTYM3}.
\end{remark}
Finally, we discuss application of our techniques to the case of $d =
3$. For $X = B_{R}$ or $\bbR^{3}$, we topologize the space
$\calH^{\sgm}_{loc}(X)$ with the norm
\begin{equation*}
  \nrm{(a, e)}_{\calH^{\sgm}_{loc}(X)} = \sup_{x \in X} \nrm{(a, e)}_{H^{\sgm} \times H^{\sgm-1}(B_{1}(x) \cap X)}.
\end{equation*}
From the small data local well-posedness result of Tao \cite{TaoYM},
we obtain the following large data result:
\begin{theorem}[Local well-posedness in the temporal gauge, $d =
  3$] \label{thm:local-temp-sub} Let $\sgm > \frac{3}{4}$. The
  Yang-Mills equation in the temporal gauge is locally well-posed for
  initial data $(a, e) \in \calH^{\sgm}_{loc}(\bbR^{3})$ on a time
  interval of length $\geq T(\nrm{(a, e)}_{\calH^{\sgm}_{loc}})$.
\end{theorem}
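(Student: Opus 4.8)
The plan is to reduce this large–data statement to Tao's small–data local well-posedness theorem \cite{TaoYM} by exploiting the subcriticality $\sgm > \tfrac12 = \tfrac{d-2}{2}$, and then to patch the resulting local-in-spacetime solutions using finite speed of propagation in the temporal gauge. Write $M = \nrm{(a,e)}_{\calH^{\sgm}_{loc}(\bbR^{3})}$. First I would rescale to smallness: fixing $x_{0} \in \bbR^{3}$ and a scale $\rho \in (0,1]$ to be chosen, set $\tilde a(y) = \rho\, a(x_{0} + \rho y)$ and $\tilde e(y) = \rho^{2}\, e(x_{0} + \rho y)$ on $B_{2}(0)$. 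Since the Gauss equation \eqref{eq:YMconstraint} is scale-covariant, $(\tilde a, \tilde e)$ is again a Yang--Mills initial data set there, and since $\sgm > \tfrac12$ the rescaling is subcritical: using the monotonicity of the restriction seminorm under shrinking the domain, together with the embedding $H^{\sgm}(B_{1}) \hookrightarrow L^{p}$, $\tfrac1p = \tfrac12 - \tfrac\sgm3$ (valid for $\sgm < \tfrac32$), to control the $L^{2}$ tail on a small ball, one obtains
\begin{equation*}
  \nrm{(\tilde a, \tilde e)}_{H^{\sgm} \times H^{\sgm-1}(B_{2})} \aleq \rho^{\sgm - \frac12}\, M
\end{equation*}
uniformly in $x_{0}$. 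I would then fix $\rho = \rho(M)$ so that the right side is below all smallness thresholds appearing below.

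Next I would perform initial data surgery on $(\tilde a, \tilde e)$. Applying the excise-and-extend procedure of Theorem~\ref{thm:excise} — in its subcritical incarnation afforded by Proposition~\ref{prop:gauss-small}, which is available precisely because $\nrm{\tilde a}_{\dot H^{1/2}}$ has been made small — produces a Yang--Mills initial data set $(\ba, \be)$ on all of $\bbR^{3}$ which is gauge-equivalent on $B_{3/2}$, via a \emph{constant} gauge transformation (as $d = 3$ is odd), to $(\tilde a, \tilde e)$, is still small in $H^{\sgm} \times H^{\sgm-1}(\bbR^{3})$, and is smooth if $(\tilde a, \tilde e)$ is. The role of Theorem~\ref{thm:gauss-0} here is to repair the nonlocal error that \eqref{eq:YMconstraint} develops under a naive cutoff while preserving both support and the smallness. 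By \cite{TaoYM} there is then a temporal-gauge solution $\bar A$ to \eqref{eq:ym} on $[0,c) \times \bbR^{3}$ with data $(\ba, \be)$, for a universal $c > 0$, depending continuously on the data. Conjugating by the constant gauge transformation (which preserves the temporal gauge) and invoking finite speed of propagation, the restriction of $\bar A$ to $\calD_{[0,c)}(B_{1})$ depends only on $(\tilde a, \tilde e)|_{B_{1}}$; undoing the scaling yields a temporal-gauge solution $A^{(x_{0})}$ with data $(a,e)$ on $\calD_{[0, c\rho)}(B_{\rho}(x_{0}))$, with bounds depending only on $M$.

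To globalize, set $T = \tfrac12 \min(c,1)\, \rho(M)$. For $(t,x)$ with $0 \le t < T$ one has $B_{t}(x) \subseteq B_{\rho}(x)$, so $A^{(x)}$ is defined near $(t,x)$; put $A(t,x) := A^{(x)}(t,x)$. Finite speed of propagation and uniqueness of temporal-gauge solutions force $A^{(x)} = A^{(x')}$ on the domain of dependence of $B_{\rho}(x) \cap B_{\rho}(x')$ whenever $|x-x'|$ is small, so $A$ coincides with $A^{(x_{0})}$ near each $(t_{0},x_{0})$ with $t_{0} < T$, and is therefore a temporal-gauge Yang--Mills solution on $[0,T) \times \bbR^{3}$ with data $(a,e)$, unique in that class, smooth when $(a,e)$ is, and satisfying an a priori bound with constant depending only on $M$. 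Continuity of the data-to-solution map on $\calH^{\sgm}_{loc}(\bbR^{3})$ then follows by chaining the continuity of the rescaling, of the excise-and-extend step (Theorem~\ref{thm:excise}(2), subcritical version), of Tao's solution map, and of the restriction and patching operations, along the fixed threshold $\rho(M)$.

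The crux is the excision-and-extension step: one must kill the \emph{nonlocal} Gauss-constraint error of a localized data set without enlarging its physical-space support and while keeping the $H^{\sgm} \times H^{\sgm-1}$ norm small, and at the subcritical regularity $e \in \dot H^{\sgm}$ the constraint error of a naive cutoff a priori lies only in the poorly-behaved space $\dot H^{\sgm-2}$; this is exactly the difficulty that Proposition~\ref{prop:gauss-small} overcomes, crucially under the smallness of $\nrm{a}_{\dot H^{1/2}}$ arranged by the rescaling. A lesser but genuine technical point is to reconcile Tao's gauge and solution conventions with ours and to confirm that his solutions enjoy the finite-speed-of-propagation property invoked in the patching.
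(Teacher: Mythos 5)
Your proof follows essentially the same route as the paper's: rescale by the subcritical gain $\lmb^{\sgm-\frac12}$ to reduce to small $\calH^{\sgm}_{loc}$ data, apply the subcritical excise-and-extend step (the paper packages this as Theorem~\ref{thm:excise-3}, built on Proposition~\ref{prop:gauss-small}, exactly as you describe), feed the result into Tao's temporal-gauge small-data theorem, and patch by finite speed of propagation, yielding a time of existence depending only on $\nrm{(a,e)}_{\calH^{\sgm}_{loc}}$. This matches the paper's argument in substance and in the choice of key lemmas.
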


Moreover, the techniques of this paper lead to an alternative proof
of the classical result of Klainerman--Machedon \cite{KlMa2}:
\begin{theorem} \label{thm:KM} The Yang--Mills equation in the
  temporal gauge is globally well-posed for initial data $(a, e) \in
  \calH^{1}_{loc}(\bbR^{3})$.
\end{theorem}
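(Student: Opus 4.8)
The plan is to globalize the large-data local well-posedness of Theorem~\ref{thm:local-temp-sub}, applied at the regularity $\sgm = 1$ (which is admissible since $1 > \tfrac{3}{4}$), by means of a conservation-of-energy a priori bound together with finite speed of propagation in the temporal gauge.

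\emph{Reduction to an a priori bound.} By Theorem~\ref{thm:local-temp-sub} at $\sgm = 1$ and a standard continuation argument, the $\calH^{1}_{loc}$ solution exists on a maximal interval $[0, T^{\ast})$ and, if $T^{\ast} < \infty$, then $\nrm{(A, \rd_{t}A)(t)}_{\calH^{1}_{loc}} \to \infty$ as $t \to T^{\ast}$. It therefore suffices to establish, for every $T > 0$, a bound $\sup_{[0,T]}\nrm{(A, \rd_{t}A)(t)}_{\calH^{1}_{loc}} \le C(T, \nrm{(a,e)}_{\calH^{1}_{loc}})$ valid on any interval of existence. (Alternatively, one can first use the excision-and-extension technique of Section~\ref{subsec:excise} --- in $d = 3$ the smallness of $\nrm{a}_{\dot{H}^{1/2}}$ required by Proposition~\ref{prop:gauss-small} is arranged by localizing to sufficiently small balls, cf.\ Remark~\ref{rem:gauss-low-d} --- together with finite speed of propagation to reduce to data of finite \emph{total} energy; either route is equivalent in the end.)

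\emph{The a priori bound from the energy.} Since $\calH^{1}_{loc}$ solutions are limits of regular ones (Definition~\ref{def:ym-sol-rough}), the energy-flux monotonicity on backward light cones --- valid for smooth Yang--Mills solutions since the stress-energy tensor satisfies the dominant energy condition --- passes to the limit, giving $\calE_{\set{t}\times B_{1}(x)}[A(t), \rd_{t}A(t)] \le \calE_{\set{0}\times B_{1+t}(x)}[a, e]$, and the right-hand side is $\aleq (1+t)^{3}\,Q(\nrm{(a,e)}_{\calH^{1}_{loc}})$ by covering $B_{1+t}(x)$ with $\aleq (1+t)^{3}$ unit balls and invoking the Sobolev embedding $H^{1}(B_{1}) \hookrightarrow L^{4}(B_{1})$ in $d = 3$ to absorb the term $[a,a]$ in $F[a]$. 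In the temporal gauge $F_{0j} = \rd_{t}A_{j}$, so this already controls $\nrm{\rd_{t}A(t)}_{L^{2}_{loc}}$ and $\nrm{F_{jk}(t)}_{L^{2}_{loc}}$; integrating $\rd_{t}A = F_{0x}$ in $t$ then gives $\nrm{A(t)}_{L^{2}_{loc}} \aleq \nrm{a}_{L^{2}_{loc}} + (1+t)^{5/2}Q^{1/2}$. The remaining task is to bound $\nrm{\nb A(t)}_{L^{2}_{loc}}$: decomposing $A$ into its curl-free and divergence-free parts, one has $\rd_{j}A_{k} - \rd_{k}A_{j} = F_{jk} - [A_{j}, A_{k}]$, while the divergence obeys the transport equation $\rd_{t}(\rd^{j}A_{j}) = -[A^{j}, F_{0j}]$ coming from the $\bt = 0$ component of \eqref{eq:ym}; combined with the curvature bounds these recover $\nb A$ in $L^{2}_{loc}$ once the quadratic terms are controlled.

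\emph{The main obstacle.} The crux is precisely this last step. In $d = 3$ the nonlinearity $[A,A]$ is \emph{scaling-critical at the $\dot{H}^{1}$ level}, so the naive estimate $\nrm{[A,A]}_{L^{2}} \aleq \nrm{A}_{L^{2}}^{1/2}\nrm{\nb A}_{L^{2}}^{3/2}$ is superlinear in $\nrm{\nb A}_{L^{2}}$ and does not close for large data; and the flux inequality alone only yields the useless bound $\rc^{\eps_{\ast}}[A(t),\rd_{t}A(t)] \ge \rc^{\eps_{\ast}}[a,e] - t$ for the energy-concentration scale. What is genuinely required is the dispersive structure of the temporal-gauge system --- the wave equation for the divergence-free part coupled to the transport equation for the divergence, estimated with the spacetime (Strichartz-type) bounds underpinning Tao's small-data theorem \cite{TaoYM}, now run over unit-length time steps with the conserved energy playing the role of the smallness hypothesis (so that on each short step the portions of the solution where $F$ concentrates are handled by the small-data result on small balls). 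Equivalently, one must show that the energy-concentration scale cannot degenerate in finite time. This is where the bulk of the work lies; within the framework of this paper it is the substitute for the null-form energy estimate of Klainerman--Machedon \cite{KlMa2} (and the argument of Eardley--Moncrief for smooth data). Once it is in place, iterating Theorem~\ref{thm:local-temp-sub} over $[0,T]$ for arbitrary $T$ produces the global solution.
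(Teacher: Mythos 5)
There is a genuine gap, and you have in fact located it yourself: your strategy of proving an a priori $\calH^{1}_{loc}$ bound on the \emph{temporal-gauge} potential via energy conservation does not close, because the energy controls only the curvature, not $\nb A$. Where the proposal goes astray is in the suggested repair. You gesture at ``the dispersive structure of the temporal-gauge system'', Strichartz-type bounds, and a non-degeneration statement for the energy-concentration scale, and you say ``this is where the bulk of the work lies''. That is not what the paper does, and importing the $d=4$ concentration-scale framework is the wrong instinct here: in $d=3$ the energy is \emph{subcritical}, so no concentration compactness is needed --- a simple rescaling suffices.

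The missing ingredient is \textbf{Uhlenbeck's lemma (in its subcritical form)}, which converts gauge-invariant smallness of the local curvature into smallness of the gauge potential in a new, locally constructed gauge. The paper's argument runs as follows. By energy conservation and finite speed of propagation (the only place your energy-flux monotonicity enters), the gauge-invariant quantity $\nrm{(F[A_x](t), F_{0x}(t))}_{L^2_{loc}}$ stays bounded on any compact time interval. Rescaling by $\lmb \aeq \eps_{\ast} \nrm{(F[a],e)}_{L^2_{loc}}^{-2}$ makes this quantity $\aleq \eps_{\ast}$. On each unit ball $2B$, Uhlenbeck's lemma \cite[Theorem~1.3]{MR648356} produces a \emph{static} gauge transformation $O \in \calG^{2,2}(2B)$ with $\nrm{O}_{H^2(B)} \aleq \nrm{a}_{H^1(2B)}$ in which the transformed data $(\ta,\te)$ obey $\nrm{(\ta,\te)}_{H^1 \times L^2(2B)} \aleq \nrm{(F[a],e)}_{L^2(2B)} \aleq \eps_{\ast}$. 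Tao's small-data theorem (Theorem~\ref{thm:small-temp-sub}) then gives a temporal-gauge solution $\tA$ on the unit time interval; since $O$ is time-independent, conjugating back by $O^{-1}$ yields a solution that is again temporal and lives in $\calD_{[0,1)}(2B)$. This agrees with the solution of Theorem~\ref{thm:local-temp-sub} by local-in-spacetime uniqueness, so it patches across balls. The upshot is that the existence time depends only on $\nrm{(F[a],e)}_{L^2_{loc}}$, not on $\nrm{(a,e)}_{\calH^1_{loc}}$ --- so iteration closes without ever bounding $\nb A$ in the temporal gauge. Your proposal never makes this re-gauging move, which is precisely the substitute in this framework for the Klainerman--Machedon null-form estimate.
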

An advantage of the present approach is that the delicate issue of
boundary values on spacetime cones (i.e., the domains of dependence of
balls) is avoided by the robust excision-and-extension procedure. We
note that yet another proof of Theorem~\ref{thm:KM} relying on a
global gauge defined by the Yang--Mills heat flow (a subcritical
version of the \emph{caloric gauge} we use in the present series
\cite{OTYM1, OTYM2, OTYM3}) was given by the first author \cite{Oh1,
  Oh2}.

\subsection{Topological classes, instantons and harmonic Yang--Mills
  connections on \texorpdfstring{$\bbR^{4}$}{R4}} \label{subsec:threshold} In this
subsection, we restrict to the energy critical dimension $d = 4$, and
discuss the relationship between the topological class of a connection
$a$ on $\bbR^{4}$ and its \emph{static energy}
\begin{equation} \label{eq:ym-har-en} \spE(a) = \calE_{\bbR^{4}}(a, 0)
  = \frac{1}{2} \int_{\bbR^{4}} \brk{F_{jk}[a], F^{jk}[a]} \, \ud x.
\end{equation}

Recall that each topological class $[a]$ of finite energy connections
form a path-connected component in the $\dot{H}^{1}$ distance up to
gauge transformations (Section~\ref{subsec:top-class}). We may
therefore look for an absolute minimizer of $\spE(a)$ in each
topological class; such a connection is called an
\emph{instanton}\footnote{Usually, one also distinguishes between an
  instanton and an anti-instanton, depending on whether the curvature
  is self- or anti-self-dual. Here, we make no such
  distinction.}. More generally, we refer to a critical point of
\eqref{eq:ym-har-en} as a \emph{harmonic Yang--Mills connection}.

Such connections are clearly static solutions to both the Yang--Mills
heat flow and the hyperbolic Yang--Mills equation, and hence
obstructions to convergence of solutions to the trivial connection (as well as
scattering). Moreover, these connections may also arise as ``bubbles''
near the singularity of a dynamic solution. Therefore, knowledge of the
energies of the harmonic Yang--Mills connections is necessary for
determining the precise threshold energy in the Threshold Theorem,
both for the Yang--Mills heat flow \cite{OTYM1} and for the hyperbolic
Yang--Mills equation \cite{OTYM3}.

We open our discussion with the important special case $\G =
  SU(2)$. The corresponding Lie algebra $\g = su(2)$ consists of $2
  \times 2$ complex anti-hermitean matrices with zero trace. We
  furthermore assume that the $Ad$-invariant inner product on $\g$
  takes the form
  \begin{equation*}
    \brk{A, B} = - \tr (A B).
  \end{equation*}
  In fact, as all $Ad$-invariant inner products on $\g$ are positive
  multiples of each other, there is no loss of generality.

  In this case, the topological classes of finite energy connections
  are classified by the second Chern number $c_{2}$, which takes the
  explicit form (via the Chern--Weil theory)
  \begin{equation} \label{eq:chern-no} c_{2} = \frac{1}{8 \pi^{2}}
    \int_{\bbR^{4}} \tr (F[a] \wedge F[a]).
  \end{equation}
  For any finite energy connection $a$, the second Chern number
  $c_{2}$ is an integer; in fact, it equals the degree of the
  $0$-homogeneous map $O$ (defined using the homeomorphism $SU(2)
  \simeq \bbS^{3}$) in Theorem~\ref{thm:goodrep}. A simple algebraic
  manipulation using the Hodge star operator\footnote{To define
    $\star$, we use the standard inner product on $2$-forms such that
    $\set{\ud x^{j} \wedge \ud x^{k} : j < k}$ is an orthonormal
    basis.} $\star$ shows that
  \begin{align*}
    \brk{F_{jk}[a], F^{jk}[a]} =& - \star 2 \tr(F \wedge \star F)  \\
    = & - \star \tr((F \pm \star F) \wedge \star (F \pm \star F)) \pm 2 \star \tr
    (F \wedge F) \\
    = & \frac{1}{2} \brk{F \pm \star F, F \pm \star F} \pm 2 \star \tr
    (F \wedge F) .
  \end{align*}
  Note that the first term on the last line is nonnegative. Integrating over $\bbR^{4}$, we obtain the \emph{Bogomoln'yi bound}
  \begin{equation} \label{eq:bog} \spE(a) \geq 8 \pi^{2} \abs{c_{2}}.
  \end{equation}
  The equality holds (in which case, $a$ is an instanton) if and only
  if $F = \mp \star F$, where $\pm$ is the sign of $c_{2}$. We call
  such a connection \emph{anti-self} or \emph{self dual},
  respectively. There is a beautiful theory due to
  Atiyah--Drinfeld--Hitchin--Manin \cite{ADHM}, which gives an explicit
  construction of all anti-self dual (resp. self-dual) connections
  with $c_{2} > 0$ (resp. $c_{2} < 0$).  In particular, we have:
  \begin{theorem}[\cite{ADHM}]\label{thm:instanton-SU2}
    For any $\kpp \in \bbZ$, there exists an instanton with $c_{2} = -
    \kpp$ and energy $8 \pi^{2} \abs{\kpp}$.
  \end{theorem}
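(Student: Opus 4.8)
The plan is to reduce the statement, via the Bogomol'nyi bound \eqref{eq:bog}, to the purely constructive task of exhibiting one self-dual or anti-self-dual finite-energy $SU(2)$-connection on $\bbR^{4}$ for each value of the second Chern number. Indeed, if $a$ is self-dual or anti-self-dual then equality holds in \eqref{eq:bog}, so $\spE[a] = 8\pi^{2}\abs{c_{2}}$; and since $c_{2}$ is constant on each topological class (Corollary~\ref{cor:ch-class}), applying \eqref{eq:bog} to every $a'\in[a]$ gives $\spE[a']\geq 8\pi^{2}\abs{c_{2}} = \spE[a]$, so such an $a$ is automatically an absolute minimizer of $\spE$ in its class and hence an instanton. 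It therefore suffices to realize every integer as the $c_{2}$ of some (anti-)self-dual connection: the value $0$ is realized by $a=0$, and it is enough to realize all $n\geq 1$ --- the values $-n$ then follow by pulling back along an orientation-reversing isometry of $\bbR^{4}$, which exchanges $\star$ with $-\star$ on $2$-forms (turning an anti-self-dual connection into a self-dual one) and flips the sign of $\int_{\bbR^{4}}\tr(F\wedge F)$.

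For the construction I would use 't Hooft's multi-instanton ansatz, a special case of the Atiyah--Drinfeld--Hitchin--Manin construction \cite{ADHM}. Identify $\bbR^{4}$ with the quaternions, fix distinct points $p_{1},\dots,p_{n}\in\bbR^{4}$ and scales $\lmb_{1},\dots,\lmb_{n}>0$, and set
\[
  \phi(x) = 1 + \sum_{i=1}^{n}\frac{\lmb_{i}^{2}}{\abs{x-p_{i}}^{2}},
\]
which is positive on $\bbR^{4}$ and harmonic on $\bbR^{4}\setminus\set{p_{1},\dots,p_{n}}$. Define the $su(2)$-valued connection $1$-form by $a_{\mu} = \tau_{\mu\nu}\,\rd_{\nu}\log\phi$, where the constant matrices $\tau_{\mu\nu}\in su(2)$ are the (anti-)self-dual 't Hooft symbols built from the imaginary quaternion units. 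A direct computation, using the harmonicity of $\phi$ away from the $p_{i}$ and the algebraic identities satisfied by the $\tau_{\mu\nu}$, shows that $F[a]$ is (anti-)self-dual; the apparent singularities of $a$ at the $p_{i}$ are pure gauge, removable by an explicit singular gauge transformation exactly as for the single BPST instanton ($n=1$), so in a good gauge $a$ is smooth on all of $\bbR^{4}$. The gauge-invariant density $\abs{F[a]}^{2}$ is then smooth and decays like $\abs{x}^{-8}$ at infinity, so $F[a]\in L^{2}(\bbR^{4})$; in particular $a$ is a finite-energy connection in the sense of this paper, with a well-defined topological class by Definition~\ref{def:top-class}.

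It remains to identify $c_{2}$. I would compute $c_{2} = \frac{1}{8\pi^{2}}\int_{\bbR^{4}}\tr(F\wedge F)$ by writing $\tr(F\wedge F) = \ud\,\tr(A\wedge\ud A + \tfrac{2}{3}A\wedge A\wedge A)$ in the singular gauge: since $a$ decays at infinity, the boundary term there vanishes, and the only contributions come from small spheres around the $p_{i}$, each contributing a single unit, giving $c_{2} = \mp n$. (Alternatively, one may use the identification in Theorem~\ref{thm:goodrep} of $c_{2}$ with the degree of the gauge at infinity $O_{(\infty)}:\bbS^{3}\to SU(2)\cong\bbS^{3}$ and compute that degree directly from the ansatz.) Combined with the first paragraph, this produces for every $\kpp\in\bbZ$ an instanton with $c_{2} = -\kpp$ and $\spE = 8\pi^{2}\abs{\kpp}$.

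The main obstacle is the explicit verification that the 't Hooft ansatz is genuinely (anti-)self-dual and that its point singularities are pure gauge --- a somewhat lengthy quaternionic computation with the 't Hooft symbols --- together with the bookkeeping needed to match the sign and orientation conventions of this paper (in particular the normalization $\brk{A,B} = -\tr(AB)$ and the sign in \eqref{eq:chern-no}) so that the construction lands in the class $c_{2} = -\kpp$ rather than $c_{2} = +\kpp$. A more conceptual but less self-contained alternative is to invoke the construction half of the ADHM correspondence directly: exhibit ADHM data of rank $n$ (for instance the data underlying the 't Hooft family above) and quote from \cite{ADHM} that the ADHM recipe produces an anti-self-dual $SU(2)$-connection of charge $n$ on $\bbR^{4}$.
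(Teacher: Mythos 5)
The paper does not actually prove Theorem~\ref{thm:instanton-SU2}; it is stated as a known consequence of the ADHM construction and cited from \cite{ADHM}, so there is no in-paper argument to compare against. Your proposal is a correct and standard proof of the statement. The reduction is exactly right: an (anti-)self-dual connection saturates \eqref{eq:bog}, hence is automatically an absolute minimizer of $\spE$ in its topological class (since $c_{2}$ is determined by $[a]$ by Corollary~\ref{cor:ch-class}), so it suffices to exhibit one (anti-)self-dual connection of each charge; and the orientation-reversing trick, which exchanges $\star$ with $-\star$ and flips the sign of $\int\tr(F\wedge F)$, correctly passes from $+\kpp$ to $-\kpp$. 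The 't Hooft ansatz is one of the simplest explicit realizations, and you correctly identify where the real work is: verifying that the ansatz is (anti-)self-dual away from the $p_{i}$, that the singularities at the $p_{i}$ are removable by degree-one gauge transformations, and then computing the charge by Stokes applied to the Chern--Simons transgression form on small spheres around the $p_{i}$. Two small caveats, neither affecting the plan: the 't Hooft family is only a $5n$-parameter slice of the $(8n-3)$-dimensional ADHM moduli space, so ``the ADHM data underlying the 't Hooft family'' glosses over a nontrivial dictionary if you want to invoke \cite{ADHM} directly; and the sign bookkeeping you flag, to ensure the construction lands in $c_{2}=-\kpp$ rather than $+\kpp$ under the paper's normalization $\brk{A,B}=-\tr(AB)$ and \eqref{eq:chern-no}, is genuine but routine.
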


  However, the instantons do not tell the full story. It is known that
  there also exist nontrivial harmonic Yang--Mills connections which
  are not self or anti-self dual \cite{SSU, Bor, SaSe,
    Parker}. Nevertheless, by the recent result of
  Gursky--Kelleher--Streets \cite{GKS}, they must have energy at least
  $16 \pi^{2}$ more than the Bogomoln'yi bound\footnote{Note that
    \cite[Corollary~1.2]{GKS} is stated on $\bbS^{4}$, but the same
    conclusion holds on $\bbR^{4}$ by conformal invariance of the
    harmonic Yang--Mills equation and $\spE$. Moreover, to compare the
    results, recall that $\spE(a) = \frac{1}{2}
    \nrm{F[a]}_{L^{2}}^{2}$.}:
  \begin{theorem}[{\cite[Corollary~1.2]{GKS}}] \label{thm:GKS-SU2} Any
    harmonic Yang--Mills connection on $\bbR^{4}$ either has energy
    equal to $8 \pi^{2} \abs{c_{2}}$, or has energy at least $8
    \pi^{2} \abs{c_{2}} + 16 \pi^{2}$.
  \end{theorem}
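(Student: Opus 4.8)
The plan is to deduce the statement from the theorem of Gursky--Kelleher--Streets on $\bbS^{4}$ together with the conformal invariance of the static Yang--Mills energy and of the harmonic Yang--Mills equation in dimension four. First I would recall that the stereographic projection $\boldsymbol{\Sgm} : \bbS^{4} \setminus \set{p} \to \bbR^{4}$ is a conformal diffeomorphism, that $\spE$ is a conformal invariant when $d = 4$ (the integrand $\brk{F_{jk}, F^{jk}} \, \ud x$ has the correct conformal weight, since $\abs{F}^{2}$ scales like $e^{-4\phi}$ under $g \mapsto e^{2\phi} g$ and the volume form like $e^{4\phi}$), and that the Euler--Lagrange equation $\covD^{j} F_{jk} = 0$ for critical points of $\spE$ is likewise conformally invariant (in $d=4$, $\covD^{*}$ acting on $2$-forms is built from the conformally invariant Hodge star $\star$). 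Consequently, pulling back along $\boldsymbol{\Sgm}$, a finite-energy harmonic Yang--Mills connection $a$ on $\bbR^{4}$ corresponds to a harmonic Yang--Mills connection on $\bbS^{4} \setminus \set{p}$ with finite energy and at worst an isolated singularity at $p$.

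Next I would upgrade this to a genuine smooth harmonic Yang--Mills connection on all of $\bbS^{4}$. Since $F[a] \in L^{2}(\bbR^{4})$, Theorem~\ref{thm:goodrep} supplies a good representative $A = -\chi O_{(\infty); x} + B$ with $B \in \dot{W}^{1,2}(\bbR^{4}; \g)$ and a gauge at infinity $O_{(\infty)}$; in particular the pullback bundle along $\boldsymbol{\Sgm}$ extends to a bundle on $\bbS^{4}$, whose second Chern number equals the $c_{2}$ of $a$ by Corollary~\ref{cor:ch-class} and \eqref{eq:chern-no}. Because $a$ additionally solves the elliptic equation $\covD^{j} F_{jk} = 0$, I would invoke Uhlenbeck's removable singularity theorem together with elliptic regularity in a good Coulomb-type gauge near $p$ to conclude that, after a gauge transformation, the pulled-back connection extends smoothly across $p$, producing a smooth Yang--Mills connection $\wht{a}$ on $\bbS^{4}$ on the same bundle, hence with the same $c_{2}$, and with $\spE[\wht{a}] = \spE[a]$ by conformal invariance of the energy. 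Conversely, restricting any smooth Yang--Mills connection on $\bbS^{4}$ to $\bbS^{4} \setminus \set{p} \cong \bbR^{4}$ yields a finite-energy harmonic Yang--Mills connection, so the two classes of objects are in an energy- and $c_{2}$-preserving correspondence.

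Finally I would apply \cite[Corollary~1.2]{GKS} to $\wht{a}$ on $\bbS^{4}$: such a connection is either (anti-)self-dual, in which case the Bogomoln'yi identity \eqref{eq:bog} forces $\spE[\wht{a}] = 8\pi^{2}\abs{c_{2}}$, or it lies strictly above the Bogomoln'yi bound by at least twice the ground state energy, i.e. $\spE[\wht{a}] \geq 8\pi^{2}\abs{c_{2}} + 16\pi^{2}$; here one only needs to check that the normalization used above ($\spE = \tfrac12 \nrm{F}_{L^{2}}^{2}$ with $\brk{A,B} = -\tr(AB)$ on $su(2)$) matches that in \cite{GKS}, which is exactly the content of the footnote to the statement. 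The hard part will be the removable singularity step: one must verify that \emph{every} finite-energy critical point of $\spE$ on $\bbR^{4}$ — a priori only a weak Yang--Mills connection with $L^{2}$ curvature — does extend, after gauging, to a smooth connection on $\bbS^{4}$, so that no exotic finite-energy harmonic Yang--Mills connection on $\bbR^{4}$ escapes the GKS dichotomy; everything else is bookkeeping about conformal weights and normalizations.
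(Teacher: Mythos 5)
Your proposal takes essentially the same route as the paper, which simply cites \cite[Corollary~1.2]{GKS} and records in a footnote that the $\bbS^{4}$ statement transfers to $\bbR^{4}$ by conformal invariance of $\spE$ and of the harmonic Yang--Mills equation in dimension four, together with a normalization check (and, for the general structure group in Section~7, the observation encapsulated in Lemma~\ref{lem:GKS-inner}). You additionally spell out the removable-singularity step (Uhlenbeck) needed to pass from a finite-energy Yang--Mills connection on $\bbS^{4}\setminus\set{p}$ to a smooth one on all of $\bbS^{4}$, which the paper's footnote leaves implicit but which is indeed the technical substance of the transfer.
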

  In conclusion, we see that: \emph{Any nontrivial harmonic $SU(2)$
    Yang--Mills connection either has energy at least $16 \pi^{2}$, or
    it is an instanton with $c_{2} = \pm 1$ (a \emph{first instanton})
    with energy $8 \pi^{2}$.} We call the first instanton alternatively as the \emph{ground state} (as it has the lowest nontrivial energy), and refer to its energy as the \emph{ground state energy} $\Egs$.

  We now turn to the general case when $\G$ is a compact Lie group,
  for which our goal is to establish a similar conclusion.  Consider
  $f_{2} (\cdot, \cdot) = - \brk{\cdot, \cdot}$, which is a symmetric
  $Ad$-invariant bilinear function, and the corresponding
  characteristic class (cf. Section~\ref{subsec:smth}).
  \begin{equation} \label{eq:ch-cl} - \brk{F[a] \wedge F[a]} = -
    \brk{F_{i j}[a], F_{k \ell}[a]} \, \ud x^{i} \wedge \ud x^{j}
    \wedge \ud x^{k} \wedge \ud x^{\ell}.
  \end{equation}
  The characteristic number
  \begin{equation} \label{eq:ch-no} \ch = \int_{\bbR^{4}} - \brk{F[a]
      \wedge F[a]}
  \end{equation}
  is determined by the topological class $[a]$, by
  Corollary~\ref{cor:ch-class}. Moreover, the same algebra as in
  \eqref{eq:bog} leads to:
  \begin{lemma} \label{lem:bog-G} Let $\G$ be a compact Lie group. For
    any finite energy connection $a$ on a $\G$-bundle on $\bbR^{4}$,
    we have the pointwise bound
    \begin{equation} \label{eq:bog-G} \frac{1}{2} \brk{F_{jk}[a],
        F^{jk}[a]} \geq \abs{\brk{F[a] \wedge F[a]}},
    \end{equation}
    and the corresponding integrated bound
    \begin{equation*}
      \spE(a) \geq \abs{\ch}.
    \end{equation*}

  \end{lemma}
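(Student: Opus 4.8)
The plan is to reduce everything to the pointwise bound \eqref{eq:bog-G} and then integrate; the pointwise bound is the $\G = SU(2)$ computation displayed just above the statement, with $-\tr(\cdot\,\cdot)$ replaced by the general $Ad$-invariant inner product $\brk{\cdot,\cdot}$ on $\g$. Nothing about $su(2)$ is used beyond the fact that the pairing is symmetric and positive-definite, so the generalization is essentially a transcription. Concretely I would first prove \eqref{eq:bog-G} by a fiberwise linear-algebra argument, and then deduce $\spE[a] \geq \abs{\ch}$ by integrating over $\bbR^{4}$.

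For the pointwise bound, fix a point and let $\star$ be the Euclidean Hodge star on $2$-forms of $\bbR^{4}$, which is an involution and is self-adjoint for the standard inner product $\brk{\cdot,\cdot}_{\Lambda^{2}}$ on $2$-forms. Decompose $F = F[a]$ into self-dual and anti-self-dual parts $F^{\pm} = \tfrac{1}{2}(F \pm \star F)$, so that $\star F^{\pm} = \pm F^{\pm}$. Writing $\brk{G \wedge G'}$ for the scalar $4$-form obtained by wedging two $\g$-valued $2$-forms and pairing their coefficients with $\brk{\cdot,\cdot}$ (a symmetric operation), the two ingredients are: the normalization identity $\brk{G \wedge \star G'} = \brk{G, G'}_{\Lambda^{2}\otimes\g}\, \ud x^{1} \wedge \cdots \wedge \ud x^{4}$, and consequently $\brk{F \wedge \star F} = \tfrac{1}{2}\brk{F_{jk}, F^{jk}}\, \ud x^{1} \wedge \cdots \wedge \ud x^{4}$; and the orthogonality relations that follow from it together with $\star F^{\pm} = \pm F^{\pm}$ and self-adjointness of $\star$, namely $\brk{F^{+} \wedge F^{-}} = 0$ and $\brk{F^{\pm} \wedge F^{\pm}} = \pm\, \abs{F^{\pm}}^{2}\, \ud x^{1} \wedge \cdots \wedge \ud x^{4}$, where $\abs{F^{\pm}}^{2} \geq 0$ precisely because $\brk{\cdot,\cdot}$ is positive-definite. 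Combining these yields $\tfrac{1}{2}\brk{F_{jk}, F^{jk}} = \abs{F^{+}}^{2} + \abs{F^{-}}^{2}$ and $\brk{F \wedge F} = (\abs{F^{+}}^{2} - \abs{F^{-}}^{2})\, \ud x^{1} \wedge \cdots \wedge \ud x^{4}$, so \eqref{eq:bog-G} is exactly the elementary inequality $\abs{\abs{F^{+}}^{2} - \abs{F^{-}}^{2}} \leq \abs{F^{+}}^{2} + \abs{F^{-}}^{2}$ (with equality forcing $F^{+} = 0$ or $F^{-} = 0$, i.e.\ (anti-)self-duality).

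For the integrated bound, note that $\spE[a] < \infty$ together with \eqref{eq:bog-G} shows the $4$-form $\brk{F[a] \wedge F[a]}$ is absolutely integrable, so
\begin{align*}
\abs{\int_{\bbR^{4}} \brk{F[a] \wedge F[a]}}
&\leq \int_{\bbR^{4}} \abs{\brk{F[a] \wedge F[a]}} \\
&\leq \frac{1}{2} \int_{\bbR^{4}} \brk{F_{jk}[a], F^{jk}[a]}\, \ud x = \spE[a],
\end{align*}
and the left-hand side equals $\abs{\ch}$ by the definition \eqref{eq:ch-no} of $\ch$. (The refinement that $\ch$ depends only on $[a]$, recorded in Corollary~\ref{cor:ch-class}, is not needed for the inequality itself.)

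I do not expect a genuine obstacle: the argument is the $SU(2)$ computation of the excerpt with $-\tr$ replaced by $\brk{\cdot,\cdot}$, and the single place positivity of $\brk{\cdot,\cdot}$ is used is to make the squares $\abs{F^{\pm}}^{2}$ nonnegative. The only step requiring care — and I would flag it as the main point to get right — is the bookkeeping of sign and normalization conventions: fixing the Hodge-star conventions so the orthogonality relations above hold as stated, and tracking the numerical constants relating $\brk{F \wedge \star F}$, $\tfrac{1}{2}\brk{F_{jk}, F^{jk}}$, and the forms in \eqref{eq:ch-cl}--\eqref{eq:ch-no}.
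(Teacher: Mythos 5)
Your proof is correct and is precisely the argument the paper intends: Lemma~\ref{lem:bog-G} is stated after the remark that it follows by ``the same algebra as in \eqref{eq:bog},'' which is the $SU(2)$ Bogomoln'yi computation you transcribe with $-\tr(\cdot\,\cdot)$ replaced by the $Ad$-invariant inner product $\brk{\cdot,\cdot}$. Your reorganization through the decomposition $F = F^{+} + F^{-}$ rather than expanding $(F \pm \star F)\wedge\star(F\pm\star F)$ is only cosmetic, and you correctly identify the normalization and sign bookkeeping (the relation among $\brk{F\wedge\star F}$, $\tfrac{1}{2}\brk{F_{jk},F^{jk}}$, and the form in \eqref{eq:ch-cl}--\eqref{eq:ch-no}) as the one place care is needed.
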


  Note that when $\G$ is commutative, then the harmonic Yang--Mills
  connections are nothing else than the harmonic $2$-forms; thus no
  nontrivial finite energy harmonic Yang--Mills connections exist. In
  the noncommutative case, we prove:
  \begin{theorem} \label{thm:thr} Let $\G$ be a noncommutative compact Lie
    group. Let
    \begin{equation*}
      \Egs = \inf \set{\spE(a) : \hbox{$a$ is a nontrivial harmonic Yang--Mills connection on a $\G$-bundle on $\bbR^{4}$}}.
    \end{equation*}
    Then the following statements hold.
    \begin{enumerate}
    \item There exists a nontrivial harmonic Yang--Mills connection
      $a$ such that $\spE(Q) = \Egs < \infty$.
    \item Let $a$ be any nontrivial harmonic Yang--Mills
      connection. Then either $\spE(a) \geq 2 \Egs$, or
      \begin{equation*}
	\abs{\ch} = \spE(a) \geq \Egs.
      \end{equation*}

    \end{enumerate}
  \end{theorem}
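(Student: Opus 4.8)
\emph{Strategy.} The plan is to push everything onto the round sphere $\bbS^{4}$ and run a concentration--compactness argument for part (1), while for part (2) the crux is a sharp lower bound for the anti-self-dual part of the curvature. In dimension four both $\spE$ and the harmonic Yang--Mills equation are conformally invariant, so by Uhlenbeck's removable singularity theorem any finite-energy harmonic Yang--Mills connection on $\bbR^{4}$ may be regarded as one on $\bbS^{4}$, and conversely. On $\bbS^{4}$ I will use three standard facts. First, an $\eps$-regularity theorem: there is $\eps_{0}>0$ such that a harmonic Yang--Mills connection with $\spE<\eps_{0}$ is gauge-equivalent to the trivial one; in particular $\Egs\ageq\eps_{0}>0$. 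Second, Uhlenbeck weak compactness together with the bubble-tree description, with no energy loss in the necks (valid in the conformally invariant dimension $4$): a sequence of harmonic Yang--Mills connections with uniformly bounded energy subconverges, modulo gauge, to a harmonic Yang--Mills connection $a^{\infty}$ on $\bbS^{4}$ together with finitely many bubbles $b_{1},\dots,b_{k}$ (harmonic Yang--Mills connections on $\bbR^{4}$), so that $\lim_{n}\spE[a^{n}]=\spE[a^{\infty}]+\sum_{i}\spE[b_{i}]$. Third, the existence of a Lie algebra embedding $su(2)\hookrightarrow\g$: a noncommutative compact $\g$ has a nonzero simple ideal, every compact simple Lie algebra contains a copy of $su(2)$, and since $SU(2)$ is simply connected the embedding integrates to a homomorphism $\iota:SU(2)\to\G$.

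For part (1), finiteness of $\Egs$ follows by pushing the first instanton of Theorem~\ref{thm:instanton-SU2} forward along $\iota$: a Lie group homomorphism intertwines the Yang--Mills equations and sends a connection with nonzero curvature to one with nonzero curvature, so $\iota_{*}$ of the first instanton is a nontrivial harmonic Yang--Mills connection on a $\G$-bundle of finite energy, whence $\Egs<\infty$. For attainment, take a minimizing sequence $\{a^{n}\}$ with $\spE[a^{n}]\downarrow\Egs$, transfer to $\bbS^{4}$, and apply the bubble-tree decomposition. Each $b_{i}$ is a nontrivial harmonic Yang--Mills connection, hence $\spE[b_{i}]\geq\Egs$, while $\spE[a^{\infty}]$ is $0$ or $\geq\Egs$; plugging into $\Egs=\spE[a^{\infty}]+\sum_{i}\spE[b_{i}]$ and using $\Egs>0$ forces either $k=0$ with $a^{\infty}$ nontrivial and $\spE[a^{\infty}]=\Egs$, or $k=1$ with $a^{\infty}$ trivial and $\spE[b_{1}]=\Egs$. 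In both cases we have produced a nontrivial harmonic Yang--Mills connection attaining $\Egs$.

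For part (2), let $a$ be a nontrivial harmonic Yang--Mills connection and split $F[a]=F^{+}+F^{-}$ into self-dual and anti-self-dual parts; the algebra behind \eqref{eq:bog-G} gives $\spE[a]=\abs{\ch}+c_{1}\min(\nrm{F^{+}}_{L^{2}}^{2},\nrm{F^{-}}_{L^{2}}^{2})$ for a fixed $c_{1}>0$, with the sign of $\ch$ selecting the larger term. If the minimum vanishes, then $a$ saturates the Bogomoln'yi bound, $\spE[a]=\abs{\ch}$, and since $a$ is a nontrivial harmonic Yang--Mills connection the very definition of $\Egs$ gives $\abs{\ch}=\spE[a]\geq\Egs$: this is the second alternative. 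Otherwise $a$ is neither self- nor anti-self-dual, and (as $\abs{\ch}\geq0$) it suffices to prove the sharp gap $c_{1}\min(\nrm{F^{+}}_{L^{2}}^{2},\nrm{F^{-}}_{L^{2}}^{2})\geq2\Egs$; say $\nrm{F^{-}}_{L^{2}}\le\nrm{F^{+}}_{L^{2}}$, so the claim is that a harmonic Yang--Mills connection on $\bbS^{4}$ with $F^{-}\not\equiv0$ obeys $c_{1}\nrm{F^{-}}_{L^{2}}^{2}\geq2\Egs$. I would first reduce the structure group: the connection reduces to its holonomy bundle, with structure algebra $\h\subseteq\g$; if $\h$ were abelian then $F[a]$ would be an $\h$-valued harmonic $2$-form on $\bbS^{4}$, hence zero (as $H^{2}(\bbS^{4})=0$) and $a$ trivial, a contradiction, so $\h$ is noncommutative. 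Splitting $\h$ into its simple ideals (its center carries only trivial Yang--Mills on $\bbS^{4}$), both the connection and $\spE$ split orthogonally over the ideals; if two ideals carry nontrivial pieces then $\spE[a]\geq2\Egs$ already, so we may assume $\h$ simple. If $\h\cong su(2)$, with $\brk{\cdot,\cdot}\restriction_{\h}=\mu(-\tr)$ for some $\mu>0$, then Theorem~\ref{thm:GKS-SU2} gives $\spE[a]\geq16\pi^{2}\mu=2\cdot8\pi^{2}\mu\geq2\Egs$, the last step because pushing the first instanton through this very $su(2)$ shows $\Egs\leq8\pi^{2}\mu$. The remaining case is $\h$ simple but not $su(2)$: here one either appeals to the version of the Gursky--Kelleher--Streets estimate for a general semisimple structure group, or proves the gap from scratch via a Weitzenb\"ock/Bochner identity for the $d_{A}$-harmonic $2$-form $F^{-}$ on the positively curved $\bbS^{4}$ (using $d_{A}F^{-}=0=d_{A}^{*}F^{-}$, which follow from the Bianchi and Yang--Mills equations), yielding some positive gap $c_{0}(\G)$, and then upgrades $c_{0}(\G)$ to the sharp value $2\Egs$ using the bubble-tree analysis from part (1).

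The main obstacle is precisely this last step: identifying the sharp constant in the non-self-dual energy gap as $2\Egs$, and not merely some positive $c_{0}(\G)$. This is equivalent to the assertion that a ground state is itself (anti-)self-dual --- for if a ground state $Q$ were not, it would be Bourguignon--Lawson--Simons unstable on $\bbS^{4}$ and hence, by part (2), satisfy $\spE[Q]\geq2\Egs>\Egs=\spE[Q]$, a contradiction --- so the two facts must be established together. This is exactly the content of the Gursky--Kelleher--Streets theorem in the $SU(2)$ case; carrying it out for a general simple structure group, and threading the inner-product normalization through the embedding $su(2)\hookrightarrow\g$ (the even/odd nuisances in the definition of $\calE^{\frac{d-2}{2}}$ are vacuous here, since $d=4$), is the technical heart of the argument.
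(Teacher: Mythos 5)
Your proposal splits cleanly into two halves, and they fare very differently against the paper's argument.

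For part (1), your concentration--compactness route (transfer to $\bbS^{4}$ via conformal invariance and removable singularities, apply bubble-tree decomposition with no neck loss, use $\eps$-regularity to get $\Egs\geq\eps_{0}>0$) is a legitimate alternative to the paper's, which is completely explicit: the paper decomposes $\g=\g_{1}\oplus\cdots\oplus\g_{n}\oplus\mathfrak{a}$, pushes $SU(2)$-instantons into each simple factor through a highest-root embedding (Theorem~\ref{thm:bott}), and identifies $\Egs=\min_{i}\frac{16\pi^{2}}{\brk{\alp_{i},\alp_{i}}}$. The explicit route is much more elementary (no bubbling analysis), it computes $\Egs$, and --- importantly --- it feeds directly into part (2). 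Your route delivers only an abstract minimizer and does not compute $\Egs$, which leaves you without the input you need later.

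For part (2), there is a genuine gap that you yourself flag: after the holonomy reduction you are left with a simple but otherwise arbitrary compact structure algebra $\h$, and you need the sharp Gursky--Kelleher--Streets energy gap for such $\h$. The holonomy reduction does \emph{not} reduce to $su(2)$ (the holonomy algebra can be all of $\g$), so invoking Theorem~\ref{thm:GKS-SU2} as stated is not enough. Your fallback --- a Bochner/Weitzenb\"ock estimate giving ``some gap $c_{0}(\G)$, upgraded to $2\Egs$ via bubble-tree analysis'' --- is not an argument: a soft positive gap has no mechanism to self-improve to the sharp constant, and your observation that ``a ground state must be (anti-)self-dual, because otherwise part (2) would give a contradiction'' uses part (2) to prove part (2). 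The paper closes precisely this gap by proving Lemma~\ref{lem:GKS-inner} (under the normalization~\eqref{eq:lie-norm}, $\abs{[A,B]}\leq\sqrt{2}\abs{A}\abs{B}$ with equality exactly on the $su(2)$-triple of a highest root), which is the replacement for \cite[Lemma~2.1]{GKS} and makes the GKS proof go through for an arbitrary simple compact $\G$ with the explicit parameters $\gmm_{0}=\sqrt{2}$, $\gmm_{1}=\frac{4}{\sqrt{6}}$. Your $su(2)$ rescaling observation ($\brk{\cdot,\cdot}\restriction_{\h}=\mu(-\tr)$ and $\Egs\leq 8\pi^{2}\mu$) is correct and is essentially the same normalization bookkeeping the paper does, but it covers only the case $\h\cong su(2)$; the general simple case is where the real work is, and the proposal stops short of it.

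A secondary point worth noting: once the simple-factor splitting $\ud\boldsymbol{\pi}_{i}(a)$ is in hand, the paper's Step~2 finishes part (2) by a one-line dichotomy --- if $\spE[a]<2\Egs$ then each factor is zero or a first instanton and exactly one is nonzero --- with no bubble-tree machinery at all. If you keep your structure, you should still replace the ``upgrade'' step by the paper's Lemma~\ref{lem:GKS-inner}--based verification of the GKS bound for general simple $\G$; without it, part (2) is unproved.
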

  We call $\Egs$ the \emph{ground state energy}, and a harmonic Yang--Mills connection $Q$ attaining this energy a \emph{ground state}.
  
  The proof of Theorem~\ref{thm:thr} combines well-known results
  concerning the structure of a compact Lie group and the preceding
  analysis in the case $\G = SU(2)$; it is provided in
  Section~\ref{sec:threshold}.

\addtocontents{toc}{\protect\setcounter{tocdepth}{-1}}
\subsection*{Acknowledgments} 
S.-J. Oh was supported by the Miller Research Fellowship from the Miller Institute, UC Berkeley and the TJ Park Science Fellowship from the POSCO TJ Park Foundation. D. Tataru was partially
supported by the NSF grant DMS-1266182 as well as by a Simons
Investigator grant from the Simons Foundation.

\addtocontents{toc}{\protect\setcounter{tocdepth}{2}}

\section{Notation and conventions} \label{sec:notation} Here we
collect some notation and conventions used in this paper.
\begin{itemize}
\item We employ the usual asymptotic notation $A \aleq B$ to denote $A
  \leq C B$ for some implicit constant $C > 0$. The dependence of $C$
  on various parameters is specified by subscripts.

\item Throughout the paper, we omit the dependence of constants on the
  dimension $d$. In particular, by a universal constant, we mean a
  constant that depends only on $d$.

\item We call a bounded open subset $U$ of $\bbR^{d}$ a
  \emph{domain}. For $\lmb > 0$, $\lmb U$ is defined to be rescaling
  of $U$ by $\lmb$ centered at the barycenter of $U$. For any $r > 0$
  and $x \in \bbR^{d}$, $B_{r}(x)$ is the ball of radius $r$ centered
  at $x$. When $(x)$ is omitted, the center is taken to be the origin
  $0$.

\item We use the notation $\rd$ (without sub- or superscripts) for the
  spatial gradient $\rd = (\rd_{1}, \rd_{2}, \ldots, \rd_{d})$, and
  $\nb$ for the spacetime gradient $\nb = (\rd_{0}, \rd_{1}, \ldots,
  \rd_{d})$. We write $\rd^{(n)}$ (resp. $\nb^{(n)}$) for the
  collection of $n$-th order spatial (resp. spacetime) derivatives,
  and $\rd^{(\leq n)}$ (resp. $\nb^{(\leq n)}$) for those up to order
  $n$.

\item The $n$-th homogeneous $L^{p}$-Sobolev space for functions from
  $\bbR^{d}$ into a normed vector space $V$ is denoted by $\dot{W}^{n,
    p}(\bbR^{d}; V)$. In the special case $p = 2$, we write
  \[
\dot{H}^{n}(\bbR^{d}; V) = \dot{W}^{n, 2}(\bbR^{d}; V).
\]
 The
  inhomogeneous counterparts are denoted by $W^{n, p}(\bbR^{d}; V)$
  and $H^{n}(\bbR^{d}; V)$, respectively. The Lebesgue spaces (i.e.,
  when $n = 0$) are denoted by $L^{p}(\bbR^{d}; V)$.

\item The mixed spacetime norm $L^{q}_{t} \dot{W}^{n, r}_{x}$ of
  functions on $\bbR^{1+d}$ is often abbreviated as $L^{q} \dot{W}^{n,
    r}$.

\item Given a function space $X$ (on either $\bbR^{d}$ or
  $\bbR^{1+d}$), we define the space $\ell^{p} X$ by
  \begin{equation*}
    \nrm{u}_{\ell^{p}X}^{p} = \sum_{k} \nrm{P_{k} u}^{p}_{X}
  \end{equation*}
  (with the usual modification for $p = \infty$), where $P_{k}$ $(k
  \in \bbZ)$ are the usual Littlewood--Paley projections to dyadic
  frequency annuli.

\item Generally, a function space on an open subset $U \subseteq
  \bbR^{d}$ is defined by restriction, i.e., $\nrm{u}_{X(U)} = \inf
  \set{\nrm{\tilde{u}}_{X} : \tilde{u} \in X, \ \tilde{u}
    \restriction_{U} = u}$. A similar convention applies for a function
  space on an open subset $\calO \subseteq \bbR^{1+d}$.

  According to this convention, the restriction of the homogeneous
  Sobolev norm $\dot{W}^{n, p}$ for $n \in \bbN$, $1 < p <
  \frac{d}{n}$ for a locally Lipschitz domain $U$ is characterized by
  \begin{equation*}
    \nrm{u}_{\dot{W}^{n, p}(U)} \simeq_{U} \nrm{\rd^{(n)} u}_{L^{p}(U)} + \nrm{u}_{L^{p^{\ast}}(U)}, \quad \hbox{ where } \frac{d}{p^{\ast}} = \frac{d}{p} - n.
  \end{equation*}
  Note, importantly, that the implicit constant is invariant under
  scaling. To distinguish this norm from the usual homogeneous Sobolev
  semi-norm, we introduce the notation $\mathring{W}^{n, p}(U)$ for a
  nonnegative integer $n$ and $p \in [1, \infty]$, and define
  $\nrm{u}_{\mathring{W}^{n, p}(U)} = \nrm{\rd^{(n)} u}_{L^{p}(U)}$.
\item The local function space $X_{loc}(U)$ is defined as
  \begin{equation*}
    X_{loc}(U) = \bigcap_{B_{x}(r): \overline{B}_{x}(r) \subseteq U} X(B_{x}(r)).
  \end{equation*}
\end{itemize}

\section{Connections with
  \texorpdfstring{$L^{\frac{d}{2}}$}{Ld/2}-curvature} \label{sec:rough-conn} In this section,
we prove the good global gauge theorems
Theorems~\ref{thm:goodrep-ball} and \ref{thm:goodrep}.  Throughout the
section, we let $d \geq 3$.

\subsection{\texorpdfstring{$\G$}{G}-valued functions at critical
  regularity} \label{subsec:rough-gt} We start by collecting some
basic analytic facts concerning $\G$-valued functions at regularity
$W^{k, \frac{d}{k}}$.

In what follows, we assume that $\G$ is a group of orthogonal matrices
in $\bbR^{N \times N}$, equipped with the usual inner product $\brk{A,
  B} = \tr A B^{\dagger}$. Recall the standard fact that any compact
Lie group $\G$ may be realized as such a matrix group, and the inner
product on $\g = T_{Id} \G$ is equivalent to the one induced from
$\bbR^{N \times N}$.

Let $U \subseteq \bbR^{d}$ be an open set, $k \in \bbR$ and $p \in [1,
\infty]$. In Section~\ref{subsec:rough-conn}, we introduced
\begin{align*}
  \calG^{k, p}(U) = & \set{O \in W^{k, p}(U; \bbR^{N \times N}) : O
    (x) \in \bfG \hbox{ for a.e. } x \in U}.
\end{align*}
Since $\G$ is compact, any $O \in \calG^{k, p}(U)$ belongs to
$L^{\infty}(U)$. When $U$ is a domain with locally Lipschitz boundary,
an element $O \in \calG^{k, p}(U)$ may be extended\footnote{We
  emphasize, however, that $\tilde{O}(x) \not \in \bfG$ for $x \not
  \in U$ in general.} to $\tilde{O} \in W^{k, p} \cap
L^{\infty}(\bbR^{d})$; see \cite[\S VI.3]{MR0290095}. For a general
irregular open set $U$, we instead use
\begin{align*}
  \calG^{k, p}_{loc}(U) = & \set{O \in W^{k, p}_{loc}(U; \bbR^{N
      \times N}) : O (x) \in \bfG \hbox{ for a.e. } x \in U},
\end{align*}
for which the following extension property holds: For any ball $B
\subseteq U$, there exists ${}^{(B)}\tilde{O} \in W^{k, p} \cap
L^{\infty}(\bbR^{d})$ such that ${}^{(B)}\tilde{O}(x) = O(x)$ for
a.e. $x \in B$.

In view of the applications to the hyperbolic Yang--Mills equation at
the critical regularity, we consider the scale-invariant case $p =
\frac{d}{k} > 1$, which is subtle due to the fact that $H^{k,
  \frac{d}{k}} \not \hookrightarrow L^{\infty}$, and thus $H^{k,
  \frac{d}{k}}$ is not an algebra. Nevertheless, as we will see, basic
operations needed to define a $\G$-bundle are still well-defined. To
avoid technical issues, we focus on the case when $k$ is a positive
integer. Of special importance is when $k = 2$, which correspond to
local gauge transformations in a bundle admitting a connection with
$L^{\frac{d}{2}}$ curvature.

As a quick consequence of the extension properties mentioned above, we
have the following multiplication lemma.
\begin{lemma} \label{lem:mult} Let $k$ be a positive integer, and let
  $U \subseteq \bbR^{d}$ be an open set. Then the pointwise
  multiplication map
  \begin{equation*}
    \calG^{k, \frac{d}{k}}_{loc}(U) \times \calG^{k, \frac{d}{k}}_{loc}(U) \ni (O_{1}, O_{2}) \mapsto O_{1} \cdot O_{2} \in \calG^{k, \frac{d}{k}}_{loc}(U)
  \end{equation*}
  is continuous. If $U$ is a domain with a locally Lipschitz boundary,
  then the same conclusion holds for the space $\calG^{k,
    \frac{d}{k}}(U)$.
\end{lemma}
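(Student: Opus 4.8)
The plan is to reduce the statement to a concrete product estimate in Sobolev spaces via a localization argument. First, since $\calG^{k,\frac dk}_{loc}(U)$ is defined by restriction to balls $B\subseteq U$ with $\overline B\subseteq U$, it suffices to show that for each such ball $B$ the map $(O_1,O_2)\mapsto O_1\cdot O_2$ is continuous from $W^{k,\frac dk}(B)\times W^{k,\frac dk}(B)$ into $W^{k,\frac dk}(B)$, together with the a.e.\ constraint being preserved (which is immediate, since $\G$ is a group and $O_1(x),O_2(x)\in\G$ a.e.). Here the hypothesis $\G\subseteq\bbR^{N\times N}$ compact is used twice: it gives the pointwise constraint and, crucially, the $L^\infty$ bound $\nrm{O_i}_{L^\infty}\aleq 1$ uniformly. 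Using the extension property stated in the excerpt, pick for each $O_i$ an extension ${}^{(B)}\tld{O}_i\in W^{k,\frac dk}\cap L^\infty(\bbR^d)$ agreeing with $O_i$ a.e.\ on $B$; then $({}^{(B)}\tld O_1)\cdot({}^{(B)}\tld O_2)\in W^{k,\frac dk}(\bbR^d)$ restricts to $O_1\cdot O_2$ on $B$, so everything is reduced to the global statement on $\bbR^d$.

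Next I would establish the global product estimate: for $k$ a positive integer and $p=\frac dk>1$,
\begin{equation*}
  \nrm{fg}_{W^{k,p}(\bbR^d)} \aleq \nrm{f}_{W^{k,p}\cap L^\infty}\,\nrm{g}_{W^{k,p}\cap L^\infty}.
\end{equation*}
The mechanism is the Leibniz rule: $\rd^{(k)}(fg)$ is a sum of terms $\rd^{(j)} f\,\rd^{(k-j)}g$ with $0\le j\le k$. For the endpoint terms $j=0$ and $j=k$ one factor sits in $L^\infty$ and the other in $L^p$, so these are controlled directly. For the intermediate terms $1\le j\le k-1$, I would use the Gagliardo--Nirenberg interpolation inequalities: writing $p_j$ by $\frac{1}{p_j}=\frac{j}{d}\cdot\frac kp\cdot\frac1k+\dots$ — more precisely, $\rd^{(j)}f\in L^{\frac{dk}{jp}\cdot}$... let me state it cleanly: since $f\in W^{k,p}\cap L^\infty$ with $kp=d$, one has $\rd^{(j)} f\in L^{d/j}$ with $\nrm{\rd^{(j)} f}_{L^{d/j}}\aleq \nrm{f}_{W^{k,p}}^{j/k}\nrm{f}_{L^\infty}^{1-j/k}$, and likewise $\rd^{(k-j)}g\in L^{d/(k-j)}$; since $\frac jd+\frac{k-j}{d}=\frac kd=\frac1p$, Hölder closes the estimate. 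Continuity (not just boundedness) then follows from multilinearity: $f_ng_n-fg=(f_n-f)g_n+f(g_n-g)$, and one bounds each piece using the same Leibniz/Gagliardo--Nirenberg scheme, noting that $L^\infty$-boundedness of the $\G$-valued factors is automatic and uniform, so no $L^\infty$ convergence is needed.

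The last clause, for $U$ a locally Lipschitz domain and the non-local space $\calG^{k,\frac dk}(U)$, is handled the same way but using the \emph{global} extension operator on $U$ from \cite{MR0290095} (cited in the excerpt) in place of the ball-by-ball extension: extend $O_1,O_2$ to $W^{k,\frac dk}\cap L^\infty(\bbR^d)$, multiply, and restrict. I expect the main obstacle to be purely bookkeeping: verifying that the Gagliardo--Nirenberg exponents in the intermediate Leibniz terms are admissible (in particular that $1<d/j<\infty$ for $1\le j\le k-1$, which needs $d\ge k+1$, i.e.\ $p>1$, exactly the hypothesis in force) and organizing the continuity argument so that one never needs convergence of the $\G$-valued maps in $L^\infty$ — only their uniform $L^\infty$ bound, which compactness of $\G$ supplies for free. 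No genuinely hard analysis is involved; the subtlety the lemma is flagging is precisely that $W^{k,d/k}$ fails to embed in $L^\infty$, so one cannot invoke "$W^{k,d/k}$ is an algebra," and the $\G$-valued structure is what rescues the product estimate.
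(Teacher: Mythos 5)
Your reduction to the whole-space estimate by extension, and the Leibniz/Gagliardo--Nirenberg derivation of the product bound $\nrm{fg}_{W^{k,d/k}}\aleq\nrm{f}_{W^{k,d/k}\cap L^\infty}\nrm{g}_{W^{k,d/k}\cap L^\infty}$, are both correct and are exactly the paper's starting point. The gap is in the continuity step. Writing $f_ng_n-fg=(f_n-f)g_n+f(g_n-g)$ and applying the Leibniz scheme to, say, $(f_n-f)g_n$ works for every term $\rd^{(j)}(f_n-f)\cdot\rd^{(k-j)}g_n$ with $1\le j\le k$: Gagliardo--Nirenberg gives $\nrm{\rd^{(j)}(f_n-f)}_{L^{d/j}}\aleq\nrm{f_n-f}_{W^{k,d/k}}^{j/k}\nrm{f_n-f}_{L^\infty}^{1-j/k}$, and the positive power of $\nrm{f_n-f}_{W^{k,d/k}}$ makes this vanish. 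But the endpoint term $j=0$, namely $(f_n-f)\cdot\rd^{(k)}g_n$, is estimated by that scheme only as $\nrm{f_n-f}_{L^\infty}\nrm{\rd^{(k)}g_n}_{L^{d/k}}$, and $\nrm{f_n-f}_{L^\infty}$ does \emph{not} tend to zero since $W^{k,d/k}\not\hookrightarrow L^\infty$. So your phrase ``no $L^\infty$ convergence is needed'' is true as a slogan, but the product estimate alone does not deliver it; the naive bound on the $j=0$ piece stalls.

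The paper closes exactly this gap by a separate argument: after the Leibniz/interpolation bookkeeping disposes of the intermediate terms, it isolates the endpoint term $\rd^{(k)}O_1\cdot(O_2^n-O_2)$, where $\rd^{(k)}O_1\in L^{d/k}$ is a \emph{fixed} function and $O_2^n-O_2$ is uniformly bounded, and then invokes dominated convergence along an a.e.-convergent subsequence together with a subsequence/pigeonhole argument to upgrade to convergence of the whole sequence. In your decomposition you would do the same for $(f_n-f)\rd^{(k)}g$ after further splitting $(f_n-f)\rd^{(k)}g_n=(f_n-f)\rd^{(k)}g+(f_n-f)(\rd^{(k)}g_n-\rd^{(k)}g)$ (the second piece being trivially $o(1)$ in $L^{d/k}$ by $L^\infty\times L^{d/k}$). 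You need to add this dominated-convergence step explicitly; as written, the proof is incomplete at the single place where the $L^\infty$-endpoint actually bites.
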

Although multiplication is continuous, we remark that it utterly fails
to be any more regular. This is in sharp contrast with the subcritical
case $\calG^{k, p}$ with $p > \frac{d}{k}$, in which multplication is
smooth.
\begin{proof}
  It suffices to consider the case when $U$ is a domain with a locally
  Lipschitz boundary (the other case follows by taking $U$ to be
  balls). Let $O_{1}, O_{2} \in \calG^{k, \frac{d}{k}}(U)$, and
  consider their usual extensions outside $U$. Note that $O_{1} \cdot
  O_{2}$ is an $L^{1}_{loc}$ function with values in $\bfG$ for
  a.e. $x \in U$, and belongs to $W^{k, \frac{d}{k}}(U)$ by the whole
  space estimate
  \begin{equation*}
    \nrm{O_{1} \cdot O_{2}}_{W^{k, \frac{d}{k}}} \aleq \nrm{O_{1}}_{L^{\infty}} \nrm{O_{2}}_{W^{k, \frac{d}{k}}} + \nrm{O_{1}}_{W^{k, \frac{d}{k}}} \nrm{O_{2}}_{L^{\infty}}.
  \end{equation*}
  To prove continuity, consider sequences $O^{n}_{1} \to O_{1}$ and
  $O^{n}_{2} \to O_{2}$ in $\calG^{k, \frac{d}{k}}(U)$. We extend
  $O^{n}_{1}$ and $O^{n}_{2}$ to the whole space using the same
  extension operator as before, which insures $O^{n}_{1} \to O_{1}$
  and $O^{n}_{2} \to O_{2}$ in $W^{k, \frac{d}{k}}(\bbR^{d}; \bbR^{N
    \times N})$. By the Leibniz rule and the Sobolev inequality, for
  any multi-index $\alp$ of order $k$, we may show that
  \begin{equation*}
    \rd^{\alp} (O^{n}_{1} \cdot O^{n}_{2}) - (\rd^{\alp} O^{n}_{1}) \cdot O^{n}_{2} - O^{n}_{1} \cdot \rd^{\alp} O^{n}_{2}
    \to \rd^{\alp} (O_{1} \cdot O_{2}) - (\rd^{\alp} O_{1}) O_{2} - O_{1} \rd^{\alp} O_{2} \hbox{ in } L^{\frac{d}{k}}.
  \end{equation*}
  By symmetry, it only remains to prove that $(\rd^{\alp} O^{n}_{1})
  \cdot O^{n}_{2} \to (\rd^{\alp} O_{1}) \cdot O_{2}$ in
  $L^{\frac{d}{k}}$. Since $O^{n}_{2}$ is uniformly bounded, the
  problem is further reduced to showing that
  \begin{equation*}
    \nrm{\rd^{\alp} O_{1} \cdot (O^{n}_{2} - O_{2})}_{L^{\frac{d}{k}}} \to 0.
  \end{equation*}
  If this limit were not true, then there would exist a subsequence
  with no further subsequence converging to zero.  However, $O^{n}_{2}
  \to O_{2}$ in $W^{k, \frac{d}{k}}$ implies a.e. convergence along a
  subsequence, along which the above limit holds by the dominated
  convergence theorem. \qedhere
\end{proof}

It is well-known that if $U$ is an open set with piecewise smooth
boundary, then any $O \in \calG^{2, \frac{d}{2}}(U)$ can be
approximated by a sequence $O^{n} \in C^{\infty}(U; \G)$ in the $W^{2,
  \frac{d}{2}}(U; \bbR^{N \times N})$-topology \cite{MR710054}. We
state here a technical refinement which allows us to localize the
region where we perform the approximation (essentially from
\cite{MR815194}). This version will be helpful for handling the
extension problem to a $\G$-valued map (not $\bbR^{N \times
  N}$-valued).

\begin{lemma} \label{lem:part-approx} Let $k$ be a positive
  integer. Let $U \subseteq \bbR^{d}$ be a domain with locally
  Lipschitz boundary, and let $O \in \calG^{k, \frac{d}{k}}(U)$. If
  $V, W$ are (possibly empty) open sets in $\bbR^{d}$ such that
  $\overline{V} \cup \overline{W} \subseteq \overline{U}$ and
  $\overline{V} \cap \overline{W} = \0$, then for every $\eps > 0$
  there exists $O' \in \calG^{k, \frac{d}{k}}(U)$ such that $O'
  \restriction_{V} = O \restriction_{V}$, $O' \in C^{\infty}(W; \G)$
  and $\nrm{O' - O}_{W^{k, \frac{d}{k}}(U; \bbR^{N \times N})} <
  \eps$.
\end{lemma}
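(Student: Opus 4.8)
The plan is to reduce to the known global approximation result (that any $O \in \calG^{k,\frac{d}{k}}(U)$ is approximable in $W^{k,\frac{d}{k}}$ by smooth $\G$-valued maps, Bethuel--Zheng \cite{MR710054}) and then upgrade it to match $O$ exactly on $V$ while achieving $\G$-valued smoothness on $W$. The key tension is that gluing a smooth approximant to the exact $O$ across an interface is \emph{not} a $\G$-valued operation in general (a convex combination of two elements of $\G$ leaves $\G$), so one cannot simply cut-and-paste; this is where the hypothesis $\overline{V}\cap\overline{W}=\0$ is used, to separate the two requirements spatially.

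First I would set up a partition of $\overline{U}$ using the disjointness of $\overline V$ and $\overline W$: choose open sets $V \subset V' \Subset U'$ and $W\subset W'$ with $\overline{V'}\cap\overline{W'}=\0$ (shrinking toward $V$, enlarging slightly for $W$), and a smooth cutoff $\vartheta$ equal to $1$ near $\overline V$ and supported away from $\overline{W'}$. The construction on $W$ is the easy half: away from $\overline V$ we are free to perturb $O$, so a localized version of \cite{MR710054}---mollify $O$ in a neighborhood of $W$, project the result back onto $\G$ (legitimate since mollification converges a.e.\ and in $W^{k,\frac{d}{k}}$, so the projection, which is smooth near $\G$, is well-defined and continuous for $n$ large)---produces $O_W \in \calG^{k,\frac{d}{k}}(U)$ with $O_W$ smooth near $W$ and $\nrm{O_W-O}_{W^{k,\frac{d}{k}}(U)}<\eps/2$. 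This is exactly the localization made available in \cite{MR815194}; I would cite it or reprove it by the mollify-and-project argument just described, using Lemma~\ref{lem:mult} to control the resulting products.

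Next, to also match $O$ \emph{exactly} on $V$: since $O_W$ already agrees with $O$ up to $\eps/2$ globally, and since we only need to \emph{correct} it on $V$ (not approximate further), consider the gauge transformation $g := O^{-1} O_W$, which lies in $\calG^{k,\frac{d}{k}}(U)$ by Lemmas~\ref{lem:mult}--\ref{lem:inv}, equals $I$ on $V$ up to small error, and is smooth near $W$. Replace $g$ by the interpolated map $\wht g$ obtained by writing $g = \exp(\xi)$ near the identity (valid since $g$ is $W^{k,\frac{d}{k}}$-close to $I$, hence $L^\infty$-close on the relevant region after the usual truncation) and damping $\xi$ by the cutoff $1-\vartheta$: set $\wht g = \exp((1-\vartheta)\xi)$ on the region where $g$ is near $I$, and $\wht g = g$ elsewhere; these patch consistently because $\vartheta$ vanishes near $\overline W$ (where we keep $g$) and equals $1$ near $\overline V$ (where $\wht g \equiv I$). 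Then $O' := O\,\wht g$ satisfies $O'|_V = O|_V$, is smooth near $W$ (as $\wht g = g$ and $O$ need not be smooth there---wait: we also need $O'$ smooth on $W$, so in the construction above I must instead start from a smooth-on-$W$ object throughout). Concretely, run the correction on $O_W$ rather than $O$: $\wht g$ is smooth near $W$ and equals $I$ near $\overline V$, and one checks $\nrm{\wht g - I}_{W^{k,\frac{d}{k}}}\aleq \nrm{g-I}_{W^{k,\frac{d}{k}}}\aleq \nrm{O_W - O}_{W^{k,\frac{d}{k}}}$ by Leibniz and Sobolev, so $O' = O_W\,\wht g^{-1}$ works: $O'|_V = O_W|_V\cdot I = O|_V$ after noting $O_W = O$ there is \emph{not} automatic---so in fact the cleanest route is to build $O_W$ so that $O_W = O$ holds near $\overline V$ from the start, which is possible because mollification can be turned off near $\overline V$ using $\vartheta$.

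\textbf{The main obstacle} I anticipate is precisely this bookkeeping of the interface: ensuring simultaneously that (i) the final map is \emph{exactly} $O$ on $V$, (ii) it is \emph{genuinely} $\G$-valued everywhere (not merely $\bbR^{N\times N}$-valued, which is why the naive convex-combination glue fails and one must work through $\exp$/projection near the identity), and (iii) it is $C^\infty$ and $\G$-valued on $W$, all while keeping the $W^{k,\frac{d}{k}}$ perturbation below $\eps$. The resolution is to do the mollification with a spatially varying parameter that vanishes near $\overline V$ (so $O$ is untouched there) and is genuinely positive near $\overline W$ (giving smoothness there), then project back onto $\G$; the disjointness $\overline V\cap\overline W=\0$ makes such a variable parameter available, and the continuity of multiplication and inversion on $\calG^{k,\frac{d}{k}}$ from Lemmas~\ref{lem:mult}, \ref{lem:inv} keeps all the algebraic manipulations inside the class. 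Everything else---the $W^{k,\frac{d}{k}}$ estimates via Leibniz and the critical Sobolev embeddings, the a.e.\ convergence along subsequences to justify the projection, the smoothness of $\exp$ and of the nearest-point projection near $\G$---is routine.
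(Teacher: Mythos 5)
Your final plan, after some detours, coincides with the paper's: mollify with a spatially varying parameter $\delta h(x)$ where $h$ is a smooth cutoff vanishing on $V$ and equal to $1$ on $W$, then project the result back onto $\G$ via the nearest-point projection $\pi_{\G}$. The intermediate route via $g := O^{-1}O_W$ and $g = \exp(\xi)$ that you consider and then abandon is, in any case, not obviously repairable at critical regularity: $W^{k,\frac{d}{k}}$-closeness of $g$ to the identity does \emph{not} give $L^\infty$-closeness (the embedding $W^{k,\frac{d}{k}} \hookrightarrow L^\infty$ fails), so ``write $g = \exp(\xi)$ near the identity'' needs justification you do not supply, and ``after the usual truncation'' is not a known device here.

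The more substantive issue is the step you call ``routine'': that the mollified map lands in a tubular neighborhood $\tilde{\G}$ of $\G$ so that $\pi_{\G}$ is applicable. This does not follow from a.e.\ convergence or from $W^{k,\frac{d}{k}}$-convergence of the mollification, precisely because $W^{k,\frac{d}{k}}$ does not control $L^\infty$. The paper's proof makes this the central estimate: for each fixed $x$, one bounds $d(\tilde{O}^{\delta}(x), \G)^{d}$ by the average of $\abs{\tilde{O}^{\delta}(x) - O(y)}^{d}$ over $U \cap B_{\delta h(x)}(x)$ (using that $O(y) \in \G$ a.e.), then a Poincar\'e inequality converts this to $\int_{B_{\delta h(x)}(x)} \abs{\rd \tilde{O}}^{d}$, which tends to zero \emph{uniformly} in $x$ by absolute continuity of the integral of $\abs{\rd\tilde{O}}^{d} \in L^{1}$ together with compactness of $\overline{U}$; the locally Lipschitz hypothesis on $\rd U$ is used to get $\abs{U \cap B_{r}(x)} \gtrsim r^{d}$ for the averaging. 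Without this pointwise quantitative bound the composition $\pi_{\G}\circ\tilde{O}^{\delta}$ is simply not defined, so this is not bookkeeping---it is the heart of the mollify-and-project argument at critical Sobolev regularity (and is the content one borrows from \cite{MR710054}).
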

We recover the usual approximation result by setting $V = \0$ and $W =
U$. As a consequence, for a general open set $U$, any $O \in \calG^{k,
  \frac{d}{k}}_{loc}(U)$ can be approximated by $O^{n} \in
C^{\infty}(B; \G)$ in the $W^{k, \frac{d}{k}}(B; \bbR^{N \times
  N})$-topology for any open ball $B \subseteq U$.
\begin{proof}
  We may assume that $W \neq \0$, as otherwise we may set $O^{\eps} =
  O$. By standard Sobolev extension, there exists $\tilde{O} \in W^{k,
    \frac{d}{k}}(\bbR^{d}; \bbR^{N \times N})$ such that $\tilde{O}
  \restriction_{U} = O$. We introduce $\dlt > 0$ to be fixed later,
  and let $h : U \to [0, 1]$ be a smooth function such that $h = 0$ on
  $V$ and $h = 1$ on $W$ (smooth Urysohn's lemma). Fix a smooth
  function $\zt$ supported in the unit ball satisfying $\int \zt =
  1$. We define $\tilde{O}^{\dlt} : \bbR^{d} \to \bbR^{N \times N}$ by
  inhomogeneous mollification:
  \begin{equation*}
    \tilde{O}^{\dlt} (x) = \int \zt(y) \tilde{O}(x - \dlt h(x) y) \, \ud y.
  \end{equation*}
  It is straightforward to verify that
  $\nrm{\tilde{O}^{\dlt}-\tilde{O}}_{W^{k, \frac{d}{k}}(U)} \to 0$ as
  $\dlt \to 0$, and also that $\tilde{O}^{\dlt}$ is smooth on
  $W$. However, $\tilde{O}^{\dlt} (x) \not \in \G$ in general. To
  rectify this, we proceed as in \cite{MR710054}.

  Let $\tilde{\G} \subseteq \bbR^{N \times N}$ be a tubular
  neighborhood of $\G$, on which the nearset-point projection
  $\pi_{\G} : \tilde{\G} \to \G$ is well-defined as a smooth map. For
  $x \in U$, we wish to ensure that $\tilde{O}^{\dlt}(x) \in
  \tilde{\G}$ for $\dlt$ sufficiently small. Since $O(y) \in \G$ for
  a.e. $y \in U$, we have
  \begin{align*}
    d(\tilde{O}^{\dlt}(x), \G)^{d} \leq \frac{1}{\abs{U \cap B_{\dlt
          h(x)}(x)}} \int_{U \cap B_{\dlt h(x)}(x)}
    \abs{\tilde{O}^{\dlt}(x) - O(y)}^{d} \ \ud y .
  \end{align*}
  By boundedness and the locally Lipschitz condition, $\abs{U \cap
    B_{r}(x)} \ageq_{U, d} r^{d}$ for every $x \in \overline{U}$ and
  sufficiently small $r > 0$. Moreover, by the Poincar\'e inequality
  $\nrm{f}_{L^{d}(B_{r}(x))} \aleq_{\zt} r \nrm{\rd
    f}_{L^{d}(B_{r}(x))}$ for $f$ satisfying $\int \zt(y) f(x + r y)
  \, \ud y = 0$, we have
  \begin{equation*}
    d(\tilde{O}^{\dlt}(x), \G)^{d} \aleq_{U, \zt, d} \int_{B_{\dlt h(x)}(x)} \abs{\rd \tilde{O}(y)}^{d} \ \ud y.
  \end{equation*}
  By compactness of $\overline{U}$, the RHS goes to $0$ uniformly as
  $\dlt \to 0$, so that $\tilde{O}^{\dlt}(x) \in \tilde{\G}$.

  Define $O' = \pi_{\G} \circ \tilde{O}^{\dlt} \restriction_{U}$. It
  is now straightforward to show that $O'$ obeys the desired
  properties once we fix $\dlt > 0$ small enough (depending on
  $\eps$).
\end{proof}

As a consequence of the approximation property, we now show that
pointwise inversion is well-defined as a continuous map $\calG^{k,
  \frac{d}{k}}_{loc}(U) \to \calG^{k, \frac{d}{k}}_{loc}(U)$.
\begin{lemma} \label{lem:inv} Let $k$ be a positive integer, and let
  $U \subseteq \bbR^{d}$ be an open set. Then the pointwise inversion
  map
  \begin{equation*}
    \calG^{k, \frac{d}{k}}_{loc}(U) \ni O \mapsto O^{-1} \in \calG^{k, \frac{d}{k}}_{loc}(U)
  \end{equation*}
  is continuous. Moreover, the usual differentiation rule $\rd_{x}
  O^{-1} = - O^{-1} \rd_{x} O O^{-1}$ holds for $O \in \calG^{k,
    \frac{d}{k}}_{loc}(U)$. If $U$ is a domain with a locally
  Lipschitz boundary, then the same conclusion holds for the space
  $\calG^{k, \frac{d}{k}}(U)$.
\end{lemma}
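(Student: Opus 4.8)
The plan is to exploit the specific matrix realization of $\G$ fixed at the start of Section~\ref{subsec:rough-gt}, under which $\G$ consists of \emph{orthogonal} matrices. The point is that pointwise inversion then coincides with the componentwise transpose: for $O \in \calG^{k, \frac{d}{k}}_{loc}(U)$ one has $O^{-1}(x) = O(x)^{\dagger}$ for a.e.\ $x \in U$. Since the transpose is a linear isometry of $W^{k, \frac{d}{k}}_{loc}(U; \bbR^{N \times N})$ onto itself that carries orthogonal matrices to orthogonal matrices, the map $O \mapsto O^{-1}$ is a well-defined isometric involution of $\calG^{k, \frac{d}{k}}_{loc}(U)$; in particular it is continuous. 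The identical reasoning applies with $W^{k, \frac{d}{k}}_{loc}$ replaced by $W^{k, \frac{d}{k}}$ when $U$ is a domain with locally Lipschitz boundary, which settles the continuity assertion in both cases. (This step uses none of the earlier multiplication or approximation lemmas.)

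To establish the differentiation rule $\rd_{x} O^{-1} = - O^{-1} (\rd_{x} O) O^{-1}$, I would first note that iterated Sobolev embedding gives $W^{k, \frac{d}{k}}_{loc}(U) \hookrightarrow W^{1, d}_{loc}(U)$ (drop $k-1$ derivatives, each step increasing the integrability exponent, ending at $d$; for $k=1$ this is trivial), while $\nrm{O}_{L^{\infty}(U)} \aleq 1$ by compactness of $\G$, and likewise for $O^{-1}$. Thus $O, O^{-1} \in W^{1, d}_{loc}(U) \cap L^{\infty}_{loc}(U)$, a class on which the Leibniz rule for products holds: the candidate derivative $(\rd_{x} O) O^{-1} + O (\rd_{x} O^{-1})$ lies in $L^{d}_{loc}(U) \subseteq L^{1}_{loc}(U)$, and the identity follows by mollification. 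Differentiating the a.e.\ identity $O \cdot O^{-1} = I$ then gives
\begin{equation*}
  0 = (\rd_{x} O)\, O^{-1} + O\, \rd_{x} O^{-1}
\end{equation*}
in $L^{d}_{loc}(U)$, and left-multiplication by $O^{-1} \in L^{\infty}_{loc}(U)$ yields the asserted formula; the right-hand side lies in $W^{k-1, \frac{d}{k}}_{loc}(U)$ by standard Leibniz-type multiplicative estimates, consistently with $\rd_{x} O^{-1} \in W^{k-1, \frac{d}{k}}_{loc}(U)$. Since $\calG^{k, \frac{d}{k}}(U) \subseteq \calG^{k, \frac{d}{k}}_{loc}(U)$ for $U$ a Lipschitz domain, this also covers the differentiation rule in the domain case.

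The whole argument collapses to the orthogonality identity plus a routine product rule, so I do not expect a real obstacle; the only point requiring care is the validity of the Leibniz rule at the scaling-critical regularity $W^{1, d}$. If one wishes to sidestep it, the differentiation rule can be derived instead by approximation: on a ball $B \subseteq U$ pick $O^{n} \in C^{\infty}(B; \G)$ with $O^{n} \to O$ in $W^{k, \frac{d}{k}}(B)$ via Lemma~\ref{lem:part-approx}, note that $\rd_{x}(O^{n})^{-1} = - (O^{n})^{-1} (\rd_{x} O^{n}) (O^{n})^{-1}$ holds classically, and pass to the limit using $(O^{n})^{-1} = (O^{n})^{\dagger} \to O^{\dagger} = O^{-1}$ in $W^{k, \frac{d}{k}}(B)$ (hence $\rd_{x}(O^{n})^{-1} \to \rd_{x} O^{-1}$ in $L^{1}(B)$), together with the uniform $L^{\infty}$ bound and a.e.\ subsequential convergence of $(O^{n})^{-1}$ and the convergence $\rd_{x} O^{n} \to \rd_{x} O$ in $L^{\frac{d}{k}}_{loc}(B)$, so that the triple products converge in $L^{\frac{d}{k}}_{loc}(B) \subseteq L^{1}_{loc}(B)$.
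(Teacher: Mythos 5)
Your proof is correct, and for the continuity assertion it takes a genuinely different and slicker route than the paper. The paper establishes both the well-definedness/continuity of $O \mapsto O^{-1}$ and the differentiation formula in one pass by approximating $O$ with smooth $\G$-valued maps via Lemma~\ref{lem:part-approx}, applying the classical inversion formula to the approximants, and then using the dominated convergence theorem to pass to the limit and to deduce continuity. You instead observe that the matrix realization fixed at the start of Section~\ref{subsec:rough-gt} makes $\G$ a subgroup of the orthogonal group, so that $O^{-1} = O^{\dagger}$ pointwise, and transposition is a linear isometry of $W^{k,\frac{d}{k}}(U;\bbR^{N\times N})$ preserving the constraint $O(x) \in \G$; this disposes of well-definedness and continuity in one line, with no approximation lemma needed, and even upgrades ``continuous'' to ``isometric involution.'' For the differentiation rule your two alternatives are both fine: the second (approximate by smooth maps, use $\rd_{x}(O^{n})^{-1} = -(O^{n})^{-1}(\rd_{x} O^{n})(O^{n})^{-1}$, pass to the limit) is essentially the paper's argument, while the first (embed into $W^{1,d}_{loc}\cap L^{\infty}_{loc}$ by iterated Sobolev, invoke the product rule for bounded $W^{1,p}_{loc}$ functions, and differentiate $O\, O^{-1}=I$) is a valid standard alternative that avoids Lemma~\ref{lem:part-approx} entirely. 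The net effect is that your argument is more self-contained and exploits the orthogonality structure that the paper sets up but does not use at this point; the paper's approach is more uniform in that it reuses the same approximation machinery it relies on elsewhere and does not depend on the particular matrix realization.
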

\begin{proof}
  As before, we only consider the case when $U$ is a domain with a
  locally Lipschitz boundary. For simplicity, we only treat the case
  $k = 1$; the higher $k$'s are handled similarly. Given $O \in
  \calG^{1, d}(U)$, let $O^{n} \to O$ be a smooth approximation
  sequence in $\calG^{1, d}(U)$ given by Lemma~\ref{lem:part-approx}
  (with $V = \0$ and $W = U$). By passing to a subsequence, we may
  assume that $O^{n} \to O$ a.e. in $U$ as well; hence $(O^{n})^{-1}
  \to O^{-1}$ in $U$. Moreover, by the usual differentiation formula
  in the smooth case,
  \begin{equation*}
    \rd_{x} (O^{n})^{-1} = - (O^{n})^{-1} \rd_{x} O^{n} (O^{n})^{-1}.
  \end{equation*}
  By the dominated convergence theorem, $\rd_{x} (O^{n})^{-1}$ is
  Cauchy in $W^{1, d}(U; \bbR^{N \times N})$, so that $O^{-1} \in
  \calG^{1, d}(U)$. Moreover, the formula
  \begin{equation*}
    \rd_{x} O^{-1} = - O^{-1} \rd_{x} O O^{-1}
  \end{equation*}
  is justified for $O \in \calG^{1, d}(U)$. By a similar argument
  using the dominated convergence theorem applied to an arbitrary
  sequence $O^{n} \to O$ in $\calG^{1, d}(U)$. the continuity property
  also follows.
\end{proof}

Next, from the approximation property and Lemma~\ref{lem:inv}, it
follows that the usual operations involving $\calG^{k,
  \frac{d}{k}}_{loc}(U)$ and $W^{k', \frac{d}{k}}_{loc}(U; \g)$ are
continuous.
\begin{lemma} \label{lem:d-inv} Let $k$ be a positive integer, and let
  $U \subseteq \bbR^{d}$ be an open set.
  \begin{enumerate}
  \item The operations $O \mapsto O_{;x} = \rd_{x} O O^{-1}$ and $O
    \mapsto O^{-1}_{;x} = - O^{-1} \rd_{x} O$ are continuous as
    mappings $\calG^{k, \frac{d}{k}}_{loc}(U) \to W^{k-1,
      \frac{d}{k}}_{loc}(U; \g)$.
  \item For any integer $0 \leq k' \leq k$, the operation $(O, B)
    \mapsto Ad(O) B = O B O^{-1}$ is continuous as a mapping
    $\calG^{k, \frac{d}{k}}_{loc}(U) \times W^{k',
      \frac{d}{k}}_{loc}(U; \g) \to W^{k', \frac{d}{k}}_{loc}(U; \g)$.
  \item If $O, O_{1}, O_{2} \in \calG^{k, \frac{d}{k}}_{loc}(U)$ and
    $B \in W^{k', \frac{d}{k}}_{loc}(U; \g)$, then the following
    Leibniz rules hold:
    \begin{align*}
      (O_{1} O_{2})_{;x} =& O_{1;x} + Ad(O_{1}) O_{2;x}, \\
      \rd_{x} (Ad(O) B) =& Ad(O) \rd_{x} B + Ad(O) [O_{;x}, B].
    \end{align*}
  \end{enumerate}
  If $U$ has a locally Lipschitz boundary, then the same conclusion
  holds for the spaces $\calG^{k, \frac{d}{k}}(U)$ and $W^{k',
    \frac{d}{k}}(U; \g)$.
\end{lemma}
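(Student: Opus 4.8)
The strategy is the one already used for Lemmas~\ref{lem:mult}, \ref{lem:inv}: reduce to a favorable class of domains, establish boundedness of each operation by a Leibniz expansion plus careful Hölder/Sobolev bookkeeping, upgrade to continuity by a smooth approximation and dominated convergence argument, and obtain the Leibniz formulas by passing to the limit from the smooth case. First I would reduce to the case where $U$ is a bounded domain with locally Lipschitz boundary: all the asserted maps are between $X_{loc}$ spaces whose defining seminorms are those on balls $B$ with $\overline{B} \subseteq U$, so the general statement follows by localizing to such balls. On a bounded Lipschitz $U$ one has the scale-invariant Sobolev embeddings $W^{m, \frac{d}{k}}(U) \hookrightarrow L^{\frac{d}{k-m}}(U)$ for $0 \leq m < k$, the borderline embedding $W^{k, \frac{d}{k}}(U) \hookrightarrow \bigcap_{q < \infty} L^{q}(U)$, the Gagliardo--Nirenberg bound $\nrm{\rd^{(a)} f}_{L^{d/a}(U)} \aleq_{U} \nrm{f}_{L^{\infty}}^{1 - a/k} \nrm{\rd^{(k)} f}_{L^{d/k}}^{a/k}$ for $1 \leq a \leq k$ and $f \in W^{k, \frac{d}{k}} \cap L^{\infty}$, the Stein extension operator \cite{MR0290095}, and, crucially, the uniform inclusion $\calG^{k, \frac{d}{k}}(U) \subseteq L^{\infty}$ coming from compactness of $\G$.

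For the \emph{boundedness} part I would expand each operation by the Leibniz rule (legitimate first for smooth arguments, then by density via Lemma~\ref{lem:part-approx}) and estimate every resulting multilinear term by Hölder, placing each factor into the right Lebesgue space. For $O \mapsto O_{;x} = \rd_{x} O \, O^{-1}$ the terms are $\rd^{(a+1)} O \cdot \rd^{(b)} O^{-1}$ with $a + b = k-1$, and the Hölder exponents add up to $\tfrac{(a+1) + b}{d} = \tfrac{k}{d}$, the extreme term $\rd^{(k)} O \cdot O^{-1}$ using $O^{-1} \in L^{\infty}$. For $Ad(O) B = O B O^{-1}$ with $0 \leq k' \leq k-1$ the terms are $\rd^{(a)} O \cdot \rd^{(b)} B \cdot \rd^{(c)} O^{-1}$ with $a + b + c = k'$; here $\rd^{(b)} B \in W^{k'-b, \frac{d}{k}} \hookrightarrow L^{d/(k-k'+b)}$ is a genuine Lebesgue space since $k' < k$, while $\rd^{(a)} O$, $\rd^{(c)} O^{-1}$ land in $L^{d/a}$, $L^{d/c}$ (or $L^{\infty}$ when the order is $0$), and again $\tfrac{(k-k'+b) + a + c}{d} = \tfrac{k}{d}$. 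In every case the product lies in $L^{\frac{d}{k}}(U)$ by Hölder and the bounded-domain inclusion $L^{q}(U) \subseteq L^{\frac{d}{k}}(U)$ for $q \geq \frac{d}{k}$, which also gives that $Ad(O) B$ itself lies in the lowest Lebesgue space of the target. The delicate point is the endpoint $k' = k$ of $Ad(O) B$, where $B$ sits at critical regularity without an $L^{\infty}$ bound; I would handle it by using the identity from part (3), $\rd_{x}(Ad(O) B) = Ad(O)(\rd_{x} B + [O_{;x}, B])$, to trade one derivative for lower-order factors, together with the refined endpoint Sobolev embedding $\dot{W}^{k, \frac{d}{k}} \hookrightarrow \dot{B}^{0}_{\infty, 2}$ to control the residual low-frequency logarithmic loss.

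For \emph{continuity}, given $O^{n} \to O$ (resp.\ $O_{i}^{n} \to O_{i}$) in $\calG^{k, \frac{d}{k}}(U)$, Lemma~\ref{lem:inv} gives $(O^{n})^{-1} \to O^{-1}$, and in the Leibniz expansion of each difference exactly one factor is a difference tending to $0$ in the relevant Lebesgue norm while the remaining factors are uniformly bounded there. When the difference factor lies in a genuine $L^{q}$ with $q < \infty$ one concludes by Hölder directly; when the difference factor is forced into $L^{\infty}$ — which happens only when it is one of the $\G$-valued maps itself rather than a derivative of it — one passes to an a.e.-convergent subsequence and applies dominated convergence with the uniform $L^{\infty}$ bound from $\G$ compact, exactly as in the proof of Lemma~\ref{lem:mult}, a standard contradiction argument removing the passage to a subsequence. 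Finally, the Leibniz formulas in (3) are pointwise identities for smooth $O$, $O_{i}$, $B$, and they extend to the stated regularity classes by approximating each argument via Lemma~\ref{lem:part-approx} and passing to the limit using the continuity of all the operations involved. I expect the main obstacle throughout to be the critical scaling $W^{k, \frac{d}{k}} \not\hookrightarrow L^{\infty}$: products are only borderline well-defined, so one must keep exact track of Hölder exponents (which close precisely because of the conformal invariance of the $W^{k, \frac{d}{k}}$ scale) and replace naive $L^{\infty}$ arguments by the compactness/dominated-convergence device, with the case $k' = k$ of $Ad(O) B$ being the single place where a soft argument does not suffice and the refined endpoint estimate is required.
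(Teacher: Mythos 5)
Your overall strategy --- localize to balls, expand by the Leibniz rule and close the Hölder/Sobolev exponents, use the partial approximation of Lemma~\ref{lem:part-approx} together with the a.e.-convergence/dominated-convergence device from the proof of Lemma~\ref{lem:mult} for continuity, and extend the Leibniz identities from the smooth case --- is exactly what the paper intends (it omits the proof and simply refers back to the preceding lemmas). Your exponent bookkeeping in part (1), and in part (2) for $0 \leq k' \leq k-1$, is correct and fills in the omitted details faithfully.

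The gap is at the endpoint $k' = k$ of part (2), which you rightly flag as the delicate case but for which your proposed fix does not close. First, applying the Leibniz identity once merely relocates the difficulty: in $\rd^{(k-1)}$ of the commutator $[O_{;x}, B]$, the worst term is $\rd^{(k-1)}O_{;x}\cdot B$, with $\rd^{(k-1)}O_{;x} \in L^{d/k}$ paired against the unbounded critical $B$, the same obstruction you started from with $\rd^{(k)}O\cdot B$. Second, the Besov refinement does not supply what is missing: the correct endpoint embedding is $\dot{W}^{k,d/k} = \dot{F}^{k}_{d/k,2} \hookrightarrow \dot{B}^{0}_{\infty,\max(2,d/k)}$ (your $\dot{B}^{0}_{\infty,2}$ holds only for $k \geq d/2$), and in any case $\ell^{\max(2,d/k)}$-summability of $\nrm{P_j B}_{L^\infty}$ does not give the uniform bound on $\nrm{S_j B}_{L^\infty}$ needed to control the low--high paraproduct $T_B(\rd^{(k)} O) = \sum_j S_{j-3}B\, P_j(\rd^{(k)}O)$; that would require $\ell^1$-summability, i.e.\ $B \in L^\infty$. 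This is exactly the role of the $L^\infty$ bound in the Moser-type estimate underlying Lemma~\ref{lem:mult}, and at $k'=k$ no grouping of $OBO^{-1}$ gives you two bounded factors. You need a genuinely sharper bilinear estimate at this endpoint, or to restrict the statement to $0 \leq k' \leq k-1$, which in fact covers every use of the lemma in this paper (the compatibility conditions pair $\calG^{2,d/2}$-valued transitions with $W^{1,d/2}$ gauge potentials and $L^{d/2}$ curvature 2-forms).
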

As before, the fact  that these operations map into the right space is justified
by using a smooth approximating sequence (Lemma~\ref{lem:part-approx}),
and then their continuity properties are proved in a similar
manner. We omit the proof.

We end with an auxiliary lemma concerning the construction of a
$\G$-valued function on an annulus with a prescribed normal derivative
on the outer boundary.
\begin{lemma} \label{lem:Ar=0} Let $B$ be a $\g$-valued function in
 $H^{\frac{d-3}{2}}(\bbS^{d-1})$. There exists $O \in
  L^{\infty} \cap H^{\frac{d}{2}}(B_{1})$, which depends continuously on $B$,
  such that
  \begin{equation*}
    (O, O_{;r}) \restriction_{\set{r = 1}} = (Id, B),
  \end{equation*}
  where $O_{;r} = \frac{x^{j}}{\abs{x}} O_{;j}$.
  A similar construction can be done in the exterior region $\bbR^{d}
  \setminus B_{1}$.
\end{lemma}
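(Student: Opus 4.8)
My plan is to reduce the construction to building a $\g$-valued \emph{function} $w$ on a collar of $\rd B_{1}$ with $w\restriction_{\{r=1\}}=0$ and $\rd_{r}w\restriction_{\{r=1\}}=A_{r}$, and then to set $O=\exp(w)$. Since $\exp$ maps $\g$ into $\G$, such an $O$ is automatically $\G$-valued; the boundary conditions $O\restriction_{\{r=1\}}=Id$ and $O_{;s}\restriction_{\{r=1\}}=\rd_{r}w\restriction_{\{r=1\}}=A_{r}$ follow from $w\restriction_{\{r=1\}}=0$ together with the formula $\rd_{r}(\exp w)=\int_{0}^{1}\exp(\sigma w)\,\rd_{r}w\,\exp((1-\sigma)w)\,\ud\sigma$ (all boundary values understood in the trace sense, which is legitimate since $w,\rd_{r}w,O,\rd_{r}O$ all have traces in $H^{\frac{d-3}{2}}(\bbS^{d-1})$ when $w,O\in H^{\frac{d}{2}}$). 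If moreover $w$ vanishes for $r\le\tfrac12$, one extends $O$ by $Id$ over $B_{1/2}$. The two points needing care are: (i) that $w$ can be taken with $\nrm{w}_{H^{\frac{d}{2}}(B_{1})}\aleq\nrm{A_{r}}_{H^{\frac{d-3}{2}}(\bbS^{d-1})}$, which is exactly the right trace count $\frac{d-3}{2}+\frac{3}{2}=\frac{d}{2}$; and (ii) that $\exp(w)\in H^{\frac{d}{2}}$, which is delicate precisely because $H^{\frac{d}{2}}(\bbR^{d})\not\hookrightarrow L^{\infty}$ at this critical exponent.

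To handle (i) and (ii) simultaneously I would build $w$ by an explicit Poisson-type extension that \emph{smooths} $A_{r}$ as one moves inward. Using polar coordinates $(r,\theta)\in(0,1]\times\bbS^{d-1}$, the functional calculus of $-\Delta_{\bbS^{d-1}}$, and a cutoff $\chi$ with $\chi\equiv 1$ near $r=1$ and $\chi\equiv 0$ for $r\le\tfrac12$, set
\begin{equation*}
  w(r,\theta) = \chi(r)\,(r-1)\,\bigl[e^{(r-1)\sqrt{-\Delta_{\bbS^{d-1}}}}A_{r}\bigr](\theta).
\end{equation*}
Then $w\restriction_{\{r=1\}}=0$, $\rd_{r}w\restriction_{\{r=1\}}=A_{r}$, and $w$ vanishes for $r\le\tfrac12$; since the operator acts scalar-wise on matrix entries, $w$ is $\g$-valued wherever $A_{r}$ is. Standard elliptic trace theory on the collar $\{\tfrac12<r<1\}$ gives $\nrm{w}_{H^{\frac{d}{2}}(B_{1})}\aleq\nrm{A_{r}}_{H^{\frac{d-3}{2}}(\bbS^{d-1})}$. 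The non-obvious bound is the $L^{\infty}$ one: a Bernstein-type estimate gives $\nrm{e^{-t\sqrt{-\Delta_{\bbS^{d-1}}}}A_{r}}_{L^{\infty}(\bbS^{d-1})}\aleq t^{-1}\nrm{A_{r}}_{H^{\frac{d-3}{2}}(\bbS^{d-1})}$ for small $t>0$ (the Poisson semigroup at scale $t$ localizes to spherical frequencies $\aleq t^{-1}$, and the Sobolev/Bernstein gap from $H^{\frac{d-3}{2}}(\bbS^{d-1})$ to $L^{\infty}(\bbS^{d-1})$ costs exactly one power of frequency, since $\frac{d-1}{2}-\frac{d-3}{2}=1$); pairing this against the prefactor $|r-1|$ yields $\nrm{w}_{L^{\infty}(B_{1})}\aleq\nrm{A_{r}}_{H^{\frac{d-3}{2}}(\bbS^{d-1})}$. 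This is exactly where the naive choice $w=\chi(r)(r-1)A_{r}$ would fail.

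With $w\in H^{\frac{d}{2}}(B_{1})\cap L^{\infty}(B_{1})$ in hand, I would write $O-Id=\sum_{n\ge1}\frac{1}{n!}w^{n}$ and use that $H^{s}\cap L^{\infty}$ is an algebra for every $s>0$, with $\nrm{fg}_{H^{s}}\aleq\nrm{f}_{L^{\infty}}\nrm{g}_{H^{s}}+\nrm{g}_{L^{\infty}}\nrm{f}_{H^{s}}$ (Bony paraproduct decomposition; the resonant term is bounded precisely because $s>0$). Iterating, $\nrm{w^{n}}_{H^{\frac{d}{2}}}\aleq n\,C^{n}\nrm{w}_{L^{\infty}}^{n-1}\nrm{w}_{H^{\frac{d}{2}}}$, hence $\nrm{O-Id}_{H^{\frac{d}{2}}(B_{1})}\aleq\nrm{w}_{H^{\frac{d}{2}}}\,e^{C\nrm{w}_{L^{\infty}}}\aleq_{\nrm{A_{r}}_{H^{\frac{d-3}{2}}}}1$, so $O\in\calG^{\frac{d}{2}}(B_{1})$. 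Continuous dependence follows along the same lines: $A_{r}\mapsto w$ is linear and bounded from $H^{\frac{d-3}{2}}(\bbS^{d-1})$ into $H^{\frac{d}{2}}(B_{1})\cap L^{\infty}(B_{1})$, and the Nemytskii map $w\mapsto\exp(w)$ is locally Lipschitz on $H^{\frac{d}{2}}\cap L^{\infty}$ by the analogous telescoping estimate for $\exp(w)-\exp(w')=\sum_{n\ge1}\frac{1}{n!}\sum_{j=0}^{n-1}w^{j}(w-w')\,w'^{\,n-1-j}$.

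Finally, the exterior region $\bbR^{d}\setminus B_{1}$ is handled identically, using the collar $\{1\le r\le 2\}$, the inward-smoothing extension $e^{(1-r)\sqrt{-\Delta_{\bbS^{d-1}}}}$, and setting $O\equiv Id$ for $r\ge 2$ (so $O-Id$ is compactly supported). I expect the main obstacle to be point (ii), i.e.\ the $L^{\infty}$ bound on $w$: one must commit to a frequency-smoothing extension rather than a naive linear one and then balance the decaying prefactor $|r-1|$ against the Bernstein growth $|r-1|^{-1}$, which is what keeps $\exp(w)$ inside the critical space $H^{\frac{d}{2}}$ while preserving the $\G$-valued constraint.
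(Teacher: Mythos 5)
Your construction is essentially identical to the paper's: both set $w=\chi(r)\,(r-1)\,e^{(r-1)\sqrt{-\Delta_{\bbS^{d-1}}}}A_{r}$ on a collar, show $w\in L^{\infty}\cap H^{\frac{d}{2}}$, and take $O=\exp(w)$ extended by $Id$ into the interior. The only difference is that you spell out the Bernstein estimate $\nrm{e^{-t\sqrt{-\Delta}}A_{r}}_{L^{\infty}}\aleq t^{-1}\nrm{A_{r}}_{H^{\frac{d-3}{2}}}$ and the paraproduct algebra bound that the paper leaves as ``it may be checked,'' so your write-up is a correct, more detailed version of the same argument.
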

\begin{proof}
  We first work on the annulus $B_{1} \setminus \overline{\frac{1}{2} B_{1}}$,
  which we view as the product space $(\frac{1}{2}, 1)_{r} \times
  \bbS^{d-1}_{\Tht}$ (note that the corresponding Lebesgue and Sobolev
  spaces are equivalent). We define $\varphi(r, \Tht)$ to be Poisson
  semigroup $\varphi(r, \Tht) = e^{\sqrt{-\lap_{\Tht}} (r-1)} B$, and
  define
  \begin{equation*}
    \Psi(r, \Tht) = (r - 1) \varphi(r, \Tht), \qquad
  \end{equation*}
  By the properties of the Poisson semigroup, observe that
  \begin{equation*}
    \Psi(r, \Tht) = (r-1) B(\Tht) + o_{r \to 1}(r-1) \quad \hbox{ in } H^{\frac{d-3}{2}}(\bbS^{d-1}).
  \end{equation*}
  Moreover, $\Psi(r, \Tht) \in L^{\infty} \cap
  H^{\frac{d}{2}}((\frac{1}{2}, 1) \times \bbS^{d-1})$ and
  \begin{equation*}
    \nrm{\Psi(r, \cdot)}_{L^{\infty}(\bbS^{d-1})} = o_{r \to 1}(1)
  \end{equation*}
  where the rate depends only on the right tail of the
  $H^{\frac{d-3}{2}}$ frequency envelope of $B$. Let
  \begin{equation*}
    O(r, \Tht) = \exp (\chi \Psi(r, \Tht)).
  \end{equation*}
  where $\chi = \chi(r)$ is a smooth radial function such that $\chi =
  0$ in $\set{r < \frac{2}{3}}$ and $\chi = 1$ in $\set{r >
    \frac{5}{6}}$.  Since $L^{\infty} \cap H^{\frac{d}{2}}$ is an
  algebra, and since $O = Id$ in $\set{r < \frac{2}{3}}$, it may be
  checked that $O \in L^{\infty} \cap H^{\frac{d}{2}}(B_{1})$. Moreover,
  $\rd_{r} O(r, \Tht) O^{-1}(r, \Tht) \restriction_{\set{r=1}} =
  \rd_{r} \Psi(r, \Tht) \restriction_{\set{r = 1}} = B(\Tht)$, as
  desired. \qedhere
\end{proof}

\subsection{Patching procedures} \label{subsec:patching} Here we
describe procedures for patching together local gauges to a global
gauge, which is one of the main ingredients of the proof of the good
global gauge theorems.

We consider three scenarios:
\begin{enumerate}
\item Local gauges given on small (round) cubes $Q_{(\alp)}$ covering
  a large (round) cube $Q_{R}$;
\item Local gauges given on small balls $B_{(\alp)}$ covering a large
  ball $B_{R}$;
\item Local gauges given on concentric balls $B_{R_{n}}$ covering $X =
  B_{R}$ or $\bbR^{d}$.
\end{enumerate}
In all three scenarios, the patching procedure depends only on the
trivial topology and differentiable structure of the base.

\subsubsection*{Scenario~(1): Large cubes covered by smaller cubes}
We first consider a covering consisting of (round) cubes, which admits
simple intersection properties.

Let $Q_{R}$ be a smooth domain in $\bbR^{d}$, and consider a covering
$\set{Q_{\alp}}_{\alp \in \Gmm}$ of $Q_{R}$ by smooth domains
$Q_{\alp}$ indexed by a subset $\Gmm$ of the lattice $\bbZ^{d}$. We
equip $\bbZ^{d}$ with two norms: $\abs{\alp}_{\infty}= \sup_{k}
\abs{\alp_{k}}$ and $\abs{\alp}_{1}= \left( \sum_{k}
  \abs{\alp_{k}}^{2} \right)^{1/2}$. We say that two indices are
adjacent if $\abs{\alp - \alp'}_{\infty} \leq 1$. If $\abs{\alp -
  \alp'}_{1} \leq 1$, we say that $\alp$ and $\alp'$ are
\emph{face-adjacent}; if $\abs{\alp - \alp'}_{\infty} = 1$ but
$\abs{\alp - \alp'}_{1} > 1$, then we say that $\alp$ and $\alp'$ are
\emph{corner-adjacent}. We say that the covering $\set{Q_{\alp}}_{\alp
  \in \Gmm}$ is \emph{good} if the following properties hold:
\begin{enumerate}[label=(\alph*)]
\item The index set $\Gmm$ is of the form $\Gmm = \set{\alp \in
    \bbZ^{d} : \abs{\alp}_{\infty} < R_{\Gmm}}$ for some $R_{\Gmm} >
  0$.
\item For each $\alp$, there exist a sequence of shrinking domains
  $Q_{\alp} = Q^{(0)}_{\alp} \supset Q^{(1)}_{\alp} \supset \cdots$,
  such that, for each $n \geq 0$,
  \begin{equation*}
    Q_{R} \subseteq \bigcup_{\alp \in \Gmm} Q^{(n)}_{\alp}, \qquad
    \overline{Q^{(n+1)}_{\alp}} \cap Q_{R} \subseteq Q^{(n)}_{\alp} \cap Q_{R}. 
  \end{equation*}

\item Two domains $Q^{(n)}_{\alp}$ and $Q^{(n')}_{\alp'}$ intersect if
  and only if their indices are adjacent.
\item Consider any $\alp \in \Gmm$ and a subfamily $\Gmm' \subseteq
  \Gmm$ of adjacent indices with the property that (i) the
  face-adjacent indices in $\Gmm'$ are adjacent to each other and (ii)
  each corner-adjacent index in $\Gmm'$ is adjacent to some
  face-adjacent index in $\Gmm'$. Then for each $n \geq 1$ there
  exists a diffeomorphism $\Phi^{(n)}_{\Gmm'}$ from $Q^{(n)}_{\alp}$
  into $\tilde{F}^{(n)}_{\Gmm'} = \left(\cup_{\alp' \in \Gmm'}
    Q^{(n-1)}_{\alp'}\right) \cap Q^{(n)}_{\alp}$, which equals the
  identity in $F^{(n)}_{\Gmm'} = \left(\cup_{\alp' \in \Gmm'}
    Q^{(n)}_{\alp'}\right) \cap Q^{(n)}_{\alp}$.
\end{enumerate}

Given any cube $Q_{R}$ of sidelength $R > 1$, we construct a good
covering of $Q_{R}$ by round cubes (i.e., with rounded edges, so that
they are smooth) with roughly unit sidelength (more precisely, between
$1/2$ and $4$) as follows. Rescaling by a factor $\simeq 1$ (say
between $1/2$ and $2$), we may assume that $R$ is an
integer. Partition $Q_{R}$ into unit cubes $\tilde{Q}_{\alp}$ with
integer vertices, indexed in an obvious manner by $\Gmm \subseteq
\bbZ^{d}$ as in (a). Rounding off the edges (uniformly in $\alp$), we
may replace each $\tilde{Q}_{\alp}$ by a round cube, such that
$\set{1.1 \tilde{Q}_{\alp}}$ still covers $Q_{R}$. Fix a sequence $2 >
\lmb^{(0)} > \lmb^{(1)} > \cdots > 1.1$, and define $Q^{(n)}_{\alp}$
to be the enlargement $\lmb^{(n)}\tilde{Q}_{\alp}$. It is then
straightforward to verify that (b)--(d) hold for
$\set{Q^{(n)}_{\alp}}$.
\begin{remark}\label{rem:Q-cover}
  We make the simple but crucial observation that the preceding
  construction of a good covering may be \emph{fixed} depending only
  on the size $R$ of the large cube. Also, $Q_{R}$ may be taken to be
  a round cube as well; it does not affect the properties (a)--(d), as
  long as the edges are rounded off at a scale much smaller than $1$.
\end{remark}

Let $\set{Q_{\alp}}_{\alp \in \Gmm}$ be a good covering of $Q_{R}$,
and suppose that a local data set $\set{Q_{\alp}, O_{(\alp \bt)}}$ for
a $\G$-bundle (with arbitrary regularity) is given. Our goal is to
patch the local gauges up to form a global gauge on $Q_{R}$. More
concretely, we find a gauge transformation $P_{(\alp)}$ on each
$Q^{(N)}_{(\alp)}$, where $N = \# \Gmm$, such that
\begin{equation*}
  P_{(\bt)} = P_{(\alp)} \cdot O_{(\alp \bt)} \quad \hbox{ in } Q^{(N)}_{\alp} \cap Q^{(N)}_{\bt}.
\end{equation*}

To start the construction, we endow $\Gmm$ with the lexicographic
ordering (i.e., $\alp < \alp'$ if $\alp_{i} < \alp'_{i}$, where $i$ is
the first index where the components differ); we denote by $[\alp]$
the ordinality of $\alp$ in this covering (thus $1 \leq [\alp] \leq
N$). The simple key observation is that such an ordering insures that
each $\alp$ and $\Gmm' = \set{\alp' : \alp' \hbox{ is adjacent to } \alp, \, \alp' < \alp}$ satisfy the condition of
(d).

We proceed inductively on $[\alp]$, and construct $P_{(\alp)}$ on
$Q^{([\alp])}_{\alp}$ such that
\begin{equation*}
  P_{(\bt)} = P_{(\alp)} \cdot O_{(\alp \bt)} \quad \hbox{ in } Q_{\alp}^{([\bt])} \cap Q_{\bt}^{([\bt])}, \hbox{ for } \alp \leq \bt.
\end{equation*}
For the first element $[\alp] = 1$, we simply take $P_{(\alp)} = Id$
on $Q^{(1)}_{\alp}$. Now assume that $P_{(\alp')}$ has been
constructed on $Q^{([\alp'])}_{\alp'}$ for $\alp' < \alp$, where
$[\alp] = n > 1$. Define $\tilde{P}_{(\alp)}$ in
$\tilde{F}^{(n)}_{\set{\alp' < \alp}} = \left(\cup_{\alp' < \alp}
  Q^{(n-1)}_{\alp'} \right) \cap Q^{(n)}_{\alp}$ by
\begin{equation} \label{eq:patching-tP} \tilde{P}_{(\alp)} =
  P_{(\alp')} \cdot O_{(\alp' \alp)} \quad \hbox{ on }
  Q^{(n-1)}_{\alp'} \cap Q^{(n)}_{\alp} \hbox{ for each } \alp' <
  \alp.
\end{equation}
By construction, these expressions match on the
intersections. Applying (d) in the definition of a good covering, we
find a diffeomorphism $\Phi^{(n)}_{\set{\alp' < \alp}}$ from
$Q^{(n)}_{\alp}$ into $\tilde{F}^{(n)}_{\set{\alp' < \alp}}$, which
equals the identity in $F^{(n)}_{\set{\alp' < \alp}} =
\left(\cup_{\alp' < \alp} Q^{(n)}_{\alp'} \right) \cap
Q^{(n)}_{\alp}$. We simply define $P_{(\alp)}$ in $Q^{(n)}_{\alp}$ by
the pullback
\begin{equation} \label{eq:patching-P} P_{(\alp)} = \tilde{P}_{(\alp)}
  \circ \Phi^{(n)}_{\set{\alp' < \alp}}.
\end{equation}

Next, suppose that local data for a connection $\set{A_{(\alp)}}$ are
also given. Then the gauge potential $A$ in the global gauge
constructed above is described in terms of $A_{(\alp)}$ and
$P_{(\alp)}$ as follows: Given a partition of unity $\chi_{(\alp)}$
subordinate to $\set{Q^{(N)}_{(\alp)}}$, we have
\begin{equation} \label{eq:patching-A} A = \sum \chi_{(\alp)} \left(
    Ad(P_{(\alp)}) A_{(\alp)} - P_{(\alp); x} \right).
\end{equation}

The advantage of this patching procedure is that it relies only on the
properties (a)--(d) of the good covering $\set{Q_{\alp}}_{\alp \in
  \Gmm}$, and is universal in the data $\set{O_{(\alp \bt)}}$ or
$\set{A_{(\alp)}}$. Moreover, it is straightforward to infer
properties of $P_{(\alp)}$ and $A$ from those of $\set{O_{(\alp
    \bt)}}$ and $\set{A_{(\alp)}}$. Indeed, in the above construction,
observe that $\set{P_{(\alp)}}$ is constructed from $\set{O_{(\alp
    \bt)}}$ using only the operations of (i) pointwise multplication,
(ii) pullback by a diffeomorphism, (iii) restriction to a smooth
subdomain and (iv) patching up local expressions which are consistent
on the intersections. Any property of $\set{O_{(\alp \bt)}}$ invariant
under these operations transfers to $P_{(\alp)}$. In particular, for
any $k \geq 1$ and $p \geq \frac{d}{k}$,
\begin{equation*}
  O_{(\alp \bt)} \in \calG^{k, p}_{loc}(Q_{\alp} \cap Q_{\bt}) \ \forall \alp, \bt \imp P_{(\alp)} \in \calG^{k, p}_{loc}(Q^{(N)}_{\alp}) \ \forall \alp.
\end{equation*}
Regarding bounds for $A$, it is useful to introduce the following
definition:
\begin{definition} \label{def:patching-adm} We say that a norm $Y$ on
  $\bbR^{d}$ is \emph{(patching-)admissible} if:
  \begin{itemize}
  \item $Y$ is invariant under pullback by any diffeomorphism;
  \item $Y$ is invariant under any smooth cutoff;
  \item If $A \in Y$ and $O_{;x} \in Y$, then $Ad(O) A \in Y$ with
    $\nrm{Ad(O) A}_{Y} \aleq_{\nrm{A}_{Y}, \nrm{O_{;x}}_{Y}} 1$.
  \end{itemize}
\end{definition}
From the preceding observation regarding the construction of
$P_{(\alp)}$, as well as the explicit formula \eqref{eq:patching-A},
we see that:
\begin{equation*}
  O_{(\alp \bt); x} \in Y(Q_{\alp} \cap Q_{\bt}) \ \forall \alp, \bt \hbox{ and }
  A_{(\alp)} \in Y(Q_{\alp})\ \forall \alp \imp \nrm{A}_{Y(Q_{R})} \aleq 1,
\end{equation*}
where the implicit constant depends only on the good covering (which,
in turn, may be fixed depending only on $R$;
cf. Remark~\ref{rem:Q-cover}), $\sup_{\alp}
\nrm{A_{(\alp)}}_{Y(Q_{\alp})}$ and $\sup_{\alp, \bt} \nrm{O_{(\alp
    \bt)}}_{Y(Q_{\alp} \cap Q_{\bt})}$.

\subsubsection*{Scenario~(2): Large ball covered by small balls}
Here, we wish to patch up local data for a $\G$-bundle and a
connection given on small balls centered inside $B_{R}$; this is the
case we encounter in our applications. The idea is to reduce to
Scenario~(1) by a suitable diffeomorphism.

Consider a covering $\set{B_{\alp} \cap B_{R}}$ of $B_{R}$ by finitely
many balls.  Let $\Phi$ be a bi-Lipschitz isomorphism from the cube
$Q_{\lmb_{0} R}$ to $B_{R}$, where $\lmb_{0} \in (0, \infty)$ is to be
fixed below. Let $\set{Q_{\alp}}_{\alp \in \Gmm}$ be a good covering
of $Q_{\lmb_{0} R}$ as in Scenario~(1). We wish to insure that the
image of each $Q_{\alp}$ under $\Phi_{R}$ is contained in a unit
ball. Indeed, observe that, by scaling-invariance, the Lipschitz
constant of $\Phi_{R}$ is independent of $R$, but decreases in
$\lmb_{0}$. Hence, for any $\dlt > 0$, by choosing $\lmb_{0}$
sufficiently large (independent of $R$) we may insure that
\begin{equation} \label{eq:cube-ball} \Phi_{R}(Q_{\alp}) \subseteq
  B_{\dlt}(x) \quad \hbox{ for some } x \in B_{R}.
\end{equation}
By Lebesgue's covering lemma, this ensures that $\Phi(Q_{\alp})$ is
contained in some ball $B_{\alp}$ in the covering.  Finally, by
rounding off the edges of $Q_{\lmb_{0} R}$, we may replace
$Q_{\lmb_{0} R}$ by a round cube, and $\Phi$ by a diffeomorphism with
uniform bounds. Note that this can be done while not disturbing the
Lipschitz constant much (and thus \eqref{eq:cube-ball} still holds),
while the uniform bounds of higher derivatives would depend on $R$.

\begin{remark}\label{rem:B-cover}
  In the above procedure, note that $\lmb_{0}$ depends only on
  Lebesgue constant $\dlt > 0$ of the covering $\set{B_{\alp}}$. In
  particular, if $B_{\alp}$'s are unit balls which are uniformly
  separated, so that the Lebesgue constant is $\aeq 1$, $\lmb_{0}$ may
  be fixed independent of $R$. The remaining components of the
  construction may be \emph{fixed} depending only on the radius $R$
  (recall also Remark~\ref{rem:Q-cover}).
\end{remark}
We now apply the patching procedure in Scenario~(1) to the pulled-back
data $\set{Q_{\alp}, O_{(\alp \bt)} \circ \Phi, \Phi^{\ast}
  A_{(\alp)}}$, which are well-defined since each $\Phi(Q_{\alp})$ is
contained in some ball $B_{\alp}$ in the covering. Then we return to
$B_{R}$ via $\Phi^{-1}$. As a result, we obtain a refinement
$\set{B'_{\alp} = \Phi(Q^{(N)}_{\alp})}$ of the covering
$\set{B_{\alp}}$ (the index sets are different, but we abuse the
notation and denote both by $\alp$), as well as a gauge transform
$P_{\alp'}$ in each $B'_{\alp'}$, such that
\begin{equation} \label{eq:patching-P-ball} P_{(\alp)} = P_{(\alp')}
  \cdot O_{(\alp' \alp)} \quad \hbox{ on } B'_{\alp'} \cap B'_{\alp}.
\end{equation}
Moreover, given a partition of unity subordinate to $\set{B'_{\alp}}$,
the global gauge potential $A$ takes the form
\begin{equation} \label{eq:patching-A-ball} A = \sum \chi_{(\alp)}
  \left(Ad(P_{(\alp)} A_{(\alp)} - P_{(\alp); x} \right).
\end{equation}
Finally, we obtain the following result:
\begin{proposition} \label{prop:patching-ball} Let $R \geq 1$, and
  consider a covering $\set{B_{\alp} \cap B_{R}}$ of $B_{R}$ by
  uniformly separated unit balls $B_{\alp}$ centered inside
  $B_{R}$. Any $\G$-bundle with $O_{(\alp \bt)} \in \calG^{k,
    \frac{d}{k}}(B_{\alp} \cap B_{\bt} \cap B_{R})$ admits a global
  gauge. Moreover, given any local data $\set{A_{(\alp)}}$ for a
  connection on this $\G$-bundle satisfying $A \in W^{k-1,
    \frac{d}{k}}(B_{\alp} \cap B_{R})$, the global gauge potential
  satisfies $A \in W^{k, \frac{d}{k}}(B_{R})$. More precisely, if
  \begin{equation*}
    \sup_{\alp} \nrm{A_{(\alp)}}_{W^{k-1, \frac{d}{k}}(B_{\alp} \cap B_{R})} \leq M, \quad
    \sup_{\alp, \bt} \nrm{O_{(\alp \bt); x}}_{W^{k-1, \frac{d}{k}}(B_{\alp} \cap B_{\bt} \cap B_{R})} \leq M, 
  \end{equation*}
  for some $M > 0$, then
  \begin{equation*}
    \nrm{A}_{W^{k-1, \frac{d}{k}}(B_{R})} \aleq_{R, M} 1.
  \end{equation*}
\end{proposition}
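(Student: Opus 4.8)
The plan is to reduce the ball covering to the cube covering of Scenario~(1), run the inductive patching there, and transport the resulting bound back through the reducing diffeomorphism via the admissible-norm bookkeeping already set up in Section~\ref{subsec:patching}. Since the $B_{\alp}$ are unit balls that are uniformly separated, their Lebesgue constant is $\aeq 1$, so by Remark~\ref{rem:B-cover} we may fix, depending only on $R$, a constant $\lmb_{0} \aeq 1$, a round cube $Q_{\lmb_{0} R}$, a good covering $\set{Q^{(n)}_{\alp}}$ of it, and a diffeomorphism $\Phi\colon Q_{\lmb_{0} R} \to B_{R}$ with uniformly bounded derivatives for which \eqref{eq:cube-ball} holds, so that each $\Phi(Q_{\alp})$ lies inside some $B_{\alp}$ of the covering. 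Pulling the data $\set{O_{(\alp \bt)}, A_{(\alp)}}$ back by $\Phi$ and running \eqref{eq:patching-tP}--\eqref{eq:patching-P} — order $\Gmm$ lexicographically, take $P_{(\alp)} = Id$ for the first index, and at stage $[\alp] = n$ glue the mutually consistent expressions $P_{(\alp')} \cdot O_{(\alp'\alp)}$ over $\alp' < \alp$ and compose with the diffeomorphism $\Phi^{(n)}_{\set{\alp'<\alp}}$ supplied by property~(d) (applicable because the lexicographic order makes $\set{\alp' < \alp}$ satisfy the hypothesis of~(d)) — produces, after returning to $B_{R}$ via $\Phi^{-1}$, a refinement $\set{B'_{\alp} = \Phi(Q^{(N)}_{\alp})}$ and gauge transformations $P_{(\alp)}$ obeying \eqref{eq:patching-P-ball}. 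Because each $P_{(\alp)}$ is assembled from the $O_{(\alp \bt)}$ using only pointwise multiplication, pullback by a diffeomorphism, restriction to a smooth subdomain, and patching of locally consistent data — all of which preserve the class $\calG^{k, \frac{d}{k}}_{loc}$ by Lemmas~\ref{lem:mult} and \ref{lem:inv} — we obtain $P_{(\alp)} \in \calG^{k, \frac{d}{k}}(B'_{\alp})$, hence a global gauge.

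For the quantitative statement, read the global gauge potential off \eqref{eq:patching-A-ball}, namely $A = \sum_{\alp} \chi_{(\alp)}\bigl(Ad(P_{(\alp)}) A_{(\alp)} - P_{(\alp); x}\bigr)$, and verify that the norm $Y = W^{k-1, \frac{d}{k}}$ is patching-admissible in the sense of Definition~\ref{def:patching-adm}. Invariance under the diffeomorphisms $\Phi$, $\Phi^{(n)}_{\Gmm'}$ and under the fixed cutoffs $\chi_{(\alp)}$ holds with an $R$-dependent constant, since these are drawn from a family fixed by $R$ alone (Remarks~\ref{rem:Q-cover}, \ref{rem:B-cover}); the only substantive point is condition~(iii), i.e. boundedness of $Ad(O)$ on $W^{k-1, \frac{d}{k}}$ when $O \in \calG^{k, \frac{d}{k}}$ (equivalently $O_{;x} \in W^{k-1, \frac{d}{k}}$ together with $O$ bounded), which is precisely the critical-Sobolev multiplication estimate with one bounded $\G$-valued factor and is the content of Lemmas~\ref{lem:mult} and \ref{lem:d-inv}. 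Invoking the transfer principle stated at the end of Scenario~(1)/(2) with $Y = W^{k-1, \frac{d}{k}}$ and the hypotheses $\sup_{\alp}\nrm{A_{(\alp)}}_{W^{k-1, \frac{d}{k}}(B_{\alp} \cap B_{R})} \leq M$ and $\sup_{\alp, \bt}\nrm{O_{(\alp \bt); x}}_{W^{k-1, \frac{d}{k}}(B_{\alp} \cap B_{\bt} \cap B_{R})} \leq M$ then yields $\nrm{A}_{W^{k-1, \frac{d}{k}}(B_{R})} \aleq_{R, M} 1$, so that $A \in W^{k-1, \frac{d}{k}}(B_{R})$ — at the regularity of its own local representatives, one order below the transition data — with the claimed scaling-controlled bound. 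The higher-regularity addendum (when the $O_{(\alp \bt)}$ and $A_{(\alp)}$ are more regular) propagates through the identical construction, each step being a bounded operation on the corresponding Sobolev scale.

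The main obstacle is purely harmonic-analytic and concentrated at the critical exponent $\frac{d}{k}$: the space $W^{k-1, \frac{d}{k}}$ is not an algebra, and $\calG^{k, \frac{d}{k}}$ maps need not be Lipschitz, so naive multiplication and differentiation are unavailable. The two delicate points are (a) the closure of $\calG^{k, \frac{d}{k}}_{loc}$ under the patching operations, which rests on the continuity — rather than smoothness — statements of Lemmas~\ref{lem:mult}--\ref{lem:d-inv} and the localized smooth approximation of Lemma~\ref{lem:part-approx}; and (b) the verification of admissibility condition~(iii), again a borderline multiplication estimate. By contrast, the combinatorial skeleton — existence of a good covering, the lexicographic induction, and freezing every implicit constant in terms of $R$ — is routine once Scenario~(1) is in hand, the reduction through $\Phi$ adding nothing essential.
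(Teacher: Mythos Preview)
Your argument is correct and follows exactly the route the paper lays out in Scenario~(2): pull the ball-cover back to a good cube-cover via the diffeomorphism $\Phi$ (with all geometric data fixed by $R$ alone, per Remarks~\ref{rem:Q-cover} and \ref{rem:B-cover}), run the lexicographic induction \eqref{eq:patching-tP}--\eqref{eq:patching-P}, and then read off the bound from the admissible-norm transfer principle with $Y = W^{k-1,\frac{d}{k}}$. You have also correctly observed that the conclusion should be $A \in W^{k-1,\frac{d}{k}}(B_R)$ rather than $W^{k,\frac{d}{k}}(B_R)$ as the statement literally reads --- the global potential cannot exceed the regularity of its local representatives $A_{(\alp)} \in W^{k-1,\frac{d}{k}}$, and the downstream uses of the proposition (e.g.\ Proposition~\ref{prop:ball-triv}) confirm that $W^{k-1,\frac{d}{k}}$ is what is meant.
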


\subsubsection*{Scenario~(3): $X = B_{R}$ or $\bbR^{d}$ covered by concentric balls}
Finally, we consider the case when local data for a $\G$-bundle and a
connection are given all concentric balls $\set{B_{R_{n}}}_{n= 1, 2,
  \ldots}$ with $R_{n} \nearrow R$ or $\infty$.

Add a smaller ball $B_{R_{0}} \subset B_{R_{1}}$ to the covering. For
$n \geq 2$, let $\Phi_{n}$ be a diffeomorphism from $B_{R_{n}}$ into
$B_{R_{n-1}}$, which equals the identity on $B_{R_{n-2}}$. Define
$P_{(n)}$ on $B_{R_{n}}$ inductively by $P_{(1)} = id$ and
\begin{equation*}
  P_{(n)} = (P_{(n-1)} \cdot O_{((n-1) n)}) \circ \Phi_{n}.
\end{equation*}
Then we restrict the data and $P_{(n)}$ on $B_{R_{n}}$ to
$B_{R_{n-1}}$. It follows by construction that, for $n < m$,
\begin{equation*}
  P_{(m)} = P_{(n)} \cdot O_{(n m)} \quad \hbox{ in } B_{R_{n-1}}.
\end{equation*}
Given some local data $\set{A_{(n)}}$ for a connection, the global
gauge potential is given by
\begin{equation*}
  A = Ad(P_{(n)}) A_{(n)} - P_{(n); x} \quad \hbox{ in } B_{R_{n-1}}.
\end{equation*}
These expressions are consistent in the intersection (i.e., the
smaller ball).  Again, observe that $P_{(n)}$ is constructed by the
same operations (i)--(iv) as in Scenario~(1).

As a consequence this patching procedure, as well as
Proposition~\ref{prop:patching-ball}, we obtain the following soft
result, which is a starting point for the good global gauge theorems.
\begin{proposition} \label{prop:ball-triv} Any $\G$-bundle with
  regularity $\calG^{k, \frac{d}{k}}_{loc}$ on $X = B_{R}$ or
  $\bbR^{d}$ admits a global gauge.  Moreover, for any $\covD \in
  \calA^{k-1, \frac{d}{k}}_{loc}(X)$ on this $\G$-bundle, the global
  gauge potential satisfies $A \in W^{k-1, \frac{d}{k}}_{loc}(X)$.
\end{proposition}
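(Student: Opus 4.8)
The plan is to bootstrap the soft patching results of Scenarios~(2) and~(3) by first exhausting $X$ by concentric balls, so that the ball-patching of Scenario~(2) is only ever applied on a precompact piece. Concretely, suppose the $\calG^{k, \frac{d}{k}}_{loc}$ $\G$-bundle $\eta$ is given by the data $\set{U_{\alp}, O_{(\alp \bt)}}$ with $O_{(\alp \bt)} \in \calG^{k, \frac{d}{k}}_{loc}(U_{\alp} \cap U_{\bt})$, and fix an exhaustion $B_{R_{1}} \subset B_{R_{2}} \subset \cdots$ with each $\overline{B_{R_{n}}}$ compact and contained in $X$ and $R_{n} \nearrow R$ (or $R_{n} \nearrow \infty$ when $X = \bbR^{d}$); after a trivial rescaling we may assume $R_{n} \geq 1$. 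First I would, for each fixed $n$, extract a finite subcover of the compact set $\overline{B_{R_{n}}}$ and refine it to a finite cover of $\overline{B_{R_{n}}}$ by balls $B_{(i)}$ with each $\overline{B_{(i)}}$ contained in some $U_{\alp(i)}$; the pulled-back transition maps $O_{(\alp(i)\alp(j))}\restriction_{B_{(i)} \cap B_{(j)}}$ then satisfy the cocycle conditions and lie in $\calG^{k, \frac{d}{k}}_{loc}(B_{(i)} \cap B_{(j)})$, since $B_{(i)} \cap B_{(j)}$ is a Lipschitz domain compactly contained in $U_{\alp(i)} \cap U_{\alp(j)}$. Applying Scenario~(2) (Proposition~\ref{prop:patching-ball}) then produces a global gauge for $\eta$ over each $B_{R_{n}}$.

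Next I would stitch these trivializations together. Viewing the concentric cover $\set{B_{R_{n}}}$, with the change-of-trivialization maps between the Scenario~(2) gauges on $B_{R_{n-1}}$ and on $B_{R_{n}}$ (restricted to $B_{R_{n-1}}$) as the new local transition data, and noting that these maps are again $\calG^{k, \frac{d}{k}}_{loc}$ --- being obtained from the $O_{(\alp \bt)}$ by pointwise multiplication, inversion, pullback by a diffeomorphism and restriction, all of which preserve $\calG^{k, \frac{d}{k}}_{loc}$ by Lemmas~\ref{lem:mult}, \ref{lem:inv} and \ref{lem:d-inv} --- we are exactly in the situation of Scenario~(3), which delivers a single global gauge on all of $X$. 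For the regularity claim, given $\covD \in \calA^{k-1, \frac{d}{k}}_{loc}(X)$ with local representatives $A_{(\alp)} \in W^{k-1, \frac{d}{k}}_{loc}(U_{\alp}; \g)$, I would track these representatives through the two patching steps: on each $B_{(i)}$ one has $A_{(\alp(i))}\restriction_{B_{(i)}} \in W^{k-1, \frac{d}{k}}(B_{(i)}; \g)$, and in the formula \eqref{eq:patching-A-ball} the patching gauge transformations $P_{(i)}$ are $\calG^{k, \frac{d}{k}}_{loc}$, so by Lemma~\ref{lem:d-inv} both $Ad(P_{(i)}) A_{(\alp(i))}$ and $P_{(i); x}$ are $W^{k-1, \frac{d}{k}}$; hence the Scenario~(2) global potential on $B_{R_{n}}$ lies in $W^{k-1, \frac{d}{k}}(B_{R_{n}}; \g)$. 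Feeding this into the Scenario~(3) formula $A = Ad(P_{(n)}) A_{(n)} - P_{(n); x}$ with $P_{(n)} \in \calG^{k, \frac{d}{k}}_{loc}(B_{R_{n-1}})$, a further application of Lemma~\ref{lem:d-inv} gives $A \in W^{k-1, \frac{d}{k}}_{loc}(B_{R_{n-1}}; \g)$; letting $n \to \infty$ yields $A \in W^{k-1, \frac{d}{k}}_{loc}(X; \g)$, which in the case $k = 2$ of interest is precisely $A \in W^{1, \frac{d}{2}}_{loc}(X)$, as claimed.

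The main obstacle is really the non-compactness of $X$: an arbitrary open cover of $B_{R}$ or $\bbR^{d}$ need not admit a finite subcover (the $U_{\alp}$ may accumulate at $\partial B_{R}$ or at infinity), so Scenario~(2) cannot be invoked directly, and the concentric-ball exhaustion is exactly the device that reduces to it. Everything else is bookkeeping: checking that the transition maps produced at each stage remain in $\calG^{k, \frac{d}{k}}_{loc}$, and that the explicit patching formulas \eqref{eq:patching-A}--\eqref{eq:patching-A-ball} send $W^{k-1, \frac{d}{k}}_{loc}$ data to $W^{k-1, \frac{d}{k}}_{loc}$ potentials, both of which follow mechanically from the continuity statements in Lemmas~\ref{lem:mult}--\ref{lem:d-inv}. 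Since no quantitative bounds are needed for this soft statement, it suffices to work with the $_{loc}$ spaces throughout.
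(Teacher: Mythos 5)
Your proposal is correct and takes essentially the same approach as the paper's proof: both reduce to the ball-patching of Scenario~(2) (Proposition~\ref{prop:patching-ball}) by producing a suitable ball refinement of the cover over each precompact $B_{R'}\Subset X$ (you via finite subcover, the paper via Lebesgue's covering lemma with a uniform radius $\dlt$), then stitch the resulting trivializations using Scenario~(3), and finally observe that the explicit patching formulas \eqref{eq:patching-A}--\eqref{eq:patching-A-ball} together with Lemmas~\ref{lem:mult}--\ref{lem:d-inv} carry the local Sobolev regularity through to the global gauge potential. Your regularity bookkeeping is slightly more explicit than the paper's (which dismisses it as ``a quick corollary''), and your intermediate conclusion $A \in W^{k-1,\frac{d}{k}}_{loc}$ is the natural one for general $k$, agreeing with the stated $W^{1,\frac{d}{2}}_{loc}$ precisely in the relevant case $k=2$.
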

\begin{proof}
  Let $\set{U_{\alp}, O_{(\alp \bt)}}$ be the local data for a
  $\G$-bundle with regularity $\calG^{k, \frac{d}{k}}_{loc}$ on $X =
  B_{R}$ or $\bbR^{d}$, and consider a smaller ball $B_{R'}$ such that
  $\overline{B_{R'}} \subseteq X$. By Lebesgue's covering lemma, there
  exists a refinement of $U_{\alp}$ by balls $\set{B_{\dlt}(x) \cap
    B_{R'}}_{x \in B_{R'}}$ of the same radius $\dlt > 0$ . By
  Proposition~\ref{prop:patching-ball}, we obtain a global gauge on
  $B_{R'}$. Since $R'$ is arbitrary, Scenario~(3) applies to a
  sequence of global gauges on $B_{R'}$ with $R' \nearrow R$ or
  $\infty$, and we obtain a global gauge on $X$. Existence of a
  corresponding global gauge potential for any $\calA^{k-1,
    \frac{d}{k}}_{loc}(X)$ connection is a quick corollary. \qedhere
\end{proof}

\subsection{Uhlenbeck lemmas and elliptic regularity}
Thanks to Proposition~\ref{prop:ball-triv}, we know that any
$\calA^{1, \frac{d}{2}}_{loc}(X)$ connection admits a global gauge
potential in $W^{1, \frac{d}{2}}_{loc}(X)$. This is a natural setting
for Uhlenbeck's lemma, which finds good local gauges under a
gauge-invariant smallness assumption. These good local gauges furnish
another main ingredient of the proof of the good global gauge
theorems.

We start with the case of a ball $B_{1}$.
\begin{theorem} [Uhlenbeck's lemma on a
  ball] \label{thm:uhlenbeck-ball} Consider $\covD \in \calA^{1,
    \frac{d}{2}}_{loc}(B_{1})$ of the form $\covD = \ud + A$ with $A
  \in W^{1, \frac{d}{2}}(B_{1}; \g)$, which satisfies
  \begin{equation} \label{eq:uhlenbeck-ball-hyp}
    \nrm{F[A]}_{L^{\frac{d}{2}}(B_{1})} < \eps_{0}.
  \end{equation}
  \begin{enumerate}
  \item There exists $O \in \calG^{2, \frac{d}{2}}(B_{1})$, unique up
    to multiplication by a constant element of $\G$, such that $\tA =
    Ad(O) A - O_{;x} \in W^{1, \frac{d}{2}}(B_{1}; \g)$ obeys
    \begin{equation*}
      \rd^{\ell} \tA_{\ell} = 0 \hbox{ in } B_{1}, \qquad
      x^{\ell} \tA_{\ell} = 0 \hbox{ on } \rd B_{1}
    \end{equation*}
    and
    \begin{equation*}
      \nrm{\tA}_{W^{1, \frac{d}{2}}(B_{1})} \aleq \nrm{F[A]}_{L^{\frac{d}{2}}(B_{1})}.
    \end{equation*}

  \item Let $A^{n}$ be a sequence of connections such that $A^{n} \to
    A$ in $W^{1, \frac{d}{2}}(B_{1}; g)$. Let $(\tA^{n}, O^{n})$ be
    given by (1) from $A^{n}$. Then passing to a subsequence and
    suitably conjugating each $(\tA^{n}, O^{n})$ with a constant gauge
    transformation, we have
    \begin{equation*}
      \tA^{n} \to \tA \hbox{ in } W^{1, \frac{d}{2}}(B_{1}), \qquad
      O^{n} \to O \hbox{ in } W^{2, \frac{d}{2}}(B_{1}).
    \end{equation*}
  \end{enumerate}
\end{theorem}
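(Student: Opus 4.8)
The plan is to regard part~(1) as a version, in the critical function-space framework of \S\ref{subsec:rough-gt}, of Uhlenbeck's gauge-fixing lemma \cite{MR648356}, and to obtain the continuity statement~(2) as a separate compactness argument. Write $p=\frac d2$. The only two linear inputs are the borderline Sobolev embedding $W^{1,p}(B_1)\hookrightarrow L^{2p}(B_1)$ (valid precisely because $d\ge 3$), and the Hodge--Gaffney estimate on the contractible domain $B_1$: for every $\g$-valued $1$-form $\omg$ with $x^{\ell}\omg_{\ell}=0$ on $\rd B_1$,
\[
  \nrm{\omg}_{W^{1,p}(B_1)}\aleq_{p}\nrm{\ud\omg}_{L^{p}(B_1)}+\nrm{\rd^{\ell}\omg_{\ell}}_{L^{p}(B_1)},
\]
there being no nontrivial harmonic field with this boundary condition. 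The problem in~(1) is to find $O\in\calG^{2,p}(B_1)$ so that $\tA=Ad(O)A-O_{;x}$ satisfies $\rd^{\ell}\tA_{\ell}=0$ in $B_1$ and $x^{\ell}\tA_{\ell}=0$ on $\rd B_1$.

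For part~(1) I would record the two facts that are actually used below. First, the \emph{a priori dichotomy}: combining $\ud\tA=F[\tA]-\tfrac12[\tA\wedge\tA]$, the gauge-invariance $\nrm{F[\tA]}_{L^{p}}=\nrm{F[A]}_{L^{p}}$, the Hodge estimate, and $\nrm{[\tA\wedge\tA]}_{L^{p}}\aleq\nrm{\tA}_{L^{2p}}^{2}\aleq\nrm{\tA}_{W^{1,p}}^{2}$, any Coulomb representative obeys $\nrm{\tA}_{W^{1,p}}\le C_{0}(\nrm{F[A]}_{L^{p}}+\nrm{\tA}_{W^{1,p}}^{2})$, so either $\nrm{\tA}_{W^{1,p}}\le 2C_{0}\nrm{F[A]}_{L^{p}}$ or $\nrm{\tA}_{W^{1,p}}\ge(2C_{0})^{-1}$. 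Second, \emph{uniqueness up to a constant}: if $O_{1},O_{2}$ both produce Coulomb representatives of size $\aleq\eps_{0}$, then $Q=O_{2}O_{1}^{-1}$ intertwines them, $\rd^{\ell}(Q_{;x})_{\ell}$ equals a quadratic expression in quantities of size $\aleq\eps_{0}$, and the Hodge estimate forces $Q_{;x}=0$, i.e.\ $Q$ constant. Existence of a representative satisfying the first alternative is Uhlenbeck's theorem: the implicit function theorem near $O=\mathrm{Id}$ settles the case of small $\nrm{A}_{W^{1,p}}$, and the general case with small $\nrm{F[A]}_{L^{p}}$ follows by the continuity method (or a variational construction), using the dichotomy to rule out the second alternative and the weak-closedness of $\calG^{2,p}$ under the operations of \S\ref{subsec:rough-gt} (Lemmas~\ref{lem:mult}--\ref{lem:d-inv}) to take limits; at the endpoint $p=\frac d2$ one first treats smooth connections (Lemma~\ref{lem:part-approx}) and passes to the limit by the argument of part~(2) below, whose core does not presuppose part~(1) for the limiting connection.

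For part~(2), let $A^{n}\to A$ in $W^{1,p}(B_{1};\g)$. The bound in~(1) gives $\sup_{n}\nrm{\tA^{n}}_{W^{1,p}}\aleq\sup_{n}\nrm{F[A^{n}]}_{L^{p}}\aleq 1$, whence $\nrm{O^{n}_{;x}}_{W^{1,p}}=\nrm{Ad(O^{n})A^{n}-\tA^{n}}_{W^{1,p}}\aleq 1$ by Lemma~\ref{lem:d-inv}, and since $O^{n}$ takes values in the compact group $\G$, differentiating $\rd O^{n}=O^{n}_{;x}O^{n}$ shows $\set{O^{n}}$ is bounded in $\calG^{2,p}(B_{1})$. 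Passing to a subsequence, $\tA^{n}\weakto\tA_{\infty}$ in $W^{1,p}$ and $O^{n}\weakto O_{\infty}$ in $W^{2,p}$, with a.e.\ convergence, and by Rellich both converge strongly in $L^{q}$ for all $q<2p$. Using this together with the a.e.-convergence and truncation devices of \S\ref{subsec:rough-gt} (which are needed precisely because multiplication is only continuous at this regularity), one passes to the limit in the defining relations to get $\tA_{\infty}=Ad(O_{\infty})A-O_{\infty;x}$, that $\tA_{\infty}$ is Coulomb, and that $F[\tA_{\infty}]=Ad(O_{\infty})F[A]$. To upgrade to strong convergence, set $\delta^{n}=\tA^{n}-\tA_{\infty}$; subtracting the two Hodge systems,
\[
  \rd^{\ell}\delta^{n}_{\ell}=0, \quad x^{\ell}\delta^{n}_{\ell}=0 \ \hbox{ on } \rd B_{1}, \quad \ud\delta^{n}=R^{n}-\tfrac12[\delta^{n}\wedge\delta^{n}],
\]
where $R^{n}$ collects the curvature difference $Ad(O^{n})F[A^{n}]-Ad(O_{\infty})F[A]$ and the brackets pairing $\delta^{n}$ with the fixed $\tA_{\infty}$, all of which tend to $0$ in $L^{p}$ by the same devices. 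The Hodge estimate gives $\nrm{\delta^{n}}_{W^{1,p}}\aleq\nrm{R^{n}}_{L^{p}}+\nrm{\delta^{n}}_{W^{1,p}}^{2}$, and since $\nrm{\delta^{n}}_{W^{1,p}}\aleq\eps_{0}$ is small the quadratic term is absorbed, so $\nrm{\delta^{n}}_{W^{1,p}}\aleq\nrm{R^{n}}_{L^{p}}\to 0$. Then $O^{n}_{;x}=Ad(O^{n})A^{n}-\tA^{n}\to O_{\infty;x}$ in $W^{1,p}$, and differentiating $\rd O^{n}=O^{n}_{;x}O^{n}$ once more gives $O^{n}\to O_{\infty}$ in $W^{2,p}$. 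Finally, by part~(1) the strong limit $(\tA_{\infty},O_{\infty})$ coincides with the output $(\tA,O)$ for $A$ after left-multiplication of $O_{\infty}$ by a constant element of $\G$; compactness of $\G$ lets these constants be chosen convergently along the subsequence, which completes the proof.

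The step I expect to be the main obstacle is this strong-convergence upgrade: at the scaling-critical regularity $p=\frac d2$ one cannot pass weak limits through the quadratic nonlinearities $[\tA^{n}\wedge\tA^{n}]$ and $Ad(O^{n})(\cdot)$ into the norm $L^{p}$ that the Hodge estimate demands, and the gain comes only from combining the borderline embedding $W^{1,p}\hookrightarrow L^{2p}$ with the a.e.-convergence/truncation arguments of \S\ref{subsec:rough-gt} and with the smallness of $\eps_{0}$, which alone makes the quadratic term $[\delta^{n}\wedge\delta^{n}]$ absorbable. This is also exactly why part~(1) at the endpoint $p=\frac d2$ has to be reached by density from smooth connections rather than by a direct continuity argument.
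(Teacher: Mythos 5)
Your proposal is correct and follows essentially the same route as the paper's own proof: existence by citing Uhlenbeck \cite{MR648356}, uniqueness via the div-curl system satisfied by the gauge difference, and for part (2) a weak-compactness argument that exploits the covariant identity $F[\tA^n]=Ad(O^n)F[A^n]$ and the div-curl system for $\tA$ to upgrade weak $W^{1,\frac d2}$ convergence to strong. You spell out the strong-convergence step in more detail (explicit subtraction and absorption of the small quadratic bracket, with a truncation argument for the cross term), which the paper leaves terse; the only other, cosmetic, difference is that the paper invokes the uniqueness in (1) to identify the weak limit with $(\tA,O)$ before the strong-convergence upgrade, whereas you do it afterwards.
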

\begin{proof}
  For a proof of the existence claim in (1), see
  \cite[Theorem~1.3]{MR648356}. For uniqueness, observe that the gauge
  transformation $\tilde{O} \in \calG^{2, \frac{d}{2}}(B_{1})$ between
  the two possible $\tA$ and $\tA'$ satisfies the a-priori bound
  $\nrm{\tilde{O}_{;x}}_{W^{1, \frac{d}{2}}(B_{1})} \aleq \eps_{0}$,
  and also solves the div-curl system
  \begin{equation*}
    \rd^{\ell} \tilde{O}_{;\ell} = Ad(\tilde{O}) [\tilde{O}_{;\ell}, (\tA')^{\ell}], \qquad
    \rd_{j} \tilde{O}_{;k} - \rd_{k} \tilde{O}_{;j} = - [\tilde{O}_{;j}, \tilde{O}_{;k}],
  \end{equation*}
  with the boundary condition $x^{\ell} \tilde{O}_{;\ell} = 0$ on $\rd
  B_{1}$. It follows that $\tilde{O}_{;x} = 0$, i.e., $\tilde{O}$ is
  constant.

  To prove $(2)$, observe first that the $W^{2, \frac{d}{2}}(B_{1})$
  norm of $O^{n}$ is uniformly bounded, thanks to the formula
  $O^{n}_{;x} = Ad(O^{n}) A^{n} - \tA^{n}$. Thus, after passing to a
  subsequence, $O^{n} \weakto O'$ and $\tA^{n} \weakto \tA'$ in $W^{2,
    \frac{d}{2}}(B_{1})$ and $W^{1, \frac{d}{2}}(B_{1})$,
  respectively. This weak convergence is enough to justify
  \begin{equation*}
    \tA' = Ad(O') A - O'_{;x} \hbox{ in } B_{1}, \quad \rd^{\ell} \tA_{\ell} = 0 \hbox{ in } B_{1}, \quad x^{\ell} \tA_{\ell} = 0 \hbox{ on } \rd B_{1}.
  \end{equation*}
  Hence, by the uniqueness statement in (1), $(\tA', O')$ coincides
  with $(\tA, O')$ up to a constant gauge transformation
  $O_{0}$. Applying $O_{0}$ to the sequence $(\tA^{n}, O^{n})$, we may
  insure that $O^{n} \weakto O$ and $\tA^{n} \weakto \tA$ in $W^{2,
    \frac{d}{2}}(B_{1})$ and $W^{1, \frac{d}{2}}(B_{1})$,
  respectively.

  To upgrade the weak convergence to strong convergence, we use the
  div-curl system for $\tA$. First, by the strong $W^{1, \frac{d}{2}}$
  convergence $A^{n} \to A$ and the weak $W^{2, \frac{d}{2}}$
  convergence $O^{n} \to O$, it follows that
  \begin{equation*}
    F[\tA^{n}] = Ad(O^{n}) F[A^{n}] \to Ad(O) F[A] = F[\tA] \quad \hbox{ in } L^{\frac{d}{2}}(B_{1}).
  \end{equation*}
  Then by the div-curl system
  \begin{equation*}
    \rd^{\ell} \tA_{\ell} = 0, \qquad 
    \rd_{j} \tA_{k} - \rd_{k} \tA_{j} = F[\tA^{n}],
  \end{equation*}
  the weak $W^{1, \frac{d}{2}}$ convergence $\tA^{n} \to \tA$ is
  improved to strong convergence. Finally, by the formula $O_{;x} =
  Ad(O) A - \tA$, the weak $W^{2, \frac{d}{2}}$ convergence $O^{n} \to
  O$ is also improved to strong convergence. \qedhere
\end{proof}

Theorem~\ref{thm:uhlenbeck-ball} was extended in \cite{MR815194} to a
``removal of singularity'' result for connections defined only on a
punctured ball. Let $B'_{r} = \set{x \in \bbR^{d} : 0 < \abs{x} < r}$.

\begin{theorem} [Uhlenbeck's lemma on a punctured
  ball] \label{thm:uhlenbeck-pball} Consider $\covD \in \calA^{1,
    \frac{d}{2}}_{loc}(B_{1+\dlt}')$ for some $\dlt > 0$, which admits
  a representative $\covD = \ud + A$ with $A \in W^{1,
    \frac{d}{2}}_{loc}(B'_{1+\dlt}; \g)$ and satisfies
  \begin{equation*}
    \nrm{F[A]}_{L^{\frac{d}{2}}(B_{1}')} \leq \eps_{0}'.
  \end{equation*}
  Then there exists $O \in \calG^{2, \frac{d}{2}}_{loc}(B_{1}')$ such
  that $\tA = Ad(O) A - O_{;x}$ obeys
  \begin{equation*}
    \rd^{\ell} \tA_{\ell} = 0 \quad \hbox{ in } B_{1}',
  \end{equation*}
  and
  \begin{equation*}
    \nrm{\tA}_{W^{1, \frac{d}{2}}(B_{1}')} \aleq \nrm{F[A]}_{L^{\frac{d}{2}}(B_{1}')}.
  \end{equation*}
\end{theorem}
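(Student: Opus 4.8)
The plan is to reduce the statement to the ball version, Theorem~\ref{thm:uhlenbeck-ball}, in two moves: first produce \emph{some} global gauge on $B_1'$ in which the gauge potential lies in $W^{1,\frac{d}{2}}(B_1')$ --- not merely in $W^{1,\frac{d}{2}}_{loc}(B_1')$, which is all that the hypothesis provides --- and then remove the puncture. The second move is short, so let me record it first. Suppose we have found $O_0 \in \calG^{2,\frac{d}{2}}_{loc}(B_1')$ with $A_0 := Ad(O_0)A - O_{0;x} \in W^{1,\frac{d}{2}}(B_1')$. Since a point has zero $W^{1,\frac{d}{2}}$-capacity for $d \geq 3$, the $1$-form $A_0$ extends (keeping the same name) to an element of $W^{1,\frac{d}{2}}(B_1)$, whose curvature $F[A_0] \in L^{\frac{d}{2}}(B_1)$ is simply the $L^{\frac{d}{2}}$-extension of its curvature on $B_1'$, no singular term being created at the origin; since $F[A_0] = Ad(O_0)F[A]$ on $B_1'$ and $O_0$ is $\G$-valued, $\nrm{F[A_0]}_{L^{\frac{d}{2}}(B_1)} = \nrm{F[A]}_{L^{\frac{d}{2}}(B_1')} \leq \eps_0'$. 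Taking $\eps_0' < \eps_0$ and applying Theorem~\ref{thm:uhlenbeck-ball} on $B_1$ then gives $O' \in \calG^{2,\frac{d}{2}}(B_1)$ with $\tA := Ad(O')A_0 - O'_{;x}$ divergence-free on $B_1$ (hence on $B_1'$) and $\nrm{\tA}_{W^{1,\frac{d}{2}}(B_1)} \aleq \nrm{F[A_0]}_{L^{\frac{d}{2}}(B_1)} = \nrm{F[A]}_{L^{\frac{d}{2}}(B_1')}$. Then $O := O' \cdot O_0 \in \calG^{2,\frac{d}{2}}_{loc}(B_1')$ by Lemma~\ref{lem:mult}, and restricting to $B_1'$ proves the theorem. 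So everything rests on the preliminary construction, which is Uhlenbeck's removable-singularity argument \cite{MR815194}.

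For that construction, I would decompose $B_1'$ into dyadic annuli $\Omega_k = \set{2^{-k-2} < \abs{x} < 2^{-k+1}}$, $k \geq 0$: the family has bounded overlap, and consecutive annuli meet in a ``unit-shape'' annulus. Rescaling $\Omega_k$ by $2^{k}$ makes it a \emph{fixed} annulus while leaving $\nrm{F[A]}_{L^{\frac{d}{2}}}$ unchanged, by scale invariance of the $L^{\frac{d}{2}}$-norm of curvature (equivalently one may pass to the conformally equivalent half-cylinder $[0,\infty) \times \bbS^{d-1}$, on which $\nrm{F}_{L^{\frac{d}{2}}}$ is conformally invariant). The proof of Theorem~\ref{thm:uhlenbeck-ball} carries over verbatim to any fixed compact smooth domain when $d \geq 3$ --- the radial boundary condition being replaced by $\nu^{\ell}\tA_{\ell} = 0$ on each boundary component, and there being no $\pi_1$-obstruction since $\bbS^{d-1}$ is simply connected --- so, after shrinking $\eps_0'$ to absorb the (bounded) overlap multiplicity, on each $\Omega_k$ I obtain $O_{(k)} \in \calG^{2,\frac{d}{2}}(\Omega_k)$ and a local Coulomb representative $\tA_{(k)} = Ad(O_{(k)})A - O_{(k);x}$ with
\begin{equation*}
  \nrm{\tA_{(k)}}_{W^{1,\frac{d}{2}}(\Omega_k)} \aleq \nrm{F[A]}_{L^{\frac{d}{2}}(\Omega_k)} =: \dlt_k, \qquad \sum_{k \geq 0} \dlt_k^{\frac{d}{2}} \aleq \nrm{F[A]}_{L^{\frac{d}{2}}(B_1')}^{\frac{d}{2}}.
\end{equation*}

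Next I would patch the local gauges. On each overlap $\Omega_k \cap \Omega_{k+1}$ the transition function $g_k = O_{(k)} O_{(k+1)}^{-1}$ satisfies $\tA_{(k)} = Ad(g_k)\tA_{(k+1)} - g_{k;x}$, and the div-curl argument from the uniqueness part of Theorem~\ref{thm:uhlenbeck-ball} yields $\nrm{g_{k;x}}_{W^{1,\frac{d}{2}}(\Omega_k \cap \Omega_{k+1})} \aleq \dlt_k + \dlt_{k+1}$; hence $g_k$ differs from a constant $c_k \in \G$ by a term of size $\aleq \dlt_k + \dlt_{k+1}$ in $W^{2,\frac{d}{2}}$. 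Replacing $O_{(k)}$ by $(c_0 c_1 \cdots c_{k-1})^{-1} O_{(k)}$ --- each a \emph{finite} product, so the infinite product $c_0 c_1 \cdots$ need not converge --- and then gluing with a partition of unity subordinate to $\set{\Omega_k}$ as in the patching procedures of Section~\ref{subsec:patching} (cf.~\eqref{eq:patching-A}), I obtain $O_0 \in \calG^{2,\frac{d}{2}}_{loc}(B_1')$ whose gauge potential obeys $\nrm{A_0}_{W^{1,\frac{d}{2}}(B_1')}^{\frac{d}{2}} \aleq \sum_{k} (\dlt_k + \dlt_{k+1})^{\frac{d}{2}} \aleq \nrm{F[A]}_{L^{\frac{d}{2}}(B_1')}^{\frac{d}{2}} < \infty$; in particular $A_0 \in W^{1,\frac{d}{2}}(B_1')$, which is all the reduction above needs. (One could instead bypass the renormalization by a continuity/exhaustion argument on the long cylinder segments $[0,N] \times \bbS^{d-1}$, adding one unit segment at a time.)

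The hard part is this patching: the $\Omega_k$ accumulate at the puncture, so the global bound must be \emph{uniform} in $k$, whereas a naive use of the finite patching estimates of Section~\ref{subsec:patching} would blow up with the number of pieces. What makes it work is that the curvature contributions $\dlt_k$ enter the final estimate only through $\sum_k \dlt_k^{\frac{d}{2}}$, and this is $\aleq \nrm{F[A]}_{L^{\frac{d}{2}}(B_1')}^{\frac{d}{2}}$ by bounded overlap --- precisely the subadditivity of $\nrm{\cdot}_{L^{\frac{d}{2}}}^{\frac{d}{2}}$. A secondary subtlety worth flagging is that $A$ itself may be unbounded near the puncture, so one cannot assemble Sobolev bounds for $A$ directly there; it is only the gauge-invariant quantity $F[A]$ that is globally small, and the content of the statement is exactly that this suffices.
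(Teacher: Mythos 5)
The paper itself gives no proof here: it defers entirely to Uhlenbeck's removable-singularities paper \cite{MR815194}. Your proposal is a correct reconstruction of that argument, and the two-move structure --- first produce a global $W^{1,\frac{d}{2}}(B_1')$ gauge on the punctured ball by patching dyadic annular Coulomb gauges, then remove the puncture by a zero-capacity argument and apply Theorem~\ref{thm:uhlenbeck-ball} on the filled ball --- is the right way to organize it. All the load-bearing observations are in place: each rescaled annulus supports a small Coulomb representative of size $\aleq \dlt_k$; the transition maps between consecutive annuli differ from constants in $\G$ by amounts $\aleq \dlt_k + \dlt_{k+1}$; the renormalizing constants need not converge as $k \to \infty$ because each $O_{(k)}$ is hit by only a finite product; bounded overlap converts the local bounds into the global $\ell^{d/2}$ bound; and the zero-capacity argument (valid because $\tfrac{d}{2} < d$ when $d\geq 3$) kills the putative singular term of $\ud A_0$ at the origin.

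Three things you should tighten. (i) The renormalization has the product on the wrong side: with $g_k := O_{(k)} O_{(k+1)}^{-1} \approx c_k$, replacing $O_{(k)}$ by $d_k O_{(k)}$ gives new transitions $d_k g_k d_{k+1}^{-1}$, so one wants $d_{k+1} = d_k c_k$ with $d_0 = id$, i.e., $d_k = c_0 c_1 \cdots c_{k-1}$ rather than its inverse. (ii) The assertion that $g_k$ is close in $L^\infty$ to a single $c_k \in \G$ requires the $\ell^1$ elliptic improvement of Lemma~\ref{lem:div-curl-O} followed by Lemma~\ref{lem:ptwise-O}: $W^{2,\frac{d}{2}}$-smallness of $g_{k;x}$ by itself gives no pointwise control, so the projection to a constant in $\G$ is not yet meaningful without this step; you should cite it. (iii) Invoking the generic patching machinery of Section~\ref{subsec:patching} is not quite enough: the implicit constants there are allowed to depend on the number of pieces, which here is infinite, and Scenario~(3) builds $P_{(n)}$ by composition with diffeomorphisms whose derivatives are not controlled uniformly in $n$. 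What actually closes the estimate is the chain-specific construction you gesture at but do not write down: since, by (ii), the renormalized transitions $g'_k$ are $L^\infty$-close to $id$, one may take $P_{(k)} = \exp(\chi_k \log g'_{k-1})$ with a cutoff $\chi_k$ so that $P_{(k)} = g'_{k-1}$ on the $\Omega_{k-1}$ overlap and $P_{(k)} = id$ on the $\Omega_{k+1}$ overlap, giving $\nrm{P_{(k);x}}_{W^{1,\frac{d}{2}}(\Omega_k)} \aleq \dlt_{k-1} + \dlt_k$; the glued potential on $\Omega_k$ then has size $\aleq \dlt_{k-1}+\dlt_k$, and your $\ell^{d/2}$ summation closes. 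With these points made explicit, the argument is a complete and faithful rendering of the cited proof.
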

As a consequence, we see that $\tA$ is the restriction of a $\calA^{1,
  \frac{d}{2}}_{loc}$ connection on the full ball $B_{1+\dlt}$. For a
proof, we refer the reader to \cite{MR815194}.

If $F$ satisfies higher (covariant) regularity bounds, then so does
$\tA$ in the above theorems. This statement is most naturally
formulated as an elliptic regularity result for the nonlinear div-curl
system satisfied by $\tA$ with $\rd^{\ell} \tA_{\ell} = 0$. In what
follows, we omit the tilde for simplicity, and we focus on
quantitative bounds in scaling-invariant spaces.

We start with a simple interior regularity result.
\begin{lemma} \label{lem:div-curl-A-intr} Let $A \in W^{1,
    \frac{d}{2}}(B)$ be a solution to the nonlinear div-curl system
  \begin{equation} \label{eq:div-curl-A}
    \begin{aligned}
      \rd_{j} A_{k} - \rd_{k} A_{j} =& F_{jk} - [A_{j}, A_{k}], \\
      \rd^{\ell} A_{\ell} =& 0.
    \end{aligned}
  \end{equation}
  If $\covD^{(m)} F \in L^{\frac{d}{m+2}}(B)$ with $\frac{d}{m+2} >
  1$, then $\rd^{(m+1)} A \in L^{\frac{d}{n+2}}(\lmb B)$ for any $0
  \leq \lmb < 1$, with a bound depending only on $m$,
  $\nrm{\covD^{(m)} F}_{L^{\frac{d}{m+2}}(B)}$, $\nrm{A}_{L^{d}(B)}$
  and $\lmb$.
\end{lemma}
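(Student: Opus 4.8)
The plan is to run an interior elliptic bootstrap, exploiting the fact that the Coulomb condition $\rd^{\ell} A_{\ell} = 0$ turns the div-curl system \eqref{eq:div-curl-A} into a Poisson equation for $A$: taking the divergence of the curl equation and using $\rd^{\ell} A_{\ell} = 0$ gives
\begin{equation*}
  \lap A_{k} = \rd^{j} F_{jk} - [A^{j}, \rd_{j} A_{k}] \qquad \hbox{ in } B.
\end{equation*}
(Equivalently one may keep the first-order system and invoke the Calder\'on--Zygmund bound recovering $\rd A$ from its curl and divergence; I phrase things through $\lap$ for brevity.) Since all the norms in the statement are scale-invariant, I would first reduce to $B = B_{1}$, absorbing the resulting volume constants into universal ones. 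Note also that the hypothesis $\frac{d}{m+2} > 1$ is exactly what keeps every step within the Calder\'on--Zygmund and Sobolev-embedding range $1 < p < d$.

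The argument proceeds by induction on $m$, with inductive claim $H(m)$: for $\covD$ as in the statement with $\covD^{(m)} F \in L^{\frac{d}{m+2}}(B)$, one has $\rd^{(i)} A \in L^{\frac{d}{i+1}}(\lmb B)$ for $1 \leq i \leq m+1$ and every $\lmb < 1$, hence also (using $F = \rd A + [A, A]$) $\rd^{(j)} F \in L^{\frac{d}{j+2}}(\lmb B)$ for $0 \leq j \leq m$, with constants depending only on $m$, $\nrm{\covD^{(m)} F}_{L^{d/(m+2)}(B)}$, $\nrm{A}_{L^{d}(B)}$ and $\lmb$. The base case $H(0)$ is immediate since $A \in W^{1, \frac{d}{2}}(B)$ is assumed, the quantitative bound coming from the elliptic estimate together with $\nrm{[A, A]}_{L^{d/2}} \aleq \nrm{A}_{L^{d}}^{2}$. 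For the step $H(m-1) \imp H(m)$, fix $\lmb < \mu < 1$; first use covariant Gagliardo--Nirenberg interpolation between $F \in L^{\frac{d}{2}}(B)$ and $\covD^{(m)} F \in L^{\frac{d}{m+2}}(B)$ to obtain $\covD^{(m-1)} F \in L^{\frac{d}{m+1}}(B)$, so that $H(m-1)$ applies on $B$ and delivers $\rd^{(i)} A \in L^{\frac{d}{i+1}}(\mu B)$ for $i \leq m$ and $\rd^{(j)} F \in L^{\frac{d}{j+2}}(\mu B)$ for $j \leq m-1$. Next, expanding $\covD^{(m)} F = \rd^{(m)} F + \sum c_{\vec{\imath}}\, \rd^{(i_{1})} A \cdots \rd^{(i_{s})} A\, \rd^{(i_{0})} F$ over $i_{0} + \cdots + i_{s} + s = m$ with $s \geq 1$ (so each $i_{\ell} \leq m-1$), every summand is a product of factors controlled on $\mu B$, and H\"older with the Sobolev exponents matched exactly puts each in $L^{\frac{d}{m+2}}$; hence $\rd^{(m)} F \in L^{\frac{d}{m+2}}(\mu B)$. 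Finally, applying $\rd^{(m-1)}$ to the Poisson equation and multiplying by a cutoff $\chi$ equal to $1$ on $\lmb B$ and supported in $\mu B$, the equation for $\chi\, \rd^{(m-1)} A_{k}$ has right-hand side in $L^{\frac{d}{m+2}}$: the term $\rd^{(m-1)} \rd^{j} F_{jk}$ by the previous step, the term $\rd^{(m-1)}[A^{j}, \rd_{j} A_{k}]$ being a sum of products $\rd^{(a)} A \cdot \rd^{(b+1)} A$ with $a + b = m-1$ (again matched by H\"older), and the cutoff commutator $[\lap, \chi]\, \rd^{(m-1)} A_{k}$ involving only $\rd^{(\leq m)} A$, already controlled. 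Calder\'on--Zygmund then yields $\rd^{(m+1)} A \in L^{\frac{d}{m+2}}(\lmb B)$, which is $H(m)$ after relabeling the finitely many intermediate radii so that the smallest exceeds the prescribed $\lmb$.

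The hard part will be the critical integrability: every product estimate sits at the scaling-invariant exponent $\frac{d}{m+2}$, only barely above $1$, so no $L^{\infty}$ control on $A$ is available (one has $\dot{W}^{1, \frac{d}{2}} \hookrightarrow L^{d}$ but not $L^{\infty}$) and each nonlinear term must be handled by chaining H\"older and Sobolev with the exponents aligned as above. The sharpest manifestation is the covariant Gagliardo--Nirenberg interpolation needed to descend from $\covD^{(m)} F$ to $\covD^{(j)} F$ for $j < m$: it has to be proved for a connection of only $\dot{W}^{1, \frac{d}{2}}$ regularity, so one cannot treat $A$ as a bounded coefficient but must track the low-regularity commutator structure of $\covD = \rd + ad(A)$ carefully. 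Keeping every constant scale-invariant --- so that the final bound depends only on $m$, $\nrm{\covD^{(m)} F}_{L^{d/(m+2)}(B)}$, $\nrm{A}_{L^{d}(B)}$ and $\lmb$ --- requires using the homogeneous form of the interior elliptic estimates throughout.
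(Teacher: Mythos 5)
Your proposal matches the paper's (sketched) proof: it is the same induction on $m$, recasting the Coulomb div-curl system as a Poisson equation for $A$, and controlling $\rd^{(m)}F_{jk}$ and $\rd^{(m)}[A_j,A_k]$ at the matched exponent $L^{d/(m+2)}$ via $\covD^{(m)}F$ together with the inductive bounds $\rd^{(m'+1)}A\in L^{d/(m'+2)}$. The paper dismisses the remainder as ``a straightforward interior elliptic regularity argument,'' so your write-up --- including the cutoff bookkeeping and the covariant interpolation needed to descend from $\covD^{(m)}F$ to $\covD^{(j)}F$ for $j<m$, which the sketch implicitly assumes --- simply supplies the details the paper omits.
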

\begin{proof}
  Since it is a straightforward interior elliptic regularity argument,
  we only sketch the proof.  We proceed by a simple induction on $m$;
  the key point is that $\rd^{(m)} F_{jk}$ and $\rd^{(m)} [A_{j},
  A_{k}]$ in $L^{\frac{d}{m+2}}$ are controlled by $\covD^{(m)} F$ in
  $L^{\frac{d}{m+2}}$ and the inductive bounds for $\rd^{(m'+1)} A$ in
  $L^{\frac{d}{m'+2}}$ ($0 \leq m' \leq m$).
\end{proof}

When Theorem~\ref{thm:uhlenbeck-ball} is applied to a unit ball
$B_{1}(x)$ centered near the boundary $\rd B_{R}$ of a larger ball, it
is of interest to control regularity of $A$ up to the boundary $\rd
B_{R}$. For this purpose, consider normalized angular derivatives
$\srd = \set{\frac{1}{\abs{x}} (x_{j} \rd_{k} - x_{k} \rd_{j})}$ about
the origin (at which $B_{R}$ is centered), and the corresponding
covariant angular derivatives $\scovD = \set{\frac{1}{\abs{x}} (x_{j}
  \covD_{k} - x_{k} \covD_{j})}$. In any unit ball away from the
origin, we show that higher angular regularity of $F$ implies the
corresponding regularity of $A$ in the Coulomb gauge.
\begin{lemma} \label{lem:div-curl-A-tang} Let $B$ be a unit ball in
  $\bbR^{d}$ such that $B \cap B_{1}(0) = \0$, and let $A \in W^{1,
    \frac{d}{2}}(B)$ be a solution to the nonlinear div-curl system
  \eqref{eq:div-curl-A}.  If $\scovD^{(m)} F \in L^{\frac{d}{m+2}}(B)$
  with $\frac{d}{m+2} > 1$, then $\rd \srd^{(m)} A \in
  L^{\frac{d}{m+2}}(\lmb B)$ for any $0 \leq \lmb < 1$, with a bound
  depending only on $m$, $\nrm{\scovD^{(m)}
    F}_{L^{\frac{d}{m+2}}(B)}$, $\nrm{A}_{W^{1, \frac{d}{2}}(B)}$ and
  $\lmb$.
\end{lemma}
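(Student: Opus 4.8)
The plan is to mirror the interior argument of Lemma~\ref{lem:div-curl-A-intr}, with ordinary derivatives replaced by the normalized angular fields $\srd$. Since $B$ is a unit ball disjoint from $B_{1}(0)$, we have $\abs{x} \simeq 1$ on $\overline{B}$, so each $\srd_{jk} = \abs{x}^{-1}\Omg_{jk}$ with $\Omg_{jk} = x_{j}\rd_{k} - x_{k}\rd_{j}$ is a vector field with coefficients smooth and bounded together with all their derivatives on $B$; moreover $[\srd_{jk}, \rd_{\ell}]$ and $[\srd_{jk}, \srd_{pq}]$ are again first-order operators of the form $(\text{smooth bounded})\,\rd + (\text{smooth bounded})\,\srd$ (using that $[\Omg_{jk},\rd_{\ell}]$ is constant-coefficient and $[\Omg_{jk},\Omg_{pq}] \in \mathfrak{so}(d)$). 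I would run an induction on $m$: at level $m$ I may assume the conclusions of all lower levels, i.e. $\rd\srd^{(m')}A \in L^{\frac{d}{m'+2}}_{loc}(B)$ for $m' < m$, together with the natural hierarchy of curvature hypotheses $\scovD^{(m')}F \in L^{\frac{d}{m'+2}}(B)$ for $m'\le m$ (as in Lemma~\ref{lem:div-curl-A-intr}, where the intermediate norms also follow by interpolation between $F \in L^{\frac{d}{2}}$ and the top norm). By Sobolev embedding the lower-level conclusions give $\srd^{(m')}A \in L^{\frac{d}{m'+1}}_{loc}(B)$, and in particular $\srd^{(m)}A = \srd(\srd^{(m-1)}A) \in L^{\frac{d}{m+1}}_{loc}(B) \subseteq L^{\frac{d}{m+2}}_{loc}(B)$.

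The first step is to derive the div-curl system for $v := \srd^{(m)}A$ (a fixed choice of $m$ of the fields $\srd$): applying $\srd^{(m)}$ to \eqref{eq:div-curl-A} and commuting the $\srd$'s past $\rd$ with the identities above yields
\begin{align*}
  \rd_{j} v_{k} - \rd_{k} v_{j} =& \srd^{(m)}F_{jk} - \srd^{(m)}[A_{j}, A_{k}] + R_{jk}, \\
  \rd^{\ell} v_{\ell} =& S,
\end{align*}
where $R_{jk}$ and $S$ are linear combinations, with smooth bounded coefficients, of $\rd\srd^{(m')}A$ and $\srd^{(m')}A$ with $m' \leq m-1$; these lie in $L^{\frac{d}{m+2}}(B)$ by the inductive hypothesis. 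The nonlinear term expands by Leibniz into $\sum_{a}[\srd^{(a)}A, \srd^{(m-a)}A]$, and even the borderline term $[A, \srd^{(m)}A]$ lies in $L^{\frac{d}{m+2}}$ because $A \in L^{d}$ and $\srd^{(m)}A \in L^{\frac{d}{m+1}}$; the remaining terms are controlled the same way by H\"older and Sobolev.

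The technical heart is the conversion of the iterated covariant angular derivative into the ordinary one. With $\bfA_{jk} = \abs{x}^{-1}(x_{j}A_{k} - x_{k}A_{j}) \in W^{1,\frac{d}{2}}(B)$ and $\nrm{\bfA}_{W^{1,\frac{d}{2}}(B)} \aleq \nrm{A}_{W^{1,\frac{d}{2}}(B)}$, one has $\scovD_{jk} = \srd_{jk} + ad(\bfA_{jk})$, so by induction $\srd^{(m)}F$ is a finite sum of iterated commutators $[\srd^{(b_{1})}\bfA, [\,\cdots, [\srd^{(b_{r})}\bfA, \scovD^{(a)}F]\cdots]]$ subject to $b_{1}+\cdots+b_{r}+r+a = m$; the crucial bookkeeping is that each replacement $\srd \mapsto ad(\bfA)$ consumes one derivative while inserting one factor $\bfA \in L^{d}$, which is neutral for the scaling count. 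Using $\srd^{(b)}\bfA \in L^{\frac{d}{b+1}}(B)$ (Sobolev, from level $b \leq m-1$) and $\scovD^{(a)}F \in L^{\frac{d}{a+2}}(B)$, H\"older places each such term in $L^{p}(B)$ with $\frac1p = \sum_{i}\frac{b_{i}+1}{d} + \frac{a+2}{d} = \frac{m+2}{d}$; since $\frac{d}{m+2} > 1$ all the intermediate exponents are strictly between $1$ and $d$, so all Sobolev embeddings are legitimate and the bounds depend only on $m$, $\nrm{A}_{W^{1,\frac d2}(B)}$ and the curvature norms. The same expansion shows $\srd^{(m)}[A_{j},A_{k}] \in L^{\frac{d}{m+2}}(B)$. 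With every source in the div-curl system now controlled in $L^{\frac{d}{m+2}}(B)$, and $v \in L^{\frac{d}{m+2}}(B)$ already known, I would apply the standard interior $L^{p}$ div-curl regularity estimate with $1 < p = \frac{d}{m+2} < \infty$ (equivalently, elliptic regularity for $\lap v_{j} = \rd^{k}(\rd_{k}v_{j} - \rd_{j}v_{k}) + \rd_{j}\rd^{\ell}v_{\ell}$),
\begin{equation*}
  \nrm{\rd v}_{L^{p}(\lmb B)} \aleq_{\lmb} \nrm{\rd_{j}v_{k} - \rd_{k}v_{j}}_{L^{p}(B)} + \nrm{\rd^{\ell}v_{\ell}}_{L^{p}(B)} + \nrm{v}_{L^{p}(B)},
\end{equation*}
to conclude $\rd\srd^{(m)}A \in L^{\frac{d}{m+2}}(\lmb B)$ with the asserted dependence. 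This closes the induction.

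The step I expect to be the main obstacle is the conversion identity above: keeping the combinatorics of the nested commutators honest so that the derivative-versus-$\bfA$-factor bookkeeping yields exactly the exponent $\frac{m+2}{d}$ in every term (with no off-by-one in the Sobolev embeddings), and making sure the intermediate covariant curvature norms are genuinely available at each stage. The remaining ingredients — the vector-field commutator identities, the Leibniz expansions, and the interior div-curl estimate — are routine.
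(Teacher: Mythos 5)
Your proof is correct, but it takes a genuinely different route from the paper's. The paper's key observation is to commute with the \emph{Lie derivatives} $\calL_{\overline{\Omg}}$ along the (rescaled) rotation vector fields $\overline{\Omg}_{jk} = d(0,B)^{-1}\Omg_{jk}$: since rotations are isometries, these Lie derivatives commute \emph{exactly} with both the divergence and the curl, so applying $\calL_{\overline{\Omg}}^{(m)}$ to \eqref{eq:div-curl-A} produces the same div-curl system for $\calL_{\overline{\Omg}}^{(m)}A$ with source terms $\calL_{\overline{\Omg}}^{(m)}F$ and $\overline{\Omg}^{(m)}[A,A]$, and no geometric commutator errors. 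The pointwise comparability $\abs{\calL^{(\leq m)}_{\overline{\Omg}} A} \aeq_{m} \abs{\srd^{(\leq m)} A}$ then transfers the conclusion to $\srd$. The paper controls the resulting products via a trace-type inequality $\nrm{u}_{L^{\infty}_{r} L^{(d-1)/(m+1)}_{\Tht}(B)} \aleq \nrm{u}_{W^{1,d/(m+2)}(B)}$ rather than plain Sobolev on the ball. By contrast, you apply the scalar first-order operators $\srd^{(m)}$ directly and push them through $\rd_j$ by hand, generating lower-order commutator terms $R_{jk}, S$ which you then absorb into the inductive hypothesis; and you handle all the products (the $\scovD\leftrightarrow\srd$ conversion for $F$, and the Leibniz expansion of $\srd^{(m)}[A,A]$) via Sobolev embedding and H\"older on the full ball. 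Both routes are of comparable overall difficulty. The paper's Lie-derivative device buys cleanliness --- the div-curl structure survives intact with no error terms from the geometry --- at the modest cost of working with Lie derivatives of $1$-forms rather than scalar differentiation; your version is more elementary and self-contained (no appeal to the Killing field structure or to the pointwise equivalence of $\calL_{\overline{\Omg}}$ and $\srd$), but has to account for the commutator corrections. Your scaling bookkeeping $b_1+\cdots+b_r+r+a=m$ and the resulting H\"older exponent count $\sum(b_i+1)/d + (a+2)/d = (m+2)/d$ is exactly right, including the borderline product $[A,\srd^{(m)}A]$ handled by $A\in L^d$, $\srd^{(m)}A\in L^{d/(m+1)}$. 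One shared subtlety (present in the paper's sketch as well) is that the argument implicitly requires control of the intermediate covariant norms $\scovD^{(m')}F\in L^{d/(m'+2)}$ for $m'<m$, which you flag and attribute to interpolation; this is fine but worth noting as something beyond a literal reading of the stated hypothesis.
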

\begin{proof}
  This lemma is most simply proved by commuting with the Lie
  derivatives with respect to the normalized rotation vector fields
  $\overline{\Omg}_{jk} = \frac{1}{d(0, B)} \Omg_{jk}$; these are
  isometries and thus exactly commute with the div-curl
  system. Moreover, their lengths are comparable to $1$ (independent
  of $B$), so that $\abs{\calL^{(\leq n)}_{\overline{\Omg}} A}
  \aeq_{n} \abs{\srd^{(\leq n)} A}$.

  As before, when $p = \frac{d}{n+2} > 1$, the statement follows (with
  explicit bounds) by an induction on $n$. By the trace theorem and
  the (angular) Sobolev inequality, observe that
  \begin{equation*}
    \nrm{u}_{L^{\infty}_{r} L^{\frac{d-1}{m+1}}_{\Tht}(B)} \aleq \nrm{u}_{L^{\frac{d}{m+2}}(B)} + \nrm{\rd u}_{L^{\frac{d}{m+2}}(B)}.
  \end{equation*}
  Using this inequality and H\"older, we may control
  $\calL_{\overline{\Omg}}^{(n)} F$ and $\overline{\Omg}^{(n)} [A_{j},
  A_{k}]$ in $L^{\frac{d}{n+2}}$ by $\scovD^{(\leq n)} F$ in
  $L^{\frac{d}{n+2}}$ and the inductive bounds for $\rd \srd^{(\leq
    m)} A$ in $L^{\frac{d}{m+2}}$. Then we may proceed as in the proof
  of Lemma~\ref{lem:div-curl-A-intr}.
\end{proof}

\begin{remark} \label{rem:uhlenbeck-cont} As in
  Theorem~\ref{thm:uhlenbeck-ball}(2), an argument similar to
  Lemma~\ref{lem:div-curl-A-intr}
  (resp. Lemma~\ref{lem:div-curl-A-tang}) for the div-curl system for
  $\tA$ leads to strong convergence of $\rd^{(\leq m+1)} \tA^{n}$ and
  $\rd^{(\leq m+2)} O^{n}$ in $L^{\frac{d}{n+2}}(\lmb B)$
  (resp. $\rd^{(\leq m+1)} \tA^{n}$ and $\rd^{(\leq 2)} \srd^{(\leq
    m)} O^{n}$ in $L^{\frac{d}{n+2}}(\lmb B \cap B_{R})$), provided
  that $A^{n} \to A$ in $W^{m, \frac{d}{m+1}}$. We omit the
  straightforward proof.
\end{remark}

Next, we record a simple interior regularity result for the div-curl
system of $O$.
\begin{lemma} \label{lem:div-curl-O} Let $O \in W^{2, \frac{d}{2}}(B)$
  be a solution to the div-curl system
  \begin{equation} \label{eq:div-curl-O}
    \begin{aligned}
      \rd_{j} O_{;k} - \rd_{k} O_{;j} =& [O_{;j}, O_{;k}] \\
      \rd^{\ell} O_{; \ell} = & H.
    \end{aligned}
  \end{equation}
  If $H \in \ell^{1} L^{\frac{d}{2}}(B)$, then $O_{;x} \in \ell^{1}
  W^{1, \frac{d}{2}}(\lmb B)$ for any $0 \leq \lmb < 1$, with the
  bound
  \begin{equation*}
    \nrm{O_{;x}}_{\ell^{1} \dot{W}^{1, \frac{d}{2}}(\lmb B)} \aleq_{\lmb} \nrm{H}_{\ell^{1} L^{\frac{d}{2}}(B)} + \nrm{O_{;x}}_{W^{1, \frac{d}{2}}(B)}^{2}.
  \end{equation*}
  Moreover, if $(O', H') \in W^{2, \frac{d}{2}}(B) \times \ell^{1}
  L^{\frac{d}{2}}(B)$ is another solution to \eqref{eq:div-curl-O},
  then
  \begin{equation*}
    \nrm{O_{;x} - O'_{;x}}_{\ell^{1} \dot{W}^{1, \frac{d}{2}} (\lmb B)} \aleq_{\lmb} \nrm{H-H'}_{\ell^{1} L^{\frac{d}{2}}(B)} +( \nrm{O_{;x}}_{W^{1, \frac{d}{2}}(B)} + \nrm{O'_{;x}}_{W^{1, \frac{d}{2}}(B)}) \nrm{O_{;x} - O'_{;x}}_{W^{1, \frac{d}{2}}(B)}.
  \end{equation*}
\end{lemma}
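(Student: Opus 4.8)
\textit{Strategy.} I would recast the first--order div--curl system as a second--order elliptic equation for $a := O_{;x}$ and run a localized elliptic regularity argument in Littlewood--Paley (Besov--type) spaces. Taking the divergence of the curl relation and inserting $\rd^{\ell}a_{\ell}=H$ gives, on $B$,
\[
  \Delta a_{j} = \rd^{\ell}[a_{\ell},a_{j}] + \rd_{j}H
\]
(equivalently one may work with $\Delta O = HO + \rd_{\ell}O\,O^{-1}\rd^{\ell}O$). Fix $\lambda<\mu<1$ and a cutoff $\chi\in C^{\infty}_{c}(B)$ with $\chi\equiv 1$ near $\mu\overline{B}$; multiplying by $\chi$ and commuting yields
\[
  \Delta(\chi a_{j}) = \rd^{\ell}\big(\chi[a_{\ell},a_{j}]\big) + \rd_{j}(\chi H) + E_{j},
\]
where every term of $E_{j}$ is supported in $\{\rd\chi\ne 0\}\subset B\setminus\mu B$ and is bounded in $L^{\frac d2}(B)$ by $\nrm{H}_{L^{\frac d2}(B)}+\nrm{a}_{W^{1,\frac d2}(B)}+\nrm{a}_{W^{1,\frac d2}(B)}^{2}$ (the $\rd\chi\,\rd a$ piece is handled by the div--curl reconstruction of $\rd a$ from $H$, $[a,a]$ and $a$). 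Inverting $\Delta$ by the Newtonian potential, the $E_{j}$ contribution is harmonic on $\mu B\supset\lambda B$, hence smooth there with all $\lambda B$--norms controlled by a fixed low--regularity norm on $\mu B$; this produces the quadratic (and lower order) part of the right--hand side. The source term is immediate: $\nrm{\Delta^{-1}\rd(\chi H)}_{\ell^{1}\dot W^{1,\frac d2}}\aleq\nrm{\chi H}_{\ell^{1}L^{\frac d2}}\aleq\nrm{H}_{\ell^{1}L^{\frac d2}(B)}$, since Calder\'on--Zygmund operators preserve $\ell^{1}L^{\frac d2}$ and so does multiplication by $\chi$.

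\textit{The quadratic term.} The crux is placing $\chi[a,a]$ in $\ell^{1}L^{\frac d2}$ so that $\Delta^{-1}\rd$ of it lands in $\ell^{1}\dot W^{1,\frac d2}$. A paraproduct decomposition gives
\[
  \nrm{[a,a]}_{\ell^{1}L^{\frac d2}(\mu B)} \aleq \nrm{a}_{\ell^{1}\dot W^{1,\frac d2}(\mu B)}\,\nrm{a}_{\dot W^{1,\frac d2}(B)} ,
\]
the high--high interactions being summable because the output sits at lower frequency with a Bernstein gain, while each paraproduct costs exactly one factor in the $\ell^{1}$--refined space. To feed this back one first needs the \emph{qualitative} finiteness $a\in\ell^{1}\dot W^{1,\frac d2}_{loc}(B)$. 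This I would get from interior regularity of the div--curl system, exactly as in Lemma~\ref{lem:div-curl-A-intr}: since $a=O_{;x}$ is a pure gauge, its curvature vanishes identically, so $\covD^{(m)}F\equiv 0$ for all $m$; iterating the interior estimate bootstraps $a$ into a space with many more derivatives, which, after re-running the estimate with the now smoother quadratic term, embeds into $\ell^{1}\dot W^{1,\frac d2}_{loc}$ (a Sobolev embedding at equal differential dimension but strictly larger integrability exponent, which buys the $\ell^{1}$ summation). With finiteness in hand, the localized estimate closes along a finite chain of shrinking balls $\lambda B\subset\cdots\subset\mu_{0}B$, absorbing the quadratic contribution and leaving $\nrm{O_{;x}}_{\ell^{1}\dot W^{1,\frac d2}(\lambda B)}\aleq_{\lambda}\nrm{H}_{\ell^{1}L^{\frac d2}(B)}+\nrm{O_{;x}}_{W^{1,\frac d2}(B)}^{2}$.

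\textit{The difference estimate.} Setting $b=O_{;x}-O'_{;x}$, bilinearity turns the curl relation into the \emph{linear} system $\rd_{j}b_{k}-\rd_{k}b_{j}=[b_{j},(O_{;x})_{k}]+[(O'_{;x})_{j},b_{k}]$, $\rd^{\ell}b_{\ell}=H-H'$, hence $\Delta b_{j}=\rd^{\ell}\big([b_{\ell},(O_{;x})_{j}]+[(O'_{;x})_{\ell},b_{j}]\big)+\rd_{j}(H-H')$. The same localization and paraproduct bounds apply, with the nonlinearity now bilinear in $b$ and in $O_{;x}$ or $O'_{;x}$; the coefficients enter only through their $W^{1,\frac d2}(B)$ norms, and the qualitative finiteness $b\in\ell^{1}\dot W^{1,\frac d2}_{loc}$ follows from interior regularity of this linear equation (whose coefficients are smooth on the interior by the previous step). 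This gives the asserted bound for $\nrm{O_{;x}-O'_{;x}}_{\ell^{1}\dot W^{1,\frac d2}(\lambda B)}$.

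\textit{Main obstacle.} The hard part is precisely this recovery of $\ell^{1}$ frequency summability: the undifferentiated quadratic term $[a,a]$ is only borderline in $L^{\frac d2}$, so the improvement is not a direct product estimate but has to be extracted from the elliptic equation together with the flatness of the pure--gauge connection; arranging the shrinking--domain iteration so that it closes cleanly (and, in low dimensions, controlling the low--frequency tail of the homogeneous Besov norms) is where the care goes.
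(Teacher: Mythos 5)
Your architecture (localize, pass to the second-order elliptic equation, Newtonian potential, paraproducts) is reasonable, but the crux of the lemma—landing the quadratic term $[O_{;x},O_{;x}]$ in $\ell^1 L^{\frac d2}$—is handled quite differently in the paper, and your route has a genuine gap.

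The paper's one-line remark asserts the \emph{strong} bilinear estimate: $[O_{;j},O_{;k}]$ is controlled in $\ell^{1}L^{\frac d2}(B)$ by $\nrm{O_{;x}}_{W^{1,\frac d2}(B)}^{2}$ alone, with no $\ell^{1}$ factor on the right. This is a genuine $\ell^{1}$-summability gain for a product of two functions, each with only $\ell^{2}$ (Triebel--Lizorkin) frequency information; it is established by the resonant term via the square-function (Cauchy--Schwarz with the weight $2^{2k}$ applied pointwise) and by the paraproduct terms via the shifted double sum $\sum_{m>0}2^{-m}\sum_{j}a_{j}b_{j+m}\lesssim\nrm{a}_{\ell^{2}}\nrm{b}_{\ell^{2}}$. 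Once this is in hand, inverting $\Delta$ on $\partial_{j}H-\partial^{k}[O_{;j},O_{;k}]$ directly produces the stated bound in one pass—no bootstrap, no qualitative finiteness step, no smallness needed. Your proposal instead uses the weaker \emph{mixed} paraproduct bound $\nrm{[a,a]}_{\ell^{1}L^{\frac d2}}\lesssim\nrm{a}_{\ell^{1}\dot W^{1,\frac d2}}\nrm{a}_{\dot W^{1,\frac d2}}$ and then must bootstrap. This only closes to the lemma's quadratic-in-$\nrm{O_{;x}}_{W^{1,\frac d2}(B)}$ bound if the $W^{1,\frac d2}(B)$ norm is small, or after a further subdivision argument that you do not carry out.

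More seriously, the step that you rely on to launch the bootstrap—the ``qualitative finiteness'' $O_{;x}\in\ell^{1}\dot W^{1,\frac d2}_{loc}$—is not justified by the argument you give. You appeal to Lemma~\ref{lem:div-curl-A-intr}, invoking $\covD^{(m)}F\equiv 0$ to conclude that $a=O_{;x}$ ``bootstraps into a space with many more derivatives.'' But Lemma~\ref{lem:div-curl-A-intr} applies specifically to the Coulomb-gauge system $\partial^{\ell}A_{\ell}=0$, whereas here $\partial^{\ell}O_{;\ell}=H$ with a nontrivial, merely $\ell^{1}L^{\frac d2}$, right-hand side. Switching to Coulomb gauge introduces a new gauge transformation whose regularity is exactly what you are trying to prove, so the appeal is circular. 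More fundamentally, the interior regularity of $O_{;x}$ is \emph{limited by} $H$: since $H\in\ell^{1}L^{\frac d2}$ is only marginally better than $L^{\frac d2}$, the sharp conclusion is exactly $O_{;x}\in\ell^{1}\dot W^{1,\frac d2}_{loc}$—i.e., what the lemma asserts—and one cannot first upgrade $O_{;x}$ to a much smoother space and then ``descend'' to $\ell^{1}\dot W^{1,\frac d2}$ as you propose. Without a correct first pass into $\ell^{1}$, the mixed bilinear estimate has no starting point and the iteration over shrinking balls never begins. The same issue reappears in your treatment of the difference estimate, where the coefficients $O_{;x},O'_{;x}$ are not ``smooth on the interior'' for the same reason.

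Your ``main obstacle'' paragraph correctly identifies the heart of the matter (the quadratic term is borderline in $L^{\frac d2}$, and the $\ell^1$ gain is the whole point), but the mechanism that recovers the $\ell^1$ summability is the strong $W^{1,\frac d2}\times W^{1,\frac d2}\to\ell^{1}L^{\frac d2}$ product estimate rather than a regularity-bootstrap—and your proposal does not establish either.
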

The key point is that $[O_{;j}, O_{;k}]$ in $\ell^{1}
L^{\frac{d}{2}}(B)$ can be estimated by $O_{;j}, O_{;k}$ in $W^{1,
  \frac{d}{2}}(B)$. We omit the obvious proof.

The $\ell^{1} \dot{W}^{1, \frac{d}{2}}$ bound on $O_{;x}$ is useful as
it implies continuity of $O$. More precisely, we have the following:
\begin{lemma} \label{lem:ptwise-O} If $O_{;x} \in \ell^{1} W^{1,
    \frac{d}{2}}(B)$, then $O$ is continuous on $B$.
\end{lemma}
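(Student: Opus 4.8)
The plan is to localize and reduce the statement to a Littlewood--Paley summability bound. Fix a ball $B'$ with $\overline{B'} \subseteq B$ and choose $\varphi, \psi \in C^{\infty}_{c}(B)$ with $\varphi \equiv 1$ on a neighborhood of $\overline{B'}$ and $\psi \equiv 1$ on $\supp \varphi$; it suffices to show that $O$ agrees a.e.\ on $B'$ with a continuous function. I would prove the stronger claim that $\rd(\varphi O) \in \ell^{1} W^{1, \frac{d}{2}}(\bbR^{d})$. Granting this, Bernstein's inequality gives $\nrm{P_{k}(\varphi O)}_{L^{\infty}} \aleq 2^{2k} \nrm{P_{k}(\varphi O)}_{L^{\frac{d}{2}}}$, which for $k \geq 1$ is $\aeq \nrm{P_{k} \rd^{(2)}(\varphi O)}_{L^{\frac{d}{2}}}$ and for $k \leq 0$ is $\aleq 2^{2k} \nrm{\varphi O}_{L^{\frac{d}{2}}}$; summing over $k \in \bbZ$ yields $\sum_{k} \nrm{P_{k}(\varphi O)}_{L^{\infty}} < \infty$, so the Littlewood--Paley series of $\varphi O$ converges uniformly to a continuous function, which equals $O$ on $B'$. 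As a preliminary I would record the bootstrap $O \in W^{2, \frac{d}{2}}_{loc}(B)$: by the definition $O_{;x} = \rd_{x} O \, O^{-1}$ and Lemmas~\ref{lem:inv} and \ref{lem:d-inv}, $\rd O = O_{;x} \, O$; since $O$ is $\G$-valued it lies in $L^{\infty}$, and since $O_{;x} \in W^{1, \frac{d}{2}}_{loc} \hookrightarrow L^{d}_{loc}$, a further differentiation gives $\rd(O_{;x} O) = (\rd O_{;x}) O + O_{;x} (O_{;x} O) \in L^{\frac{d}{2}}_{loc}(B)$.

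The core of the argument is the following product estimate, which I would prove via the Bony paraproduct decomposition: if $u \in \ell^{1} W^{1, \frac{d}{2}}(\bbR^{d})$ and $v \in W^{2, \frac{d}{2}}(\bbR^{d}) \cap L^{\infty}$ is compactly supported, then $uv \in \ell^{1} W^{1, \frac{d}{2}}(\bbR^{d})$. Writing $uv = T_{u} v + T_{v} u + R(u, v)$ and setting $a_{j} = 2^{j} \nrm{P_{j} u}_{L^{\frac{d}{2}}}$ (so $\sum_{j} a_{j} \aleq \nrm{u}_{\ell^{1} W^{1, \frac{d}{2}}} < \infty$), the low-high term $T_{v} u$ is controlled by $\nrm{S_{k} v}_{L^{\infty}} \aleq \nrm{v}_{L^{\infty}}$, giving $2^{k} \nrm{P_{k}(T_{v} u)}_{L^{\frac{d}{2}}} \aleq \nrm{v}_{L^{\infty}} a_{k}$. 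The high-low term $T_{u} v$ and the high-high term $R(u, v)$ use the $W^{2, \frac{d}{2}}$-decay $\nrm{P_{k} v}_{L^{\frac{d}{2}}} \aleq 2^{-2k} \nrm{v}_{W^{2, \frac{d}{2}}}$ together with the Bernstein bound $\nrm{P_{j} u}_{L^{\infty}} \aleq 2^{2j} \nrm{P_{j} u}_{L^{\frac{d}{2}}} = 2^{j} a_{j}$, leading to
\begin{equation*}
  \sum_{k} 2^{k} \nrm{P_{k}(T_{u} v)}_{L^{\frac{d}{2}}} \aleq \sum_{k} 2^{-k} \sum_{j \leq k} 2^{j} a_{j} = 2 \sum_{j} a_{j} < \infty
\end{equation*}
and, similarly, $\sum_{k} 2^{k} \nrm{P_{k} R(u, v)}_{L^{\frac{d}{2}}} \aleq \sum_{k} 2^{k} \sum_{j \geq k} 2^{-j} a_{j} \aleq \sum_{j} a_{j} < \infty$; the non-derivative part of the $\ell^{1} W^{1, \frac{d}{2}}$-norm is estimated the same way, with the low frequencies harmless since all supports are compact. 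I would then apply this with $u = \varphi O_{;x} \in \ell^{1} W^{1, \frac{d}{2}}(\bbR^{d})$ (multiplication by $C^{\infty}_{c}$ functions preserving $\ell^{1} W^{1, \frac{d}{2}}$) and $v = \psi O \in W^{2, \frac{d}{2}}(\bbR^{d}) \cap L^{\infty}$, obtaining $\varphi \, \rd O = (\varphi O_{;x})(\psi O) \in \ell^{1} W^{1, \frac{d}{2}}(\bbR^{d})$. The remaining term $(\rd \varphi)\, O = (\rd \varphi)(\psi O)$ lies in $\ell^{1} W^{1, \frac{d}{2}}(\bbR^{d})$ for the softer reason that $\psi O$ is a compactly supported $W^{2, \frac{d}{2}}$ function, which places it in $\ell^{1} W^{1, \frac{d}{2}}$, and multiplication by $\rd \varphi \in C^{\infty}_{c}$ preserves this space. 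Since $\rd(\varphi O) = \varphi \, \rd O + (\rd \varphi)\, O$, adding the two pieces gives $\rd(\varphi O) \in \ell^{1} W^{1, \frac{d}{2}}(\bbR^{d})$, which closes the argument.

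The step I expect to be the main obstacle is precisely this product estimate, and the reason it is delicate is that $O_{;x}$ itself need \emph{not} be bounded: $\ell^{1} W^{1, \frac{d}{2}}$ does not embed into $L^{\infty}$, since the relevant critical index $d/(d/2) = 2$ is strictly larger than the order $1$. Consequently there is no naive bound of the form $\nrm{O_{;x} O} \aleq \nrm{O_{;x}}_{L^{\infty}} \nrm{O}$, and the entire gain must come from the $\ell^{1}$-summability built into the hypothesis, exploited through the elementary identity $\sum_{k} 2^{-k} \sum_{j \leq k} 2^{j} a_{j} = 2 \sum_{j} a_{j}$ that tames the ``low-frequency-of-$O_{;x}$ times high-frequency-of-$O$'' paraproduct $T_{O_{;x}} O$. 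Everything else --- the bootstrap $O \in W^{2, \frac{d}{2}}_{loc}$, the passage between the global statement on $\bbR^{d}$ and the local statement on $B$, and the bookkeeping of low frequencies via compact support --- is routine, and I would only sketch it.
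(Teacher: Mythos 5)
Your argument is correct, but it takes a genuinely different route from the paper's. The paper proves this by the physical-space analogue: it writes a Morrey-type pointwise representation
\begin{equation*}
d(O(x_1),O(x_2)) \aleq \int_{B(x_1,2r)} \frac{\abs{O_{;x}}}{\abs{x-x_1}^{d-1}} + \frac{\abs{O_{;x}}}{\abs{x-x_2}^{d-1}}\,\ud x, \qquad r=\abs{x_1-x_2},
\end{equation*}
and then invokes in one line that the local Riesz potential of $\abs{O_{;x}}$ vanishes as $r\to 0$ when $O_{;x}$ lies in the Besov space $\ell^1 W^{1,\frac d2}=B^1_{d/2,1}$. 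Implicitly this is the Lorentz-space bound: $\abs{y}^{-(d-1)}\in L^{d/(d-1),\infty}$ near $0$, the embedding $B^1_{d/2,1}\hookrightarrow L^{d,1}$, and absolute continuity of the $L^{d,1}$ norm on small balls; the $\ell^1$ summability over dyadic scales is exactly what gives the Lorentz second index $1$ rather than $2$ or $\infty$, and that is what makes the tail vanish. Your proof instead works entirely in frequency space: you localize, bootstrap $O\in W^{2,\frac d2}_{loc}$, prove a paraproduct multiplication lemma $\ell^1 W^{1,\frac d2}\cdot (W^{2,\frac d2}\cap L^\infty)\subset \ell^1 W^{1,\frac d2}$ to get $\rd(\varphi O)\in\ell^1 W^{1,\frac d2}(\bbR^d)$ (i.e.\ $\varphi O\in B^2_{d/2,1}$ up to low frequencies), and then sum Bernstein estimates to land in $C^0$. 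Your route is longer and the paraproduct lemma does real work (the correct observation is that $\ell^1 W^{1,\frac d2}$ is not an algebra and does not embed into $L^\infty$, so the naive product bound fails; you need the $W^{2,\frac d2}$ decay of $v$ in the $T_u v$ and $R(u,v)$ pieces), but it is self-contained in Littlewood--Paley calculus and avoids Lorentz spaces. Both arguments ultimately exploit the same borderline fact, the embedding $B^{d/p}_{p,1}\hookrightarrow C^0$ at the critical index $d/p=2$; you realize it through Bernstein summability, the paper through the $L^{d,1}$ Riesz potential. Your computations check out (the low-high term uses only $\nrm{v}_{L^\infty}$, the other two use $\nrm{v}_{W^{2,d/2}}$ together with the Bernstein bound $\nrm{P_j u}_{L^\infty}\aleq 2^{2j}\nrm{P_j u}_{L^{d/2}}$, and the geometric sums close), so this is a valid alternative proof.
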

\begin{proof}
  Without loss of generality, let $x_{1}$ be farther away from $\rd B$
  than $x_{2}$. As in the proof of Morrey's inequality, we have
  \begin{equation*}
    d(O(x_{1}), O(x_{2})) \aleq \int_{B(x_{1}, 2r)} \frac{\abs{O_{;x}}}{\abs{x - x_{1}}^{d-1}} + \frac{\abs{O_{;x}}}{\abs{x - x_{2}}^{d-1}} \, \ud x.
  \end{equation*}
  The last integral may be estimated in terms of the Besov norm of the
  extension of $O_{;x}$, and vanishes as $x_{1} \to x_{2}$. \qedhere
\end{proof}

\subsection{Good global gauge theorem on the ball}
The goal of this subsection is to prove
Theorem~\ref{thm:goodrep-ball}. The overall proof is divided into two
steps:
\begin{itemize}
\item First, we prove the quantitative statements under the assumption
  that $\covD$ admits a global gauge potential $A \in \dot{W}^{1,
    \frac{d}{2}}(B_{R})$.
\item Next, using softer arguments, we remove the global gauge
  assumption.
\end{itemize}

In the first step, the idea is to produce local gauges on balls
$B_{1}(x)$ centered inside $B_{R}$ using Uhlenbeck's lemma, and then
patch them up to a global gauge on $B_{R}$. To handle balls near the
boundary, the following simple extension procedure is helpful.
\begin{lemma} \label{lem:ext-simple} Let $A \in W^{1,
    \frac{d}{2}}(B_{R})$ with $A_{r} = 0$ on $\rd B_{R}$. Extend $A$
  outside $B_{R}$ by
  \begin{equation*}
    \bar{A}_{r} \left( \frac{R^{2}}{r}, \Tht \right)= - A_{r}(r, \Tht), \quad
    \bar{A}_{\Tht} \left( \frac{R^{2}}{r}, \Tht \right)= A_{\Tht} (r, \Tht).
  \end{equation*}
  Then the extension obeys
  \begin{equation} \label{eq:ext-simple} F[\bar{A}]
    \left(\frac{R^{2}}{r}, \Tht\right) = F[A](r, \Tht) \quad \hbox{
      for } r < R.
  \end{equation}
\end{lemma}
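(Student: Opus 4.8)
The plan is to recognize the extension $\bar A$ as the pullback of $A$ under the inversion $\iota\colon x\mapsto R^{2}x/\abs{x}^{2}$ in the sphere $\partial B_{R}$. In polar coordinates $(r,\Theta)$ this map is simply $(r,\Theta)\mapsto (R^{2}/r,\Theta)$, so it fixes the angular variables and reverses the radial ray; pulling back the connection $1$-form $A = A_{r}\,dr + A_{\Theta}$ therefore produces exactly the sign flip of the radial part and the (essentially) trivial action on the tangential part recorded in the statement, modulo the angular normalization convention of the paper. Since the curvature map $\covD\mapsto F[\covD]$ is natural with respect to bundle morphisms --- in particular it commutes with pullback by a diffeomorphism of the base --- one gets $F[\bar A] = \iota^{\ast}F[A]$, and evaluating this at the point with polar coordinates $(R^{2}/r,\Theta)$ gives the asserted identity $F[\bar A](R^{2}/r,\Theta) = F[A](r,\Theta)$.

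If one prefers an argument that does not invoke naturality, the identity may be verified directly by computing the two families of curvature components in polar coordinates. For the tangential--tangential components, since the substitution $r\mapsto R^{2}/r$ does not touch the angular variables, the angular exterior derivative and the quadratic term $[\bar A_{\Theta}\wedge\bar A_{\Theta}]$ transform trivially, and one reads off $F[\bar A]_{\Theta\Theta}(R^{2}/r,\Theta) = F[A]_{\Theta\Theta}(r,\Theta)$ at once. For the mixed radial--tangential component, one uses $F_{r\Theta} = \partial_{r}A_{\Theta} - \rd_{\Theta}A_{r} + [A_{r},A_{\Theta}]$ together with $\tfrac{\ud}{\ud\rho}(R^{2}/\rho) = -R^{2}/\rho^{2}$; the sign coming from $\bar A_{r} = -A_{r}$ and the sign and weight coming from the chain rule conspire, after passing back through the angular normalization, to reproduce $F[A]_{r\Theta}$ at the reflected point. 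It is cleanest to run this in the logarithmic variable $s = \log(r/R)$, in which $\iota$ is the reflection $s\mapsto -s$ and the conspiring signs are transparent.

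The hypothesis $A_{r} = 0$ on $\partial B_{R}$ enters precisely to guarantee that the glued object --- $A$ on $B_{R}$ together with $\bar A$ on $\bbR^{d}\setminus\overline{B_{R}}$ --- is a genuine connection of class $W^{1,\frac{d}{2}}_{loc}$ away from the origin and from infinity: the radial component is odd under the reflection, so its traces on $\set{r=R}$ from the two sides agree only because both vanish, whereas the tangential component is even and matches automatically; combined with the fact that $\iota$ is a smooth diffeomorphism of $\bbR^{d}\setminus(\set{0}\cup\set{\infty})$, this puts $\bar A$ in $W^{1,\frac{d}{2}}_{loc}$ and legitimizes the pointwise curvature identity. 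I expect the only real obstacle to be bookkeeping rather than conceptual: one must pin down the convention for the angular part of a $1$- and $2$-form under the dilation built into the inversion so that no spurious powers of $r/R$ survive in the mixed component, and confirm that the radial--tangential component picks up exactly one sign flip while the tangential--tangential component picks up none. Once this is done the identity follows by a short calculation; note in particular that, since $\iota$ is conformal and $\int\abs{F}^{d/2}$ is conformally invariant for $2$-forms in dimension $d$, it yields $\nrm{F[\bar A]}_{L^{d/2}(\bbR^{d}\setminus B_{R})} = \nrm{F[A]}_{L^{d/2}(B_{R})}$, which is the form in which the lemma will be used.
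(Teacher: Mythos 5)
There is a genuine gap, and it sits exactly at the spot you flag as ``bookkeeping'' without resolving: the extension formula in the statement is \emph{not} the pullback $\iota^{\ast}A$ under the inversion $\iota(x)=R^{2}x/\abs{x}^{2}$, so the naturality argument does not directly apply, and the claimed cancellation in the direct computation does not in fact occur with the standard reading of $A_{r}$ as the $\ud r$-coefficient.

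Concretely, $\iota^{\ast}\ud r = -\tfrac{R^{2}}{\rho^{2}}\,\ud r$, so the radial component of the genuine pullback at $\rho>R$ is
$(\iota^{\ast}A)_{r}(\rho,\Theta) = -\tfrac{R^{2}}{\rho^{2}}A_{r}(R^{2}/\rho,\Theta)$,
whereas the lemma defines $\bar A_{r}(\rho,\Theta) = -A_{r}(R^{2}/\rho,\Theta)$, i.e.\ the Jacobian weight is absent. Hence the claim that pulling back ``produces exactly the sign flip of the radial part'' is wrong, and $\bar A\neq\iota^{\ast}A$; the appeal to $F[\bar A]=\iota^{\ast}F[A]$ therefore has no foundation. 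The direct computation makes the mismatch concrete. For $\rho>R$ one has
$\partial_{\rho}\bar A_{\Theta}(\rho) = -\tfrac{R^{2}}{\rho^{2}}(\partial_{r}A_{\Theta})(R^{2}/\rho)$, $-\partial_{\Theta}\bar A_{r}(\rho)=(\partial_{\Theta}A_{r})(R^{2}/\rho)$ and $[\bar A_{r},\bar A_{\Theta}](\rho)=-[A_{r},A_{\Theta}](R^{2}/\rho)$, so at $\rho=R^{2}/r$,
\begin{equation*}
F[\bar A]_{r\Theta}\bigl(\tfrac{R^{2}}{r},\Theta\bigr) = -\tfrac{r^{2}}{R^{2}}(\partial_{r}A_{\Theta})(r,\Theta) + (\partial_{\Theta}A_{r})(r,\Theta) - [A_{r},A_{\Theta}](r,\Theta),
\end{equation*}
while $F[A]_{r\Theta}(r,\Theta) = (\partial_{r}A_{\Theta}) - (\partial_{\Theta}A_{r}) + [A_{r},A_{\Theta}]$. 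These agree (up to the overall sign coming from the orientation reversal of $\ud r$) only when the extra factor $r^{2}/R^{2}$ is absent, i.e.\ on $r=R$, not on the whole ball. Passing to the logarithmic variable does not rescue this: with $A_{t}=rA_{r}$, the lemma's formula becomes $\bar A_{t}(t)=-e^{2t}A_{t}(-t)$, not the plain reflection $-A_{t}(-t)$, so the weight $e^{2t}$ is still there. You explicitly list ``confirm that the radial--tangential component picks up exactly one sign flip'' as the task to be done, but you then assert the conspiring of signs rather than carrying it out; as the display above shows, it fails as stated.

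To repair the argument one must either supply the Jacobian, taking $\bar A_{r}(R^{2}/r,\Theta)=-\tfrac{r^{2}}{R^{2}}A_{r}(r,\Theta)$ (which makes $\bar A=\iota^{\ast}A$ honestly, whence $F[\bar A]=\iota^{\ast}F[A]$ by naturality, and the boundary condition $A_{r}=0$ on $\partial B_{R}$ exactly ensures the glued connection is $W^{1,d/2}_{loc}$, since $d\iota\restriction_{\partial B_{R}}$ is reflection through the sphere), or identify explicitly the nonstandard normalization of $A_{r}$ that absorbs the weight; right now your write-up does neither. Two of your remarks do survive unchanged: the observation that $A_{r}=0$ on $\partial B_{R}$ is what makes the radial component glue continuously, and the observation that conformal invariance of $\int\abs{F}^{d/2}$ gives $\nrm{F[\bar A]}_{L^{d/2}(\bbR^{d}\setminus B_{R})}=\nrm{F[A]}_{L^{d/2}(B_{R})}$, which is what the application actually uses — but both of these hold for the corrected $\bar A=\iota^{\ast}A$, not for the uncorrected radial formula.
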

The proof is an easy algebra computation, which we omit.
%
%
We now carry out the first step.
\begin{proposition} \label{prop:goodrep-ball-key}
  Theorem~\ref{thm:goodrep-ball} holds under the additional assumption
  that $A \in \dot{W}^{1, \frac{d}{2}}(B_{R})$.
\end{proposition}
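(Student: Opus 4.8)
The plan is to build local Coulomb gauges on a uniformly separated net of unit balls covering $B_{R}$ by Uhlenbeck's lemma (Theorem~\ref{thm:uhlenbeck-ball}), and then glue them into a single global gauge by the patching procedure of Scenario~(2), i.e.\ Proposition~\ref{prop:patching-ball}. First I would rescale by $\rcsp^{\eps_{\ast}}[\covD]$, so that the inner concentration scale is $1$, the ball becomes $B_{\rho}$ with $\rho = R/\rcsp^{\eps_{\ast}}[\covD] \geq 1$, and $\nrm{F[\covD]}_{L^{\frac{d}{2}}(B_{1}(x) \cap B_{\rho})} \leq \eps_{\ast}$ for all $x$. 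Since $\dot{W}^{1,\frac{d}{2}}$ is scaling invariant, it then suffices to bound the global gauge potential in $\dot{W}^{1,\frac{d}{2}}(B_{\rho})$ by a constant depending only on $\rho$ and $\eps_{\ast}$; when $\rho$ is bounded by a fixed constant (in particular $\rho \leq 1$, in which case $B_{\rho}$ itself carries small curvature) this follows from finitely many applications of Theorem~\ref{thm:uhlenbeck-ball}, so I may assume $\rho$ is large.

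The one genuine difficulty is that the unit balls meeting $\rd B_{\rho}$ protrude out of the region $B_{\rho}$ on which $\covD$ is defined. To fix this I would first exploit the hypothesis $A \in \dot{W}^{1,\frac{d}{2}}(B_{\rho})$ --- so that $A_{r}|_{\rd B_{\rho}}$ lies in the trace space $W^{1-\frac{2}{d},\frac{d}{2}}(\bbS^{d-1})$ --- to apply a preliminary gauge transformation supported in a collar of $\rd B_{\rho}$, built as in Lemma~\ref{lem:Ar=0} (or a variant adapted to that trace space) so that it belongs to $\calG^{2,\frac{d}{2}}$ and leaves $A$ in $\dot{W}^{1,\frac{d}{2}}(B_{\rho})$, arranging $A_{r} = 0$ on $\rd B_{\rho}$; gauge invariance of $\abs{F}$ preserves the concentration bound. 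I would then extend $\covD$ across $\rd B_{\rho}$ by the inversion $r \mapsto \rho^{2}/r$ of Lemma~\ref{lem:ext-simple}, producing $\bar{\covD} \in \calA^{1,\frac{d}{2}}_{loc}(\bbR^{d})$; by the curvature identity \eqref{eq:ext-simple}, together with the observation that a unit ball at radius $s \geq \rho$ is mapped by the inversion into a ball of radius $\aleq 1$ inside $B_{\rho}$, every unit ball in $\bbR^{d}$ now carries $\aleq \eps_{\ast}$ of the curvature of $\bar{\covD}$. In particular each unit ball centered in $B_{\rho}$ lies entirely in the domain of $\bar{\covD}$ and has small curvature there.

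Now I would fix a net of uniformly separated unit balls $\set{B_{\alp}}$ centered in $B_{\rho}$ covering $B_{\rho}$, and on each $B_{\alp}$ apply Theorem~\ref{thm:uhlenbeck-ball} to $\bar{\covD}$ (with $\eps_{\ast}$ below the Uhlenbeck threshold), obtaining $O_{\alp} \in \calG^{2,\frac{d}{2}}(B_{\alp})$ with Coulomb representative $\tA_{\alp}$ obeying $\nrm{\tA_{\alp}}_{W^{1,\frac{d}{2}}(B_{\alp})} \aleq \eps_{\ast}$. The transition maps $O_{(\alp\bt)} = O_{\alp} O_{\bt}^{-1} \in \calG^{2,\frac{d}{2}}(B_{\alp} \cap B_{\bt})$ then satisfy $O_{(\alp\bt);x} = Ad(O_{(\alp\bt)}) \tA_{\bt} - \tA_{\alp}$, whence $\nrm{O_{(\alp\bt);x}}_{W^{1,\frac{d}{2}}} \aleq \eps_{\ast}$ by Lemma~\ref{lem:d-inv}. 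Feeding the data $\set{B_{\alp} \cap B_{\rho},\, O_{(\alp\bt)},\, \tA_{\alp}}$ into Proposition~\ref{prop:patching-ball} (with $k = 2$) yields a global gauge on $B_{\rho}$ --- and hence, after composing with the collar transformation, a global gauge for the original $\covD$ --- in which the gauge potential is bounded in $W^{1,\frac{d}{2}}(B_{\rho}) \subseteq \dot{W}^{1,\frac{d}{2}}(B_{\rho})$ by a constant depending only on $\rho$ and $\eps_{\ast}$; undoing the rescaling gives \eqref{eq:goodrep-ball-est}. For the higher-regularity statement I would note that $\abs{\covD^{(n)} F}$ is gauge invariant and transforms by an isometry under the inversion diffeomorphism (all of whose derivatives are bounded for $\rho \geq 1$), so $\covD^{(n)} F[\bar{\covD}] \in L^{p}$ on each unit ball; interior elliptic regularity for the div--curl systems of $\tA_{\alp}$ and of $O_{(\alp\bt)}$ (Lemmas~\ref{lem:div-curl-A-intr} and \ref{lem:div-curl-O}, and their higher-order analogues) upgrades the local data to $W^{n+1,p}$, which then propagates through the patching formula to give $A \in W^{n+1,p}(B_{R})$.

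The step I expect to be the main obstacle is arranging $A_{r} = 0$ on $\rd B_{R}$ at the critical regularity: the trace of $A$ only barely lives in the correct fractional Sobolev space, the naive radial (exponential) gauge is too rough to use, and one must separately verify that the reflected connection genuinely lies in $\calA^{1,\frac{d}{2}}_{loc}(\bbR^{d})$ and still obeys the unit-ball curvature concentration bound. Everything downstream --- the Uhlenbeck applications, the transition-map estimates, and the patching --- is then essentially bookkeeping.
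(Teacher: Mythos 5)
Your proof follows the paper's route essentially step by step: rescale so the inner concentration scale is $1$, treat the case of small $R/r$ by a single Uhlenbeck application, then arrange $A_r = 0$ on $\partial B_\rho$ via Lemma~\ref{lem:Ar=0}, reflect across the boundary via Lemma~\ref{lem:ext-simple}, apply Uhlenbeck's lemma on a uniformly separated net of unit balls, and glue via Proposition~\ref{prop:patching-ball}. That is exactly the paper's argument, so the main line is correct.

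The one genuine deviation is in the higher-regularity statement, and there your justification is wrong as written. You claim that $\abs{\covD^{(n)} F}$ ``transforms by an isometry under the inversion diffeomorphism,'' and hence that $\covD^{(n)} F[\bar\covD] \in L^p$ on unit balls crossing $\partial B_\rho$, so that the interior regularity Lemma~\ref{lem:div-curl-A-intr} suffices everywhere. But the inversion $r \mapsto \rho^2/r$ is conformal, not an isometry, and the extension in Lemma~\ref{lem:ext-simple} is not even the tensorial pullback $\iota^\ast A$ (check the radial component: the pullback carries a factor $\rho^2/r^2$ that the lemma's formula does not). Consequently $\covD^{(n)} F[\bar\covD]$ is not simply $\abs{\covD^{(n)} F[\covD]}$ reflected; one picks up Christoffel-type corrections from the non-flat pulled-back metric, and nothing has been proved about these. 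The paper sidesteps this entirely: it does not claim any higher regularity of the reflected connection outside $B_R$. Instead, for boundary balls it applies Lemma~\ref{lem:div-curl-A-tang}, which commutes the div-curl system with the rotation vector fields $\overline\Omega_{jk}$ (these really are isometries of $B_R$) to get higher \emph{angular} regularity up to $\partial B_R$, and then recovers radial regularity from the first-order radial equations \eqref{eq:div-curl-rad} together with radial covariant derivative bounds on $F_{r\Theta}$. You would need either to import that argument, or to verify directly (by an explicit chain-rule computation using the boundedness of the inversion's derivatives on the collar for $\rho \geq 1$) that $\covD^{(n)} F[\bar\covD] \in L^p$ near $\partial B_\rho$ — which is plausible but is not the one-line ``isometry'' claim you made.

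Your other stated worry — that arranging $A_r = 0$ on $\partial B_R$ at critical regularity is the main obstacle — is already resolved by the paper's Lemma~\ref{lem:Ar=0}, which constructs the collar gauge transformation from the Poisson extension of the trace $A_r|_{\partial B_R} \in H^{\frac{d-3}{2}}(\bbS^{d-1})$; that is exactly the ingredient you tentatively propose, and it works.
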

\begin{proof}
  By rescaling, we may set $r = 1$, i.e., $\rcsp^{\eps_\ast}[A] \geq
  1$. Then we need to show that \eqref{eq:goodrep-ball-est} holds with
  an implicit constant depending only on $\eps_{\ast}$ and $R$,
  provided that $\eps_{\ast}$ is sufficiently small compared to a
  universal constant.

  If $R \aleq 1$, then the conclusion of
  Theorem~\ref{thm:goodrep-ball} follows by Uhlenbeck's lemma, so we
  may assume that $R > 10$ (say). Applying Lemma~\ref{lem:Ar=0}, we
  may assume, without loss of generality that $A_{r} = 0$. Then we
  extend $A$ outside $B_{R}$ via Lemma~\ref{lem:ext-simple}. By
  \eqref{eq:ext-simple}, it follows that the extended connection still
  has concentration radius $\ageq 1$ in $B_{R+10}$. Choosing
  $\eps_{\ast}$ sufficiently small, we may insure that Uhlenbeck's
  lemma applies to the extended connection on balls of radius $2$
  centered in $B_{R}$.

  Consider a covering $\set{B_{\alp}}$ of $B_{R}$ by uniformly
  separated unit balls centered in $B_{R}$, and apply Uhlenbeck's
  lemma on each $2 B_{\alp}$ to obtain local data $A_{(\alp)} \in
  W^{1, \frac{d}{2}}(2 B_{\alp})$ and $O_{(\alp \bt)} \in \calG^{2,
    \frac{d}{2}}(2 B_{\alp} \cap 2 B_{\bt})$. By
  Lemma~\ref{lem:div-curl-A-intr}, we see that $A_{(\alp)}$ enjoys
  higher regularity properties in each interior ball $B_{\alp}$ (i.e.,
  $2 B_{\alp} \cap \rd B_{R} = \0$). For a boundary ball $B_{\alp}$,
  i.e., $2 B_{\alp} \cap \rd B_{R} \neq \0$, we first obtain higher
  angular regularity of $A$ in $B_{\alp} \cap B_{R}$ by
  Lemma~\ref{lem:div-curl-A-tang}, and then also regularity in the
  radial direction by the equations
  \begin{equation} \label{eq:div-curl-rad} \rd_{r} A_{r} = -
    \mathrm{div}_{\Tht} A_{\Tht}, \qquad \rd_{r} A_{\Tht} = \rd_{\Tht}
    A_{r} + [A_{r}, A_{\Tht}] + F_{r \Tht},
  \end{equation}
  as well as radial covariant derivative bounds on $F_{r
    \Tht}$. Finally, observe that the desired higher regularity of
  $O_{(\alp \bt)}$ in $B_{\alp} \cap B_{\bt} \cap B_{R}$ follows from
  the equation $O_{(\alp \bt); x} = Ad(O_{(\alp \bt)}) A_{(\bt)} -
  A_{(\alp)}$ and the bounds for $A_{(\alp)}, A_{(\bt)}$.

  As a result, on the covering $\set{B_{\alp} \cap B_{R}}$, we obtain
  local data $O_{(\alp \bt)} \in W^{k, \frac{d}{k}}(B_{\alp} \cap
  B_{\bt} \cap B_{R})$ and $A_{(\alp)} \in W^{k,
    \frac{d}{k}}(B_{(\alp)} \cap B_{R})$, provided that $\covD^{(k)} F
  \in L^{\frac{d}{k}}$ (with $k \geq 1$, $\frac{d}{k} > 1$). We are in
  a position to apply Proposition~\ref{prop:patching-ball}, from which
  the conclusion of Theorem~\ref{thm:goodrep-ball} follows.
\end{proof}

Finally, we remove the global gauge assumption, and thereby complete
the proof of Theorem~\ref{thm:goodrep-ball}.

\begin{proof}[Completion of proof of Theorem~\ref{thm:goodrep-ball}]
  Consider a sequence $R_{n} \upto R$. Apply
  Proposition~\ref{prop:goodrep-ball-key} to each $A
  \restriction_{B_{R_{n}}}$, which gives rise to $\tA^{(n)}$ and
  $O^{(n)}$ such that
  \begin{align*}
    O^{(n)}_{;j} =& Ad(O^{(n)}) A_{j} - \tA^{(n)}_{j} \\
    \rd_{k} O^{(n)}_{;j} =& [O^{(n)}_{;k}, Ad(O^{(n)}) A_{j}] +
    Ad(O^{(n)}) \rd_{k} A_{j} - \rd_{k} \tA^{(n)}_{j}
  \end{align*}
  It follows that $O^{(m)}_{;x}$ is uniformly bounded in $W^{1,
    \frac{d}{2}}$ on each fixed $B_{R'}$.  Therefore, after passing to
  a subsequence, there exists $O \in W^{2, \frac{d}{2}}_{loc}(B_{R};
  \bbR^{N \times N})$ such that $O^{(n)} \weakto O$ in $W^{2,
    \frac{d}{2}}(B_{R'}; \bbR^{N \times N})$ for every $0 < R' < R$
  and $O^{(n)} \to O$ a.e. on $B_{R}$. Hence, $O \in \calG^{2,
    \frac{d}{2}}_{loc}(B_{R})$ and moreover
  \begin{equation*}
    \tA_{j} = Ad(O) A_{j} - O_{;j}
  \end{equation*}
  is the weak limit of $\tA^{(n)}$ in $W^{1,
    \frac{d}{2}}_{loc}$. Since the $\dot{W}^{1,
    \frac{d}{2}}(B_{R_{n}})$ norm of $\tA^{(n)}$ is uniformly bounded
  in $n$, it follows that $\nrm{\tA}_{\dot{W}^{1, \frac{d}{2}}(B_{R})}
  \aleq_{\nrm{F}_{L^{\frac{d}{2}}}} 1$.
\end{proof}

\subsection{Good global gauge theorem on the whole space}
Next, we establish Theorem~\ref{thm:goodrep}.
\begin{proof}[Proof of Theorem~\ref{thm:goodrep}]
  By rescaling, we set $\Rcsp = 1$. Throughout this proof, we work
  with global gauge potentials in $W^{1, \frac{d}{2}}_{loc}(\bbR^{d})$
  for $\covD$, which exists thanks to
  Proposition~\ref{prop:ball-triv}.

  The first main task is to find a good gauge in a suitable exterior
  domain. By hypothesis, and our normalization $\Rcsp = 1$, we have
  $\nrm{F[A]}_{L^{\frac{d}{2}}(\bbR^{d} \setminus \overline{B})} <
  \eps_{\ast}$. Consider the inversion map
  \begin{equation*}
    \iota : x \mapsto y = \frac{x}{\abs{x}^{2}},
  \end{equation*}
  which clearly satisfies $\iota \circ \iota = id$. Under $\iota$, the
  exterior region $\bbR^{d} \setminus \overline{B}$ is the image of
  the punctured unit ball $B'$, and vice versa. The map $\iota$ is a
  conformal isometry, such that
  \begin{equation*}
    (\iota^{\ast} \dlt)^{ij} = \abs{x}^{4} \dlt^{ij}, \quad \iota^{\ast}(\ud y^{1} \wedge \cdots \wedge \ud y^{d}) = \frac{(-1)^{d}}{\abs{x}^{2d}} \ud x^{1} \wedge \cdots \wedge \ud x^{d}.
  \end{equation*}
  In particular, if $T$ is a covariant $2$-tensor on $\iota(U)
  \subseteq \bbR^{d}$, then
  \begin{equation*}
    \int_{\iota(U)} (\sum_{i, j} \abs{T_{y^{i} y^{j}}}^{2} )^{\frac{d}{4}}(y) \, \ud y = \int_{U} (\sum_{i, j} \abs{\iota^{\ast} T_{x^{i} x^{j}}}^{2} )^{\frac{d}{4}}(x) \, \ud x.
  \end{equation*}
  Choosing $\eps_{\ast} < \eps_{0}'$, we have $\nrm{\iota^{\ast}
    F}_{L^{\frac{d}{2}}(B')} = \nrm{F}_{L^{\frac{d}{2}}(\bbR^{d}
    \setminus \overline{B}} < \eps_{0}'$, and we may apply
  Theorem~\ref{thm:uhlenbeck-pball} to find a local gauge in which the
  gauge potential satisfies $\tilde{A}_{(\infty)} \in \dot{W}^{1,
    \frac{d}{2}}(B)$. We define $A_{(\infty)}$ to be the local gauge
  potential of $\covD = \iota^{\ast} \iota^{\ast} \covD$ on $\bbR^{d}
  \setminus \overline{B}$ given by $A_{(\infty)} = \iota^{\ast}
  \tilde{A}_{(\infty)}$. Since $\rd (\iota^{\ast}
  \tilde{A}_{(\infty)}) = \iota^{\ast} (\rd \tilde{A}_{(\infty)})$ and
  $\nrm{\iota^{\ast} \rd
    \tilde{A}_{(\infty)}}_{L^{\frac{d}{2}}(\bbR^{d} \setminus
    \overline{B})} = \nrm{\rd
    \tilde{A}_{(\infty)}}_{L^{\frac{d}{2}}(B')}$, it follows that
  $A_{(\infty)} \in \dot{W}^{1, \frac{d}{2}}(\bbR^{d} \setminus
  \overline{B})$ and
  \begin{equation} \label{eq:goodrep-extr} \nrm{A_{(\infty)}}_{L^{d}
      \cap \dot{W}^{1, \frac{d}{2}}(\bbR^{d} \setminus \overline{B})}
    \aleq \eps_{\ast}.
  \end{equation}

  On the other hand, by Theorem~\ref{thm:goodrep-ball} applied to
  $5B$, we obtain a local gauge potential $A_{(0)} \in $ for such that
  \begin{equation} \label{eq:goodrep-intr} \nrm{A_{(0)}}_{L^{d} \cap
      \dot{W}^{1, \frac{d}{2}}(5B)} \aleq_{\eps_{\ast}, \rcsp^{-1}} 1.
  \end{equation}
  
  By construction there exists $O \in \calG^{2, \frac{d}{2}}_{loc}(5B
  \setminus \overline{B})$ such that
  \begin{equation*}
    A_{(0)} = Ad(O) A_{(\infty)} - O_{;x} \quad \hbox{ in } 5B \setminus \overline{B}.
  \end{equation*}
  By this relation, \eqref{eq:goodrep-extr} and
  \eqref{eq:goodrep-intr}, on $5B \setminus \overline{B}$ we have
  \begin{equation*}
    \nrm{O_{;x}}_{L^{d} \cap \dot{W}^{1, \frac{d}{2}}(5B \setminus \overline{B})}
    \aleq_{\eps_{\ast}, \rcsp^{-1}} 1.
  \end{equation*}
%
%
  Using the partial approximation lemma (Lemma~\ref{lem:part-approx})
  and performing $0$-homogeneous extension outside a suitable sphere,
  it is straightforward to construct a gauge transform
  $\tilde{O}_{(\infty)}$ on $\bbR^{d} \setminus \overline{B}$
  satisfying the following properties:
%
%
  \begin{itemize}
  \item $\tilde{O}_{(\infty)} = O$ in $2B \setminus \overline{B}$;
  \item $\tilde{O}_{(\infty)}(r \Tht) = \tilde{O}_{(\infty)}(4 \Tht)$
    for $\Tht \in \bbS^{d-1}$ and $r \geq 4$
  \item $\displaystyle{\nrm{\tilde{O}_{(\infty)}}_{L^{d} \cap
        \dot{W}^{1, \frac{d}{2}}(5B \setminus \overline{B})}
      \aleq_{\eps_{\ast}, \rcsp^{-1}} 1} $;
  \item $\tilde{O}_{(\infty)} $ is $C^{\infty}$ in $5B \setminus
    \overline{3B}$ with $\nrm{O_{(\infty)}}_{C^{N}(5B \setminus
      \overline{3B})} \aleq_{\eps_{\ast}, \rcsp^{-1}, N} 1$ for all $N
    \geq 0$.
  \end{itemize}
  Using $\tilde{O}_{(\infty)}$ to patch up the local gauges in $2B$
  and $\bbR^{d} \setminus \overline{B}$, we obtain the global gauge
  potential
  \begin{equation*}
    A_{x} = \left\{ 
      \begin{array}{cl}
	A_{(0) x} & \hbox{ on } 2B \\
	Ad(\tilde{O}_{(\infty)}) A_{(\infty) x} - \tilde{O}_{(\infty); x} & \hbox{ on } \bbR^{d} \setminus \overline{B}
      \end{array}
    \right.
  \end{equation*}
  Let $O_{(\infty)}$ be the smooth $0$-homogeneous map on $\bbR^{d}
  \setminus \set{0}$ defined by $O_{(\infty)}(r \Tht) =
  \tilde{O}_{(\infty)}(4 \Tht)$, and define $B_{x} = A_{x} + \chi
  O_{(\infty); x}$. By \eqref{eq:goodrep-extr},
  \eqref{eq:goodrep-intr} and the preceding bounds for
  $\tilde{O}_{(\infty)}$, the desired bounds \eqref{eq:goodrep-bnd}
  follow. \qedhere
\end{proof}

\subsection{Topological classes of rough connections}
Here, we verify the results stated in
Section~\ref{subsec:top-class}. Our first goal is to prove homotopy
equivalence of $O_{(\infty)}$ of different good representations of the
same connection (Proposition~\ref{prop:goodrep-homotopy-0}). We need a
few lemmas.
\begin{lemma} \label{lem:homotopy} Let $O \in
  \calG^{2,\frac{d}{2}}(A)$, where $A = \set{x \in \bbR^{d} : R_{1} <
    \abs{x} < R_{2}}$ is an annulus. For almost every $R \in (R_{1},
  R_{2})$, $O \restriction_{\rd B_{R}}$ is continuous, which are all
  homotopic to each other.
\end{lemma}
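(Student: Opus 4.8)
The plan is to combine a Fubini-type slicing argument with the smooth approximation result of Lemma~\ref{lem:part-approx}. Writing $A \cong (R_{1}, R_{2})_{r} \times \bbS^{d-1}_{\Tht}$ in polar coordinates (on which the Lebesgue and Sobolev structures are comparable to the flat ones, since $r \aeq 1$), the hypothesis $O \in \calG^{2, \frac{d}{2}}(A)$ together with Fubini applied to the angular weak derivatives $\srd^{(\leq 2)} O$ (of order at most two) shows that $O(R, \cdot) \in W^{2, \frac{d}{2}}(\bbS^{d-1})$ for a.e. $R \in (R_{1}, R_{2})$, with the integrated bound $\int_{R_{1}}^{R_{2}} \nrm{O(R, \cdot)}_{W^{2, \frac{d}{2}}(\bbS^{d-1})}^{\frac{d}{2}} \, \ud R \aleq \nrm{O}_{W^{2, \frac{d}{2}}(A)}^{\frac{d}{2}}$. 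Here it is essential to slice rather than take a trace on a single fixed sphere: a trace theorem on a fixed $\bbS^{d-1}$ would only place $O|_{\rd B_{R}}$ in $W^{2 - \frac{2}{d}, \frac{d}{2}}(\bbS^{d-1})$, which misses $C^{0}$ at the endpoint, whereas $W^{2, \frac{d}{2}}(\bbS^{d-1}) \hookrightarrow C^{0, \frac{2}{d}}(\bbS^{d-1})$ holds \emph{strictly} because $2 \cdot \frac{d}{2} = d > d - 1$ (this is where $d \geq 3$ enters). Since $\G$ is closed in $\bbR^{N \times N}$ and $O$ takes values in $\G$ a.e., the continuous representative of $O|_{\rd B_{R}}$ is a genuine continuous map $\bbS^{d-1} \to \G$ for a.e. $R$, which is the first assertion.

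For the homotopy statement, I would apply Lemma~\ref{lem:part-approx} with $U = A$ (whose boundary is smooth, in particular locally Lipschitz), $V = \0$, $W = A$, to produce $O^{j} \in \calG^{2, \frac{d}{2}}(A)$ that are smooth and $\G$-valued on $A$ and satisfy $O^{j} \to O$ in $W^{2, \frac{d}{2}}(A)$. Applying the slicing bound to $O^{j} - O$ and passing to a subsequence, one fixes a full-measure set $E \subseteq (R_{1}, R_{2})$ such that for every $R \in E$: (i) $O|_{\rd B_{R}} \in C^{0}(\bbS^{d-1}; \G)$ as above, and (ii) $\nrm{O^{j}|_{\rd B_{R}} - O|_{\rd B_{R}}}_{W^{2, \frac{d}{2}}(\bbS^{d-1})} \to 0$, hence $O^{j}|_{\rd B_{R}} \to O|_{\rd B_{R}}$ uniformly on $\bbS^{d-1}$. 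Let $\delta_{0} > 0$ be a radius on whose tubular neighborhood $\tilde{\G}$ of $\G$ the nearest-point projection $\pi_{\G}$ is well-defined and smooth (as in the proof of Lemma~\ref{lem:part-approx}). For $R \in E$ and $j$ large, $\nrm{O^{j}(R, \cdot) - O(R, \cdot)}_{C^{0}} < \delta_{0}$, so the straight-line segment from $O^{j}(R, \Tht) \in \G$ to $O(R, \Tht)$ stays inside $\tilde{\G}$, and $(s, \Tht) \mapsto \pi_{\G}\bigl( (1 - s) O^{j}(R, \Tht) + s\, O(R, \Tht) \bigr)$ is a homotopy from $O^{j}|_{\rd B_{R}}$ to $O|_{\rd B_{R}}$.

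It remains to chain homotopies. Given $R, R' \in E$, fix $j$ large enough that the previous paragraph gives $O|_{\rd B_{R}} \simeq O^{j}|_{\rd B_{R}}$ and $O|_{\rd B_{R'}} \simeq O^{j}|_{\rd B_{R'}}$. Since $O^{j}$ is continuous and $\G$-valued on the compact shell $[\min(R, R'), \max(R, R')] \times \bbS^{d-1} \subseteq A$, the map $(s, \Tht) \mapsto O^{j}\bigl( (1-s) R + s R', \Tht \bigr)$ is a homotopy $O^{j}|_{\rd B_{R}} \simeq O^{j}|_{\rd B_{R'}}$. Composing the three homotopies yields $O|_{\rd B_{R}} \simeq O|_{\rd B_{R'}}$, completing the proof.

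I expect the only genuinely delicate point to be the borderline slicing/embedding in the first paragraph: getting a.e.-sphere continuity out of $W^{2, \frac{d}{2}}(A)$ requires exploiting that the regularity loss occurs only in the ambient radial direction and that $W^{2, \frac{d}{2}}(\bbS^{d-1}) \hookrightarrow C^{0}$ is a strict, non-endpoint embedding for $d \geq 3$. Everything after that — uniform convergence along a subsequence on a full-measure set of radii, and the homotopy bookkeeping via $\pi_{\G}$ and the continuity of the smooth approximants along radial slices — is soft and standard.
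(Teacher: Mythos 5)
Your proof is correct and follows essentially the same route as the paper: approximate $O$ by smooth $\G$-valued maps via Lemma~\ref{lem:part-approx}, use a Fubini/slicing argument to get, along a subsequence, strong $W^{2,\frac{d}{2}}(\bbS^{d-1})$ convergence of the restrictions for almost every radius, then invoke the strict Sobolev embedding $W^{2,\frac{d}{2}}(\bbS^{d-1}) \hookrightarrow C^{0}(\bbS^{d-1})$ to upgrade to uniform convergence, and finally chain the resulting homotopies through a fixed smooth approximant. The paper's proof is terser and leaves the nearest-point-projection homotopy and the chaining implicit, but the skeleton is identical. One small remark: your first paragraph (direct Fubini slicing of the angular weak derivatives of $O$ itself) is redundant, and is also the one place that is slightly under-justified — identifying the intrinsic angular weak derivatives of $O\restriction_{\rd B_{R}}$ with the sliced ambient ones is most cleanly done \emph{after} smooth approximation, which is exactly what your second paragraph supplies. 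Since that paragraph already produces continuity of $O\restriction_{\rd B_{R}}$ for a.e.\ $R$ as the uniform limit of continuous maps, you could delete the first paragraph's slicing entirely and only keep the (correct) observation that a fixed-sphere trace would land at the endpoint $s p = d-1$ and miss $C^{0}$.
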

By this lemma, we may define $[O]$ to be the homotopy class (as
continuous maps $\bbS^{d-1} \to \G$) of the restriction of $O$ to $\rd
B_{R}$ for almost every $R$. We refer to such $R$'s as \emph{generic
  radii}.
\begin{proof}
  Since the boundary of $A$ is smooth, we may approximate $O$ by
  $O^{n} \in C^{\infty}(A; \G)$ in the $W^{2, \frac{d}{2}}(A; \bbR^{N
    \times N})$-topology \cite{MR710054, MR815194}. After passing to a
  subsequence, for almost every $R \in (R_{1}, R_{2})$, we have
  \begin{equation*}
    O^{n} \restriction_{\rd B_{R}} \to O^{n} \restriction_{\rd B_{R}} \quad \hbox{ in } W^{2, \frac{d}{2}}(\rd B_{R}; \bbR^{N \times N}).
  \end{equation*}
  The lemma now follows from the observation that $W^{2,
    \frac{d}{2}}(\rd B_{R}; \bbR^{N \times N}) \hookrightarrow
  C^{0}(\rd B_{R}; \bbR^{N \times N})$, due to the Sobolev embedding
  on spheres.
\end{proof}

\begin{lemma} \label{lem:homotopy-0} Let $\dlt > 0$ and let $O \in
  \calG^{2,\frac{d}{2}}(\tilde{A})$, where $\tilde{A} = \set{x \in
    \bbR^{d} : R_{1} -\dlt < \abs{x} < R_{2}}$. Then there exists an
  extension $\tilde{O} \in \calG^{2, \frac{d}{2}}(B_{R_{2}})$ such
  that $\tilde{O} \restriction_{A} = O \restriction_{A}$ if and only
  if $[O] = [id]$.
\end{lemma}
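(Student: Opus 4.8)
The plan is to prove the two implications separately; the forward one is soft, and the reverse one requires a careful smooth gluing across a sphere.

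For necessity, suppose $\tilde{O} \in \calG^{2,\frac{d}{2}}(B_{R_{2}})$ restricts to $O$ on $A$. I would fix $0 < \sigma < R_{1} - \dlt$ and apply Lemma~\ref{lem:homotopy} to $\tilde{O}$ on the annulus $\set{\sigma < \abs{x} < R_{2}}$: all restrictions of $\tilde{O}$ to generic spheres are mutually homotopic. On a generic $\rd B_{R}$ with $R \in (R_{1}, R_{2})$ this restriction equals $O \restriction_{\rd B_{R}}$, so the common class is $[O]$; evaluating instead at a generic $\tau < \sigma$, it equals $[\tilde{O} \restriction_{\rd B_{\tau}}]$. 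Since $\tilde{O} \in \calG^{2,\frac{d}{2}}_{loc}(B_{\sigma})$, the approximation statement following Lemma~\ref{lem:part-approx} gives $\tilde{O}^{n} \in C^{\infty}(B_{\sigma/2}; \G)$ with $\tilde{O}^{n} \to \tilde{O}$ in $W^{2,\frac{d}{2}}(B_{\sigma/2})$; passing to a subsequence, and using the trace theorem together with the spherical Sobolev embedding $W^{2,\frac{d}{2}}(\rd B_{\tau}) \hookrightarrow C^{0}(\rd B_{\tau})$ exactly as in the proof of Lemma~\ref{lem:homotopy}, I may assume $\tilde{O}^{n} \restriction_{\rd B_{\tau}} \to \tilde{O} \restriction_{\rd B_{\tau}}$ uniformly for a suitable generic $\tau$. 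Then $\tilde{O} \restriction_{\rd B_{\tau}}$ is homotopic to $\tilde{O}^{n} \restriction_{\rd B_{\tau}}$ for large $n$, which is null-homotopic since it bounds the smooth disk map $\tilde{O}^{n} \restriction_{\overline{B_{\tau}}}$. Hence $[O] = [id]$.

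For sufficiency, assume $[O] = [id]$. First I would make $O$ smooth near one sphere without changing it on $A$: pick $\rho \in (R_{1} - \dlt, R_{1})$ and apply Lemma~\ref{lem:part-approx} with $V = \set{R_{1} < \abs{x} < R_{2}}$ and $W$ a thin shell about $\rd B_{\rho}$, obtaining $O' \in \calG^{2,\frac{d}{2}}(\tilde{A})$ with $O' \restriction_{V} = O \restriction_{V}$ and $O' \in C^{\infty}$ near $\rd B_{\rho}$; by Lemma~\ref{lem:homotopy} the smooth map $O' \restriction_{\rd B_{\rho}} : \bbS^{d-1} \to \G$ has class $[O] = [id]$. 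Next I would collapse $O'$ to a $0$-homogeneous map in a collar: for $\abs{x}$ near $\rho$ the element $O'(\rho\Tht)^{-1} O'(r\Tht)$ lies near the identity, hence equals $\exp X(r,\Tht)$ for a smooth $\g$-valued $X$ vanishing at $r = \rho$, and with a radial cutoff $\beta$ equal to $1$ near $r = \rho$ and $0$ slightly inside, the map $O'(\rho\Tht)\exp(\beta(r) X(r,\Tht))$ agrees with $O'$ near $\rd B_{\rho}$ and equals the smooth $0$-homogeneous map $g(\Tht) := O'(\rho\Tht)$ on an inner shell. Finally, since $g$ is smooth and null-homotopic, the standard disk-filling (a smooth relative homotopy constant near both ends, reparametrized radially) yields a smooth $F$ on the remaining inner ball equal to $g$ in $0$-homogeneous form near its boundary. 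Gluing $O'$, the collar map, and $F$ along spheres on which adjacent pieces coincide on a full neighborhood and are mutually $C^{\infty}$ produces $\tilde{O} \in \calG^{2,\frac{d}{2}}(B_{R_{2}})$ with $\tilde{O} \restriction_{A} = O \restriction_{A}$.

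The main obstacle is this gluing in the converse direction: one cannot simply extend $O' \restriction_{\rd B_{\rho}}$ over $B_{\rho}$ and patch, since a jump in the normal derivative would put a surface measure into the second derivatives and destroy membership in $W^{2,\frac{d}{2}}$. The collar construction is engineered precisely to avoid this by doing all interpolation \emph{inside} the region where $O'$ is already smooth, so that every matching happens across a sphere on which both sides are $C^{\infty}$ and equal near it; the only genuinely topological input — filling a null-homotopic smooth sphere map over a disk smoothly — is then elementary. The routine points to check are that $X$ is indeed $\g$-valued (a logarithm of a product of $\G$-valued smooth functions near the identity) and that each glued piece lies in $\calG^{2,\frac{d}{2}}$ on its region, which is immediate since it is either $O'$ on a subset of $\tilde{A}$ or a $\G$-valued $C^{\infty}$ map on a compact set.
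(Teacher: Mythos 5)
Your overall strategy matches the paper's in both directions: for necessity, approximate $\tilde{O}$ by smooth $\G$-valued maps on a ball and observe that the restriction of the approximant to shrinking spheres gives a null-homotopy; for sufficiency, smooth $O$ near an inner sphere via Lemma~\ref{lem:part-approx}, collapse to a $0$-homogeneous sphere map, and fill the resulting null-homotopic smooth map over the remaining ball. The paper disposes of the last step with the phrase ``working in the smooth category''; your explicit collar construction (writing $O'(\rho\Theta)^{-1}O'(r\Theta) = \exp X(r,\Theta)$ and damping $X$ by a radial cutoff) is a correct and welcome instantiation of it, and it addresses exactly the right concern — that a naive glue across a sphere would introduce a surface-measure singularity in $\partial^{2}\tilde{O}$. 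One small inaccuracy in the necessity direction: you apply Lemma~\ref{lem:homotopy} on the annulus $\set{\sigma < |x| < R_2}$ but then evaluate ``at a generic $\tau < \sigma$,'' which lies outside that annulus. Either take $\tau \in (\sigma, R_1 - \dlt)$ generic and carry out the smooth approximation of $\tilde{O}$ on a ball containing $\rd B_\tau$ (as the paper does, approximating on all of $B_{R_2}$), or shrink the inner radius of the annulus below your intended $\tau$; either fix is immediate and does not affect the substance of the argument.
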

In this lemma, $[O]$ is defined by viewing $O$ as defined on either
the annulus $\tilde{A}$ or $A$; both give the same answer by
Lemma~\ref{lem:homotopy}. Our proof is qualitative, in that we make no
claim regarding the size of $\tilde{O} \in \calG^{2,
  \frac{d}{2}}(B_{R_{2}})$.
\begin{proof}
  We first prove the ``only if'' part. By Lemma~\ref{lem:part-approx}
  (with $V = \0$ and $U = W = B_{R_{2}}$), there exists an
  approximating sequence $O^{n} \in C^{\infty}(B_{R_{2}}; \G)$, which
  approaches $O$ in the $W^{2, \frac{d}{2}}(B_{R_{2}}; \bbR^{N \times
    N})$-topology. Recalling the proof of Lemma~\ref{lem:homotopy}, we
  see that $[O]$ is the homotopy class of $O^{n} \restriction_{\rd
    B_{R}}$ for any $\rd B_{R} \subseteq B_{R_{2}}$, provided that $n$
  is sufficiently large. Now, the whole map $O^{n} : B_{R_{2}} \to \G$
  provides a homotopy from $O^{n} \restriction_{\rd B_{R_{2}}}$ to the
  constant map $O^{n} \restriction_{\set{0}}$, which in turn is
  homotopic to the identity map.

  Next, we prove the ``if'' part. First, by
  Lemma~\ref{lem:part-approx}, there exists $O' \in \calG^{2,
    \frac{d}{2}}(R_{1} - \frac{4}{3} \dlt < \abs{x} < R_{2})$ such
  that $O' \in C^{\infty}(R_{1} - \frac{4}{3} \dlt < \abs{x} <R_{1} -
  \frac{1}{4} \dlt; \G)$, $O' \restriction_{A} = O
  \restriction_{A}$. By Lemma~\ref{lem:homotopy}, $[O'] = [O] =
  [id]$. Working in the smooth category, we may find $\tilde{O} \in
  \calG^{2, \frac{d}{2}}(B_{R_{2}})$ such that $\tilde{O}
  \restriction_{A} = O' \restriction_{A}$ while $\tilde{O} \in
  C^{\infty}(\abs{x} < R_{2} - \frac{1}{2} \dlt)$. \qedhere
\end{proof}

\begin{lemma} \label{lem:homotopy-infty} Let $O \in \calG^{2,
    \frac{d}{2}}(\bbR^{d} \setminus \overline{B})$. Then $[O] = [id]$.
\end{lemma}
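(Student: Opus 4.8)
\emph{Proof proposal.} The plan is to transfer the problem from the exterior region to a punctured ball via inversion, exactly as in the proof of Theorem~\ref{thm:goodrep}, and then invoke Lemma~\ref{lem:homotopy-0}. Introduce the inversion $\iota : x \mapsto y = x/\abs{x}^{2}$, which is a conformal diffeomorphism of $\bbR^{d} \setminus \overline{B}$ onto the punctured ball $B' = B_{1} \setminus \set{0}$ with $\iota \circ \iota = \mathrm{id}$, and which sends $\rd B_{\rho}$ to $\rd B_{1/\rho}$, restricting there to the identity map $\bbS^{d-1} \to \bbS^{d-1}$ (after the obvious identifications). Set $h = O \circ \iota : B' \to \G$. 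Since $\iota$ identifies the spheres of the two regions by the identity, the restriction $h \restriction_{\rd B_{\rho}}$ agrees with $O \restriction_{\rd B_{1/\rho}}$ as a map $\bbS^{d-1} \to \G$; hence, by Lemma~\ref{lem:homotopy}, $[h] = [O]$, and it suffices to prove $[h] = [\mathrm{id}]$.

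Next I would check that $h \in \calG^{2, \frac{d}{2}}(B_{1})$. The point is that the conformal factor of $\iota$ is $\abs{D\iota} \aeq \abs{x}^{-2}$, so on $\bbR^{d} \setminus \overline{B}$ (where $\abs{x} \geq 1$) the associated weights are bounded by $1$. Combining the chain rule with the change of variables $\ud y = \abs{x}^{-2d} \, \ud x$ (as in the conformal computation in the proof of Theorem~\ref{thm:goodrep}), one obtains
\begin{equation*}
  \nrm{\rd h}_{L^{\frac{d}{2}}(B')} + \nrm{\rd^{(2)} h}_{L^{\frac{d}{2}}(B')}
  \aleq \nrm{\rd O}_{L^{\frac{d}{2}}(\bbR^{d} \setminus \overline{B})} + \nrm{\rd^{(2)} O}_{L^{\frac{d}{2}}(\bbR^{d} \setminus \overline{B})} < \infty,
\end{equation*}
the lower-order contributions carrying the harmless weight $\abs{x}^{-\frac{d}{2}} \leq 1$. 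Thus $h \in W^{2, \frac{d}{2}}(B')$. Since $\frac{d}{2} \leq d$, the origin has zero $W^{1, \frac{d}{2}}$-capacity, so $h$ extends across $0$ to an element of $W^{2, \frac{d}{2}}(B_{1})$, and as $h(y) \in \G$ for a.e.\ $y$ we get $h \in \calG^{2, \frac{d}{2}}(B_{1})$.

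Finally, $h \restriction_{\set{1/2 < \abs{y} < 1}}$ lies in $\calG^{2, \frac{d}{2}}$ and admits the extension $h \in \calG^{2, \frac{d}{2}}(B_{1})$, so the ``only if'' direction of Lemma~\ref{lem:homotopy-0} gives $[h] = [\mathrm{id}]$, whence $[O] = [\mathrm{id}]$. I expect the only delicate point to be the regularity transfer under $\iota$ combined with the removability of the origin for $W^{2, \frac{d}{2}}$; both are routine given the conformal-invariance computation already performed for Theorem~\ref{thm:goodrep} and the standard fact that points are null for $W^{1, \frac{d}{2}}$-capacity. (If one prefers to avoid the inversion, one can argue directly: the Sobolev--Morrey embedding $W^{2, \frac{d}{2}}(\bbS^{d-1}) \hookrightarrow C^{0, \frac{2}{d}}(\bbS^{d-1})$ together with the global bounds $\rd O, \rd^{(2)} O \in L^{\frac{d}{2}}(\bbR^{d} \setminus \overline{B})$ produces a sequence of generic radii $R_{n} \to \infty$ along which the oscillation of $O \restriction_{\rd B_{R_{n}}}$ tends to $0$; then $O \restriction_{\rd B_{R_{n}}}$ takes values in a geodesically convex neighborhood in $\G$, hence is null-homotopic, and $[O] = [O \restriction_{\rd B_{R_{n}}}] = [\mathrm{id}]$.)
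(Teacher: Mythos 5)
Your main argument is correct and takes a genuinely different route from the paper. The paper works directly on the exterior region in polar coordinates: it approximates $O$ by smooth maps, uses the Sobolev embedding $W^{2,\frac{d}{2}} \hookrightarrow W^{1,d}$ to get $\int_{\bbR^{d}\setminus \overline{B}} \abs{O_{;x}}^{d}\,\ud x < \infty$, rewrites this as $\int_{1}^{\infty}\nrm{\rd_{\Tht}O(r,\cdot)}_{L^{d}(\bbS^{d-1})}^{d}\,\tfrac{\ud r}{r} < \infty$, and extracts a sequence $R_{n}\to\infty$ along which $\rd_{\Tht}O(R_{n},\cdot)\to 0$ in $L^{d}(\bbS^{d-1})$; by Morrey this forces small oscillation, hence null-homotopy. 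Your parenthetical alternative is essentially this same argument (with $W^{2,\frac{d}{2}}(\bbS^{d-1})$-control in place of $W^{1,d}(\bbS^{d-1})$-control). Your main argument instead pulls back to a punctured ball by the inversion $\iota$, checks that the conformal weights are favorable for $\abs{x}\geq 1$ so that $h = O\circ\iota \in W^{2,\frac{d}{2}}(B')$, invokes removability of a point for $W^{1,\frac{d}{2}}$ (applied successively to $h$ and $\rd h$, both of which lie in $L^{\frac{d}{2}}_{loc}(B_{1})$) to get $h \in \calG^{2,\frac{d}{2}}(B_{1})$, and then quotes the ``only if'' direction of Lemma~\ref{lem:homotopy-0}. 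This has the virtue of recycling the inversion machinery used to prove Theorem~\ref{thm:goodrep} and reducing everything to Lemma~\ref{lem:homotopy-0}, while the paper's argument is more self-contained (avoiding the removable singularity input). One caveat: you should make explicit that the removability step is applied twice (once to $h$ using $h\in L^{\infty}$, once to $\rd h$ using $\rd h\in L^{\frac{d}{2}}(B')$), since the single sentence you wrote compresses a two-step argument; but both steps are standard for $p = \tfrac{d}{2} \leq d$, so no gap.
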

In this lemma, $[O]$ is defined by viewing $O$ as defined on an
annulus $A \subseteq \bbR^{d} \setminus B$.
\begin{proof}
  Without loss of generality, let $U = \bbR^{d} \setminus
  \overline{B}_{1}$. We also observe that it suffices to prove $[O] =
  [const]$.  As before, by Lemma~\ref{lem:part-approx} (more
  precisely, a slight variant for the exterior domain) there exists an
  approximating sequence $O^{n} \in C^{\infty}(U; \G)$, which
  approaches $O$ in the $W^{2, \frac{d}{2}}(U; \bbR^{N \times
    N})$-topology, where $[O]$ is the homotopy class of $O^{n}
  \restriction_{\rd B_{R}}$ for any $\rd B_{R} \subseteq U$, provided
  that $n$ is sufficiently large.

  By Sobolev embedding, note that
  \begin{equation*}
    \int_{U} \abs{O^{n}_{;x}}^{d} \, \ud x < \infty \quad \hbox{ for all } n.
  \end{equation*}
  In the polar coordinates $(r, \Tht) \in (0, \infty) \times
  \bbS^{d-1}$, it follows that
  \begin{equation*}
    \int_{1}^{\infty} \int_{\bbS^{d-1}} \abs{\rd_{\Tht} O^{n} (r, \Tht)}^{d} \, \ud V_{\bbS^{d-1}}(\Tht)  \, \frac{\ud r}{r} < \infty \quad \hbox{ for all } n,
  \end{equation*}
  which implies that $\liminf_{r \to \infty} \nrm{\rd_{\Tht} O^{n} (r,
    \Tht)}_{L^{d}(\bbS^{d})} = 0$.  The desired conclusion $[O] =
  [const]$ now follows.
\end{proof}

We are ready to prove Proposition~\ref{prop:goodrep-homotopy-0}.
\begin{proof}[Proof of Proposition~\ref{prop:goodrep-homotopy-0}]
  By suitably replacing $\chi$, we may assume that $1-\chi$ vanishes
  outside the unit ball $B$.

  \pfstep{Proof of (1)} By equivalence of $(O_{(\infty)}, B_{x})$ and
  $(O'_{(\infty)}, B_{x}')$, there exists $O \in \calG_{loc}^{2,
    \frac{d}{2}}(\bbR^{d})$ such that
  \begin{equation*}
    - O_{(\infty);x } + B_{x} = - Ad(O) O_{(\infty); x}' - O_{;x} + Ad(O) B_{x}'.
  \end{equation*}
  From simple computation, it follows that
  \begin{equation*}
    (O_{(\infty)}^{-1} O O'_{(\infty)})_{;x} = Ad(O_{(\infty)}^{-1} O) B_{x}' - Ad(O_{(\infty)}^{-1}) B_{x},
  \end{equation*}
  which implies that $O_{(\infty)}^{-1} O O'_{(\infty)} \in \calG^{1,
    d}(\bbR^{d} \setminus \overline{B})$. Applying
  Lemmas~\ref{lem:homotopy-0} and ~\ref{lem:homotopy-infty} to $O$ and
  $O_{(\infty)}^{-1} O O'_{(\infty)}$, respectively, it follows that
  \begin{equation*}
    [id] = [O] = [O_{(\infty)}^{-1} O O'_{(\infty)}].
  \end{equation*}
  Therefore, $[O_{(\infty)}] = [O'_{(\infty)}]$, as desired.

  \pfstep{Proof of (2)} Since $[O_{(\infty)}' O_{(\infty)}^{-1}] =
  [id]$, by Lemma~\ref{lem:homotopy-0} there exists a gauge transform
  $P \in \calG^{2, \frac{d}{2}}(2B)$ such that $P = O_{(\infty)}'
  O_{(\infty)}^{-1}$ in $2B \setminus \overline{B}$. Extend $P$ as a
  $0$-homogeneous map outside $2B$; we abuse the notation and refer to
  the extension again by $P$ (thus, $P = O_{(\infty)}'
  O_{(\infty)}^{-1}$ in $\bbR^{d} \setminus \overline{B}$). Apply the
  gauge transform $P$ to $A_{x} = - \chi O_{(\infty); x} + B_{x}$, and
  define $B'_{x}$ by the decomposition $Ad(P) A_{x} - P_{;x} = - \chi
  O'_{(\infty); x} + B'_{x}$. From $P \in \calG^{2, \frac{d}{2}}(2B)$,
  it follows that $B'_{x} \in L^{d} \cap \dot{W}^{1,
    \frac{d}{2}}(2B)$. Moreover, outside $2B$,
  \begin{equation*}
    B'_{x} = Ad(P) B_{x}.
  \end{equation*}
  Observe that $0$-homogeneity of $P$ is sufficient to ensure
  $Ad(P)B_{x} \in L^{d} \cap \dot{W}^{1, \frac{d}{2}}(\bbR^{d}
  \setminus \overline{2B})$. Hence $(O'_{(\infty); x}, B'_{x})$ is
  also a good representation, as desired.
%
%
  \qedhere
\end{proof}

Finally, we prove Proposition~\ref{prop:top-class-outer}.
\begin{proof}[Proof of Proposition~\ref{prop:top-class-outer}]
  By scaling, we may set $R = 1$. Arguing as in the proof of
  Theorem~\ref{thm:goodrep}, we find local gauge potentials
  $A_{(\infty)}$ and $A'_{(\infty)}$ in $\bbR^{d} \setminus
  \overline{B}$ satisfying \eqref{eq:goodrep-extr}. By construction,
  there exist $O, O' \in \calG^{2, \frac{d}{2}}(5B \setminus
  \overline{B})$ such that
  \begin{equation*}
    A = Ad(O) A_{(\infty)} - O_{;x}, \quad 
    A' = Ad(O') A'_{(\infty)} - O'_{;x} \quad \hbox{ in } 5B \setminus \overline{B}.
  \end{equation*}
  From the proof of Theorem~\ref{thm:goodrep}, as well as
  Definition~\ref{def:top-class}, note that the topological classes
  $[A]$ and $[A']$ are determined by the homotopy classes $[O]$ and
  $[O']$, respectively, as defined in Lemma~\ref{lem:homotopy}. In
  particular, it suffices to prove that $O \restriction \rd B_{r}$ and
  $O' \restriction \rd B_{r}$ are homotopic to each other for a
  generic $1 < r < 5$, in the sense of Lemma~\ref{lem:homotopy}.

  Since $\nrm{A - A'}_{L^{d}(5B}) \leq \eps_{\ast}$, the difference
  $O_{;x} - O'_{;x}$ obeys the bound
  \begin{equation*}
    \nrm{O_{;x} - O'_{;x}}_{L^{d}(5B \setminus \overline{B})} \aleq \eps_{\ast},
  \end{equation*}
  which holds independently of possible additional constant gauge
  transformations for $O$ or $O'$.  By the pigeonhole principle, the
  following bound holds some generic $1 < r < 5$:
  \begin{equation*}
    \nrm{O_{;x} - O'_{;x}}_{L^{d}(\rd B_{r})} \aleq \eps_{\ast}.
  \end{equation*}
  After a suitable constant gauge transformation (which does not
  change the homotopy class), it follows that $O$ and $O'$ are close
  in $C^{\frac{1}{d}}(\rd B_{r})$, and therefore belong to the same
  homotopy class. \qedhere
\end{proof}

\section{Excision, gluing and extension of Yang--Mills initial data
  sets} \label{sec:excise} In this section, we provide proofs of the
results stated in Section~\ref{subsec:excise} concerning the
Yang--Mills initial data sets.
\subsection{Solvability results for the inhomogeneous Gauss equation}
In this subsection, we address the question of solvability for
divergence equations
\begin{equation}\label{ext-div} (\covD^{(a)})^{\ell} e_{\ell} = h
\end{equation}
in exterior of a convex domain.

To quantify the constants, we need to quantify the geometry of a
convex domain. Let $K$ be a convex domain with barycenter $x_{K}$. By
convexity, for each $\Tht \in \bbS^{d-1}$, there exists a unique
intersection $f_{K}(\Tht)$ of $\rd K$ and the ray in the direction
$\Tht$ emanating from $x_{K}$. Define the \emph{radius} of $K$ by
$R(K) = \sup_{x, y \in K} \abs{x-y}$, and the \emph{Lipschitz
  constant} of $K$ by
\begin{align}
  L(K) =& \sup_{\Tht, \Tht' \in \bbS^{d-1}} \frac{\abs{f_{K}(\Tht) -
      f_{K}(\Tht')}}{R(K) \abs{\Tht - \Tht'}} \label{eq:lip-K}.
\end{align}
Clearly, $R(K)$ is $1$-homogeneous and $L(K)$ is scaling-invariant, in
the sense that $R(\lmb K) = \lmb R(K)$ and $L(\lmb K) = L(K)$ for
$\lmb > 0$.

We begin with a general solvability result for the usual divergence
equation (i.e., $a = 0$).
\begin{proposition} \label{prop:gauss-zero} For any convex domain $K$,
  there exists a solution operator $T_{0}$ for the equation
  $\rd^{\ell} e_{\ell} = h$ with the following properties:
  \begin{enumerate}
  \item (Boundedness) For $1 < p < \infty$ and $1-\frac{d}{p} < \sgm <
    1 + \frac{d}{p}$,
    \begin{equation} \label{eq:gauss-zero} \nrm{T_{0}
        h}_{\dot{W}^{\sgm, p}} \aleq_{L(K), \sgm, p}
      \nrm{h}_{\dot{W}^{\sgm-1, p}}.
    \end{equation}
  \item (Exterior support) If $h = 0$ in $\lmb K$, then $T_{0} h = 0$
    in $\lmb K$.
  \item (Higher regularity) If $h$ is smooth, so is $T_{0} h$.
  \end{enumerate}
\end{proposition}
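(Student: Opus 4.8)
The plan is to build $T_0$ by an explicit radial/Bogovskii-type formula adapted to the convexity of $K$. Concretely, after translating so that $x_K = 0$, I would define $T_0 h = e$ where $e^\ell(x)$ is obtained by integrating $h$ along rays through the origin with a weight that makes the divergence identity hold exactly. The cleanest realization is the classical Bogovskii construction relative to the star-shaped-with-respect-to-a-ball structure of $K$: fix a smooth bump $\omega$ supported in a small ball $B_\rho(0) \subseteq K$ with $\int \omega = 1$, and set
\begin{equation*}
  (T_0 h)^\ell(x) = \int_{\bbR^d} h(y)\,\frac{(x-y)^\ell}{\abs{x-y}^d} \int_{\abs{x-y}}^{\infty} \omega\!\left(y + s\,\tfrac{x-y}{\abs{x-y}}\right) s^{d-1}\,\ud s \,\ud y .
\end{equation*}
A direct differentiation (integration by parts in $s$) gives $\rd^\ell (T_0 h)_\ell = h - \omega \int h$; to get $\rd^\ell e_\ell = h$ on the nose I would instead argue that it suffices to solve the equation only for $h$ in the appropriate homogeneous space, where $\int h$ is not defined — or, more carefully, absorb the correction term since we only claim boundedness in homogeneous norms $\dot W^{\sgm-1,p}$. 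In fact the correct move is to note that $\omega \int h$ lies in every $\dot W^{\sgm-1,p}$ with norm controlled by $\nrm{h}_{\dot W^{\sgm-1,p}}$ (for $h$ a priori a Schwartz function, by duality against $\dot W^{1-\sgm,p'}$), so replacing $h$ by $h - \omega\int h$ changes nothing at the level of the final estimate and we may as well assume the correction vanishes; alternatively, one iterates once more to kill it exactly. I would present the construction for smooth compactly supported $h$ and then extend by density, which simultaneously handles claim (3).

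The three claimed properties then come out as follows. For \textbf{boundedness} (1): the kernel above is a Calderón--Zygmund-type kernel of convolution-like structure in the principal term $(x-y)^\ell/\abs{x-y}^d$ (which is exactly $\rd^\ell$ of the Newtonian potential, i.e. a Riesz-type operator gaining one derivative), plus a better-behaved remainder coming from the $\omega$-factor; standard singular-integral theory on $\dot W^{\sgm-1,p} \to \dot W^{\sgm,p}$ for $1<p<\infty$ applies, and the dependence of the constant on $K$ enters only through $\mathrm{diam}(K)/\rho$ and the geometry of the cone of rays, which is controlled by $L(K)$ — this is where the Lipschitz constant \eqref{eq:lip-K} is used, to choose $\rho \simeq R(K)/L(K)$ uniformly. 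The restriction $1-\tfrac dp < \sgm < 1+\tfrac dp$ is the natural range in which the Riesz-potential / singular-integral mapping properties hold at this homogeneity. For the \textbf{exterior support} property (2): if $h$ vanishes on $\lmb K$, then for $x \in \lmb K$ and any $y \in \mathrm{supp}\,h$, convexity of $\lmb K$ ensures the ray from $y$ through $x$ exits $\lmb K$ immediately past $x$, so the inner $s$-integral $\int_{\abs{x-y}}^\infty \omega(\cdots)\,s^{d-1}\ud s$ is supported where $y + s\theta \in B_\rho \subseteq \lmb K$ — and the segment from $y$ to that point must pass through $x$; a careful geometric check shows the integrand vanishes, hence $(T_0 h)(x) = 0$. (This is the one genuinely $K$-dependent, convexity-dependent step, and is why we need $K$ convex rather than merely Lipschitz.) \textbf{Higher regularity} (3) is immediate: if $h \in C^\infty$ then differentiating under the integral sign any number of times is justified and produces the same kernel type, so $T_0 h \in C^\infty$.

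The \textbf{main obstacle} I anticipate is not the boundedness — that is textbook singular-integral theory — but making the \emph{scaling-invariant} dependence of the constant on $K$ precise and correctly pinned to $L(K)$ rather than to, say, $R(K)/\mathrm{dist}(x_K,\rd K)$. One must show that the inradius-type quantity governing $\rho$ is comparable to $R(K)/L(K)$ for convex $K$, and that the geometry of the family of rays (uniform transversality to $\rd K$, needed both for the support property and for uniform kernel bounds) is quantified solely by $L(K)$. Once the correct scale $\rho \simeq R(K)/L(K)$ is identified, scaling-covariance $T_{0,\lmb K} = $ (rescaled $T_{0,K}$) is automatic from the $1$-homogeneity of $R$ and scale-invariance of $L$, giving property (2) with $\lmb K$ for all $\lmb>0$ at once. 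I would therefore allocate most of the write-up to this geometric lemma about convex domains and keep the harmonic-analysis part brief, citing standard references for the Bogovskii operator and Riesz transforms.
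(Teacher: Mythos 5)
Your construction is a Bogovskii operator, and the key issue is that the Bogovskii operator has the \emph{wrong} support property for claim~(2). The kernel of your $T_0$ is nonzero at $(x,y)$ precisely when $x$ lies on the ray from $y$ into the ball $B_\rho$, at parameter $\leq$ the exit time from $B_\rho$; equivalently,
\[
\supp (T_0 h) \ \subseteq\ \bigcup_{y \in \supp h,\ z \in B_\rho} [y,z],
\]
roughly the convex hull of $\supp h$ and $B_\rho$. This is the classical Bogovskii support statement ($T_0 h$ stays inside a domain star-shaped with respect to $B_\rho$, when $h$ does), and it is the \emph{opposite} of what Proposition~\ref{prop:gauss-zero} requires. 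For a concrete counterexample to your property~(2): take $h$ a bump near a point $y_0 \notin \lmb K$, and let $x$ be any point on the segment from $y_0$ into $B_\rho$ lying strictly between $y_0$ and $B_\rho$ — e.g. $x$ just inside $\rd(\lmb K)$ on the $y_0$-side. Then $(x-y_0)/\abs{x-y_0}$ points at $B_\rho$, and the inner $s$-integral picks up the mass of $\omega$ on the ray past $x$, so $T_0 h(x) \neq 0$ even though $x \in \lmb K$. Your claim that the ray from $y \in \supp h$ through $x \in \lmb K$ ``exits $\lmb K$ immediately past $x$'' is simply false when $x$ is on the near side of $\lmb K$ (towards $y$): the ray continues deep into $\lmb K$, and in particular can pass through $B_\rho$. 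This is not a repairable detail — the whole geometry of Bogovskii (integrate from $y$ \emph{towards} an interior ball) is the wrong one for an exterior support property. The paper instead starts from one-sided primitives $\tilde T_\omega h(x) = \int_{-\infty}^{0} \omega\, h(x+s\omega)\,\ud s$ (exact solutions with kernel supported on a single ray, no normalizing bump), mollifies these in the angle to get smooth homogeneous kernels $T_\omega$ supported in narrow cones, and then sums against a \emph{conical} partition of unity $\sum\chi_\omega = 1$ centered at $x_K$: $T_0 = \sum_\omega T_\omega\chi_\omega$. Because each piece effectively integrates outward along rays from $x_K$ within a cone of aperture $\ll 1/L(K)$, convexity forces the ``upstream'' point to lie outside $\lmb K$ whenever $x$ does, giving property~(2), and Hardy's inequality (this is where $\abs{\sgm-1} < d/p$ enters) controls the multiplication by the $0$-homogeneous cutoffs $\chi_\omega$.

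There is a second gap that would already sink the argument even if the support property held: your fix for the compatibility condition $\int h = 0$. You claim $\omega\int h \in \dot W^{\sgm-1,p}$ with norm $\aleq \nrm{h}_{\dot W^{\sgm-1,p}}$, but this is false in general. Taking $h = \sum_{k=1}^N h_0(\cdot - k R e_1)$ for a fixed bump $h_0$ and $R$ large gives $\int h = N\int h_0$ while $\nrm{h}_{\dot W^{\sgm-1,p}} \aeq N^{1/p}\nrm{h_0}_{\dot W^{\sgm-1,p}}$, so $\abs{\int h}/\nrm{h}_{\dot W^{\sgm-1,p}} \to \infty$ for $p>1$. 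Thus the correction term cannot be absorbed, nor can you ``iterate once more to kill it exactly'' while preserving the homogeneous bound. The paper's one-sided ray operators $\tilde T_\omega$ are exact right inverses of $\rd^\ell$ with no compatibility condition, which is precisely why that construction sidesteps this issue entirely.
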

\begin{proof}
  In the case $K$ is a ball, this was considered in our prior work
  \cite{OT1}, where $T_{0}$ is constructed as a pseudodifferential
  operator of order $-1$.  Here we will use a slightly different but
  closely related solution operator.

  First, we claim that given a unit vector $\omega \in \bbS^{d-1}$, we
  can construct an exact solution operator $T_\omega$ with smooth
  homogeneous symbol of order $1$, and kernel supported in a small
  conic neighborhood of $\omega$. Our starting point is the simple
  observation that the following operator solves the divergence
  equation (say for $h \in C^{\infty}_{c}(\bbR^{d})$):
  \begin{equation*}
    \tilde{T}_{e_{1}} h(x) = \int_{-\infty}^{x^{1}} e_{1} h(y^{1}, x^{2}, \ldots, x^{d}) \, \ud y^{1},
  \end{equation*}
  where $e_{1}$ is the unit vector $(1, 0, \ldots, 0)$. This operator
  is translation-invariant with kernel
  \begin{equation*}
    e_{1} 1_{(0, \infty)}(x^{1}) \dlt_{0}(x^{2}) \cdots \dlt_{0}(x^{d}), 
  \end{equation*}
  which is supported on the ray $\set{r e_{1} : r > 0}$. By rotation,
  for any unit vector $\omg \in \bbS^{d-1}$, we obtain an analogous
  translation-invariant solution operator $\tilde{T}_{\omg}$ whose
  kernel is supported on the ray $\set{r \omg : r > 0}$. Moreover,
  given a smooth function $\tilde{\chi}_{\omg}(\omg')$ on $\bbS^{d-1}$
  supported on a neighborhood $\hat{C}_{\omg} \subseteq \bbS^{d-1}$,
  the smooth average
  \begin{equation*}
    T_{\omg} h = \int \tilde{T}_{\omg'} (h) \tilde{\chi}_{\omg}(\omg') \, \ud \omg'
  \end{equation*}
  is a translation-invariant solution operator, whose kernel is smooth
  outside the origin, homogeneous of degree $-d+1$ and supported in
  the conic neighborhood $C_{\omg} = \set{x \in \bbR^{d}:
    \frac{x}{\abs{x}} \in \hat{C}_{\omg}}$, as desired.

  We now turn to the issue of insuring the exterior support
  property. If one were to work with the operators $\tilde{T}_{\omg}$,
  then it is easy to produce such an solution operator $T$: We simply
  decompose the input into each angle $\omg$ and apply
  $\tilde{T}_{\omg}$, i.e., $T = \int_{\bbS^{d-1}} \tilde{T}_{\omg}
  \dlt_{\omg}(\omg') \, \ud \omg'$. Then (formally) the exterior
  support property holds for any convex set $K$.

  To use the operators $T_{\omg}$ with ``fattened'' kernel, we use a
  uniform conical partition of unity in the physical space $1 = \sum
  \chi_\omega$ (centered at the origin) and define our solution
  operator $T_{0}$ to be
  \[
  T_{0} = \sum T_\omega \chi_\omega.
  \]
  Making the angular support of each $\chi_{\omega}$ sufficiently
  narrow (which, of course, increases the number of partitions)
  depending on $L(K)$, we may insure the exterior support property of
  $T$.

  Multiplication by each $\chi_{\omg}$ is bounded on $\dot{W}^{\sgm-1,
    p}$ thanks to Hardy's inequality, which holds since $\abs{\sgm-1}
  < \frac{d}{p}$; hence \eqref{eq:gauss-zero} follows. The higher
  regularity property follows by differentiation. \qedhere
\end{proof}

Next, we generalize Proposition~\ref{prop:gauss-zero} to the
inhomogeneous covariant Gauss equation \eqref{ext-div} when
$\nrm{a}_{\dot{H}^{\frac{d-2}{2}}}$ is small by a perturbative
argument.
\begin{proposition} \label{prop:gauss-small} Let $\covD = \ud + a \in
  \calA^{\frac{d-2}{2}, 2}(\bbR^{d})$ satisfy
  $\nrm{a}_{\dot{H}^{\frac{d-2}{2}}} \leq \eps_{\ast}$. For any convex
  domain $K$, there exists a solution operator $T_{a}$ for the
  equation $\covD^{\ell} e_{\ell} = h$ with the following properties:
  \begin{enumerate}
  \item (Boundedness) For $2 \leq p < \infty$ and $1-\frac{d}{p} <
    \sgm < \frac{d}{2}$,
    \begin{equation} \label{eq:gauss-small} \nrm{T_{a}
        h}_{\dot{W}^{\sgm, p}} \aleq_{L(K), \sgm, p}
      \nrm{h}_{\dot{W}^{\sgm-1, p}}.
    \end{equation}
  \item (Exterior support) If $h = 0$ in $\lmb K$, then $T_{a} h = 0$
    in $\lmb K$.
  \item (Higher regularity) If $a$ and $h$ is smooth, so is $T_{a} h$.
  \end{enumerate}
\end{proposition}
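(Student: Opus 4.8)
The plan is to regard \eqref{ext-div} as a perturbation of the flat divergence equation $\rd^{\ell} e_{\ell} = h$, which is solved by the operator $T_{0}$ of Proposition~\ref{prop:gauss-zero}, and to invert the perturbation by a Neumann series. Write $M_{a}$ for the order-zero operator sending a $1$-form $e$ to the $\g$-valued function $M_{a} e = [a^{\ell}, e_{\ell}]$, so that \eqref{ext-div} reads $(\mathrm{div} + M_{a}) e = h$. Since $\mathrm{div} \circ T_{0} = \mathrm{Id}$, one has $(\mathrm{div} + M_{a}) T_{0} = \mathrm{Id} + M_{a} T_{0}$, so the ansatz $e = T_{0} f$ converts \eqref{ext-div} into $(\mathrm{Id} + M_{a} T_{0}) f = h$. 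Accordingly I would set
\[
  T_{a} \;=\; T_{0} \, (\mathrm{Id} + M_{a} T_{0})^{-1} \;=\; (\mathrm{Id} + T_{0} M_{a})^{-1} T_{0} \;=\; \sum_{k \geq 0} (- T_{0} M_{a})^{k} T_{0},
\]
the inverse being given by the Neumann series on any space where $\nrm{M_{a} T_{0}} < 1$. From the operator identity above, $(\mathrm{div} + M_{a}) T_{a} = \mathrm{Id}$ wherever $T_{a}$ is defined, so $T_{a}$ is automatically a solution operator; a density argument shows it is independent of $(\sgm, p)$, which makes the statements (2)--(3) unambiguous.

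The analytic core is thus the product (commutator) estimate
\[
  \nrm{[a^{\ell}, e_{\ell}]}_{\dot{W}^{\sgm-1, p}} \;\aleq\; \nrm{a}_{\dot{H}^{\frac{d-2}{2}}} \, \nrm{e}_{\dot{W}^{\sgm, p}}, \qquad 2 \leq p < \infty, \ \ 1 - \tfrac{d}{p} < \sgm < \tfrac{d}{2},
\]
which I would prove using the embedding $\dot{H}^{\frac{d-2}{2}}(\bbR^{d}) \hookrightarrow L^{d}(\bbR^{d})$ together with a fractional Leibniz rule placing the derivative on $e$ and, in the regime $\sgm \geq \tfrac{d}{p}$, a duality argument (pairing against $\dot{W}^{1-\sgm, p'}$ and moving a derivative onto the test function). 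Here the ceiling $\sgm < \tfrac{d}{2}$ is exactly the condition under which $a$ may be measured in $\dot{W}^{\sgm-1, d/\sgm}$ (into which $\dot{H}^{\frac{d-2}{2}}$ embeds, as $d/\sgm \geq 2$), and $p \geq 2$ is used for the H\"older/duality bookkeeping. Combined with $\nrm{T_{0} f}_{\dot{W}^{\sgm, p}} \aleq_{L(K), \sgm, p} \nrm{f}_{\dot{W}^{\sgm-1, p}}$ from Proposition~\ref{prop:gauss-zero}, this gives $\nrm{M_{a} T_{0}}_{\dot{W}^{\sgm-1,p} \to \dot{W}^{\sgm-1,p}} \aleq_{L(K), \sgm, p} \nrm{a}_{\dot{H}^{\frac{d-2}{2}}}$; for $\eps_{\ast}$ small (relative to $L(K), \sgm, p$) the Neumann series converges geometrically, yielding $\nrm{T_{a} h}_{\dot{W}^{\sgm, p}} \aleq_{L(K), \sgm, p} \nrm{h}_{\dot{W}^{\sgm-1, p}}$, i.e.\ \eqref{eq:gauss-small}. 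For those $\sgm$ lying above the range $1 + \tfrac{d}{p}$ in which $T_{0}$ is stated to be bounded, I would either invoke the finer, genuinely order-$(-1)$ construction of $T_{0}$ as in \cite{OT1}, or bootstrap the regularity of $e$ from a lower reference pair.

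For the exterior support property (2), observe that each factor in $(- T_{0} M_{a})^{k} T_{0}$ preserves vanishing in $\lmb K$: $T_{0}$ does by Proposition~\ref{prop:gauss-zero}(2), and $M_{a}$ does trivially since $\supp(M_{a} u) \subseteq \supp u$; hence $T_{a} h = 0$ in $\lmb K$ whenever $h = 0$ in $\lmb K$. For higher regularity (3), use that $e = T_{a} h$ satisfies $e = T_{0}(h - [a^{\ell}, e_{\ell}])$: if $a, h$ are smooth and $e \in \dot{W}^{s, p}_{loc}$, then $h - [a^{\ell}, e_{\ell}] \in \dot{W}^{s, p}_{loc}$, and since $T_{0}$ gains a derivative on such inputs and preserves $C^{\infty}$ (Proposition~\ref{prop:gauss-zero}(3)), $e \in \dot{W}^{s+1, p}_{loc}$; iterating from the reference regularity gives $e \in C^{\infty}$. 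The step I expect to be the main obstacle is the critical-scaling product estimate, especially near the endpoints $p = 2$ and $\sgm \uparrow \tfrac{d}{2}$ and its compatibility with the mapping range of $T_{0}$; once it is available, the perturbative inversion, the support property, and the regularity bootstrap are all soft — and its borderline nature is precisely why, unlike Proposition~\ref{prop:gauss-zero}, a smallness assumption on $\nrm{a}_{\dot{H}^{\frac{d-2}{2}}}$ is imposed (cf.\ Remark~\ref{rem:gauss-low-d}).
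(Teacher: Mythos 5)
Your proposal follows essentially the same route as the paper: a perturbative inversion $T_a = (\mathrm{Id} + T_0 M_a)^{-1} T_0$ of the flat solution operator $T_0$ from Proposition~\ref{prop:gauss-zero}, powered by the same scale-invariant product estimate $\nrm{[a,e]}_{\dot W^{\sgm-1,p}} \aleq \nrm{a}_{\dot H^{(d-2)/2}}\nrm{e}_{\dot W^{\sgm,p}}$, with the exterior support and higher-regularity statements inherited termwise and by iteration; the Neumann series is just the explicit form of the fixed-point iteration $e = T_0(h - [a^\ell,e_\ell])$ that the paper writes down. Your side remark that the stated range of $T_0$ ($\sgm < 1 + \tfrac{d}{p}$, forced by Hardy for the $0$-homogeneous cutoffs $\chi_\omega$) must be reconciled with the claimed range $\sgm < \tfrac{d}{2}$ for large $p$ is a genuine subtlety that the paper's proof passes over without comment, and your proposed fix (either bootstrap from a lower reference pair using the higher-regularity mechanism, or use a true order-$(-1)$ construction of $T_0$) is the right repair.
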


\begin{proof}
  We proceed in two steps.

  \pfstep{Step~1: Definition of $T_{a}$} To define $T_{a}$, we solve
  the fixed point problem is
  \[
  e = T( h - [a^{\ell},e_{\ell}]).
  \]
  Let us abbreviate $[a^{\ell}, e_{\ell}] = ad(a) e$. Under the
  conditions for $p$ and $\sgm$, multiplication by $a$ takes
  $\dot{W}^{\sgm, p}$ into $\dot{W}^{\sgm-1, p}$ (this may be proved
  by the usual Littlewood--Paley trichotomy), so that we can estimate
  \[
  \| T ad(a) \|_{\dot{W}^{\sgm, p} \to \dot{W}^{\sgm, p}} \lesssim
  \|a\|_{\dot{H}^{\frac{d-2}{2}}}.
  \]
  Therefore, for $\nrm{a}_{\dot{H}^{\frac{d-2}{2}}}$ sufficiently
  small, we find $T_{a}$ which clearly satisfies the boundedness and
  exterior support properties.

  \pfstep{Step~2: Higher regularity} Here we assume that $\rd^{(m)} a
  \in \dot{H}^{\frac{d-2}{2}}$ and $\rd^{(m)} h \in \dot{W}^{\sgm-1,
    p}$ for $0 \leq m \leq n$, then we prove that $\rd^{(n)} e \in
  \dot{W}^{\sgm, p}$. We consider the case $n=1$; higher values of $n$
  are dealt with in a similar manner. Differentiating our fixed point
  problem we get
  \begin{equation} \label{eq:T-diff} \rd e = T( \rd h - [a^{\ell},\rd
    e_{\ell}]) - T( [\rd a^{\ell},e_{\ell}]) + [\rd, T] ( h -
    [a^{\ell},e_{\ell}])
  \end{equation}
  where we can estimate
  \[
  \| T( [\rd a,e]) + [T,\rd] ( h - [a,e])\|_{\dot{W}^{\sgm, p}}
  \lesssim \| e\|_{\dot{W}^{\sgm, p}} + \|h\|_{\dot{W}^{\sgm, p}}
  \]
  with an implicit constant depending on the $\dot{H}^{\frac{d-2}{2}}$
  norms of $\rd a$ and $a$. Then we have a fixed point problem for
  $\rd e$, which is solved in $\dot{W}^{\sgm, p}$ to obtain the bound
  \[
  \| \rd e\|_{\dot{W}^{\sgm, p}} \lesssim \| h \|_{\dot{W}^{\sgm, p}
    \cap \dot{W}^{\sgm-1, p}}
  \]
  One minor issue here is that we do not a-priori know that $ \rd e
  \in \dot{W}^{\sgm, p}$. But this can be easily circumvented by
  replacing the gradient with the appropriated divided
  difference. \qedhere
\end{proof}

Finally, we prove Theorem~\ref{thm:gauss-0}, where the smallness
assumption for $a$ is removed. For simplicity, we restrict to the
critical space $h \in \dot{H}^{\frac{d-6}{2}}$ where $d \geq 4$, which
suffices for our main applications.
\begin{proof}[Proof of Theorem~\ref{thm:gauss-0}]
  We work from the case when $h$ is not differentiated (i.e., $h \in
  L^{\frac{d}{2}}$), and gradually move up to higher regularity
  spaces. In the proof, we omit the dependence of constants on $L(K)$.

  \pfstep{Step~1: Construction of $T_{a}: \dot{W}^{-1, p} \to L^{p}$
    $(1 < p < d)$} We compensate for the lack of smallness of $a$ by
  adding a weight $w = 2^{-\phi}$ where $\phi$ is a smooth bounded
  increasing radial function. The goal is to insure that
  \[
  \| T ad(a) \|_{L^{p}_{w} \to L^p_w} \ll 1
  \]
  We denote
  \[
  A_k = \{ x \in \bbR^{d}: k \leq \phi(x) \leq k+1\} .
  \]
  Then for $j \geq k$, by H\"older's inequality, the embedding $L^{q}
  \hookrightarrow \dot{W}^{-1, p}$ (where $q^{-1} = p^{-1} + d^{-1}$)
  and Proposition~\ref{prop:gauss-zero} we have
  \[
  \| 1_{A_j} T ad(a) 1_{A_k} \|_{L^{p}_{w} \to L^{p}_w} \lesssim
  2^{k-j} \|a\|_{L^{d}(A_k)} .
  \]
  On the other hand, the LHS vanishes when $j < k$ by the exterior
  support property. After summation, we obtain
  \[
  \| T ad(a) \|_{L^{p}_{w} \to L^{p}_w} \lesssim \sup_k \|
  a\|_{L^{d}(A_k)}.
  \]
  Thus to insure the desired smallness, it suffices to choose $w$ so
  that the RHS is small, which is easily done.

  \pfstep{Step~2: Boundedness into $\dot{H}^{\frac{d-4}{2}}$} Let $n$
  be the least integer greater than or equal to $\frac{d-4}{2}$. The
  strategy is to commute $\rd$ for up to order $n$ (as in Step~2 in
  the proof of Proposition~\ref{prop:gauss-small}), and inductively
  prove boundedness of $T_{a}: \dot{W}^{m-1, \frac{d}{m+2}} \to
  \dot{W}^{m, \frac{d}{m+2}}$ for $m=1, \ldots, n$; this would
  directly imply \eqref{eq:gauss-0-bnd} for even $d$, and after
  interpolation for odd $d$.

  For simplicity, as in Step~2 of the proof of
  Proposition~\ref{prop:gauss-small}, we only consider the case $n=1$;
  the general case is dealt with by induction in a similar manner. Our
  starting point is \eqref{eq:T-diff}:
  \begin{equation} \label{eq:T-diff'} \rd e = T( \rd h - [a^{\ell},\rd
    e_{\ell}]) - T( [\rd a^{\ell},e_{\ell}]) + [\rd, T] ( h -
    [a^{\ell},e_{\ell}]).
  \end{equation}
  The strategy is to use $\nrm{e}_{L^{d}}$, which is already under
  control, to estimate the last two terms, and use an iteration
  argument in $L^{p}_{w}$ as in Step~1 with $p = \frac{d}{3}$ to
  estimate\footnote{As in Step~2 of
    Proposition~\ref{prop:gauss-small}, to be rigorous one should work
    with divided differences, but the argument is essentially the
    same.} $\rd e$. By Proposition~\ref{prop:gauss-zero}, Sobolev and
  H\"older, we have
  \begin{equation*}
    \nrm{T([\rd a^{\ell}, e_{\ell}])}_{L^{\frac{d}{3}}} 
    \aleq \nrm{\rd a}_{L^{\frac{d}{2}}} \nrm{e}_{L^{\frac{d}{2}}}
    \aleq \nrm{a}_{\dot{H}^{\frac{d-2}{2}}} \nrm{e}_{L^{\frac{d}{2}}}.
  \end{equation*}
%
%
  On the other hand, note that $[T, \rd] = \sum_{\omg} T_{\omg} \rd
  \chi_{\omg}$ (cf. proof of Proposition~\ref{prop:gauss-zero}), where
  $\chi_{\omg}$ is $0$-homogeneous. Thus by $T_{\omg} : \dot{W}^{-1,
    p} \to L^{p}$, Hardy, Sobolev and H\"older,
  \begin{equation*}
    \nrm{[\rd, T](h - [a^{\ell}, e_{\ell}])}_{L^{\frac{d}{3}}} 
    \aleq \nrm{h - [a^{\ell}, e_{\ell}]}_{L^{\frac{d}{3}}}
    \aleq \nrm{h}_{L^{\frac{d}{3}}} + \nrm{a}_{\dot{H}^{\frac{d-2}{2}}}\nrm{e}_{L^{\frac{d}{2}}}.
  \end{equation*}
  By Step~1 with $p = \frac{d}{2}$, we have $\nrm{e}_{L^{\frac{d}{2}}}
  \aleq \nrm{h}_{\dot{W}^{-1, \frac{d}{2}}} \aleq
  \nrm{h}_{L^{\frac{d}{3}}}$. Then finding the fixed point $\rd e$ of
  \eqref{eq:T-diff'} as in Step~1, the desired estimate $\nrm{\rd
    e}_{L^{\frac{d}{3}}} \aleq_{\nrm{a}_{\dot{H}^{\frac{d-2}{2}}}}
  \nrm{h}_{L^{\frac{d}{3}}}$ follows.

  \pfstep{Step~3: Higher regularity} This step is analogous to Step~2
  of Proposition~\ref{prop:gauss-small}, where the iteration is done
  in $L^{p}_{w}$. \qedhere
\end{proof}

\subsection{Initial data surgery}

Now we explore consequences of the previous result in terms of
excising and extending initial data sets. The aim of this subsection
is to prove Theorems~\ref{thm:ext-id} and \ref{thm:excise}.
 
Before we turn to the proofs, a few remarks about Sobolev extension
are in order. For any domain $K$ with locally Lipschitz boundary,
Stein's extension theorem \cite[\S VI.3]{MR0290095} says that there
exists a universal linear extension operator $\mathfrak{E}$ for all
Sobolev spaces $W^{\sgm, p}(K) \to W^{\sgm, p}(\bbR^{d})$. When $K$ is
convex with $R(K) = 1$ (which we may insure by scaling), it can be
checked that the constant depends only on $\sgm, p$ and the Lipschitz
constant $L(K)$. In particular, we have
\begin{equation} \label{eq:ext-K} \nrm{\mathfrak{E} u}_{\dot{W}^{\sgm,
      p}} \aleq_{L(K), p, \sgm} \nrm{u}_{\dot{W}^{\sgm, p}(K)} \quad
  \hbox{ where } \sgm \geq 0, \ q, p \in (1, \infty), \ \frac{d}{p} -
  \sgm = \frac{d}{q}.
\end{equation}
The same bound holds for general $R(K)$ by scaling-invariance of the
both sides. Similarly, for an annular region $4K \setminus
\overline{K}$ (with general $R(K)$), there exists a universal linear
extension operator $\mathfrak{E}$ such that
\begin{equation} \label{eq:ext-ann-K} \nrm{\mathfrak{E}
    u}_{\dot{W}^{\sgm, p}} \aleq_{L(K), p, \sgm}
  \nrm{u}_{\dot{W}^{\sgm, p}(4 K \setminus \overline{K})} \quad \hbox{
    where } \sgm \geq 0, \ q, p \in (1, \infty), \ \frac{d}{p} - \sgm
  = \frac{d}{q}.
\end{equation}

Now we prove Theorem~\ref{thm:ext-id}, concerning truncation of
Yang--Mills initial data sets.
\begin{proof}[Proof of Theorem~\ref{thm:ext-id}]
  Let $(a, e)$ be the given $\calH^{\frac{d-2}{2}}$ Yang--Mills
  initial data set on $2K \setminus \overline{K}$. In this proof, we
  use the shorthands
  \begin{equation*}
    A = \nrm{a}_{\dot{H}^{\frac{d-2}{2}}(2K \setminus \overline{K})}, \quad
    E = \nrm{e}_{\dot{H}^{\frac{d-4}{2}}(2K \setminus \overline{K})}.
  \end{equation*}

  First, we use the universal extension $\mathfrak{E}$ to extend $a,
  e$ to $\ba, \be'$ on $\bbR^{d}$, respectively. Clearly, restriction
  of $\ba$ satisfies \eqref{eq:ext-id-a}. On the other hand, $\be'$
  obeys a favorable bound, but violates the Gauss equation outside $2K
  \setminus \overline{K}$. More precisely,
  \begin{equation*}
    (\covD^{(\ba)})^{\ell} \be'_{\ell} = h
  \end{equation*}
  where $h = 0$ in $2K \setminus \overline{K}$ since $(\ba, \be) = (a,
  e)$ there. Let $\chi_{out}$ be a smooth cutoff which equals zero in
  $K$ and $1$ outside $2K$, then let $h_{out} = \chi_{out} h$. Note
  that
  \begin{equation*}
    \nrm{h_{out}}_{\dot{H}^{\frac{d-6}{2}}} \aleq \nrm{\rd \be}_{\dot{H}^{\frac{d-6}{2}}}+\nrm{\ba}_{\dot{H}^{\frac{d-2}{2}}}\nrm{\be}_{\dot{H}^{\frac{d-4}{2}}} \aleq_{A} E.
  \end{equation*}
  Hence, by Theorem~\ref{thm:gauss-0}, we find $d_{\ell}$ such that
  $(\covD^{(\ba)})^{\ell} d_{\ell} = - h_{out}$, $d = 0$ in $2K$ and
  $\nrm{d}_{\dot{H}^{\frac{d-4}{2}}} \aleq_{A} E$. The desired $\be$
  is then given by the restriction of $\be' + d$ to $\bbR^{d}
  \setminus \overline{K}$.

  To conclude the proof, note that the higher regularity and local
  Lipschitz properties are obvious by construction. Finally,
  equivariance under constant gauge transformations can be insured by
  fixing a particular construction, conjugating by elements of $\G$,
  and then averaging over $\G$.  \qedhere
\end{proof}

Combined with Uhlenbeck's lemma (Theorem~\ref{thm:uhlenbeck-ball}), we
may now prove the final excision-and-extension result
(Theorem~\ref{thm:excise}).

\begin{proof}[Proof of Theorem~\ref{thm:excise}]
  We only treat the case when $d \geq 4$ is even and $X = B_{R}$. The
  other cases are simpler and thus are left to the reader (when $d$ is
  odd, Uhlenbeck's lemma is not needed, and when $X = \bbR^{d}$, the
  extension procedure is unnecessary).

  \pfstep{Step~1: Application of Uhlenbeck's lemma} As in the proof of
  Proposition~\ref{prop:goodrep-ball-key}, we first set $a_{r} = 0$ by
  Lemma~\ref{lem:Ar=0}, and extend $a$ outside $B_{R}$ by
  Lemma~\ref{lem:ext-simple}. Then the $L^{d}$-concentration radius of
  $a$ does not vary much, and Uhlenbeck's lemma
  (Theorem~\ref{thm:uhlenbeck-ball}) is applicable on any ball $B_{2
    r}(x)$ with $r < 10 \rc$ and $x \in B_{R}$. We claim that
  \begin{equation*}
    \nrm{\ta}_{\dot{H}^{\frac{d-2}{2}}(B_{r}(x) \cap X)} \aleq \nrm{\covD^{(\frac{d-4}{2})} F[a]}_{L^{2}(B_{r}(x) \cap X)} + \nrm{F[a]}_{L^{\frac{d}{2}}(B_{r}(x) \cap X)}.
  \end{equation*}
  For interior balls (i.e., $B_{2r}(x) \cap \rd B = \0$), this bound
  follows directly from Lemma~\ref{lem:div-curl-A-intr}. For boundary
  balls (i.e., $B_{2r}(x) \cap \rd B \neq \0$), we obtain angular
  regularity (with respect to the center of $B_{R}$) by
  Lemma~\ref{lem:div-curl-A-tang}, then radial regularity by
  \eqref{eq:div-curl-rad}. We note that the implicit constant is
  controlled thanks to the smallness of $\eps_{\ast}$.

  Next, by the formula $O_{;x} = Ad(O) a - \ta)$, we obtain
  \eqref{eq:excise-O}. Then it also follows that
  \begin{equation*}
    \nrm{\te}_{\dot{H}^{\frac{d-4}{2}}(B_{r}(x) \cap X)} \aleq \nrm{\covD^{(\frac{d-4}{2})} e}_{L^{2}(B_{r}(x) \cap X)} + \nrm{e}_{L^{\frac{d}{2}}(B_{r}(x) \cap X)}.
  \end{equation*}

  \pfstep{Step~2: Application of Theorem~\ref{thm:ext-id}} We apply
  Theorem~\ref{thm:ext-id} to $(\ta, \te)$ and obtain an extended
  Yang--Mills initial data set outside the convex domain $K = B_{r}(x)
  \cap B_{R}$, which we still denote by $(\ta, \te)$. We note for
  domains of the form $K = B_{r}(x) \cap B_{R}$, we have the universal
  bound $R(K) \aeq r$ and $L(K) \aeq 1$. Therefore, by
  \eqref{eq:ext-id-a}, \eqref{eq:ext-id-e}, and the preceding bounds
  for $(\ta, \te)$ on $K$, we obtain \eqref{eq:excise-a}.

  \pfstep{Step~3: Completion of proof} It remains to prove
  Theorem~\ref{thm:excise}.(2). We begin by clarifying the ambiguity
  of the construction so far.  In Step~1, the triple $(\ta, \te, O)
  \restriction_{K}$ is determined up to a constant gauge
  transformation, as in Uhlenbeck's lemma
  (Theorem~\ref{thm:uhlenbeck-ball}). Since Theorem~\ref{thm:ext-id}
  is equivariant under such operations, the corresponding extensions
  $(\ta, \te)$ are also constant gauge transformations of each other.

  As a result, in order to prove (2), it suffices to show that we can
  enforce strong convergence of $(\ta^{n}, \te^{n}, O^{n})$ to $(\ta,
  \te, O)$ in $H^{\frac{d-2}{2}} \times H^{\frac{d-4}{2}} \times
  H^{\frac{d}{2}}(K)$, after passing to a subsequence and conjugating
  the sequence with a constant gauge transformation. Proceeding as in
  Theorem~\ref{thm:uhlenbeck-ball}.(2), we may first insure
  convergence of a suitable subsequence up to a constant gauge
  transformation in $W^{1, \frac{d}{2}} \times L^{\frac{d}{2}} \times
  W^{2, \frac{d}{2}}(K)$. Then by Remark~\ref{rem:uhlenbeck-cont},
  strong convergence in the desired topology (of the same sequence)
  may be proved. We omit the straightforward details. \qedhere
\end{proof}

\section{The local theory for the hyperbolic Yang--Mills
  equation} \label{sec:local} In this section, we consider the
local-in-time theory for the hyperbolic Yang--Mills equation for data
in an arbitrary topological class.

\subsection{Gauge equivalent classes of connections}
We start by verifying that the gauge-equivalent class of
$\calH^{\frac{d-2}{2}}$ connections is closed, as asserted in
Section~\ref{subsec:local}.
\begin{proposition}\label{p:closed-class}
  Let $A$ be an $\calH^{\frac{d-2}{2}}_{loc}$ connection in $\calO
  \subseteq \bbR^{1+d}$. Then $[A]$ is closed in the corresponding
  topology.
\end{proposition}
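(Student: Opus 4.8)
The plan is to run a critical-regularity compactness argument on the gauge transformations relating the members of the sequence to $A$. Let $A^{(n)}\in[A]$ be $\calH^{\frac{d-2}{2}}_{loc}$ connections with $A^{(n)}\to A^{\infty}$ in $C_{t}H^{\frac{d-2}{2}}_{loc}\times C_{t}H^{\frac{d-4}{2}}_{loc}(\calO)$; we must produce an admissible gauge transformation $O^{\infty}$ in $\calO$ with $A^{\infty}_{j}=Ad(O^{\infty})A_{j}-O^{\infty}_{;j}$. By Definition~\ref{def:ym-sol-rough-gt} there are admissible $O^{(n)}$, i.e.\ $\G$-valued with $O^{(n)}_{;t,x}\in C_{t}H^{\frac{d-2}{2}}_{loc}(\calO)$, satisfying $A^{(n)}_{j}=Ad(O^{(n)})A_{j}-O^{(n)}_{;j}$. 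The starting point is to rewrite this as the (overdetermined but consistent) first-order linear system
\[
  \rd_{j}O^{(n)} = O^{(n)}A_{j} - A^{(n)}_{j}O^{(n)}, \qquad O^{(n)}(t,x)\in\G,
\]
together with the analogous relation for $\rd_{t}O^{(n)}$ obtained from the $\mu=0$ component of the defining relation where it is imposed, or else by differentiating the spatial relation in $t$ (which produces a linear gradient equation for $O^{(n)}_{;0}=\rd_{t}O^{(n)}(O^{(n)})^{-1}$ whose source lies in $C_{t}H^{\frac{d-4}{2}}_{loc}$, using $\rd_{t}A,\rd_{t}A^{(n)}\in C_{t}H^{\frac{d-4}{2}}_{loc}$).

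Next I would establish uniform-in-$n$ bounds on every compact $K\Subset\calO$. Since $O^{(n)}$ is $\G$-valued it is bounded in $L^{\infty}$, so the right-hand side above is a product of an $L^{\infty}$ factor with factors bounded in $C_{t}H^{\frac{d-2}{2}}_{loc}$; feeding this through the multiplication and inversion estimates of Section~\ref{subsec:rough-gt} (Lemmas~\ref{lem:mult}, \ref{lem:inv}, \ref{lem:d-inv}) in a short bootstrap gives $O^{(n)}$ bounded in $C_{t}(H^{\frac{d}{2}}_{loc}\cap L^{\infty})$ and $O^{(n)}_{;j}$ bounded in $C_{t}H^{\frac{d-2}{2}}_{loc}$. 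For $O^{(n)}_{;0}$ one closes the gradient estimate by localizing to balls on which $\sup_{n}\nrm{A^{(n)}}_{\dot H^{\frac{d-2}{2}}}$ is small (possible since the $A^{(n)}$ are equi-small on small balls, being convergent in $H^{\frac{d-2}{2}}_{loc}$). With these bounds, a diagonal argument over an exhaustion of $\calO$ and Rellich compactness produce a subsequence with $O^{(n)}\to O^{\infty}$ weakly in $H^{\frac{d}{2}}_{loc}$, strongly in $L^{q}_{loc}(\calO)$ for all $q<\infty$, and a.e.; hence $O^{\infty}$ is $\G$-valued a.e., $O^{\infty}\in C_{t}H^{\frac{d}{2}}_{loc}$ (Arzel\`a--Ascoli in $t$ in a weaker norm against the uniform bound), and $\rd_{t}O^{(n)}\to\rd_{t}O^{\infty}$ weakly. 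Passing to the limit in the displayed system --- $\rd_{j}O^{(n)}\weakto\rd_{j}O^{\infty}$, while $O^{(n)}A_{j}\to O^{\infty}A_{j}$ and $A^{(n)}_{j}O^{(n)}\to A^{\infty}_{j}O^{\infty}$ in $L^{1}_{loc}$ by $H^{\frac{d-2}{2}}\hookrightarrow L^{d}$ --- yields $\rd_{j}O^{\infty}=O^{\infty}A_{j}-A^{\infty}_{j}O^{\infty}$, i.e.\ $A^{\infty}_{j}=Ad(O^{\infty})A_{j}-O^{\infty}_{;j}$. Inserting this limiting relation (and its $t$-derivative) back into the product/inversion lemmas gives $O^{\infty}_{;t,x}=\nb O^{\infty}(O^{\infty})^{-1}\in C_{t}H^{\frac{d-2}{2}}_{loc}$, using Lemma~\ref{lem:inv} for $(O^{\infty})^{-1}\in C_{t}(H^{\frac{d}{2}}_{loc}\cap L^{\infty})$. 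Thus $O^{\infty}$ is admissible and $A^{\infty}\in[A]$. (This is the compactness counterpart of the approximation remark preceding the statement; alternatively one may first reduce to regular gauge transformations via that remark and the variant of Lemma~\ref{lem:part-approx}.)

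The main obstacle is the critical regularity. Because $H^{\frac{d-2}{2}}(\bbR^{d})$ is not an algebra, neither the uniform a-priori bounds on $O^{(n)}$ nor the identification of the weak limit in the nonlinear products can be read off from a naive Leibniz rule: everything must be routed through the critical-exponent multiplication estimates of Section~\ref{subsec:rough-gt}. The genuinely delicate step is securing the \emph{temporal} regularity of the limiting gauge transformation at the sharp level $C_{t}H^{\frac{d-2}{2}}_{loc}$, which needs the localization-with-equi-smallness argument for $O^{(n)}_{;0}$ (and, if the defining relation is only imposed on spatial components, the $t$-differentiated system) together with careful uniform-in-$t$ bookkeeping. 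The remaining ingredients --- weak compactness, Rellich, a.e.\ convergence --- are soft.
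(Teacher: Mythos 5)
Your overall strategy matches the paper's: extract uniform bounds on the gauge transformations $O^{(n)}$ from the defining relation, use compactness to produce a limit $O^{\infty}$, pass to the limit in the nonlinear gauge relation, and then show $O^{\infty}$ is admissible. The bounds, the a.e./weak compactness, and the identification of the limiting equation $A^{\infty}_{j}=Ad(O^{\infty})A_{j}-O^{\infty}_{;j}$ are all handled correctly. You also correctly identify where the difficulty lies: the temporal regularity of $O^{\infty}$.

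However, your resolution of that step has a gap. You assert that ``Arzel\`a--Ascoli in $t$ in a weaker norm against the uniform bound'' gives $O^{\infty}\in C_{t}H^{\frac{d}{2}}_{loc}$, and you then feed this into the product/inversion lemmas to conclude $O^{\infty}_{;t,x}\in C_{t}H^{\frac{d-2}{2}}_{loc}$. This is precisely the step that does not go through by compactness alone, and the paper says so explicitly: uniform boundedness of $O^{(n)}$ in $C_{t}H^{\frac{d}{2}}_{loc}$ together with equicontinuity in a compactly embedded weaker norm yields only $O^{\infty}\in L^{\infty}_{t}H^{\frac{d}{2}}_{loc}\cap C_{t}H^{s}_{loc}$ for $s<\frac{d}{2}$ (weak continuity at the endpoint), never strong $C_{t}$ at the top regularity. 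Since your subsequent step relies on $(O^{\infty})^{-1}\in C_{t}(H^{\frac{d}{2}}_{loc}\cap L^{\infty})$ as an input to the product estimates, the circle does not close.

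The mechanism the paper uses instead is a bootstrap driven by the given \emph{strong} temporal continuity of the two connections $A$ and $B=A^{\infty}$, rather than by compactness of the sequence. From the weak convergence one first extracts only the soft fact that $O^{\infty}(t,x)$ is continuous in $t$ for a.e.\ $x$. One then uses the relation $O^{\infty}_{;t,x}=Ad(O^{\infty})A-B$: since $O^{\infty}$ is bounded and $A,B\in C_{t}L^{d}_{loc}$, dominated convergence gives $Ad(O^{\infty})A\in C_{t}L^{d}_{loc}$, hence $\nb O^{\infty}\in C_{t}L^{d}_{loc}$; one then differentiates the relation once in $x$ and repeats to obtain $\rd\nb O^{\infty}\in C_{t}L^{\frac{d}{2}}_{loc}$, and so on up to the sharp level. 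This iteration --- transferring $C_{t}$ regularity from $A,B$ to $O^{\infty}$ through the gauge relation, starting from pointwise a.e.\ continuity in $t$ --- is the missing piece; your closing sentence gestures at ``inserting the limiting relation back'' but uses the unavailable $C_{t}H^{\frac{d}{2}}_{loc}$ bound as input rather than building it up derivative by derivative. Finally, a minor point: your worry about recovering $O^{(n)}_{;0}$ by differentiating the spatial relation in $t$ is unnecessary, since Definition~\ref{def:ym-sol-rough-gt} already includes the time component, $O^{(n)}_{;t,x}\in C_{t}H^{\frac{d-2}{2}}_{loc}$, as part of admissibility.
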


\begin{proof}
  Suppose that $O^{(n)}$ is a sequence of admissible gauge
  transformations so that the gauge equivalent connections $A^{(n)}$
  given by
  \begin{equation} \label{eq:closed-An} A^{(n)} = Ad(O^{(n)}) A -
    O^{(n)}_{; t,x}
  \end{equation}
  converge to an $\calH^{\frac{d-2}{2}}_{loc}$ connection $B$. Then we
  need to show that $A$ and $B$ are gauge equivalent.

  We first consider the corresponding gauge transformations
  $O^{(n)}$. By the relation \eqref{eq:closed-An}, it follows that
  these are uniformly bounded on compact sets. Hence, by compact
  Sobolev embeddings we obtain a limiting gauge transformation $O$, so
  that on a subsequence we have
  \begin{enumerate}[label=(\roman*)]
  \item $O$ satisfies the bounds
    \[
    \nb O \in L^\infty H^{\frac{d-2}{2}}_{loc}
    \]
  \item Convergence in weaker topologies:
    \[
    \nb O^{(n)} \to \nb O \qquad \text{ in $L^p
      W^{\frac{d-2}{2},2-}_{loc}$,\ \ $p < \infty$.}
    \]

  \item Pointwise a.e convergence:
    \[
    O^{(n)}(t,x) \to O(t,x) \ \ \text{a.e.}, \qquad \nb O^{(n)} (t,x)
    \to \nb O(t,x) \ \ \text{a.e.}.
    \]
  \end{enumerate}
  These properties allow us to pass to the limit and obtain
  \[
  B = O A O^{-1} - O_{;t,x}
  \]
  as well as the similar relation for the curvatures.

  It remains to improve the first property (i) above to continuity in
  time. This cannot come from weak convergence, instead it is a
  consequence of the corresponding continuity property for $A$ and
  $B$. We start from property (ii), which guarantees that $O(t,x)$ is
  continuous in $t$ for almost every $x$. Since $A,B \in C_t
  L^{d}_{loc}$, so is $Ad(O) A $ and thus $\nb O$.  We now
  differentiate and repeat the process for $\rd \nb O$ in
  $L^{\frac{d}{2}}$, and so on.
\end{proof}

\subsection{Local theory at optimal regularity 
for dimensions \texorpdfstring{$d \geq 4$}{d>3}}
We begin by recalling the temporal gauge small data global
well-posedness result proved\footnote{In \cite{OTYM2}, this theorem is
  stated an proved in the most difficult case $d = 4$. Nevertheless,
  its proof may be extended to $d > 4$.} in \cite{OTYM2}.
\begin{theorem}[{\cite[Theorem~1.17]{OTYM2}}] \label{thm:small-temp}
  If the $\dot{H}^{\frac{d-2}{2}} \times \dot{H}^{\frac{d-4}{2}}$ norm
  of the initial data set $(a, e)$ is smaller than some universal
  constant $\eps_{\ast}$, then the corresponding solution $(A_{t,x},
  \rd_{t} A_{t,x})$ in the temporal gauge $A_{0} = 0$ exists globally
  in $C_{t}(\bbR; \dot{H}^{\frac{d-2}{2}} \times
  \dot{H}^{\frac{d-4}{2}})$, and obeys the a-priori bound
  \begin{equation*}
    \nrm{\nb A_{x}}_{L^{\infty} \dot{H}^{\frac{d-4}{2}}} \aleq \nrm{(a, e)}_{\dot{H}^{\frac{d-2}{2}} \times \dot{H}^{\frac{d-4}{2}}}.
  \end{equation*}
  The solution is unique among the local-in-time limits of smooth
  solutions, and it depends continuously on the data $(a, e) \in
  \dot{H}^{\frac{d-2}{2}} \times \dot{H}^{\frac{d-4}{2}}$.
\end{theorem}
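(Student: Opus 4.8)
The plan is to follow the two-stage strategy developed in the earlier installments \cite{OTYM1, OTYM2} (here $d \geq 4$): first solve \eqref{eq:ym} in a gauge with favorable nonlinear structure, the \emph{caloric gauge}, and then transfer the resulting solution to the temporal gauge by controlling the intervening gauge transformation. For the first part, given small data $(a, e)$ one evolves the connection by the (covariant, de Turck-regularized) Yang--Mills heat flow in an auxiliary parameter $s \geq 0$, which agrees with $(a,e)$ at $s = 0$ and which puts the $s$-component of the connection to zero; under the smallness hypothesis standard parabolic theory yields global existence of the flow together with scaling-critical bounds on its limit, hence a well-defined caloric-gauge representative $(A_{0}, A_{x})$ of the data (this is the content of \cite{OTYM1}). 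The structural gain is that in this gauge the dynamical variables $A_{x} = A_{x}(t, x, s)$ satisfy a semilinear wave equation $\bx A_{j} = \calN_{j}(A, \nb A)$ whose nonlinearity, after a Tao-type renormalization (conjugation by a parametrix for a linear transport operator built from the low frequencies of $A$), acquires the null structure needed for estimates at the scaling-critical regularity $\dot{H}^{\frac{d-2}{2}}$.

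The second part is small-data global well-posedness for the renormalized caloric-gauge wave system. The plan is to run a contraction/continuity argument in the adapted function spaces --- an $\ell^{1}$-Besov refinement of $X^{s,b}$-type spaces equipped with null-frame and square-function components --- using the Strichartz estimates, the bilinear null-form estimates for $\bx$, and the mapping properties of the parametrix for the renormalized d'Alembertian. This delivers global existence, the a priori bound $\nrm{\nb A_{x}}_{L^{\infty} \dot{H}^{\frac{d-4}{2}}} \aleq \nrm{(a, e)}_{\dot{H}^{\frac{d-2}{2}} \times \dot{H}^{\frac{d-4}{2}}}$, Lipschitz (hence continuous) dependence on the data in the iteration space --- which in particular gives $C_{t}(\dot{H}^{\frac{d-2}{2}} \times \dot{H}^{\frac{d-4}{2}})$ continuity --- and uniqueness among the local-in-time limits of regular solutions.

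For the passage to the temporal gauge, after adjusting $a$ by a fixed time-independent gauge transformation so the data match, one obtains the gauge transformation $O(t, x)$ from the caloric to the temporal gauge by integrating the ODE $\rd_{t} O = - O A_{0}$ in $t$ with $O(0) = Id$, where $A_{0}$ is the temporal component of the caloric-gauge solution. One then estimates $O$ in $C_{t} \dot{H}^{\frac{d}{2}}_{loc}$ from the caloric-gauge bounds on $A_{0}$, checks that it is an admissible gauge transformation, and concludes that $A^{\mathrm{temp}} = Ad(O) A - O_{;t,x}$ is an $\dot{H}^{\frac{d-2}{2}}$ solution in the temporal gauge $A^{\mathrm{temp}}_{0} = 0$ inheriting the global existence, the a priori bound, uniqueness among limits of smooth solutions, and continuous dependence.

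The main obstacle is the second part: establishing the full set of multilinear estimates for the renormalized system in the critical, scaling-invariant function spaces --- controlling the high$\times$high$\to$low and parallel interactions, resolving the ``division problem'', and constructing the parametrix for $\bx_{A}$ with the required boundedness properties. This is the genuine analytic heart of the problem and constitutes the bulk of \cite{OTYM2}; by comparison the heat-flow construction and the caloric-to-temporal gauge transformation, although technically involved, are comparatively soft.
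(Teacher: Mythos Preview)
The paper does not prove this theorem at all: Theorem~\ref{thm:small-temp} is quoted verbatim from \cite[Theorem~1.17]{OTYM2} and used as a black box (see Remark~\ref{rem:series}, which explicitly flags this as the one external input from the series). So there is no ``paper's own proof'' to compare against; your proposal is really a sketch of the argument in \cite{OTYM2}, not of anything in the present paper.

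That said, your outline of the \cite{OTYM2} strategy is accurate in its broad strokes: construct the caloric gauge via the Yang--Mills heat flow, exploit the null structure revealed in that gauge to close a critical-regularity iteration in adapted $X^{s,b}$-type spaces (with a parametrix renormalization \`a la Tao), and then transfer to the temporal gauge by integrating $\rd_t O = - O A_0$. The paper's own Theorem~\ref{thm:small-cal} and the surrounding discussion confirm that this is indeed the mechanism. Your identification of the main difficulty --- the multilinear/null-form estimates and the parametrix construction at scaling-critical regularity --- is also correct.
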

 
We now derive Theorem~\ref{thm:local-temp} from
Theorems~\ref{thm:excise} and \ref{thm:small-temp}.
\begin{proof}[Proof of Theorem~\ref{thm:local-temp}]
  The idea is to construct the local-in-spacetime solutions using
  Theorems~\ref{thm:excise} and \ref{thm:small-temp}, and then patch
  up by finite speed of propagation (i.e., local-in-spacetime
  uniqueness) in the temporal gauge.  \pfstep{Step~1: Construction of
    local-in-spacetime solutions} Consider a ball $B_{r}(x)$ with $r <
  10 \rc$ and $x \in X$; we introduce the abbreviation $K = B_{r}(x)
  \cap X$. Let $(\ta, \te)$ and $O$ be the global Yang--Mills initial
  data and the gauge transformation associated with $(a, e)$ by
  Theorem~\ref{thm:excise}.(1); recall that $(a, e)$ is gauge
  equivalent to $(\ta, \te)$ via $O$ on $K$. Choosing $\eps_{\ast}$
  sufficiently small, Theorem~\ref{thm:small-temp} produces a unique
  $C_{t} \calH^{\frac{d-2}{2}}$ temporal-gauge solution $\tA$
  corresponding to $(\ta, \te)$. We define $A$ on $\calD(K)$ by
  \begin{equation*}
    A_{\mu}(t,x) = Ad(O^{-1}(x)) \tA_{\mu}(t,x) - O^{-1}_{; \mu}(x).
  \end{equation*}
  Note that $(\ta, \te, O)$ in Theorem~\ref{thm:excise}.(1) is
  determined up to a constant gauge transformation, but any choice
  leads to the same solution $A$. By \eqref{eq:excise-a},
  \eqref{eq:excise-O} and Theorem~\ref{thm:small-temp}, it follows
  that
  \begin{equation} \label{eq:local-temp-A} \nrm{\nb A_{x}}_{L^{\infty}
      \dot{H}^{\frac{d-4}{2}}(\calD(K))} \aleq \nrm{(a,
      e)}_{\dot{H}^{\frac{d-2}{2}} \times \dot{H}^{\frac{d-4}{2}}(K)}.
  \end{equation}

  \pfstep{Step~2: Continuous dependence and uniqueness} We claim that
  the mapping
  \begin{equation*}
    \calH^{\frac{d-2}{2}}(K) \ni (a, e) \mapsto (A, \rd_{t} A) \in C_{t} (H^{\frac{d-2}{2}} \times H^{\frac{d-4}{2}}) (\calD(K))
  \end{equation*}
  is continuous. Indeed, for the purpose of contradiction, suppose
  that there is a sequence of $\calH^{\frac{d-2}{2}}$ Yang--Mills
  initial data sets on $K$ such that $(a^{n}, e^{n}) \to (a, e)$,
  while $(A^{n}, \rd_{t} A^{n}) \not \to (A, \rd_{t} A)$ in $C_{t}
  (H^{\frac{d-2}{2}} \times H^{\frac{d-4}{2}})(\calD(K))$. By passing
  to a subsequence, we may assume that no further subsequence of
  $(A^{n}, \rd_{t} A^{n})$ converges to $(A, \rd_{t} A)$ in the same
  topology. However, by Theorem~\ref{thm:excise}.(2) and the
  continuity statement in Theorem~\ref{thm:small-temp}, there exists a
  subsequence for which $(\tA^{n}, \rd_{t} \tA^{n}) \to (\tA, \rd_{t}
  \tA)$ in $C_{t} (\dot{H}^{\frac{d-2}{2}} \times
  \dot{H}^{\frac{d-4}{2}})$. By the convergence $O^{n} \to O$ and
  $(O^{n})^{-1} \to O^{-1}$ in $\calG^{\frac{d}{2}, 2}(K)$, it follows
  that $(A^{n}, \rd_{t} A^{n}) \to (A, \rd_{t} A)$ in the above
  topology, which is a contradiction.

  From continuous dependence and persistence of regularity in
  Theorems~\ref{thm:excise} and \ref{thm:small-temp} it follows that
  $(A, \rd_{t}A)$ defined in Step~1 is approximated by smooth
  (temporal gauge) solutions, i.e., it is a solution to \eqref{eq:ym}
  in the sense of Definition~\ref{def:ym-sol-rough}. Therefore,
  uniqueness of the solution on $\calD(K)$ in the sense of
  Definition~\ref{def:ym-sol-rough} in the temporal gauge follows.

  \pfstep{Step~3: Conclusion of the proof} Consider now a family of
  balls $\set{B_{2\rc}(x)}_{x \in X}$, and the corresponding family
  of temporal gauge solutions in each $\calD(B_{2\rc}(x) \cap
  X)$. By the local-in-spacetime uniqueness that we just proved, these
  solutions coincide on the intersections, and therefore define a
  unique temporal gauge solution (in the sense of
  Definition~\ref{def:ym-sol-rough}) in $\calD_{[0, \rc)}(X)
  \subseteq \cup_{x \in X} \calD(B_{2\rc}(x) \cap X)$.

  Properties (1) and (2) claimed in Theorem~\ref{thm:local-temp}
  follow from the construction. For the a-priori bound in (3), we
  repeat the above steps to the data restricted to uniformly spaced
  balls $B$ of radius $2 \rc$ that cover $B_{R'}(x)$. By
  local-in-spacetime uniqueness, the result coincides with $A$ in
  $\calD_{[0, \rc)} (B_{R'}(x))$. Moreover,
  \eqref{eq:local-temp-bnd} follows by summing up the a-priori bounds
  in Theorem~\ref{thm:small-temp} for the local-in-spacetime
  solutions. \qedhere
\end{proof}

Next, we also show that all $\calH^{\frac{d-2}{2}}_{loc}$ solutions
(in the sense of Definition~\ref{def:ym-sol-rough}) are gauge
equivalent to the corresponding temporal solutions.

\begin{proof}[Proof of Theorem~\ref{thm:equiv-temp}]
  Let $A^{(n)}$ be a sequence of smooth solutions which converge to
  $A$ in the norm $C_{t} (H^{\frac{d-2}{2}}_{loc} \times
  H^{\frac{d-4}{2}}_{loc})$. Let $\tA^{(n)}$, respectively $\tA$, be
  the corresponding temporal solutions. We know that $\tA^{(n)}$ and
  $A^{(n)}$ are gauge equivalent; denote by $O^{(n)}$ the
  corresponding gauge transformations.

  We know that in the $H^1$ topology
  \[
  Ad(O^{(n)}) \tA^{(n)} - O^{(n)}_{;t,x} = A^{(n)} \to A
  \]
  but also that
  \[
  \tA^{(n)} \to \tA
  \]
  Thus, the gauge transformations $O^{(n)}$ satisfy uniform bounds
  locally. Then it follows that (up to a subsequence)
  \[
  Ad(O^{(n)}) \tA - O^{(n)}_{;t,x} \to A .
  \]
  But now we can use Proposition~\ref{p:closed-class} to conclude that
  $\tA$ and $A$ are gauge equivalent.
\end{proof}

Continuity of $A_{x}(t)$ in $\calH^{\frac{d-2}{2}}_{loc, \rc}$, as
stated in Theorem~\ref{thm:local-temp}, is in general insufficient to
conclude invariance of the topological class. However, combined with
finite speed of propagation and
Proposition~\ref{prop:top-class-outer}, we may nevertheless prove that
the topological class of $A_{x}(t)$ is conserved under the hyperbolic
Yang--Mills evolution.
\begin{proof}[Proof of Proposition~\ref{prop:top-class-ym}]
  Thanks to Theorem~\ref{thm:equiv-temp}, it suffices to consider a
  temporal gauge solution $A_{x}(t)$. By a usual continuous induction
  in $t$ (as well as time reversibility of \eqref{eq:ym}), it suffices
  to show that the $[A_{x}(t)] = [A_{x}(0)]$ for all $t > 0$
  sufficiently close to $0$.

  Since $\calE_{\bbR^{d}}^{\frac{d-2}{2}}(a, e) < \infty$, there
  exists $R > 0$ such that $\calE_{\bbR^{d} \setminus
    \overline{B_{R}}}^{\frac{d-2}{2}}(a, e) \ll \eps_{\ast}$. By
  Uhlenbeck's lemma (when $d$ is even) and the local-in-spacetime
  a-priori estimate \eqref{eq:local-temp-bnd}, it follows that
  \begin{equation*}
    \sup_{t \in [0, \rc)}\calE_{\bbR^{d} \setminus \overline{B_{R + t}}}^{\frac{d-2}{2}} (A_{x}(t), \rd_{t} A_{x}(t))
    \aleq \calE_{\bbR^{d} \setminus \overline{B_{R}}}^{\frac{d-2}{2}}(a, e) \ll \eps_{\ast}.
  \end{equation*}
  In particular, choosing $R$ large enough, we may insure that
  \begin{equation*}
    \nrm{F[A_{x}(t)]}_{L^{\frac{d}{2}}(\bbR^{d} \setminus \overline{B_{2R}})} < \eps_{\ast},
  \end{equation*}
  where $\eps_{\ast}$ is as in
  Proposition~\ref{prop:top-class-outer}. For $t > 0$ sufficiently
  close to $0$, by the continuity property \eqref{eq:local-temp-cont},
  we may also insure that
  \begin{equation*}
    \nrm{A_{x}(t) - A_{x}(0)}_{L^{d}(B_{2R})} < \eps_{\ast}.
  \end{equation*}
  By Proposition~\ref{prop:top-class-outer}, it follows that
  $[A_{x}(t)] = [A_{x}(0)]$. \qedhere
\end{proof}

Finally, we turn to the proof of Theorem~\ref{thm:imp-reg}. The main
ingredient is the caloric gauge small data well-posedness theorem from
\cite{OTYM2}:
\begin{theorem}[{\cite[Corollary~1.13]{OTYM2}}] \label{thm:small-cal}
  Let $(a, e)$ be an Yang--Mills initial data set with the property that its
  $\dot{H}^{\frac{d-2}{2}} \times \dot{H}^{\frac{d-4}{2}}$ norm is
  smaller than some universal constant $\eps^{2}_{\ast}$.  Then there
  exists a gauge transformation $O \in \dot{H}^{\frac{d}{2}}(\bbR^{d};
  \G)$ of $(a, e)$ to a caloric gauge data $(\ta, \te)$, which is
  unique up to a constant gauge transformation. Moreover, the
  corresponding caloric gauge solution $(\tA_{t,x}, \rd_{t}
  \tA_{t,x})$ exists globally in time, and obeys the a-priori bound
  \begin{equation} \label{eq:small-cal-S}
    \nrm{\tA_{x}}_{S^{\frac{d-2}{2}}} \aleq \nrm{(a,
      e)}_{\dot{H}^{\frac{d-2}{2}}\times\dot{H}^{\frac{d-4}{2}}}.
  \end{equation}
\end{theorem}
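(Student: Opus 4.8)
The statement is \cite[Corollary~1.13]{OTYM2}, so I only outline the architecture, which implements the caloric gauge strategy of the earlier installments \cite{OTYM1, OTYM2}. \emph{Construction of the caloric gauge datum.} Given $(a,e)$ with $\dot{H}^{\frac{d-2}{2}} \times \dot{H}^{\frac{d-4}{2}}$ norm below $\eps_{\ast}^{2}$, run the Yang--Mills heat flow $\rd_{s} b_{j} = {}^{(b)}\covD^{\ell} F_{\ell j}[b]$, $b(0) = a$. For data this small, parabolic energy estimates and smoothing — the core of \cite{OTYM1} — give global existence in $s \in [0,\infty)$, an $O(\eps_{\ast}^{2})$ bound in $\dot{H}^{\frac{d-2}{2}}$ uniform in $s$, and convergence $b(s) \to b_{\infty}$ in $\dot{H}^{\frac{d-2}{2}}$ to a curvature-free connection. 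Since small $\dot{H}^{\frac{d-2}{2}}$ embeds into small $\dot{W}^{1,\frac{d}{2}}$, Corollary~\ref{cor:top-triv} shows $b_{\infty}$ is topologically trivial, so there is a $\G$-valued $O$ with $Ad(O) b_{\infty} - O_{;x} = 0$. Applying this ($s$-independent) $O$ to the entire flow and using gauge covariance of the heat flow operator, $Ad(O) b(s) - O_{;x}$ is again a raw heat flow, now converging to $0$; hence $\ta = Ad(O) a - O_{;x}$ and $\te = Ad(O) e$ are the caloric gauge data, and integrating the decay of $F_{sj}$ along the flow gives $O_{;x} \in \dot{H}^{\frac{d-2}{2}}$, i.e. $O \in \dot{H}^{\frac{d}{2}}$. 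Uniqueness up to a constant is immediate: if $O'$ is another such transformation, then $\tilde{O} = O' O^{-1}$, being $s$-independent, conjugates the heat flow of $\ta$ into that of $\ta'$, and letting $s \to \infty$ forces $\tilde{O}_{;x} = 0$.

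\emph{The reduced dynamic equation.} Conjugating a solution (say the temporal-gauge one from Theorem~\ref{thm:small-temp}) by the caloric gauge produces $\tA_{\mu}(t,x)$ such that, along the dynamic Yang--Mills heat flow, $\tA_{s} = 0$ and $\lim_{s \to \infty} \tA_{x}(s) = 0$. The structural payoff, developed in \cite{OTYM1}, is that the heat-flow curvature $F_{sj}$ satisfies a covariant heat equation — so, via the reconstruction $\tA_{j} = - \int_{0}^{\infty} F_{sj} \, \ud s$, the connection is controlled by the heat-flow ``tail'' — while $\tA_{j}$ itself obeys a wave equation $\Box \tA_{j} = \calN_{j}$ whose non-perturbative quadratic interactions, after a renormalization/parametrix of the covariant d'Alembertian in the spirit of Tao's work on wave maps, reduce to null forms and to terms absorbed by the extra smoothness that the caloric connection inherits from the heat flow.

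\emph{A priori bound and global existence.} One then runs a continuity argument in the wave-adapted norm $\nrm{\tA_{x}}_{S^{\frac{d-2}{2}}}$ of \cite{OTYM2} — which controls in particular $\nb \tA_{x} \in L^{\infty} \dot{H}^{\frac{d-4}{2}}$ together with the relevant Strichartz and $X^{s,b}$-type components — combining the parabolic bounds for the heat-flow tail with the bilinear and trilinear null-form and parametrix estimates in the $S^{\frac{d-2}{2}}/N^{\frac{d-4}{2}}$ spaces. This closes the bootstrap $\nrm{\tA_{x}}_{S^{\frac{d-2}{2}}} \aleq \nrm{(a,e)}_{\dot{H}^{\frac{d-2}{2}} \times \dot{H}^{\frac{d-4}{2}}} + \nrm{\tA_{x}}_{S^{\frac{d-2}{2}}}^{2}$, which for small data gives \eqref{eq:small-cal-S}; global existence then follows by the standard continuation argument. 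The main obstacle — essentially the whole content of \cite{OTYM1, OTYM2} — lies in this last part: the renormalization that turns the caloric-gauge Yang--Mills system into a wave equation with benign (null) nonlinearity, and the multilinear estimates in the $S/N$ function spaces, for which the caloric gauge is indispensable precisely because the connection governing the covariant derivatives inherits from the heat flow the extra smoothness and decay that make the parametrix construction viable.
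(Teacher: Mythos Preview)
The paper does not prove this statement; it is quoted verbatim from \cite[Corollary~1.13]{OTYM2} and used as a black box (see Remark~\ref{rem:series}). Your opening acknowledgment of this is appropriate, and your outline of the caloric-gauge architecture from \cite{OTYM1, OTYM2} --- Yang--Mills heat flow to construct the caloric datum, the structural gain that the heat tail furnishes the extra regularity needed for the parametrix, and the bootstrap in the $S^{\frac{d-2}{2}}$ norm --- is a fair high-level summary of what those papers do, so nothing further is required here.
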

We refer the reader to \cite{OTYM1, OTYM2} for the precise definition
of the caloric gauge and the $S^{\frac{d-2}{2}}$ norm. For our
purposes, all we need to know is that
\begin{equation} \label{eq:small-cal-H} \nrm{\nb \tA_{x}}_{L^{\infty}
    \dot{H}^{\frac{d-4}{2}}} + \nrm{\tA_{x}}_{L^{2} L^{2d}} \aleq
  \nrm{\tA_{x}}_{S^{\frac{d-2}{2}}}
\end{equation}
and that the a-priori bound of the $S^{\frac{d-2}{2}}$ norm implies
the following additional control of the solution $\tA_{t,x}$
\cite[Theorem~5.1]{OTYM2}:
\begin{equation} \label{eq:small-cal-ell} \nrm{\Box \tA_{x}}_{\ell^{1}
    L^{2} \dot{H}^{\frac{d-5}{2}}} + \nrm{\rd^{\ell}
    \tA_{\ell}}_{\ell^{1} L^{2} \dot{H}^{\frac{d-3}{2}}} + \nrm{\nb
    \tA_{0}}_{\ell^{1} L^{2} \dot{H}^{\frac{d-3}{2}}}
  \aleq_{\nrm{\tA_{x}}_{S^{\frac{d-2}{2}}}}
  \nrm{\tA_{x}}_{S^{\frac{d-2}{2}}}^{2}.
\end{equation}
Combined with the initial data surgery technique
(Theorem~\ref{thm:excise}) and the patching procedure in
Section~\ref{subsec:patching}, we may now prove
Theorem~\ref{thm:imp-reg}.
\begin{proof}[Proof of Theorem~\ref{thm:imp-reg}]
  On the one hand, we have a global $\calH^{\frac{d-2}{2}}_{loc}$
  solution $A$ in $\calD_{[0, \rc)}(B_{R})$ by
  Theorem~\ref{thm:local-temp}. On the other hand, we can cover $[0,
  \rc) \times B_{R - 4 \rc}$ with cylinders $[0, \rc) \times
  B_{\rc}(x_{\alp})$, each of which is contained in a truncated cone
  $\calD_{[0, \rc)}(B_{4 \rc}(x_{\alp}))$ whose base is contained
  in $B_{R}$, i.e., $B_{4 \rc}(x_{\alp}) \subseteq B_{R}$. In each
  $\calD_{[0, \rc)}(B_{4 \rc}(x_{\alp}))$, by
  Theorem~\ref{thm:small-cal}, we have a gauge-equivalent caloric
  solution $\tA_{(\alp)}$ satisfying
  \begin{equation} \label{eq:imp-reg-tA} \nrm{\nb \tA_{(\alp)
        x}}_{L^{\infty} \dot{H}^{\frac{d-4}{2}}} + \nrm{\Box
      \tA_{(\alp) x}}_{\ell^{1} L^{2} \dot{H}^{\frac{d-5}{2}}} +
    \nrm{\rd^{\ell} \tA_{(\alp) \ell}}_{\ell^{1} L^{2}
      \dot{H}^{\frac{d-3}{2}}} + \nrm{\nb \tA_{(\alp) 0}}_{\ell^{1}
      L^{2} \dot{H}^{\frac{d-3}{2}}} \aleq \eps_{\ast}.
  \end{equation}
  In the remainder of the proof, we restrict each solution
  $\tA_{(\alp)}$ to the cylinder $[0, \rc) \times
  B_{\rc}(x_{\alp})$.

  We need to compute the regularity of the gauge transformation
  $O_{(\alp \bt)}$ between two such solutions $\tA_{(\alp)}$ and
  $\tA_{(\bt)}$. We build up the regularity of $O_{(\alp \bt)}$ in
  several stages, depending on the formula
  \begin{equation*}
    \tA_{(\alp)} = Ad(O_{(\alp \bt)}) \tA_{(\bt)} - O_{(\alp \bt); t,x} \quad \hbox{ in } [0, \rc) \times (B_{\rc}(x_{\alp}) \cap B_{\rc}(x_{\bt}))
  \end{equation*}
  In what follows, all norms are over $[0, \rc) \times
  (B_{\rc}(x_{\alp}) \cap B_{\rc}(x_{\bt}))$, and we omit the
  subscripts $(\alp)$, $(\bt)$ and $(\alp \bt)$.

  \begin{enumerate}[label=(\roman*)]
  \item {\it $L^p$ regularity.} It immediately follows that
    \[
    O_{;x}, O_{;t} \in L^\infty L^{d} \cap L^2 L^{2d}.
    \]
    Reiterating this, we also obtain
    \[
    O_{;x},O_{;t} \in L^\infty \dot H^{\frac{d-2}{2}}.
    \]
  \item{\it $\ell^1$ Besov structure for $O_{;x}$.}  Here we obtain
    \[
    O_{;x} \in \ell^1 (L^\infty \dot H^\frac{d-2}{2} \cap L^2 \dot
    H^{\frac{d-1}{2}}).
    \]
    which follows from the div-curl system\footnote{In order to appeal
      to interior regularity, we may in fact start with local data on
      slightly larger balls $B_{2 \rc}(x_{\alp})$, then shrink their
      radii to $\rc$ at this stage. We omit this minor technical
      detail.} for $O_{;x}$ (cf. Lemma~\ref{lem:div-curl-O}).

  \item {\it $\ell^1$ Besov structure for $O_{;t}$.}  Next, we obtain
    \[
    O_{;t} \in \ell^1 (L^\infty \dot H^\frac{d-2}{2} \cap L^2 \dot
    H^{\frac{d-1}{2}}).
    \]
    which is obtained by differentiating in $x$ in the $O_{;t}$
    relation. Differentiating instead in $t$, we also obtain
    \[
    \rd_{t} O_{;t} \in \ell^1 (L^\infty \dot H^\frac{d-4}{2} \cap L^2
    \dot H^{\frac{d-3}{2}}).
    \]
  \item {\it $\Box O_{;x} \in \ell^1 L^2 \dot H^{\frac{d-5}{2}}$.}
    This requires a similar bound for $[O_{;\alpha}, \partial^\alpha
    \tA]$ and for $[\partial^\alpha O_{;\alpha},\tA]$.  Both of them
    follow from the previous bounds.
  \end{enumerate}

  To summarize, we have the regularity properties:
  \begin{equation} \label{eq:imp-reg-O} O_{;x} \in \ell^1 L^2 \dot
    H^{\frac{d-1}{2}} , \qquad \rd_{t}^{2} O_{;x} \in \ell^1 L^2 \dot
    H^{\frac{d-5}{2}}, \qquad \nb O_{;t} \in \ell^1 L^2 \dot
    H^{\frac{d-3}{2}},
  \end{equation}
  where $\rd_{t}^{2} O_{;x} \in L^{2} \dot{H}^{\frac{d-5}{2}}$ follows
  by combining (ii) and (iii). These in particular imply that each $O$
  is continuous, and is close to a constant in $L^\infty$. Hence, the
  operations of pointwise multiplication, inversion, adjoint action on
  $\g$ etc. are all well-behaved for $O$ (in contrast to the general
  situation in Section~\ref{subsec:rough-gt}).

  Next step is to patch up the local gauges. Taking only the balls
  $B_{\rc}(x_{\alp})$ which cover $B_{R-4 \rc}$ and which are
  uniformly separated, Scenario~(2) in Section~\ref{subsec:patching}
  is applicable to each fixed time $\set{t} \times B_{R-4
    \rc}$. Note that the diffeomorphisms and the smooth cutoffs
  involved in the patching procedure in Scenario~(1) in
  Section~\ref{subsec:patching} all depend trivially on $t$. It
  follows that on each $[0, \rc) \times B'_{\alp}$, the gauge
  transformations $P_{(\alp)}$ obey
  \begin{equation} \label{eq:imp-reg-P} P_{;x} \in \ell^1 L^2 \dot
    H^{\frac{d-1}{2}} , \qquad \rd_{t}^{2} P_{;x} \in \ell^1 L^2 \dot
    H^{\frac{d-5}{2}}, \qquad \nb P_{;t} \in \ell^1 L^2 \dot
    H^{\frac{d-3}{2}},
  \end{equation}
  where the bound depends only on $R / \rc$ and $\eps_{\ast}$.

  It remains to verify the bound \eqref{eq:imp-reg} for the global
  gauge potential $A$, which is a consequence of
  \eqref{eq:imp-reg-tA}, \eqref{eq:imp-reg-O} and the formula
  \eqref{eq:patching-A-ball} (it is easily extended to the $0$-th
  component). Here, we only sketch the proof of $\Box A_{x} \in
  \ell^{1} L^{2} \dot{H}^{\frac{d-5}{2}}$, which is the trickiest, and
  leave the remaining cases to the reader.

  Recalling the formula \eqref{eq:patching-A-ball}, we have
  \begin{equation*}
    \Box A_{x} = \sum \chi_{\alp} \left( Ad(P_{(\alp)})  \Box \tA_{(\alp) x} - \Box P_{;x} + h.o.t.\right).
  \end{equation*}
  The higher order terms, whose precise expression is omitted, are
  estimated by \eqref{eq:imp-reg-P} and
  \eqref{eq:imp-reg-tA}. Moreover, $\Box P_{;x} = - \rd_{t}^{2} P_{;x}
  + \lap P_{;x} \in \ell^{1} L^{2} \dot{H}^{\frac{d-5}{2}}([0, \rc)
  \times B'_{\alp})$ by \eqref{eq:imp-reg-P}. Thanks to
  \eqref{eq:imp-reg-P}, $Ad(P_{(\alp)})$ may be easily removed in
  $\ell^{1} L^{2} \dot{H}^{\frac{d-5}{2}}([0,\rc) \times
  B'_{\alp})$. Then finally, $\Box \tA_{(\alp) x} \in \ell^{1} L^{2}
  \dot{H}^{\frac{d-5}{2}}([0, \rc) \times B'_{\alp})$ by
  \eqref{eq:imp-reg-tA}. \qedhere
\end{proof}

\subsection{Local theory in dimension \texorpdfstring{$d = 3$}{d=3}}
Here we sketch the proofs of Theorems~\ref{thm:local-temp-sub} and
\ref{thm:KM}. The key result is the following subcritical
initial data surgery result (cf. Theorems~\ref{thm:ext-id} and
\ref{thm:excise}):

\begin{theorem} \label{thm:excise-3} Let $\frac{1}{2} < \sgm <
  \frac{5}{2}$, and let $(a, e)$ be an $\calH^{\sgm}$ Yang--Mills
  initial data set on a convex domain $K$ in $\bbR^{3}$ satisfying
  \begin{equation} \label{eq:excise-3-hyp}
    \nrm{a}_{\dot{H}^{\frac{1}{2}}(K)} \leq \eps.
  \end{equation}
  If $\eps > 0$ is sufficiently small depending on $L(K)$, then there
  exists an $\calH^{\sgm}$ Yang--Mills initial data set $(\ba, \be)$
  in $\bbR^{3}$ that coincides with $(a, e)$ on $K$ and obeys
  \begin{align}
    \nrm{\ba}_{\dot{H}^{\sgm} \cap R(K)^{-\sgm} L^{2}} +
    \nrm{\be}_{\dot{H}^{\sgm-1} + R(K)^{\sgm-1} L^{2}} \aleq_{L(K)}
    \nrm{a}_{\dot{H}^{\sgm} \cap R(K)^{-\sgm} L^{2} (K)} +
    \nrm{e}_{\dot{H}^{\sgm-1} + R(K)^{\sgm-1}
      L^{2}(K)}. \label{eq:excise-3}
  \end{align}
  It can be arranged so that the association $(a, e) \mapsto (\ba,
  \be)$ is equivariant under constant gauge transformations, and so
  that $(a, e) \mapsto (\ba, \be)$ is locally Lipschitz
  continuous. Moreover, if $(a, e)$ is smooth, then so is $(\ba,
  \be)$.
\end{theorem}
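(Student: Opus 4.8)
The plan is to follow the same three-step scheme as in the proof of Theorem~\ref{thm:ext-id}, but with the perturbative solvability result Proposition~\ref{prop:gauss-small} in place of Theorem~\ref{thm:gauss-0}; the requirement in that proposition that $\nrm{a}_{\dot{H}^{1/2}}$ be small is precisely the source of the hypothesis \eqref{eq:excise-3-hyp}. First, let $\mathfrak{E}$ be Stein's universal extension operator adapted to the convex domain $K$ (so that all constants depend only on $L(K)$ and transform correctly under rescaling in $R(K)$, exactly as in \eqref{eq:ext-K}), and set $\ba := \mathfrak{E} a$, $\be' := \mathfrak{E} e$ on $\bbR^{3}$. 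The bound for $\ba$ in \eqref{eq:excise-3} is then immediate, and $\nrm{\ba}_{\dot{H}^{1/2}(\bbR^{3})} \aleq_{L(K)} \nrm{a}_{\dot{H}^{1/2}(K)} \leq \eps$, which can be made as small as needed by choosing $\eps$ small depending on $L(K)$.

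Next, set $h := {}^{(\ba)}\covD^{\ell} \be'_{\ell} = \rd^{\ell}\be'_{\ell} + [\ba^{\ell}, \be'_{\ell}]$. Since $\ba = a$ and $\be' = e$ on $K$ and $(a,e)$ satisfies the constraint there, $h$ vanishes on $K$. The estimate one needs is $\nrm{h}_{\dot{H}^{\sgm-2}} \aleq_{L(K)} (1+\nrm{\ba}_{\dot{H}^{1/2}})\,\nrm{e}_{\dot{H}^{\sgm-1}\cap R(K)^{-(\sgm-1)}L^{2}(K)}$, whose only nontrivial ingredient is the three-dimensional product estimate $\dot{H}^{1/2}\cdot\dot{H}^{\sgm-1}\hookrightarrow\dot{H}^{\sgm-2}$, valid exactly when $\sgm-1<\tfrac{3}{2}$ and $\sgm-\tfrac12>0$, i.e.\ in the stated range $\tfrac12<\sgm<\tfrac52$ (the $R(K)$-weighted $L^{2}$ tail of $e$ is absorbed by Sobolev embedding on $K$). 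Now apply Proposition~\ref{prop:gauss-small} with convex domain $K$, exponent $p=2$, and regularity parameter $\sgm-1$; the admissibility condition $1-\tfrac{3}{p}<\sgm-1<\tfrac{3}{2}$ is again $\tfrac12<\sgm<\tfrac52$. This produces $d := T_{\ba}[-h]\in\dot{H}^{\sgm-1}(\bbR^{3})$ with ${}^{(\ba)}\covD^{\ell}d_{\ell}=-h$ and, by the exterior support property with $\lmb=1$ (using $h=0$ on $K$), $d=0$ on $K$. Setting $\be := \be' + d$, the pair $(\ba,\be)$ then solves the Gauss equation on all of $\bbR^{3}$, coincides with $(a,e)$ on $K$, and the bound \eqref{eq:excise-3} follows by combining the previous estimates with \eqref{eq:gauss-small}.

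For the remaining properties: equivariance under constant gauge transformations is obtained as in Theorem~\ref{thm:ext-id}, by fixing one construction, conjugating it by each $O\in\G$, and averaging over $\G$ (using that $Ad(O)$ commutes with $\mathfrak{E}$ and $\rd$, and intertwines $T_{\ba}$ with $T_{Ad(O)\ba}$). Higher regularity follows from part (3) of Proposition~\ref{prop:gauss-small} and the smoothing properties of $\mathfrak{E}$. Local Lipschitz continuity is the one point genuinely new relative to Theorem~\ref{thm:ext-id}: here $\ba=\mathfrak{E}a$ is linear in $a$; $h$ is affine-bilinear in $(a,e)$ (linear in $e$, with quadratic term bilinear in $(\mathfrak{E}a,\mathfrak{E}e)$), hence locally Lipschitz; and $\ba\mapsto T_{\ba}=(I+T_{0}\,ad(\ba))^{-1}T_{0}$ depends real-analytically, hence locally Lipschitz, on $\ba\in\dot{H}^{1/2}$ valued in $\mathcal{L}(\dot{H}^{\sgm-2},\dot{H}^{\sgm-1})$ via the Neumann series valid for $\nrm{\ba}_{\dot{H}^{1/2}}$ small. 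Composing these gives local Lipschitz dependence of $\be=\mathfrak{E}e+T_{\ba}[-h]$ on $(a,e)$.

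I expect the main difficulty to be bookkeeping rather than conceptual: one must carry the scale-adapted inhomogeneous norms $\dot{H}^{\sgm}\cap R(K)^{-\sgm}L^{2}$ and $\dot{H}^{\sgm-1}+R(K)^{\sgm-1}L^{2}$ consistently through the extension, the product estimate, and the Gauss solver, checking at each step that constants depend only on $L(K)$ and scale correctly in $R(K)$; and one must make the Neumann-series/local-Lipschitz dependence of $T_{\ba}$ on $\ba$ precise, including the minor point (as in Step~2 of the proof of Proposition~\ref{prop:gauss-small}) that derivatives should be replaced by divided differences to avoid circular a priori regularity assumptions. Everything else is routine given Proposition~\ref{prop:gauss-small} and standard Littlewood--Paley trichotomy.
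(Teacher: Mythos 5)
Your proposal is correct and follows the paper's own proof essentially verbatim: rescale to $R(K)=1$, apply Stein's universal extension to get $(\ba,\be')$, observe the constraint error $h={}^{(\ba)}\covD^{\ell}\be'_{\ell}$ vanishes on $K$ and is bounded in $H^{\sgm-2}$ via the product estimate, apply Proposition~\ref{prop:gauss-small} (with $p=2$ and regularity index $\sgm-1$, which requires exactly the hypothesis \eqref{eq:excise-3-hyp} for smallness of $\nrm{\ba}_{\dot{H}^{1/2}}$ and the range $\tfrac12<\sgm<\tfrac52$), and set $\be=\be'+T_{\ba}[-h]$. You merely spell out the product-estimate exponent check and the Neumann-series argument for Lipschitz continuity that the paper labels ``obvious by construction.''
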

\begin{proof}
  By rescaling, we set $R(K) = 1$ so that $\dot{H}^{\sgm} \cap
  R(K)^{-\sgm} L^{2} \aeq H^{\sgm}$ and $\dot{H}^{\sgm-1} +
  R(K)^{\sgm-1} L^{2} \simeq H^{\sgm-1}$. As in the proof of
  Theorem~\ref{thm:ext-id}, we apply the universal extension operator
  $\mathfrak{E}$ to $(a, e)$ to first obtain $(\ba, \be') \in H^{\sgm}
  \times H^{\sgm-1}(\bbR^{3})$. Then the error for the Gauss equation
  $h = (\covD^{(\ba)})^{\ell} \be'$ is supported outside $K$ and obeys
  $\nrm{h}_{H^{\sgm-2}} \aleq_{\nrm{\ba}_{\dot{H}^{\frac{1}{2}}}}
  \nrm{e}_{H^{\sgm-1}(K)}$. Since
  \begin{equation*}
    \nrm{\ba}_{\dot{H}^{\frac{1}{2}}} \aleq_{L(K)} \nrm{a}_{\dot{H}^{\frac{1}{2}}(K)} \leq \eps, 
  \end{equation*}
  Proposition~\ref{prop:gauss-small} is applicable if $\eps > 0$ is
  chosen sufficiently small. Thus $d = - T_{\ba} h$ satisfies
  \begin{equation*} (\covD^{(\ba)})^{\ell} d_{\ell} = - h, \qquad
    \nrm{d}_{H^{\sgm-1}} \aleq \nrm{h}_{H^{\sgm-2}} \aleq
    \nrm{\be'}_{H^{\sgm-1}},
  \end{equation*}
  and vanishes in $K$. It follows that $(\ba, \be = \be' + d)$ is a
  Yang--Mills initial data set obeying the desired bound
  \eqref{eq:excise-3}. The higher regularity and local Lipschitz
  properties are obvious by construction. Finally, equivariance under
  constant gauge transformations can be insured by fixing a particular
  construction, conjugating by elements of $\G$, and then averaging.
\end{proof}

Next, we recall the temporal gauge small data local well-posedness of
Tao.
\begin{theorem} [{\cite{TaoYM}}]\label{thm:small-temp-sub}
  Let $\sgm > \frac{3}{4}$. If the $\calH^{\sgm}$ norm of the initial
  data set $(a, e)$ is sufficiently small, then the corresponding
  solution $(A_{t,x}, \rd_{t} A_{t,x})$ in the temporal gauge $A_{0} =
  0$ exists in $C_{t}((-1, 1); H^{\sgm} \times H^{\sgm-1})$, and obeys
  the a-priori bound
  \begin{equation*}
    \nrm{(A_{x}, \rd_{t} A_{x})}_{L^{\infty} (H^{\sgm} \times H^{\sgm-1})} \aleq \nrm{(a, e)}_{H^{\sgm} \times H^{\sgm-1}}.
  \end{equation*}
  The solution is unique among the local-in-time limits of smooth
  solutions, and it depends in a locally Lipschitz manner on the data
  $(a, e) \in H^{\sgm} \times H^{\sgm-1}$.
\end{theorem}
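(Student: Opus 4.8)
The plan is to carry out, in the temporal gauge, the argument of Tao \cite{TaoYM}, which trades the nonlocal Coulomb gauge of Klainerman--Machedon for a Hodge decomposition of $A$ combined with null-form bilinear estimates. First I would record the equations. Imposing $A_{0} = 0$ gives $F_{0j} = \rd_{t} A_{j}$ and $F_{jk} = \rd_{j} A_{k} - \rd_{k} A_{j} + [A_{j}, A_{k}]$. The component $\covD^{\alpha} F_{\alpha j} = 0$ becomes
\begin{equation*}
  \Box A_{j} = \rd_{j}(\rd^{k} A_{k}) + \calN_{j}(A, \nb A),
\end{equation*}
where $\calN_{j}$ collects the quadratic terms $[A^{k}, \rd_{k} A_{j}] - [A^{k}, \rd_{j} A_{k}] + [\rd^{k} A_{k}, A_{j}]$ and the cubic term $[A^{k}, [A_{k}, A_{j}]]$, while $\covD^{\alpha} F_{\alpha 0} = 0$ gives the Gauss constraint in dynamical form $\rd_{t}(\rd^{k} A_{k}) = -[A^{k}, \rd_{t} A_{k}]$. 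The scaling-critical space is $\dot{H}^{1/2} \times \dot{H}^{-1/2}$, so $\sgm > \tfrac34$ lies strictly below the energy norm $\dot{H}^{1} \times L^{2}$, and plain energy estimates will not suffice.

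Next I would split $A_{j} = A_{j}^{df} + A_{j}^{cf}$ via the Leray projection, with $A_{j}^{cf} = -\lap^{-1} \rd_{j} \rd^{k} A_{k}$. Applying $\bbP$ to the wave equation annihilates the gradient term $\rd_{j}(\rd^{k} A_{k})$, so the divergence-free part solves $\Box A_{j}^{df} = \bbP_{j} \calN$, and one checks that the quadratic part of $\bbP\calN$, evaluated on divergence-free inputs, is a combination of the classical null forms $Q_{0}$ and $Q_{\alpha\beta}$. The curl-free part is not a free wave: the Gauss constraint yields $\rd^{k} A_{k}(t) = \rd^{k} a_{k} - \int_{0}^{t} [A^{k}, \rd_{t} A_{k}]\, ds$, so $A^{cf}$ is the time-antiderivative of a quadratic nonlinearity plus data, hence effectively one degree smoother and requiring no dispersive input. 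The estimates would then be organized in wave-Sobolev spaces $X^{\sgm, b}_{\pm}$ (with $b$ slightly above $\tfrac12$) for $A^{df}$, a companion space such as $C_{t} H^{\sgm} \cap X^{\sgm, b}$, and a structurally simpler space reflecting the time-integral for $\rd^{k} A_{k}$. The core inputs are: (i) Klainerman--Machedon-type null-form bilinear estimates $\nrm{Q(u,v)}_{X^{\sgm-1, b-1}} \aleq \nrm{u}_{X^{\sgm, b}} \nrm{v}_{X^{\sgm, b}}$, which in $\bbR^{1+3}$ hold in the range $\sgm > \tfrac34$ (without the null cancellation one would instead need $\sgm \geq 1$ — this is where the threshold enters); (ii) product/transport estimates for the terms involving $A^{cf}$, using its Gauss-constraint-induced extra regularity; and (iii) cubic estimates via Strichartz ($L^{3}_{t} L^{6}_{x}$) and Sobolev, valid since $\sgm > \tfrac12$. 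Feeding these into the Duhamel formulation on a unit time interval, a contraction mapping in a small ball gives existence and the a priori bound $\nrm{(A_{x}, \rd_{t} A_{x})}_{L^{\infty}(H^{\sgm}\times H^{\sgm-1})} \aleq \nrm{(a,e)}_{H^{\sgm}\times H^{\sgm-1}}$; difference estimates in the same spaces give local Lipschitz dependence. Uniqueness among local-in-time limits of smooth solutions follows by combining persistence of regularity with a limiting argument on the difference, since the $X^{\sgm, b}$ uniqueness is only conditional.

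The main obstacle is precisely the non-null-form part of $\calN$ — the pieces involving $A^{cf}$, and $Q$-type quadratics with one curl-free factor. Absent cancellation these demand $\sgm \geq 1$; the resolution is that $A^{cf}$, being (via the Gauss constraint) the time-integral of a quadratic expression, gains an effective full derivative and extra time-integrability over a free wave, which, combined with the wave-Sobolev and Strichartz control of $A^{df}$, is exactly enough to close the iteration for $\sgm > \tfrac34$. A secondary technical point is compatibility of the Hodge decomposition, the $X^{\sgm, b}$ time localization, and the nonlinear structure; on $\bbR^{3}$ this is routine because $\bbP$ is a bounded Fourier multiplier.
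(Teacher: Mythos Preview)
The paper does not prove this theorem; it is stated with attribution to Tao \cite{TaoYM} and used as a black-box input for the large-data local theory (Theorem~\ref{thm:local-temp-sub}). Your sketch is a reasonable outline of Tao's original argument --- temporal gauge, Hodge splitting $A = A^{df} + A^{cf}$, null-form structure in the divergence-free wave system, the Gauss constraint $\partial_t(\partial^k A_k) = -[A^k, \partial_t A_k]$ providing the extra regularity for $A^{cf}$, and closure in $X^{\sigma,b}$-type spaces for $\sigma > \tfrac34$ --- so there is no discrepancy to report, simply because there is no in-paper proof to compare against.
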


Now we are ready to prove Theorem~\ref{thm:local-temp-sub}.
\begin{proof}[Sketch of Proof of Theorem~\ref{thm:local-temp-sub}]
  As in the proof of Theorem~\ref{thm:local-temp}, the idea is to
  patch together the small local-in-spacetime solutions constructed
  using Theorems~\ref{thm:excise-3} and \ref{thm:small-temp-sub} in
  the temporal gauge.

  It suffices to consider $\frac{3}{4} < \sgm < \frac{5}{2}$. Observe
  that, by subcriticality, the $\calH^{\sgm}_{loc}$ norm obeys the
  following one-sided scaling property:
  \begin{equation*}
    \nrm{(a^{(\lmb)}, e^{(\lmb)})}_{\calH^{\sgm}_{loc}}
    \aleq \lmb^{\sgm - \frac{1}{2}} \nrm{(a, e)}_{\calH^{\sgm}_{loc}}	 \quad \hbox{ for } \lmb \leq 1.
  \end{equation*}
  Here $(a^{(\lmb)}, e^{(\lmb)})(x) = (\lmb a, \lmb^{2} e)(\lmb x)$ is
  the invariant scaling. Choosing
  \begin{equation*}
    \lmb \aeq \left( \eps_{\ast} \nrm{(a, e)}_{\calH^{\sgm}_{loc}}^{-1} \right)^{\frac{2}{\sgm-1}} ,
  \end{equation*}
  we may insure that $\nrm{(a^{(\lmb)},
    e^{(\lmb)})}_{\calH^{\sgm}_{loc}} \ll \eps_{\ast}$. Choosing
  $\eps_{\ast} > 0$ sufficiently small, we may apply
  Theorem~\ref{thm:excise-3} to each $(a^{(\lmb)}, e^{(\lmb)})
  \restriction_{B_{2}(x)}$ to find an extension $(\ba^{(\lmb)},
  \be^{(\lmb)})$, and then Theorem~\ref{thm:small-temp-sub} to this
  global-in-space small data to obtain a temporal gauge solution
  $A^{(\lmb)}$ on the time interval $(-1, 1)$. Proceeding as in the
  proof of Theorem~\ref{thm:local-temp}, we obtain a well-posed
  temporal gauge solution for $(a^{(\lmb)}, e^{(\lmb)})$ on $(-1,
  1)$. By rescaling back, the theorem follows with an explicit lower
  bound $T \ageq \nrm{(a,
    e)}_{\calH^{\sgm}_{loc}}^{-\frac{2}{\sgm-1}}$.
\end{proof}

Finally, Theorem~\ref{thm:KM} is an easy corollary of Uhlenbeck's
lemma (at subcritical regularity) and
Theorem~\ref{thm:local-temp-sub}.
\begin{proof}[Sketch of Proof of Theorem~\ref{thm:KM}]
  By conservation of energy, it suffices to prove that the temporal
  gauge solution given by Theorem~\ref{thm:local-temp-sub} exists on a
  interval of length $T(\nrm{(F[a], e)}_{L^{2}_{loc}})$, where
  $\nrm{\cdot}_{L^{2}_{loc}} = \sup_{x \in \bbR^{3}}
  \nrm{\cdot}_{L^{2}(B_{1}(x))}$. As before, we have the one-sided
  scaling property
  \begin{equation*}
    \nrm{(F[a^{(\lmb)}], e^{(\lmb)})}_{L^{2}_{loc}}
    \aleq \lmb^{\frac{1}{2}} \sup_{x \in \bbR^{3}} \nrm{(F[a], e)}_{L^{2}_{loc}}	 \quad \hbox{ for } \lmb \leq 1.
  \end{equation*}
  Choosing $\lmb \aeq \eps_{\ast} \nrm{(F[a], e)}_{L^{2}_{loc}}^{-2}$,
  we may insure that the LHS is $\aleq \eps_{\ast}$. In what follows,
  we work with the rescaled data $(a^{(\lmb)}, e^{(\lmb)})$; we omit
  the superscript $(\lmb)$ for simplicity. For the rescaled data, we
  wish to show that the corresponding temporal gauge solution given by
  Theorem~\ref{thm:local-temp-sub} exists on the unit time interval
  $[0, 1)$.

  Fix a unit ball $B = B_{1}(x_{0})$. Applying Uhlenbeck's lemma
  \cite[Theorem~1.3]{MR648356} (which is possible if we take
  $\eps_{\ast}$ sufficiently small), we find $O \in \calG^{2, 2}(2B)$
  such that
  \begin{equation*}
    \nrm{O}_{H^{2}(B)} \aleq \nrm{a}_{H^{1}(2B)},
  \end{equation*}
  and $(\ta, \te) = (Ad(O) a - O_{;x}, Ad(O) e)$ obeys
  \begin{equation*}
    \nrm{(\ta, \te)}_{H^{1} \times L^{2}(2B)} \aleq \nrm{(F[a], e)}_{L^{2}(2B)} \aleq \eps_{\ast}. 
  \end{equation*}
  By Theorem~\ref{thm:small-temp-sub} (taking $\eps_{\ast}$ even
  smaller if necessary), we find a temporal gauge solution $\tA$ with
  data $(\ta, \te)$ on $(-1, 1)$. Applying the $H^{2}(2B)$ gauge
  transformation $O^{-1}$, we obtain a temporal gauge solution $A =
  Ad(O^{-1}) \tA + O^{-1} O_{;t, x}$ in $\calD_{[0, 1)}(2B)$. It can
  be easily verified that this solution is the limit of smooth
  temporal gauge solutions; hence it coincides with the solution given
  by Theorem~\ref{thm:local-temp-sub} in $\calD_{[0, 1)}(2B)$. Since
  this procedure can be applied to any unit ball $B \subseteq
  \bbR^{3}$, it follows that the temporal gauge solution exists on the
  time interval $[0, 1)$, as desired. \qedhere
\end{proof}
\section{{Harmonic Yang--Mills connections with compact structure
    group}} \label{sec:threshold} The goal of this section is to prove
Theorem~\ref{thm:thr}.  We proceed in two steps, in increasing
generality.

\pfstep{Step~1: $\G$ is simple, compact and simply connected} Assume
that $\G$ is compact and simply connected, and also that $\g$ is
\emph{simple}, i.e., it is nonabelian ($[\g, \g] \neq 0$) and there is
no nonzero proper ideal. As we will see, this case turns out to be
completely analogous to the model case $\G = SU(2)$.

We need some algebraic preliminaries on compact simple Lie algebras
over $\bbR$. We only sketch the part of the theory that is needed for
us; for a more comprehensive treatment, see \cite[Chapters~II and
IV]{Knapp}.

A maximal abelian subalgebra $\h$ of $\g$ is called a \emph{Cartan
  subalgebra}. Given such a $\h$, consider $\set{ad(H) : \g \to \g}_{H
  \in \h}$, which is a family of commuting anti-self-adjoint
operators. Thus, viewed as linear operators on the complexification
$\g_{\bbC} = \g \otimes_{\bbR} \bbC$, they are simultaneously
diagonalizable with purely imaginary (or zero) eigenvalues. A nonzero linear
functional $\alp \in \h^{\ast}$ is called a \emph{root}\footnote{A
  more standard definition (used in \cite{Knapp}) is to define roots
  as $\alp \in \h_{\bbC}^{\ast}$ such that $\cap_{H \in \h_{\bbC}}
  ker(ad(H) - \alp(H)) \neq \set{0}$. This differs from our definition
  by a factor of $i$.} if the simultaneous eigenspace (called the
\emph{root space})
\begin{equation*}
  \g_{\bbC, \alp} = \set{A \in \g_{\bbC} : ad(H) A = i \alp(H) A, \ \forall H \in \h}
\end{equation*}
is nonzero. We write $\Dlt$ for the space of all roots. By the
preceding discussion, we see that
\begin{equation*}
  \g_{\bbC} = \h_{\bbC} \oplus \bigoplus_{\alp \in \Dlt} \g_{\bbC, \alp}
\end{equation*}
as vector spaces. In particular, $\Dlt \neq \set{0}$; in fact, it
spans $\h^{\ast}$.
%
%
%
It is a fundamental result of Cartan that all Cartan subalgebras are
related to each other by an $Ad(O)$-action; thus $\Dlt$ is independent
of the choice of $\h$.

To each $\alp \in \Dlt$, we use the inner product $\brk{\cdot, \cdot}$
to associate $H_{\alp} \in \h$ such that
\begin{equation*}
  \alp(H) = \brk{H_{\alp}, H}, \qquad H \in \h,
\end{equation*}
and define the induced inner product on $\Dlt$ by $\brk{\alp, \bt} =
\brk{H_{\alp}, H_{\bt}}$. The roots with the largest norm are called
the \emph{highest roots}.

Clearly, if $\alp \in \Dlt$, then $- \alp \in \Dlt$ with $\g_{\bbC,
  -\alp} = \overline{\g_{\bbC, \alp}}$. For any $E_{\alp} \in
\g_{\bbC, \alp}$, by definition,
\begin{equation*}
  [H_{\alp}, E_{\alp}] = i \alp(H_{\alp}) E_{\alp} = i \brk{\alp, \alp} E_{\alp}, \qquad
  [H_{\alp}, \overline{E_{\alp}}] = - i \alp(H_{\alp}) \overline{E_{\alp}} = - i \brk{\alp, \alp} \overline{E_{\alp}}.
\end{equation*}
Moreover, $\dim_{\bbC }\g_{\bbC, \alp} = 1$ and for any $E_{\alp} \in
\g_{\bbC ,\alp}$, we have
\begin{equation*}
  \brk{E_{\alp}, E_{\alp}}= 0, \qquad [E_{\alp}, \overline{E_{\alp}}] = i \brk{E_{\alp}, \overline{E_{\alp}}} H_{\alp},
\end{equation*}
where $\brk{\cdot, \cdot}$ is extended to $\g_{\bbC}$ in a
$\bbC$-bilinear fashion. For the proofs of the last properties, see
\cite[Section~II.4]{Knapp}.

Every root generates an embedding of $su(2)$ into $\g$. More
precisely, given a root $\alp \in \Dlt$, normalize $E_{\alp}$ so that
\begin{equation*}
  \brk{E_{\alp}, \overline{E_{\alp}}} = \frac{2}{\brk{\alp, \alp}},
\end{equation*}
and consider $\bfi_{\alp}, \bfj_{\alp}, \bfk_{\alp} \in \g$ defined by
\begin{equation*}
  \bfi_{\alp} =  (E_{\alp} + \overline{E_{\alp}}), \quad
  \bfj_{\alp} = i (E_{\alp} - \overline{E_{\alp}}), \quad
  \bfk_{\alp} = \frac{2}{\brk{\alp, \alp}} H_{\alp}.
\end{equation*}
Then it is straightforward to verify that $\set{\bfi_{\alp},
  \bfj_{\alp}, \bfk_{\alp}}$ generate an $su(2)$-subalgebra, i.e.,
\begin{equation} \label{eq:lie-alg-su2} [\bfi_{\alp}, \bfj_{\alp}] = 2
  \bfk_{\alp}, \qquad [\bfj_{\alp}, \bfk_{\alp}] = 2 \bfi_{\alp},
  \qquad [\bfk_{\alp}, \bfi_{\alp}] = 2 \bfj_{\alp}.
\end{equation}
%
Indeed, \eqref{eq:lie-alg-su2} are precisely the Lie bracket relations
satisfied the following standard basis of $su(2)$:
\begin{equation*}
  \bfi = \left(\begin{array}{cc} 0 & 1 \\ -1 & 0 \end{array}\right), \qquad
  \bfj = \left(\begin{array}{cc} 0 & i \\ i & 0 \end{array}\right), \qquad
  \bfk = \left(\begin{array}{cc} i & 0 \\ 0 & -i \end{array}\right).
\end{equation*}
Note also that $\bfi_{\alp}, \bfj_{\alp}, \bfk_{\alp}$ obeys
\begin{equation} \label{eq:lie-norm-alp} \abs{\bfi_{\alp}}^{2} =
  \abs{\bfj_{\alp}}^{2} = \abs{\bfk_{\alp}}^{2} = \frac{4}{\brk{\alp,
      \alp}}.
\end{equation}

By simplicity, all symmetric $Ad$-invariant bilinear functions on $\g$
(of which $\brk{\cdot, \cdot}$ is an example) are constant multiples
of each other \cite[Corollary~4.9]{Knapp}. Multiplying $\brk{\cdot,
  \cdot}$ by a suitable constant, which does not change the conclusion
of Theorem~\ref{thm:thr}, we may assume that:
\begin{equation} \label{eq:lie-norm} \hbox{The highest roots in $\g$
    have $\brk{\alp, \alp} = 2$.}
\end{equation}
When $\G = SU(n)$, this amounts to taking $\brk{A, B} = -\tr(AB)$. We
now recall the following well-known result of Bott \cite{MR0087035}
concerning the third homotopy group $\pi_{3}(\G)$ of $\G$:
\begin{theorem} \label{thm:bott} Let $\G$ be a simple, compact, simply
  connected Lie group. Then $\pi_{3}(\G) \simeq \bbZ$. Any Lie group
  homomorphism $\varphi : SU(2) \to \G$, induced by the Lie algebra
  homomorphism
  \begin{equation*}
    \ud \varphi : su(2) \to \g, \ (\bfi, \bfj, \bfk) \mapsto (\bfi_{\alp}, \bfj_{\alp}, \bfk_{\alp})
  \end{equation*}
  for a highest root $\alp$ in $\g$, induces an isomorphism
  $\pi_{3}(SU(2)) \to \pi_{3}(\G)$.
\end{theorem}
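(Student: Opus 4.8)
The plan is to reduce the first assertion to a computation of $H_{3}(\G;\bbZ)$, and the second to the fact that the $su(2)$-subalgebra attached to a highest root has Dynkin index $1$.

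For $\pi_{3}(\G) \simeq \bbZ$: since $\G$ is simply connected, $\pi_{1}(\G) = 0$, and $\pi_{2}$ of any Lie group vanishes, so the Hurewicz theorem gives $\pi_{3}(\G) \simeq H_{3}(\G;\bbZ)$. One checks that $H_{3}(\G;\bbZ)$ is free abelian (a general fact for compact connected Lie groups; it also follows here since $H_{1} = H_{2} = 0$), and computes its rank as $\dim_{\bbR} H^{3}(\G;\bbR)$. By the Chevalley--Eilenberg theorem, $H^{\ast}(\G;\bbR) \simeq H^{\ast}(\g;\bbR)$ for compact $\G$, and the latter is an exterior algebra on primitive generators in odd degrees $2m_{i}+1$, where the $m_{i}$ are the exponents of $\g$; for $\g$ simple the smallest exponent is $1$ with multiplicity one, whence $\dim H^{3}(\G;\bbR) = 1$. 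Concretely, $H^{3}(\g;\bbR)$ is spanned by the bi-invariant Cartan $3$-form $\omg(X,Y,Z) = \brk{[X,Y],Z}$. This proves $\pi_{3}(\G) \simeq \bbZ$.

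For the second assertion, recall $SU(2) \simeq \bbS^{3}$, so $\pi_{3}(SU(2)) \simeq \bbZ$, and $\varphi_{\ast} : \pi_{3}(SU(2)) \to \pi_{3}(\G)$ is multiplication by an integer $j$, the Dynkin index of the subalgebra $\ud\varphi(su(2)) = \mathrm{span}\set{\bfi_{\alp}, \bfj_{\alp}, \bfk_{\alp}}$. To pin down $j$ I would pass to real cohomology: since $\ud\varphi$ is a Lie algebra homomorphism,
\[
 (\varphi^{\ast}\omg_{\G})(X,Y,Z) = \brk{[\ud\varphi X, \ud\varphi Y], \ud\varphi Z}_{\g} = \brk{\ud\varphi[X,Y], \ud\varphi Z}_{\g}.
\]
Now \eqref{eq:lie-alg-su2}, \eqref{eq:lie-norm} and \eqref{eq:lie-norm-alp}, together with the orthogonality of $\bfi_{\alp},\bfj_{\alp},\bfk_{\alp}$ (immediate from $\bfk_{\alp}\in\h$, $\bfi_{\alp},\bfj_{\alp}\in\g_{\bbC,\alp}\oplus\g_{\bbC,-\alp}$ and $\brk{E_{\alp},E_{\alp}}=0$), show that $\ud\varphi$ is an isometry of $su(2)$ (with $\brk{A,B} = -\tr(AB)$) onto $\ud\varphi(su(2))$ equipped with the restriction of $\brk{\cdot,\cdot}$; hence the right-hand side equals $\brk{[X,Y],Z}_{su(2)} = \omg_{SU(2)}(X,Y,Z)$. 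Thus $\varphi^{\ast}$ carries the generator of $H^{3}(\G;\bbR)$ to the generator of $H^{3}(SU(2);\bbR)$, so dually $\varphi_{\ast}$ is an isomorphism after tensoring with $\bbR$; in particular $j \neq 0$.

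It remains to upgrade $j \neq 0$ to $\abs{j} = 1$, i.e.\ to prove $\varphi_{\ast}$ is surjective. Pairing $\varphi_{\ast}$ of the generator against $\omg_{\G}$ gives $j = q/p$, where $p$ (resp.\ $q$) is the period of the basic Cartan $3$-form over the generator of $\pi_{3}(\G)$ (resp.\ $\pi_{3}(SU(2))$). The equality $p = q$ --- equivalently, that the highest-root $su(2)$ has Dynkin index exactly $1$ --- is the substantive content, and is precisely Bott's theorem; I would not reprove it but cite \cite{MR0087035}. Its proof goes either through Morse theory on the based loop space $\Omega\G$, where the set of minimal geodesics (a copy of $\bbS^{2}$ swept out by the circle subgroup $\exp(t\bfk_{\alp})$) generates $\pi_{2}(\Omega\G) \simeq \pi_{3}(\G)$, or through the integrality of the associated Chern--Simons class. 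This index-$1$ statement is the only genuine obstacle; everything else is bookkeeping.
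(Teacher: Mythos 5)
Your proposal is an elaboration of a result the paper handles purely by citation: the paper references Bott \cite{MR0087035} for $\pi_{3}(\G) \simeq \bbZ$ and Atiyah--Hitchin--Singer \cite[Section~8]{AHS} for the fact that $\varphi_{\ast}$ is an isomorphism, and its only in-text remark is that $\ud\varphi$ is isometric under the normalization \eqref{eq:lie-norm} --- precisely the observation your paragraph on pulling back the Cartan $3$-form expands upon. So you are not reproducing a paper proof but supplying one. Your road map is the standard one (Hurewicz and Chevalley--Eilenberg for the rank statement; Dynkin index and periods of the bi-invariant $3$-form $\brk{[\cdot,\cdot],\cdot}$ for $\varphi_{\ast}$), and it correctly isolates the one hard integrality input --- that the minimal period of the normalized Cartan $3$-form over $\pi_{3}(\G)$ equals the $SU(2)$-period, i.e.\ the highest-root $su(2)$ has Dynkin index $1$ --- and, like the paper, defers that input to a citation.

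There is one genuine (though modest) gap. The parenthetical assertion that $H_{3}(\G;\bbZ)$ is free \emph{because} $H_{1} = H_{2} = 0$ is a false implication: $H_{1} = H_{2} = 0$ gives $H^{3} \cong \operatorname{Hom}(H_{3},\bbZ)$ by universal coefficients but says nothing about torsion in $H_{3}$ itself (compare $K(\bbZ/2,3)$, which has $H_{1}=H_{2}=0$ and $H_{3}=\bbZ/2$). Hurewicz and Chevalley--Eilenberg, as you use them, only establish $\pi_{3}(\G)\otimes\bbR \simeq \bbR$; ruling out torsion in $\pi_{3}(\G)$ is an additional nontrivial fact. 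It is indeed true for compact connected Lie groups, but it is not self-contained in your argument, and in fact it is part of what Bott's loop-space analysis delivers (no odd-dimensional cells in $\Omega\G$, hence $\pi_{2}(\Omega\G)=\pi_{3}(\G)$ free). The economical fix is simply to fold it into the same citation to \cite{MR0087035} that you already invoke for the period/Dynkin-index integrality, so that your first paragraph provides the rank and Bott provides the freeness. With that adjustment the argument is sound and is a reasonable alternative to the paper's citation-only treatment.
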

The identification $\pi_{3}(\G) \simeq \bbZ$ is due to Bott
\cite{MR0087035}. For the proof that such a $\varphi$ induces an
isomorphism, see Atiyah--Hitchin--Singer \cite[Section~8]{AHS}. By
our normalization \eqref{eq:lie-norm}, $\ud \varphi$ is isometric.

Our goal now is to prove an analogue of
Theorem~\ref{thm:instanton-SU2} concerning topological classes,
characteristic numbers and instantons. Let $a$ be a $\calA^{1,
  2}_{loc}$ connection on $\bbR^{4}$ with finite energy, and let
$O_{(\infty)}$ be a gauge at infinity for $a$ (which exists thanks to
Theorem~\ref{thm:goodrep}). By Theorem~\ref{thm:bott}, $[O_{(\infty)}]
= - \kpp [\varphi]$ for some $\kpp \in \bbZ$. We claim that:
\begin{claim}
  We have $\ch = - 8 \pi^{2} \kpp$. Moreover, there exists an
  instanton for each $\kpp$ with energy $8 \pi^{2} \abs{\kpp}$.
\end{claim}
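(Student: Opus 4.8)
The plan is to reduce the computation of $\ch$ to the model case $\G = SU(2)$ by exhibiting an explicit representative of the topological class $[a]$, and then to upgrade that representative to an instanton. First I would use that, by Corollary~\ref{cor:ch-class}, the characteristic number $\ch$ depends only on $[a]$, and that by Definition~\ref{def:top-class} together with Theorem~\ref{thm:bott} the class $[a]$ is completely determined by $[O_{(\infty)}] = -\kpp[\varphi]$. Hence it is enough to produce one smooth finite-energy connection $a_{\kpp}$ on $\bbR^{4}$ whose gauge at infinity has homotopy class $-\kpp[\varphi]$, and to compute $\ch$ and $\spE$ for it. For this I take the $SU(2)$ (anti-)instanton $a^{\sharp}$ furnished by Theorem~\ref{thm:instanton-SU2} with $c_{2}(a^{\sharp}) = -\kpp$ and $\spE[a^{\sharp}] = 8\pi^{2}\abs{\kpp}$ (it has smooth, rapidly decaying curvature), and set $a_{\kpp} := \ud\varphi \circ a^{\sharp}$, obtained by applying the isometric Lie algebra homomorphism $\ud\varphi : su(2) \to \g$ pointwise to the gauge potential $a^{\sharp}$; this is a smooth finite-energy connection on a $\G$-bundle over $\bbR^{4}$.

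The second step is to identify the topological class of $a_{\kpp}$. Since $\ud\varphi$ is a Lie algebra homomorphism, $F[a_{\kpp}] = \ud\varphi(F[a^{\sharp}])$ pointwise, and if $a^{\sharp} = -\chi\, O^{\sharp}_{(\infty);x} + B^{\sharp}$ is a good representative, then, because $\varphi$ is the Lie group homomorphism integrating $\ud\varphi$, one has $\ud\varphi(O^{\sharp}_{(\infty);x}) = (\varphi \circ O^{\sharp}_{(\infty)})_{;x}$, so that $a_{\kpp} = -\chi\, (\varphi\circ O^{\sharp}_{(\infty)})_{;x} + \ud\varphi(B^{\sharp})$ with $\ud\varphi(B^{\sharp}) \in \dot{W}^{1,2}(\bbR^{4};\g)$ is a good representative of $a_{\kpp}$. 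Therefore the gauge at infinity of $a_{\kpp}$ is $\varphi \circ O^{\sharp}_{(\infty)}$, whose homotopy class is $(\deg O^{\sharp}_{(\infty)})[\varphi] = c_{2}(a^{\sharp})[\varphi] = -\kpp[\varphi] = [O_{(\infty)}]$, using the identification of $\deg O^{\sharp}_{(\infty)}$ (via $SU(2)\simeq\bbS^{3}$) with $c_{2}(a^{\sharp})$ recorded after Theorem~\ref{thm:goodrep}. Hence $a_{\kpp} \in [a]$.

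The third step is the actual computation. Since the normalization $\brk{\alp,\alp} = 2$ makes $\ud\varphi$ isometric — equivalently the inner product on $su(2)$ is $\brk{A,B} = -\tr(AB)$ — the pointwise densities match: $-\brk{F[a_{\kpp}]\wedge F[a_{\kpp}]} = -\brk{F[a^{\sharp}]\wedge F[a^{\sharp}]} = \tr(F[a^{\sharp}]\wedge F[a^{\sharp}])$. Integrating and using $c_{2} = \frac{1}{8\pi^{2}}\int_{\bbR^{4}}\tr(F\wedge F)$ gives $\ch(a_{\kpp}) = 8\pi^{2}c_{2}(a^{\sharp}) = -8\pi^{2}\kpp$, and by the invariance from the first step $\ch = -8\pi^{2}\kpp$ for the original $a$. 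Likewise $\spE[a_{\kpp}] = \spE[a^{\sharp}] = 8\pi^{2}\abs{\kpp}$. To see that $a_{\kpp}$ is an instanton, note that $a^{\sharp}$ is (anti-)self-dual and that $\star$ acts only on the form indices while $\ud\varphi$ acts only on the values, so they commute and $a_{\kpp}$ is (anti-)self-dual too; with the Bianchi identity this makes $a_{\kpp}$ a harmonic Yang--Mills connection. By Lemma~\ref{lem:bog-G} every finite-energy connection in $[a_{\kpp}]$ has energy at least $\abs{\ch} = 8\pi^{2}\abs{\kpp}$, and $a_{\kpp}$ attains this bound, so it is an absolute minimizer of $\spE$ in its class, i.e.\ an instanton.

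The step I expect to be the main obstacle is the orientation and normalization bookkeeping: reconciling the identification $\pi_{3}(\G)\simeq\bbZ$ in Theorem~\ref{thm:bott}, the generator $[\varphi]$, and the degree-of-$O$ description of $c_{2}$, so that the relation $[O_{(\infty)}] = -\kpp[\varphi]$ really forces $c_{2}(a^{\sharp}) = -\kpp$ with the correct sign (rather than $+\kpp$). The remaining ingredients — functoriality of good representatives, curvature, Hodge duality and the energy density under $\ud\varphi$ — are routine once this is pinned down.
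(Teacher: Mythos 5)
Your proposal follows the same route as the paper: push forward an $SU(2)$ (anti-)instanton $\ta_{\kpp}$ with $c_{2}=-\kpp$ via $\ud\varphi$, check that the resulting gauge at infinity is $\varphi\circ\tilde O_{\kpp(\infty)}$ and hence lands in the class $-\kpp[\varphi]$, and use the isometry of $\ud\varphi$ (from the normalization $\brk{\alp,\alp}=2$) to transport both the characteristic-number density and the energy density from the $SU(2)$ computation. Your extra remarks --- that $\ud\varphi$ commutes with the Hodge star so $a_{\kpp}$ stays (anti-)self-dual, and that Lemma~\ref{lem:bog-G} then shows $a_{\kpp}$ is an absolute minimizer in its class --- simply make explicit what the paper leaves implicit, and the sign bookkeeping you flag as a worry is resolved exactly as the paper does, by choosing the orientation convention so that $\deg\tilde O_{\kpp(\infty)}=c_{2}(\ta_{\kpp})=-\kpp$.
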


To prove the claim, note that each self-dual (resp. anti-self-dual)
$SU(2)$-connection $\ta_{\kpp}$ with second Chern number $c_{2} =
-\kpp$ where $\kpp > 0$ (resp. $\kpp < 0$) induces a self-dual
(resp. anti-self-dual) $\G$-connection $a_{\kpp} = \ud
\varphi(\ta_{\kpp})$ by the Lie algebra homomorphism $\ud \varphi :
su(2) \to \g$. Since $\ud \varphi$ preserves the normalized
$Ad$-invariant inner product, which equals $- \tr(AB)$ on $su(2)$, we
have
\begin{align*}
  \ch =& \int_{\bbR^{4}} - \brk{\ud \varphi(F[\ta_{\kpp}]), \ud \varphi(F[\ta_{\kpp}])} = \int_{\bbR^{4}} \tr (F[\ta_{\kpp}] \wedge F[\ta_{\kpp}]) = 8 \pi^{2} c_{2} \\
  \spE(a_{\kpp}) =& \frac{1}{2} \int_{\bbR^{4}} \brk{\ud
    \varphi(F_{jk}[\ta_{\kpp}]), \ud \varphi(F^{jk}[\ta_{\kpp}]) } =
  \frac{1}{2} \int_{\bbR^{4}} - \tr (F_{jk}[\ta_{\kpp}]
  F^{jk}[\ta_{\kpp}]) = 8 \pi^{2} \abs{c_{2}}.
\end{align*}
Moreover, by a standard computation, the degree of a gauge at infinity
$\tilde{O}_{\kpp (\infty)}$ for $\ta_{\kpp}$, viewed as a map
$\bbS^{3} \to SU(2) \simeq \bbS^{3}$, is equal to $c_{2} = \kpp$ (with
the appropriate choices of the orientations). Correspondingly,
$O_{\kpp (\infty)} = \varphi \circ \tilde{O}_{(\kpp (\infty)}$ is a
gauge at infinity for $a_{\kpp}$, and since $\varphi$ induces the
isomorphism $\pi_{3}(SU(2)) \to \pi_{3}(\G)$, we have
$[O_{\kpp(\infty)}] = - \kpp [\varphi]$. Since $\ch$ is dependent only
on the topological class, the claim follows.


Next, analogous to Theorem~\ref{thm:GKS-SU2}, we claim that:
\begin{claim}
  Let $a$ be a finite energy harmonic Yang--Mills connection, which is
  not an instanton. Then
  \begin{equation*}
    \spE(a) \geq \abs{\ch} + 16 \pi^{2}.
  \end{equation*}
\end{claim}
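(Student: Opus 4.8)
The plan is to reduce the general simple case to the $SU(2)$ result (Theorem~\ref{thm:GKS-SU2}) by exhibiting, for any finite-energy harmonic Yang--Mills connection $a$ on a $\G$-bundle that is not an instanton, an associated harmonic $SU(2)$ Yang--Mills connection with the same static energy and comparable topological data. The key structural input is the classical theorem (see, e.g., \cite{AHS}) that every harmonic Yang--Mills connection on a bundle over $\bbR^{4}$ (or $\bbS^{4}$) with structure group a compact simple Lie group actually reduces to a connection with structure group a \emph{rank-one} subgroup; more precisely, its holonomy algebra is contained in a subalgebra $\g'$ of $\g$, and up to conjugation one may arrange that $\g'$ is spanned over $\bbR$ by $\{\bfi_{\alp}, \bfj_{\alp}, \bfk_{\alp}\}$ for some root $\alp$, hence $\g' \cong su(2)$ (possibly $\g' \cong u(1)$, in which case $a$ is abelian and thus not a nontrivial harmonic connection). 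This is exactly the point exploited in the $SU(2)$ reduction arguments in the instanton literature, and it is the step I expect to be the main obstacle to write cleanly, since one must quote or reprove the holonomy reduction for harmonic (not just self-dual) connections.

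Granting this reduction, the argument proceeds as follows. First, fix a root $\alp$ realizing the reduction and the corresponding Lie algebra monomorphism $\ud\psi : su(2) \to \g$ carrying $(\bfi, \bfj, \bfk)$ to $(\bfi_{\alp}, \bfj_{\alp}, \bfk_{\alp})$. By \eqref{eq:lie-norm-alp}, $\ud\psi$ scales the inner product by the factor $\tfrac{4}{\brk{\alp,\alp}}$; when $\alp$ is a highest root, \eqref{eq:lie-norm} makes $\ud\psi$ isometric, but a general root contributes a factor $\tfrac{\brk{\alp_{\max},\alp_{\max}}}{\brk{\alp,\alp}} = \tfrac{2}{\brk{\alp,\alp}} \geq 1$ relating the two static energies. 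Let $b$ be the $su(2)$-connection with $\ud\psi(b) = a$ (after the reduction); then $b$ is a finite-energy harmonic $SU(2)$ Yang--Mills connection, with
\begin{equation*}
  \spE_{SU(2)}[b] = \frac{\brk{\alp,\alp}}{2}\,\spE_{\G}[a], \qquad
  \ch_{SU(2)}[b] = \frac{\brk{\alp,\alp}}{2}\,\ch_{\G}[a],
\end{equation*}
where $\ch_{SU(2)}[b] = 8\pi^{2} c_{2}[b]$ and the proportionality of the characteristic numbers follows from $\ud\psi$ scaling $-\brk{\cdot,\cdot}$ by $\tfrac{\brk{\alp,\alp}}{2}$ together with Corollary~\ref{cor:ch-class}. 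Crucially, $a$ is an instanton for $\G$ if and only if $b$ is an instanton for $SU(2)$, since the (anti-)self-duality condition $F = \mp \star F$ is preserved and reflected by the injective map $\ud\psi$.

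Now apply Theorem~\ref{thm:GKS-SU2} to $b$: since $b$ is not an instanton, $\spE_{SU(2)}[b] \geq 8\pi^{2}|c_{2}[b]| + 16\pi^{2} = |\ch_{SU(2)}[b]| + 16\pi^{2}$. Dividing by $\tfrac{\brk{\alp,\alp}}{2}$ and using $\tfrac{2}{\brk{\alp,\alp}} \leq 1$,
\begin{equation*}
  \spE_{\G}[a] = \frac{2}{\brk{\alp,\alp}}\spE_{SU(2)}[b]
  \geq |\ch_{\G}[a]| + \frac{2}{\brk{\alp,\alp}} \cdot 16\pi^{2}.
\end{equation*}
The remaining gap is that for a non-highest root the coefficient $\tfrac{2}{\brk{\alp,\alp}}$ could be strictly less than $1$, weakening the $16\pi^{2}$. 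To close this I would invoke the stronger form of the reduction: a \emph{stable} or energy-minimizing reduction must occur along a highest root (or, failing that, I would argue directly that one may always conjugate so the relevant $su(2)$ sits inside the $su(2)$ attached to a highest root, since all long roots are $Ad(\G)$-equivalent and every root's $su(2)$ embeds in a long-root $su(2)$ after enlarging the holonomy — though one must then check $a$ remains non-instanton and the energies only change by the integer factor $\tfrac{\brk{\alp,\alp}}{\brk{\alp_{\max},\alp_{\max}}}$, which is $\geq 1$, so the inequality only improves). In all cases one lands on $\spE_{\G}[a] \geq |\ch| + 16\pi^{2}$, which is the claim. The genuinely delicate point, and the one I would spell out most carefully, is the harmonic-connection holonomy reduction and the verification that it respects the instanton dichotomy; the inner-product bookkeeping is routine given Section~\ref{subsec:threshold}.
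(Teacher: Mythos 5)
Your argument hinges on a reduction theorem that, to my knowledge, does not exist: there is no result stating that a general \emph{harmonic} Yang--Mills connection (a critical point of the energy that need not be (anti-)self-dual) on a $\G$-bundle over $\bbR^{4}$ or $\bbS^{4}$, $\G$ compact simple, has holonomy algebra contained in a root $su(2)$. The Atiyah--Hitchin--Singer reference you lean on concerns (anti-)self-dual connections, and even there the content of the relevant section is $\pi_{3}(\G)\simeq\bbZ$ and the connectedness of instanton moduli (in particular that a \emph{minimal}-charge instanton can be obtained from an $SU(2)$ embedding), not a holonomy reduction for arbitrary instantons, let alone for non-self-dual critical points. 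Were such a reduction available, the theorem of \cite{GKS} for a general compact group would reduce to the $SU(2)$ case in one line, which is not how \cite{GKS} argues; that they prove the energy gap directly for a general compact structure group is strong circumstantial evidence that the reduction is not available. You flag this yourself as the ``main obstacle,'' and it is not merely an obstacle to a clean writeup --- it is a missing ingredient, offered without proof or reference. (A secondary point: your worry that a non-highest root would weaken the $16\pi^{2}$ runs the inequality backwards. By \eqref{eq:lie-norm}, every root has $\brk{\alp,\alp}\leq 2$, so $\tfrac{2}{\brk{\alp,\alp}}\geq 1$ and short roots could only \emph{strengthen} the constant; the ``enlargement'' patch you sketch would never have been needed.)

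The paper's actual route does not reduce to $SU(2)$. It runs the \cite{GKS} argument directly for an arbitrary compact simple group, and isolates the single place where the $Ad$-invariant inner product on $\g$ enters: a pointwise Lie bracket bound $\abs{[A,B]}\leq\gamma_{0}\abs{A}\abs{B}$. Lemma~\ref{lem:GKS-inner} then proves, by a root-space computation (diagonalizing $ad(A)$ on $\g_{\bbC}$, whose eigenvalues are $i\alp(A)$, and using $\abs{\alp}\leq\sqrt{2}$ under the normalization \eqref{eq:lie-norm}), that one may take $\gamma_{0}=\sqrt{2}$, with equality exactly on highest-root $su(2)$ triples up to conjugation. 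Feeding $\gamma_{0}=\sqrt{2}$ (hence $\gamma_{1}=4/\sqrt{6}$) into the \cite{GKS} machinery yields the sharp $16\pi^{2}$ for the general simple group, and the $\kappa=1$ instanton built from a highest-root embedding is observed to saturate the inequalities. The essential content is this algebraic normalization lemma replacing \cite[Lemma~2.1]{GKS}, not any reduction of the structure group.
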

In essence, this is \cite[Corollary~1.2]{GKS}. However, to insure that
we obtain the sharp bound, we need to verify that the proof goes
through for our choice of $\brk{\cdot, \cdot}$, without relying on an
embedding $\g \subset so(n)$ to normalize $\brk{\cdot, \cdot}$ as in
\cite{GKS}. For this purpose, we have the following replacement of
\cite[Lemma~2.1]{GKS}:
\begin{lemma} \label{lem:GKS-inner} Under our normalization
  \eqref{eq:lie-norm}, we have
  \begin{equation*}
    \abs{[A, B]} \leq \sqrt{2} \abs{A} \abs{B} \qquad \hbox{ for any } A, B \in \g.
  \end{equation*}
  with equality if and only if, up to an $Ad(O)$-action, $A$ and $B$
  are proportional to two of $\set{\bfi_{\alp}, \bfj_{\alp},
    \bfk_{\alp}}$ for some highest root $\alp$.
\end{lemma}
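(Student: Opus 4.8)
The plan is to convert the bilinear estimate into a bound on the operator norm of $ad(A)$ and then read everything off from the root space decomposition. Since $[A,B] = ad(A)B$ and $ad(A)$ is anti-self-adjoint for the $Ad$-invariant inner product $\brk{\cdot,\cdot}$, one has $\abs{[A,B]} \le \nrm{ad(A)}_{\mathrm{op}}\abs{B}$, so it suffices to prove $\nrm{ad(A)}_{\mathrm{op}} \le \sqrt{2}\,\abs{A}$ for every $A \in \g$. The operator norm, the norm $\abs{A}$, and the whole assertion of the lemma are invariant under $(A,B) \mapsto (Ad(O)A, Ad(O)B)$, and every element of $\g$ is $Ad(O)$-conjugate into a fixed Cartan subalgebra $\h$, so I may assume $A \in \h$. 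Then on $\g_{\bbC} = \h_{\bbC} \oplus \bigoplus_{\alp \in \Dlt} \g_{\bbC,\alp}$ the operator $ad(A)$ annihilates $\h_{\bbC}$ and acts on each $\g_{\bbC,\alp}$ by multiplication by $i\alp(A)$; since these root spaces are mutually orthogonal, $\nrm{ad(A)}_{\mathrm{op}} = \max_{\alp \in \Dlt}\abs{\alp(A)}$. Using $\alp(H) = \brk{H_{\alp},H}$, Cauchy--Schwarz, and the normalization \eqref{eq:lie-norm}, I get $\abs{\alp(A)} = \abs{\brk{H_{\alp},A}} \le \abs{H_{\alp}}\abs{A} = \sqrt{\brk{\alp,\alp}}\,\abs{A} \le \sqrt{2}\,\abs{A}$, which is the claimed inequality.

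For the equality case I would trace back through these inequalities, assuming $A, B \neq 0$ (the other case being trivial) and scaling so that $\abs{A} = \abs{B} = 1$. Equality forces a root $\alp_0$ with $\abs{\alp_0(A)} = \sqrt{2}$; then $\brk{\alp_0,\alp_0} = 2$ (so $\alp_0$ is a highest root) and, by the equality case of Cauchy--Schwarz, $A = \pm H_{\alp_0}/\abs{H_{\alp_0}}$. Since $\bfk_{\alp_0} = \tfrac{2}{\brk{\alp_0,\alp_0}}H_{\alp_0} = H_{\alp_0}$ and $\abs{\bfk_{\alp_0}}^{2} = 2$ by \eqref{eq:lie-norm-alp}, this says $A$ is a unit multiple of $\bfk_{\alp_0}$. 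Expanding $B$ in the root space decomposition, $\abs{ad(A)B}^{2} = \sum_{\alp}\abs{\alp(A)}^{2}\abs{B_{\alp}}^{2} \le 2\sum_{\alp}\abs{B_{\alp}}^{2} \le 2\abs{B}^{2}$, so equality forces $B$ to be supported on those $\g_{\bbC,\alp}$ with $\alp(A)^{2} = 2$. Because $\alp(A) = \pm\brk{\alp,\alp_0}/\sqrt{2}$, this means $\abs{\brk{\alp,\alp_0}} = 2$; a second application of Cauchy--Schwarz together with the normalization \eqref{eq:lie-norm} forces $\alp = \pm\alp_0$. Using $\dim_{\bbC}\g_{\bbC,\alp_0} = 1$ and $\g_{\bbC,-\alp_0} = \overline{\g_{\bbC,\alp_0}}$, the real points of $\g_{\bbC,\alp_0}\oplus\g_{\bbC,-\alp_0}$ are exactly $\mathrm{span}_{\bbR}\{\bfi_{\alp_0},\bfj_{\alp_0}\}$, so $B$ is a unit vector of that plane.

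Finally, from \eqref{eq:lie-alg-su2} the operator $ad(\bfk_{\alp_0})$ fixes $\bfk_{\alp_0}$ and acts on $\mathrm{span}\{\bfi_{\alp_0},\bfj_{\alp_0}\}$ as a rotation generator, so conjugating by $O = \exp(t\bfk_{\alp_0}) \in \G$ for a suitable $t$ rotates $B$ to a unit multiple of $\bfi_{\alp_0}$ while leaving $A$ a unit multiple of $\bfk_{\alp_0}$; thus, up to an $Ad(O)$-action, $A$ and $B$ are proportional to two of $\set{\bfi_{\alp_0},\bfj_{\alp_0},\bfk_{\alp_0}}$. The converse is a one-line computation from \eqref{eq:lie-alg-su2} and \eqref{eq:lie-norm-alp}: if $A \propto \bfk_{\alp_0}$ and $B \propto \bfi_{\alp_0}$, then $\abs{[A,B]} = \abs{A}\abs{B}\,\abs{[\bfk_{\alp_0},\bfi_{\alp_0}]}/(\abs{\bfk_{\alp_0}}\abs{\bfi_{\alp_0}}) = \abs{A}\abs{B}\,(2\sqrt{2})/2 = \sqrt{2}\,\abs{A}\abs{B}$. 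The inequality itself is essentially forced; the main potential obstacle is the bookkeeping in the equality case — pinning down the subspace on which $B$ must live as a single root-plane (which is exactly where the highest-root normalization and the second Cauchy--Schwarz enter), passing to its real form, and carrying out the rotation inside the root $su(2)$-triple to reduce $B$ to a standard generator.
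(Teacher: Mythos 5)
Your proof is correct and follows essentially the same route as the paper: conjugate $A$ into a Cartan subalgebra, read off the spectrum of $ad(A)$ from the root decomposition, apply Cauchy--Schwarz with the normalization $\brk{\alp,\alp}\le 2$, and then trace equality back to a highest root $\alp_0$ with $A\propto\bfk_{\alp_0}$ and $B\in\mathrm{span}\{\bfi_{\alp_0},\bfj_{\alp_0}\}$, finishing with the rotation $Ad(\exp(t\bfk_{\alp_0}))$. The only difference is that you spell out the equality bookkeeping (the second Cauchy--Schwarz pinning down $\alp=\pm\alp_0$, the real form of $\g_{\bbC,\alp_0}\oplus\g_{\bbC,-\alp_0}$, and the correct unit normalization of $A$ against $\bfk_{\alp_0}$) more carefully than the paper's compressed version.
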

\begin{proof}
  Consider a maximal abelian subalgebra $\h$ containing
  $A$. Eigenvalues of $ad(A)$ are $\set{0, i \alp(A)}_{\alp \in
    \Dlt}$. By \eqref{eq:lie-norm}, $\abs{\alp} \leq \sqrt{2}$. Thus,
  \begin{equation*}
    \abs{[A, B]} = \abs{ad(A) B} \leq \sup_{\alp \in \Dlt} \abs{\alp(A)} \abs{B} \leq \sup_{\alp \in \Dlt} \abs{\alp} \abs{A} \abs{B} \leq \sqrt{2} \abs{A} \abs{B}.
  \end{equation*}
  In order for the equalities to hold, $\alp$ must be a highest root,
  $A = \abs{A} H_{\alp} = \abs{A} \bfk_{\alp}$, and $B \in
  span(\bfi_{\alp}, \bfj_{\alp})$. Since $Ad(\exp(s \bfk_{\alp}))$
  simply rotates the plane $span(\bfi_{\alp}, \bfj_{\alp})$, and
  leaves $\bfk_{\alp}$ invariant, we see that $Ad(\exp(s \bfk_{\alp}))
  B$ is parallel to $\bfi_{\alp}$ for an appropriate choice of $s \in
  \bbR$. Finally, the converse is easy to verify. \qedhere
\end{proof}
The proof in \cite{GKS} now goes through for a $\G$-bundle with
normalization \eqref{eq:lie-norm} with the parameters $\gmm_{0} =
\sqrt{2}$ and $\gmm_{1} = \frac{2}{\sqrt{3}} \gmm_{0} =
\frac{4}{\sqrt{6}}$. The $SU(2)$-instanton with $\kpp = 1$, which we
constructed above, saturates the inequalities in \cite{GKS}, exactly
as in \cite[Remark~2.7 and Section~3.2]{GKS}.

\pfstep{Step~2: $\G$ is a general nonabelian compact Lie group}
Finally, we consider a general nonabelian compact Lie group $\bfG$ and
prove Theorem~\ref{thm:thr}.

Observe that the $Ad$-invariant inner product on $\g$ can be used to
define the orthogonal complement $\h^{\perp}$ of an ideal $\h
\subseteq \g$, which is also an ideal. Thus $\g$ admits the direct-sum
splitting
\begin{equation*}
  \g = \tilde{\g}_{1} \oplus \cdots \oplus \tilde{\g}_{\tilde{n}}
\end{equation*}
as Lie algebra ideals, where each summand has no proper nonzero
ideal. In fact, it is either $1$-dimensional, and thus abelian, or
simple. Since $\G$ is assumed to be nonabelian, at least one summand
is simple. Thus, we arrive at the decomposition
\begin{equation*}
  \g = \g_{1} \oplus \cdots \oplus \g_{n} \oplus \mathfrak{a}.
\end{equation*}
where $n \geq 1$, each $\g_{i}$ is simple, and $\mathfrak{a}$ is
abelian. As a result, the universal cover $\tilde{\G}$ of $\G$ splits
into
\begin{equation*}
  \tilde{\G} = \Pi_{i} \G_{i} \times \bbR^{r}
\end{equation*}
where $\G_{i}$ is the simply connected Lie group corresponding to
$\g_{i}$, and $r = \dim \mathfrak{a}$.  Denote by
$\boldsymbol{\pi}_{i}$ the projection $\G \to \G_{i}$, and by $\ud
\boldsymbol{\pi}_{i}$ the corresponding projection $\g \to \g_{i}$,
with the convention $\G_{n+1} = \bbR^{r}$, $\g_{n+1} = \mathfrak{a}$.

As we are working with global gauge potentials on $\bbR^{4}$, the
splitting allows us to decompose any $a$ into components $\ud
\boldsymbol{\pi}_{i} (a)$, which are completely decoupled from each
other. We have the splitting
\begin{align}
  \ch = & \int_{\bbR^{4}} - \brk{F[a], F[a]} = \sum_{i}
  \int_{\bbR^{4}} - \brk{\ud \boldsymbol{\pi} (F[a]), \ud
    \boldsymbol{\pi} (F[a])}
  = \sum_{i} \ch(\ud \boldsymbol{\pi}_{i}(a)), \label{eq:cpt-ch} \\
  \spE(a) = & \frac{1}{2} \int_{\bbR^{4}} \brk{F_{jk}[a], F^{jk}[a]} =
  \sum_{i} \frac{1}{2} \int_{\bbR^{4}} \brk{\ud \boldsymbol{\pi}
    (F_{jk}[a]), \ud \boldsymbol{\pi} (F^{jk}[a])} = \sum_{i} \spE(\ud
  \boldsymbol{\pi}_{i}(a)). \label{eq:cpt-en}
\end{align}
Moreover, $a$ is a harmonic Yang--Mills connection if and only if each
$\ud \boldsymbol{\pi}_{i}(a)$ is. In this case, $\ud
\boldsymbol{\pi}_{n+1}(a) = 0$, since no nontrivial finite energy
harmonic $2$-form exists on $\bbR^{4}$.

For each compact simple $\G_{i}$, let $E_{i}$ be the energy of a
first instanton; from Step~1, we know that $E_{i} =
\frac{16}{\brk{\alp, \alp}} \pi^{2}$, where $\alp$ is a highest root
in $\g_{i}$. Reordering the factors if necessary, we may arrange so
that $E_{1} \leq E_{2} \leq \ldots \leq E_{n}$. In
particular, $E_{1}$ coincides with the infimum in
Theorem~\ref{thm:thr}, and part (1) follows.

To prove part (2), note that if $a$ is a finite energy harmonic
Yang--Mills connection with energy $< 2 E_{1} \leq 2 E_{i}$,
then by Step~1, each $\ud \boldsymbol{\pi}_{i}(a)$ is either zero or a
first instanton. Immediately by \eqref{eq:cpt-en}, we also see that
exactly one of $\ud \boldsymbol{\pi}_{i}(a)$ is nonzero. Thus
$\abs{\ch} = \abs{\ch(\ud \boldsymbol{\pi}_{i}(a))} = \spE(\ud \boldsymbol{\pi}_{i}(a)) =
\spE(a)$, as desired.

\bibliographystyle{ym} 
\bibliography{ym}

\end{document}